\documentclass[11pt,a4paper]{article}  

\usepackage[usenames,dvipsnames]{xcolor}
\usepackage{amsmath}    
\usepackage{amssymb}    
\usepackage{amsthm}     
\usepackage{amscd}      
\usepackage{tikz}       
\usepackage{tikz-cd}    
\usepackage{etoolbox}   
\usepackage{mathrsfs}   
\usepackage{fixme}      
\usepackage{euscript}   
\usepackage{mathtools}
\usepackage{stmaryrd}
\usepackage[shortlabels]{enumitem}
\usepackage[T1]{fontenc}
\usepackage{lmodern}    
\usepackage{microtype}
\usepackage{enumitem}   
\setlist{topsep=4pt,itemsep=2pt,parsep=2pt,partopsep=0pt}

\newcommand*\mathcircled[1]{\tikz[baseline=(char.base)]{\node[shape=circle,draw,inner sep=2pt] (char) {#1};}}

\usepackage[colorlinks=true, linkcolor=blue, citecolor=blue, urlcolor=blue]{hyperref}

\usepackage[margin=3cm]{geometry}

\newcommand{\DeclareMathAlphabets}[1]{
  \expandafter\newcommand\csname c#1\endcsname{\mathcal{#1}}
  \expandafter\newcommand\csname f#1\endcsname{\mathfrak{#1}}
  \expandafter\newcommand\csname b#1\endcsname{\mathbb{#1}}
  \expandafter\newcommand\csname bf#1\endcsname{\mathbf{#1}}
  \expandafter\newcommand\csname s#1\endcsname{\mathscr{#1}}
}

\forcsvlist{\DeclareMathAlphabets}{A,B,C,D,E,F,G,H,I,J,K,L,M,N,O,P,Q,R,S,T,U,V,W,X,Y,Z}

\def\mk {\mathfrak}
\newcommand\Hom{\operatorname{Hom}}

\newcommand\Ext{\operatorname{Ext}}

\newcommand\Imm{\operatorname{Im}}
\newcommand\Ker{\operatorname{Ker}}
\newcommand\fib{\operatorname{fib}}
\newcommand\tr{\operatorname{tr}}

\numberwithin{equation}{section}

\newtheorem{theorem}{Theorem}[section]
\newtheorem{proposition}[theorem]{Proposition}
\newtheorem{lemma}[theorem]{Lemma}
\newtheorem{corollary}[theorem]{Corollary}

\theoremstyle{definition}
\newtheorem{definition}[theorem]{Definition}
\newtheorem{remark}[theorem]{Remark}
\newtheorem{example}[theorem]{Example}
\newtheorem{question}[theorem]{Question}
\newtheorem{conjecture}[theorem]{Conjecture}

\newtheorem{propdef}[theorem]{Proposition/Definition}

\begin{document}

\title{Towards Categorical K\"ahler Geometry}


\author{Fabian Haiden, Ludmil Katzarkov, Maxim Kontsevich, Pranav Pandit}

\newcommand{\Addresses}{{
  \setlength{\parindent}{0pt}
  \bigskip
  \footnotesize
    FH: \textsc{Centre for Quantum Mathematics, Department of Mathematics and Computer Science, University of Southern Denmark, Campusvej 55, 5230 Odense, Denmark} \par\nopagebreak
	\textit{E-mail:} \texttt{fab@sdu.dk}
    \medskip

    LK: \textsc{Simons Center for Geometry and Physics, Stony Brook, NY 11794, USA and International Center for Mathematical Sciences (ICMS), Sofia, Bulgaria.}
    \par\nopagebreak
	\textit{E-mail:} \texttt{lkatzarkov@gmail.com} 
    \medskip

    MK: \textsc{Institut des Hautes \'Etudes Scientifiques, Bures-sur-Yvette, France}
    \par\nopagebreak
	\textit{E-mail:} \texttt{maxim@ihes.fr} 
    \medskip

    PP: \textsc{International Centre for Theoretical Sciences, Bengaluru, India}
    \par\nopagebreak
	\textit{E-mail:} \texttt{pranav.pandit@icts.res.in} 
}}

\date{\today} 

\maketitle

\begin{abstract}
We outline the contours of an emerging theory of K\"ahler metrics in derived noncommutative geometry.
This is a refinement of the theory of Bridgeland stability conditions encoding underlying differential-geometric structures.
We propose elements of such a structure in both Archimedean and non-Archimedean settings, including metrized objects, mass measures satisfying a BPS inequality, harmonic metrics, minimizing flows, and complexified K\"ahler potentials. 
We develop the framework through examples and constructions involving Fukaya categories, quiver representations and associated C$^*$-algebras, spectral networks, and comonadic adjunctions of stable $\infty$-categories.
\end{abstract}

\setcounter{tocdepth}{2}
\tableofcontents

\section{Introduction}
\label{sec:intro}

\subsection{Background: From K\"ahler geometry to stability conditions}\label{subsec:background}

K\"ahler manifolds lie at the fascinating interface of algebraic geometry, differential geometry, and topology.
A central theme in this subject is that classes of algebraic or topological objects often have an essentially unique representative which minimizes some type of energy functional.
A first example is provided by Hodge theory: for compact $X$, each class in $H^n(X;\bC)$ has a unique harmonic representative minimizing the $L^2$-norm.

A more sophisticated example is provided by the Kobayashi--Hitchin correspondence for compact K\"ahler manifolds $(X,\omega)$, known as the Donaldson--Uhlenbeck--Yau (DUY) theorem~\cite{donaldson85,uhlenbeck_yau}. This states that among the Hermitian metrics on an indecomposable holomorphic vector bundle $E$ over $X$, there exists one solving the Hermitian Yang--Mills equations if and only if the bundle is slope-stable. 
Slope stability of $E$ means, by definition, that there are no proper coherent subsheaves with equal or larger slope $\mu(E)\coloneqq \mathrm{deg}(E)/\mathrm{rk}(E)$, $\deg(E)\coloneqq 
\int_Xc_1(E)\wedge\omega^{n-1}$, a purely algebraic notion.

As highlighted by Donaldson~\cite[Section 4]{donaldson85}, the Kobayashi--Hitchin correspondence can be conceptually interpreted as an infinite-dimensional instance of the Kempf--Ness principle, which, in finite dimensions, states that the geometric invariant theory quotient by a reductive group, and the symplectic reduction by its maximal compact subgroup are identified. The Hermitian Yang--Mills equation is then seen to be a moment map-type equation.

Inspired by mirror symmetry, Thomas and Yau~\cite{thomas01,thomas_yau02} initiated the search for a symplectic analogue of the DUY theorem, in which Hermitian vector bundles are replaced by Lagrangian submanifolds  $L$ (of a Calabi--Yau manifold) and the Hermitian Yang--Mills condition by the special Lagrangian condition $\arg(\Omega|_L)=\text{const}$.
Many new difficulties arise, one of which is that the relevant category, the Fukaya category of the underlying symplectic manifold, is not abelian but an (enhanced) triangulated category.
In particular, there is no intrinsic notion of subobject and the definition of slope stability breaks down.

An axiomatic notion of stability for triangulated categories was developed by Bridgeland in his influential paper~\cite{bridgeland07}, which was inspired by the work of Douglas on $\Pi$-stability of D-branes in string theory~\cite{DouglasFiolRomelsberger2005,Douglas2001Dbranes,douglas_icm}.
Bridgeland stability subsumes slope stability in abelian categories and leads to a much richer theory with interesting moduli spaces of stability conditions.
Joyce has formulated a version of the Thomas--Yau conjectures in terms of Bridgeland stability~\cite{joyce_conj} and highlighted subtleties regarding finite-time blow-up and obstructedness.
For more recent developments, see~\cite{lotay_oliveira,li22,stoppa}.

\subsection{Origins of the project}\label{subsec:origins}

We briefly explain the original motivations for this project.\footnote{This section can be skipped.}
The results of the paper~\cite{hkk} of the first three named authors relate stability conditions on Fukaya categories of surfaces to geometric structures on these surfaces (flat metrics, finite-length geodesics).
Our initial goal was to substantially generalize these results to the setting of Fukaya categories of surfaces \textit{with coefficients} in a perverse schober (see also~\cite{HKS}). 
Heuristically, this is related to the Thomas--Yau--Joyce proposal in the following way: given a symplectic fibration $p\colon X\to S$ over a surface $S$, possibly with singular fibers, one can hope to compute the Fukaya category of $X$, $\cF(X)$, by a two-step process: first, construct a system $\cE$ of enhanced triangulated categories over $S$, a perverse schober~\cite{KS_schobers}, assembled from the Fukaya categories of the fibers of $p$; 
second, compute the Fukaya category of $S$ with coefficients in $\cE$, $\cF(S;\cE)$.
If $X$ is moreover K\"ahler with holomorphic volume form $\Omega$ and $p$ is complex analytic, one expects to obtain a holomorphically varying family of stability conditions on $\cE$.
The goal is then to integrate this family to a stability condition on $\cF(S;\cE)$.
The semistable objects of $\cF(S;\cE)$ should then admit geometric representatives given by special graphs on $S$ labeled by algebraic data from $\cE$.
The underlying graphs can be thought of as degenerate limits of the projections to $S$ of special Lagrangian submanifolds in $X$, as the volumes of the fibers of $p$ tend to zero.
They are generalizations of the \textit{finite webs} of Gaiotto--Moore--Neitzke~\cite{GMN-spectral}, closely related to their~\textit{spectral networks}.

During this project, we realized the importance of various additional geometric structures on a triangulated category with a stability condition which encode its geometric origin. This would include, in particular, non-Archimedean K\"ahler metrics on the moduli spaces of polystable objects.
In the search for simpler toy models of our problem, we turned to quiver representations and complexes of quiver representations, over both $\bC$ and non-Archimedean fields.
This transformed the project into an ambitious program aimed at finding a categorical notion of K\"ahler geometry and drawing on many areas of mathematics, including higher category theory, homological mirror symmetry, C$^*$-algebras, scattering diagrams, and buildings.

\subsection{Goals}\label{subsec:goals}

In its most concise form, our goal can be stated as completing the following analogy between classical differential geometry and categorical geometry.
Recall that the slope of a holomorphic vector bundle, and hence its slope-stability condition, depends only on the K\"ahler class, whereas the Hermitian Yang--Mills equation depends on the K\"ahler metric itself.

\begin{center}
\renewcommand{\arraystretch}{1.8}
\begin{tabular}{c|c}
    complex manifold $X$ & enhanced triangulated category $\cC$ \\
    \hline
     K\"ahler form $\omega\in\Omega^{1,1}(X)$ & \textit{categorical K\"ahler metric} on $\cC$ \\
     \hline
     K\"ahler class $[\omega]\in H^{1,1}(X)$ & stability condition on $\cC$
\end{tabular}
\end{center}

We are thus seeking a definition of ``categorical K\"ahler metric'', both in Archimedean and non-Archimedean variants, which should arise naturally on categories $\cC$ of the form $D^b\mathrm{Coh}(X)$ or $\cF(X)$ for some Calabi--Yau manifold $X$, as well as other examples, e.g.\ the ones considered in this paper.
Such a structure should include:
\begin{itemize}
    \item a notion of a \textit{metric} on an object of $\cC$,
    \item \textit{masses} of metrized objects satisfying the BPS inequality with respect to the central charge,
    \item a notion of harmonic metric such that existence of a harmonic metric implies semistability of the underlying object, and polystability guarantees its existence,
    \item a \textit{flow} on the space of metrizations of each object which decreases mass and converges to a harmonic metric if the underlying objects admits one,
    \item a \textit{complexified K\"ahler potential} on the space of metrized objects which provides K\"ahler metrics on (smooth loci of) moduli spaces of polystable objects.
\end{itemize}
See Section~\ref{section:axiomatic} for a more detailed discussion.
We also hope that a future, more complete framework will strengthen the bridge between (categorical, homological) noncommutative algebraic geometry and operator-algebraic noncommutative geometry. Here we only touch on this topic briefly  in Section~\ref{sec:quivers_arch} in the context of quiver representations.

While we have not yet achieved this goal, our work on this program has led us to several exciting subprojects with interesting new constructions, results, and conjectures.
These include our work on the asymptotics of minimizing flows and iterated logarithms~~\cite{HKKPitlogs1,HKKPitlogs2}, as well as the constructions and conjectures discussed in later sections of this paper, related to quiver representations, spectral networks, and comonads on stable $\infty$-categories.

At this point, we have worked on the project on and off for over a decade and given talks and lecture series on much of the material. 
We feel that the time has come to record what we have established thus far, both to create a reference point and to inspire further development of the ideas.

\subsection{Related Work}\label{subsect:related}
Beyond the Thomas--Yau picture discussed above, and its subsequent development by Joyce, several developments are closely related to the perspective pursued here. On the symplectic side, Smith has discussed potential applications of Bridgeland stability conditions on Fukaya categories to symplectic topology, in particular to symplectic mapping class groups \cite{smith_stability_symplectic}. The third named author, together with Yan Soibelman, has recently proposed a generalized Riemann--Hilbert correspondence involving analytic families of Fukaya categories and limiting Bridgeland stability conditions; in the same article, he also discusses a related twistor picture involving harmonic objects \cite{Maxim-ECM24}.

In complex geometry, the deformed Hermitian--Yang--Mills equation, which is mirror to the special Lagrangian equation, has been studied as a differential-geometric counterpart to stability~\cite{CollinsJacob2013,CollinsShi2019}; Collins, Xie, and Yau discuss its relation to algebraic stability conditions \cite{Collins-Xie-Yau}, while Collins and Yau develop its variational and moment-map interpretation as an infinite-dimensional GIT problem \cite{Collins-Yau}. More generally, Dervan, McCarthy, and Sektnan associate geometric PDEs on holomorphic vector bundles to central charges arising in Bridgeland stability, whose solutions they call \(Z\)-critical connections, and establish a correspondence between asymptotic \(Z\)-stability and the existence of such connections in the large-volume limit \cite{Dervan-McCarthy-Sektnan}. Dervan has developed a parallel theory for polarized varieties \cite{Dervan-K-Stability}. These works provide differential-geometric manifestations, in classical settings, of a principle closely related to the one motivating the present paper: that stability data encoded by a central charge should admit a refinement by metric and differential-geometric structures.

A complementary perspective comes from extensions of geometric invariant theory itself. Halpern-Leistner's theory of \(\Theta\)-stratifications develops intrinsic notions of instability and Harder--Narasimhan filtrations for general moduli problems, generalizing both the Kempf--Ness stratification in GIT and Harder--Narasimhan stratifications, and applies in particular to moduli stacks of objects in hearts of Bridgeland stability conditions \cite{Halpern-Leistner-instability}. More recently, Dervan, with an appendix by Ib\'a\~nez N\'u\~nez, has developed structures in GIT analogous to stability conditions on abelian and triangulated categories, including notions of central charge on schemes with group actions and on stacks, together with an analytic counterpart to stability and a corresponding Kempf--Ness picture \cite{Dervan-Stability-GIT}. This provides a finite-dimensional framework connecting central charges, stability, moment maps, and canonical geometric structures which is closely related to, and complementary to, the categorical metric structures considered here.

A different point of contact comes from differential-geometric approaches to categorical noncommutative geometry. Block associates differential graded algebra data to dg categories of modules in a framework which, in the complex-geometric case, recovers the derived category of coherent sheaves \cite{Block-cohesive}. Closely related constructions using \(\bar\partial\)-superconnections were developed by Bondal and Rosly, who obtain a dg enhancement of the derived category of coherent sheaves and use superconnections to define Chern and Bott--Chern classes of its objects \cite{Bondal-Rosly}. These perspectives are closely related in spirit to our use of differential graded and curved differential graded structures in categorical K\"ahler geometry.

\subsection{Contents of the paper}\label{subsect:organization}

\subsubsection{Stability conditions}

Section~\ref{section:stability} is mainly preparatory and discusses abstract aspects of stability in abelian and triangulated categories, i.e.\ without any additional geometric structures.
After a warm-up on slope stability, we begin with a review of the definition of Bridgeland stability.
The remaining three subsections contain partially new material.
In the first (Section~\ref{subsec:mass}), we establish conditions under which the triangle inequality for mass, $m(B)\leq m(A)+m(C)$ for an exact triangle $A\to B\to C\to A[1]$, is either strict or an equality (Proposition~\ref{prop:mass}).
This is used later, in Sections~\ref{section:spectral} and~\ref{section:comonads}, in proofs based on induction on the mass.
In Section~\ref{subsec:effective}, we relate stability conditions to positivity in algebraic geometry, showing that the degree of any curve in the moduli space of semistable objects, defined in terms of the central charge, is non-negative (Theorem~\ref{thm:ample}).
This motivates the idea that a Bridgeland stability condition can fruitfully be thought of as a ``noncommutative K\"ahler class''.
We finish the section with a discussion of ascending-phase filtrations of objects in hearts of stability conditions.
The main result is a $\widetilde{\mathrm{GL}}^+(2,\bR)$-invariant construction of a finite-length abelian category of pairs of such filtrations, the \textit{vertex category}.
This category describes the algebraic data attached to a vertex of a spectral network and plays a central role in Section~\ref{section:spectral}.

\subsubsection{Heuristics and framework}

Section~\ref{section:heuristics} is devoted to the heuristics that inform
our axiomatic approach to K\"ahler metrics on noncommutative spaces.  These
heuristics derive largely from Fukaya categories and homological mirror
symmetry.  We first recall the relation between stability conditions,
special Lagrangians, and the large-volume limit, and discuss a conjectural
 Fukaya category  over the positive Novikov ring  $Nov_+$, built from singular Lagrangian supports and  canonical
deformations of their local Fukaya categories determined by
pseudo-holomorphic discs.

 In the final part of the section we give a more concrete sheaf-theoretic
model in the case of a split symplectic torus.  The construction does not
involve a choice of almost-complex structure, although it uses the
Lagrangian torus fibration. On the mirror side this
leads to a large $Nov_+$-linear category associated with a power of the
Tate elliptic curve.  Using a Rees construction and microlocal sheaf
theory, we associate to its objects a reduced singular support in the
symplectic torus and describe its covariance under symplectomorphisms.
For a natural class of metrized objects these supports are rational
piecewise-linear Lagrangian subsets carrying microlocal coefficient
systems.

This allows us to formulate a precise conjectural characterization of a
Bridgeland stability condition: semistable objects of a given phase should
be exactly those admitting metrized representatives whose Lagrangian
supports have arbitrarily small phase amplitude.  We finally extend this
picture heuristically to Fukaya categories with coefficients in a local
system of categories equipped with varying stability conditions.

\ 

While Section~\ref{section:axiomatic} does not contain a definitive definition of \textit{categorical K\"ahler structure} adapted to a given stability condition, we identify various desired elements of such a structure.
This includes a notion of \textit{metrized object}, and these should, at least in the non-Archimedean case, form a category (see Section~\ref{subsec:frameworknonarch}) which comes with a rescaling action. 
Each metrized object $(X,h)$ has not only a mass, $m(h)$, satisfying the BPS inequality $|Z(X)|\leq m(h)$, but also a \textit{mass measure}, $\mu_h$, which records the distribution of mass across a bounded interval of phases (see Section~\ref{subsec:generalframework}).
The total mass $\mu_h(\bR)$ is then $m(h)$ while the central charge is the total weighted measure
\[ 
Z(X)=\int_{\bR}e^{\pi\sqrt{-1}\phi}d\mu_h(\phi). 
\]
In the Archimedean case, these mass measures, as well as the K\"ahler potential and flow, are all determined by a $*$-algebra $\cA_{X,h}$ with linear functional $\Omega\colon\cA_{X,h}\to\bC$ depending on the metric $h$ on an object, as we propose in Section~\ref{subsec:frameworkarch}.
The heuristic interpretation of $\cA_{X,h}$ is as the complexified tangent space at $h$ to the space of metrizations, while $\Omega$ is the variation of the complexified K\"ahler potential $S_{\bC}$.

A more complete, though still approximate, local picture of ``categorical K\"ahler geometry'' over $\bC$ is discussed in Section~\ref{subsec:lozenge}.
In fact, much of this framework is already present in our previous work~\cite{HKKPitlogs2} and in the paper~\cite{BK21} by Bhattacharya and the third named author, where a wide range of examples, including Nekrasov's noncommutative instantons, are considered. 
The main new contribution in this section is to show that both frameworks are essentially equivalent.

\

The remaining sections of the paper describe examples of categorical K\"ahler structures arising from different areas of mathematics.

\subsubsection{Quiver representations}

In Sections~\ref{sec:quivers_arch} and \ref{sec:quivers_nonarch} we study categorical K\"ahler geometry on abelian and derived categories of representations of quivers over $\bC$ and non-Archimedean fields, respectively. 
We begin by explaining how this example fits into our general framework by employing a theorem of A.~King on the ``GIT quotient = symplectic reduction'' isomorphism in this setting (Theorem~\ref{thm:King}).
We then discuss a reformulation of this result in terms of unitary representations of certain $*$-algebras, which are generated by arrows $a\in Q_1$ of a quiver $Q$ subject to the relation
\[
\sum_{a\in Q_1}\left[a,a^*\right]=\sum_{i\in Q_0}\theta_ie_i
\]
where $\theta_i\in \bR$ are parameters for each vertex $i\in Q_0$.
These $*$-algebras may be completed to C$^{*}$-algebras in the acyclic case and pro-C$^{*}$-algebras in the general case (see Section~\ref{subsec:staralg}). 
They can be thought of as representing the underlying topological  space of the hypothetical noncommutative K\"ahler spaces arising from quivers.

A first step from the abelian to triangulated world is taken by considering Bernstein--Gel'fand--Ponomarev reflection functors, which induce derived equivalences between a quiver and its mutation obtained by reversing the direction of all arrows at a source/sink.
We show that the C$^*$-algebras and the induced K\"ahler metrics on the moduli spaces of polystable objects, are invariant under reflection (Theorem~\ref{thm:C-star} and Theorem~\ref{thm:mutation-invariance}).
We then continue with a discussion of general complexes of quiver representations and how they fit into the general framework.
This can be seen as a quiver analogue of (Chern-)superconnections and the corresponding representatives of the Chern character. 
The main novelty is the study of mass and flow in this context, which is illustrated in simple examples.
Several important questions about the flow in this setting remain open.
In Section~\ref{subsec:rampfunctions} we propose a more general class of categorical K\"ahler metrics for quivers which depend on a choice of increasing convex function (\textit{ramp function}) for each arrow.

In Section~\ref{sec:quivers_nonarch} we transition from $\bC$ as ground field to non-Archimedean fields.
Our main goal is to find the non-Archimedean analogue of King's theorem (Theorem~\ref{thm:King}). 
This involves (1) finding an analogue of the harmonicity (moment map) condition on the metric, and (2) proving the existence of non-Archimedean harmonic norms on polystable quiver representations.
To achieve (1), we consider families of K\"ahler metrics depending on a small parameter, then pass to the formal non-Archimedean limit, see  Section~\ref{subsec:nonarchlimit}. 
The condition we arrive at involves stability of a representation of an auxiliary ``star-shaped'' quiver over the residue field.
The purpose of this section is purely to illustrate the general technique and motivate later definitions; it is not logically necessary for what follows.
We then recall definitions and results around non-Archimedean norms on finite-dimensional vector spaces, using some of the basic terminology from the theory of buildings, which provides a useful conceptual guide.
Goal (2) is achieved in Section~\ref{subsect:non-arch-quiver}, at least under a spherical completeness assumption on the ground field.
This assumption ensures that the spaces of diagonalizable non-Archimedean norms (which are affine buildings) are complete as metric spaces.
The main result, Theorem~\ref{thm:nonarch_harm}, shows that polystability implies existence of a harmonic norm, which in turn implies semistability. 
The situation is thus somewhat more complicated than in the Archimedean case, where polystability is equivalent to existence of a harmonic metric.
The main tool in the proof of Theorem~\ref{thm:nonarch_harm} is the K\"ahler potential on the space of norms on the representation, a building-like space.

\subsubsection{Spectral networks}

In Section~\ref{section:spectral} we formulate a conjectural correspondence between spectral networks in the plane and quiver representations over a non-Archimedean field equipped with a pair of ascending-phase filtrations with metrized subquotients.
As discussed in Section~\ref{subsec:antihn}, a pair of ascending-phase filtrations corresponds to a diagram with a single vertex, which we think of as a vertex of a spectral network.
Choices of metrics on the subquotients conjecturally prescribe a deformation of the one-vertex spectral network to an arbitrary spectral network in the plane with finite topology.
In the other direction, from a very ``zoomed out'' view, the deformed spectral network collapses to the original one-vertex network.
The correspondence depends on a choice of area form on $\bR^2$ such that (1) the area of the strip between any pair of parallel lines is finite, and (2) the area of any conical sector between intersecting lines is infinite.
The choice of this (K\"ahler) area form can be viewed as providing categorical K\"ahler structure in this context.

\begin{figure}[ht]
    \centering
    \includegraphics[width=0.9\linewidth]{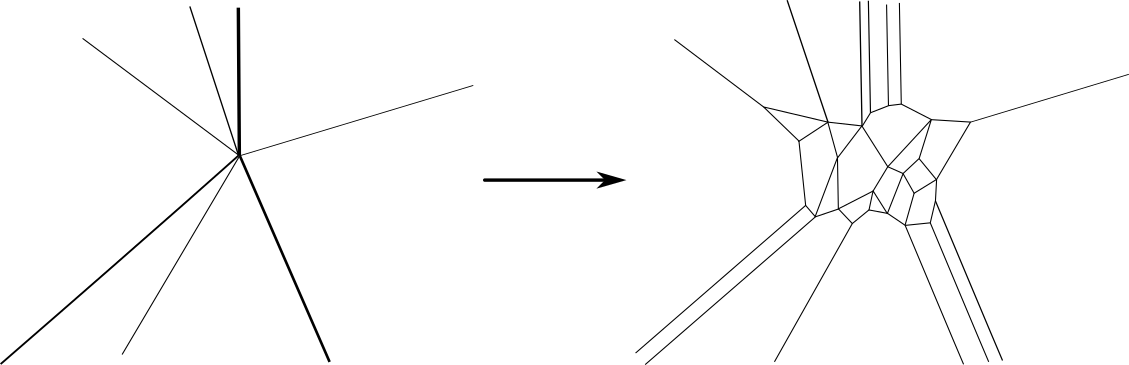}
    \caption{Deformation of a spectral network near a single vertex (``bubbling'').}
    \label{fig:specnetdeform}
\end{figure}

Our motivation for this conjecture is the problem, mentioned in Section~\ref{subsec:origins} above, of showing existence of finite web representatives of semistable objects in Fukaya categories of surfaces with coefficients in a local system of categories.
Heuristically, we expect the flow minimizing total mass of the graphs on the surface to deform a vertex of the graph, described by the pair of ascending-phase filtrations, to a more complicated graph where the vertex is replaced by the \textit{harmonic path} (see Figure~\ref{fig:specnetdeform}).

\subsubsection{Comonads}

Fukaya categories of symplectic manifolds $(M,\omega)$ come in (formal) one-parameter families by rescaling the symplectic form: $\omega\mapsto t\cdot\omega$, $t>0$. 
This family extends to $t=+\infty$, the \textit{large volume limit}, and can thus be considered as a kind of non-Archimedean metric on the Fukaya category, viewed as the general fiber of this family.
Similarly, in algebraic geometry, we can consider one-parameter families of varieties, or more generally, formal neighborhoods of effective divisors.
Abstracting these examples, one arrives at the notion of a comonadic adjunction of stable $\infty$-categories 
\[
G:\cD\leftrightarrows\cC:F,
\]
where $\cC$ is interpreted as representing the total space of the family and $\cD$ the central fiber.
The general fiber is then $\cE\coloneqq \cC/G(\cD)$.
From the point of view promoted here, the adjunction provides a kind of categorical K\"ahler metric on $\cE$.
The category of metrized objects is $\cC$.
Fix a stability condition on $\cD$ which is invariant under $T\coloneqq\mathrm{fib}(FG\xrightarrow{\varepsilon}1_{\cD})\circ [-1]$, which we assume is an autoequivalence.
An object in $\cC$ is, by definition, \textit{harmonic} if its image in $\cD$ is semistable.
More generally, we get mass measures from Harder--Narasimhan filtrations in $\cD$.
Our main result (Theorem \ref{thm:comonads}) is then the construction of a stability condition on $\cE$, depending only on the initial stability condition on $\cD$, whose semistable objects are precisely those admitting a harmonic metric.
We discuss examples arising from formal completions of smooth schemes along effective divisors (Section~\ref{subsec:nbhdiv}) and curved deformations of $A_\infty$-categories (Section~\ref{subsec:defAinf}).

\subsubsection{List of main new contributions}

Since the paper is quite long, and parts of it are reviews of the relevant background or technical preliminaries, let us briefly highlight what we feel are the most important new contributions.

\begin{itemize}
    \item The study of the \textit{vertex category} of pairs of ascending-phase filtrations (Section~\ref{subsec:antihn}).
    \item The sheaf-theoretic $Nov_+$-linear model for Fukaya categories of
split symplectic tori, its reduced singular support, and the conjectural
characterization of stability in terms of approximate special-Lagrangian
representatives (Section~\ref{sec:sheaf}).
    \item The proposed framework for categorical K\"ahler geometry (Sections~\ref{subsec:generalframework}, \ref{subsec:frameworkarch}, \ref{subsec:frameworknonarch}).
    \item The construction of C$^*$-algebras from quivers with stability parameters (Section~\ref{subsec:staralg}) and invariance under reflection functors (Theorem~\ref{thm:mutation-invariance}, Theorem~\ref{thm:C-star}).
    \item The notion of \textit{harmonic norms} and their existence in the non-Archimedean setting (Theorem~\ref{thm:nonarch_harm}).
    \item The conjecture on existence and uniqueness of spectral networks in the plane with given boundary conditions (Conjecture~\ref{conj:bubbling}).
    \item The transport of stability conditions for (co)monadic adjunctions of stable $\infty$-categories (Theorem~\ref{thm:monadtransport}).
\end{itemize}

\subsubsection{Guide to the reader}

The various sections in the body of this paper are largely logically independent, with the main exception being the first three subsections of Section~\ref{section:stability}, which provide the basic definitions and results on stability conditions used throughout the rest of the text.
The kinds of mathematics appearing in the sections also vary significantly. For instance, Section~\ref{sec:quivers_arch} contains quiver representations, geometric invariant theory quotients, and $*$-algebras, while Section~\ref{section:comonads} deals with stable $\infty$-categories and monads.
Consequently, most readers will not want to tackle the entire paper in sequential order, but instead approach different sections according to their own interests.

\subsection{Conventions, notation, and terminology}\label{subsect:conventions}

\begin{itemize}
\item We typically write $\sqrt{-1}$ for the imaginary unit, as $i$ is often used for indices, in particular vertices of a quiver.
\item An \textit{(increasing) $\bR$-filtration} $\cF_\bullet$ of an object $E$ in an abelian category is a collection of subobjects $\cF_\alpha\subseteq E$ with $\cF_\alpha\subseteq \cF_\beta$ for $\alpha\leq\beta$.
We always require the right-continuity property $\cF_\alpha=\bigcap_{\beta>\alpha}\cF_\beta$.
The filtration is \textit{complete} if it is separated and exhaustive, meaning that $\bigcap_\alpha\cF_\alpha=0$, $\bigcup_\alpha\cF_\alpha=E$.
Write $\cF_{<\alpha}\coloneqq\bigcup_{\beta<\alpha}\cF_\beta$.
The filtration is \textit{finite} if the set of $\alpha$ with $\cF_\alpha/\cF_{<\alpha}\neq 0$, the \textit{steps}/\textit{jumps} of the filtration, is finite.
While $\bR$-filtrations are by default increasing, we will sometimes consider decreasing $\bR$-filtrations as well.

\item Phase is related to central charge by the normalization $\phi(E)=\arg(Z(E))/\pi$.
\item Stability conditions are assumed to satisfy the support property.
\item We write $\mathbf k$ for the ground field. (In a non-Archimedean context, $\mathbf k$ will usually denote the residue field instead.)
\item For objects $A,B$ in a triangulated category $\cC$, we write $\mathrm{Ext}^n(A,B)\coloneqq\mathrm{Hom}_{\cC}(A,B[n])$.
\item Following Lurie~\cite{LurieHTT}, \textit{$\infty$-category} means a quasi-category. 
The \textit{homotopy category} of an $\infty$-category $\cC$ is denoted by $\mathrm{h}\cC$. 
\item A morphism in an $\infty$-category $\cC$ is an \textit{isomorphism} iff it has an inverse up to homotopy, or equivalently it maps to an isomorphism in $\mathrm h\cC$.
\item If $\cC$ is a pointed $\infty$-category, we write $X=0$ when $X$ is a zero object in $\cC$ and $f=0$ when $f$ is a zero morphism in $\cC$, i.e.\ factors through a zero object, up to homotopy.
\item Functors between stable $\infty$-categories are always assumed to be exact.
\item A $*$-structure on an associative $\bC$-algebra $A$ is a conjugate-linear involution $A\to A,a\mapsto a^*$, such that $(ab)^*=b^*a^*$. If $A$ is graded, we require $|a^*|=-|a|$ and $(ab)^*=(-1)^{|a||b|}b^*a^*$, unless otherwise stated.
\item $\mathbf{Rep}(Q,\mathbf k)$ is the abelian category of finite-dimensional representations of a quiver $Q$ over a field $\mathbf k$.
\item Given a non-Archimedean field $K$, we write $\cO$ for its ring of integers (closed unit ball), $\mathfrak m\subset \cO$ for the maximal ideal (open unit ball), and $\mathbf k=\cO/\mathfrak m$ for the residue field. We allow the trivial valuation for which $K=\cO=\mathbf k$.
\end{itemize}

\subsection*{Acknowledgements}

F.H.\ thanks Merlin Christ, Emanuele Macr\`i, Carlos Simpson, and Wojciech Szyma\'nski for stimulating discussions related to this paper.
P.P. is especially grateful to Anthony Blanc, Alexander Noll, Carlos Simpson, and Ted Spaide for collaborations that have intersected with the ideas of this project in various ways, and for many illuminating discussions over the years. P.P. thanks Indranil Biswas, Tristan Collins, Ruadha\'i Dervan, Mario Garc\'ia-Fern\'andez, Oscar Garc\'ia-Prada, Daniel Halpern-Leistner, Andr\'es Ib\'a\~nez-N\'u\~nez, Dmitry Kaledin, Michael Kapovich, Mikhail Kapranov, Jacob Kryczka, Yu-Shen Lin, R. Loganayagam, Jacob Lurie, John McCarthy, Andrew Neitzke, Nikita Nekrasov, Tony Pantev, Ashoke Sen, Artan Sheshmani, Yan Soibelman, Yuuji Tanaka, Richard Thomas, Bertrand To\"en, Spenta Wadia, Johannes Walcher and Shing-Tung Yau for stimulating conversations related to the subject matter of this paper. Much of the development of this work took place during extended visits to the Institut des Hautes \'Etudes Scientifiques, whose hospitality and stimulating environment he gratefully acknowledges. The project began while he was a postdoc at the University of Vienna, and he thanks the University for its support. Meetings organized through the Simons Collaboration on Homological Mirror Symmetry helped facilitate the early development of some of the ideas in this work.

F.H. was supported by the Sapere Aude grant 3120-00076B from the Independent Research Fund Denmark (DFF) and the VILLUM FONDEN, VILLUM Investigator grant 37814.
L.K. was supported by the Simons Foundation, grant
SFI-MPS-T-Institutes-00007697, a Simons Investigators Award (no.\
003136), the Simons Collaboration on Homological Mirror Symmetry
(award no.\ 003093), NSF FRG grant DMS-2245171, NSF grant DMS-2401495, and the Ministry of Education and Science of the Republic of Bulgaria, grant DO1-239/10.12.2024.
P.P. acknowledges support from the Department of Atomic Energy, Government of India, under project no.\ RTI4019.

\section{Categorical notions of stability}
\label{section:stability}

In this section we discuss general features of the theory of stability conditions on triangulated categories, including mass (Section~\ref{subsec:mass}), positivity (Section~\ref{subsec:effective}), and ascending-phase filtrations (Section~\ref{subsec:antihn}), which are foundational to our considerations in later sections.
The first two subsections are mostly expository, fixing conventions and recalling slope and Bridgeland stability.

\subsection{Stability in abelian categories}\label{subsec:stabab}

We recall slope stability and Harder--Narasimhan filtrations in abelian categories in preparation for the discussion of Bridgeland stability in the next subsection.

\begin{definition}\label{def:slopestability}
A \textbf{central charge}, also known as a \textit{stability function}, on an abelian category $\cA$ is an additive map $Z\colon K_0(\cA)\to\bC$ such that $Z(E)\in\bR_{>0}e^{\sqrt{-1}(0,\pi]}$ for $0\neq E\in\cA$, i.e.\ $Z(E)$ lies in the upper half-plane or negative real axis.
The \textbf{phase} of a non-zero object $E$ is defined to be $\phi(E)=\arg(Z(E))/\pi\in(0,1]$.
An object $E\in\cA$ is said to be
\begin{itemize}
    \item \textbf{semistable} if $0\neq A\subseteq E\implies \phi(A)\leq \phi(E)$,
    \item \textbf{stable} if $E\neq 0$ and $0\neq A\subsetneq E\implies \phi(A)< \phi(E)$,
    \item \textbf{polystable} if $E$ is a finite direct sum of stable objects of equal phase.
\end{itemize}
\end{definition}

\begin{example}
If $X$ is a smooth projective curve, then $Z(E)\coloneqq -\deg(E)+\sqrt{-1}\mathrm{rk}(E)$ is a central charge on the category $\mathrm{Coh}(X)$ of coherent sheaves on $X$.
The notion of stability with respect to this central charge goes back to Mumford~\cite{mumford_icm}.
\end{example}

\begin{remark}
Instead of the phase $\phi(E)=\arg(Z(E))/\pi\in(0,1]$ one can equivalently consider the \textit{slope} $\mu(E)\coloneqq -\operatorname{Re}(Z(E))/\operatorname{Im}(Z(E))\in (-\infty,+\infty]$, which is just $\mathrm{deg}(E)/\mathrm{rk}(E)$ in the case of coherent sheaves on a curve.
In the setting of triangulated categories, the phase is more convenient.
\end{remark}

\begin{definition}\label{def:HNabelian}
Let $\cA$ be an abelian category with central charge $Z$.
A filtration
\[
0=E_0\subset E_1\subset E_2\subset\ldots\subset E_n=E
\]
of an object $E\in\cA$ is a \textbf{Harder--Narasimhan (HN) filtration} if $A_i\coloneqq E_i/E_{i-1}$ is non-zero and semistable for $i=1,\ldots,n$ with $\phi(A_1)>\phi(A_2)>\ldots>\phi(A_n)$.

The central charge $Z$ has the \textbf{Harder--Narasimhan (HN) property} if every object in $\cA$ has a Harder--Narasimhan filtration.
\end{definition}

The terminology originates from the case $\cA=\mathrm{Coh}(X)$ as above, where the HN property of $Z=-\deg+\sqrt{-1}\mathrm{rk}$ was proven in~\cite{harder_narasimhan75}.
The HN property automatically holds whenever $\cA$ is finite-length, i.e.\ any filtration of an object in $\cA$ is essentially finite. This is a special case of \cite[Proposition 2.4]{bridgeland07}.

Let $A$ be a $\mathbf k$-algebra and $\mathcal A=\mathrm{Mod}^{fd}(A)$ the category of finite-dimensional modules over $A$. Then $\mathcal A$ is a typical example of a finite-length category.
A $\mathbb Z$-basis of $K_0(\cA)$ is given by isomorphism classes of simple modules, hence central charges correspond to a choice of complex number $Z(S)\in\bR_{>0}e^{\sqrt{-1}(0,\pi]}$ for every simple $S$, up to isomorphism, and they all satisfy the HN property.

The particular case when $A$ is the path algebra of a quiver will be the main subject of Sections~\ref{sec:quivers_arch} and \ref{sec:quivers_nonarch}.
In this case, the relation between slope stability and stability in the sense of geometric invariant theory~\cite{git} was established by King~\cite{king_reps}.

\begin{example}\label{example:A_2}
Let $Q=\bullet\to\bullet$ be the ``$A_2$ quiver''.
Gaussian elimination shows that there are, up to isomorphism, three indecomposable objects in the category $\mathbf{Rep}(Q)$ of finite-dimensional modules over the path algebra $\mathbf kQ$ of $Q$:
\[
A\coloneqq(0\to\mathbf k),\qquad B\coloneqq(\mathbf k\xrightarrow{1}\mathbf k),\qquad C\coloneqq(\mathbf k\to 0).
\]
Moreover, they fit into a non-split short exact sequence $0\to A\to B\to C\to 0$.

The central charge $Z$ is fixed by the choice of $z_1=Z(C)$ and $z_2=Z(A)$.
The space $\{z\mid\operatorname{Im}(z)>0\}^2$ of central charges splits into two \textit{chambers} where $\phi(A)<\phi(C)$ or $\phi(A)>\phi(C)$, respectively, separated by the codimension 1 \textit{wall} where $\phi(A)=\phi(C)$.
In the chamber where $\phi(A)<\phi(C)$, $B$ is stable, while in the chamber where $\phi(A)>\phi(C)$, $B$ is unstable (not semistable).
On the wall, $B$ is semistable but not polystable (sometimes called \textit{strictly semistable}).
Note that, as they are simple, $A$ and $C$ are stable for any choice of $Z$.
\end{example}

\begin{remark}
Slope stability in the sense discussed here is certainly not the most general notion of stability in abelian categories. 
Generalizations were developed by Rudakov~\cite{rudakov97} and Joyce~\cite{joyce_conf3}. 
The main feature one retains is the existence and uniqueness of HN filtrations.
\end{remark}

We conclude this subsection with a brief discussion on $\theta$-stability as introduced by King~\cite{king_reps}.
Let $\cA$ be an abelian category and $\theta\colon K_0(\cA)\to\bR$ additive.
An object $E\in\cA$ is \textbf{$\theta$-semistable} if $\theta(E)=0$ and $\theta(A)\geq 0$ for any subobject $A\subseteq E$.\footnote{This is the sign convention in~\cite{king_reps}. More recent papers often use the opposite sign convention for $\theta$, i.e.\ define $E$ to be semistable iff $\theta(A)\leq 0$ for subobjects $A\subseteq E$.}
If, moreover, $E\neq 0$ and $0\neq A\subsetneq E$ implies $\theta(A)>0$, then $E$ is said to be \textbf{$\theta$-stable}.
The relation to stability with respect to a central charge $Z$ is the following.

\begin{lemma}
Let $\cA$ be an abelian category, $Z\colon K_0(\cA)\to\bC$ a central charge, $\phi\in(0,1]$, and $\theta\coloneqq-\operatorname{Im}(e^{-\pi \sqrt{-1}\phi}Z)$.
Then $E$ is $Z$-(semi)stable of phase $\phi$ iff $E$ is $\theta$-(semi)stable.
\end{lemma}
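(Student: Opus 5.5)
The proof is a direct translation of phase comparisons into signs of $\theta$. The key computation is this: for a nonzero object $A$, write $Z(A)=r_Ae^{\pi\sqrt{-1}\phi(A)}$ with $r_A>0$ and $\phi(A)\in(0,1]$. Then
\[
\theta(A)=-\operatorname{Im}\bigl(e^{-\pi\sqrt{-1}\phi}Z(A)\bigr)=-r_A\sin\bigl(\pi(\phi(A)-\phi)\bigr).
\]
Since both $\phi(A)$ and $\phi$ lie in $(0,1]$, the difference $\phi(A)-\phi$ lies in $(-1,1)$. On this interval $\sin(\pi x)$ has the same sign as $x$. So $\theta(A)>0$, $=0$, $<0$ according as $\phi(A)<\phi$, $=\phi$, $>\phi$. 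This sign dictionary is the only real input; the rest is bookkeeping against Definition~\ref{def:slopestability} and King's definition.

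For the forward direction, suppose $E$ is $Z$-semistable of phase $\phi$. The case $E\neq 0$ is the substantive one; $E=0$ is trivially $\theta$-semistable. Since $\phi(E)=\phi$, the dictionary gives $\theta(E)=0$. For a subobject $A\subseteq E$, either $A=0$, so $\theta(A)=0$, or $\phi(A)\le\phi$, so $\theta(A)\ge 0$. Thus $E$ is $\theta$-semistable.

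For the converse, suppose $E\neq0$ is $\theta$-semistable. The dictionary applied to $\theta(E)=0$ gives $\phi(E)=\phi$. For nonzero $A\subseteq E$, the condition $\theta(A)\ge0$ gives $\phi(A)\le\phi=\phi(E)$, so $E$ is $Z$-semistable of phase $\phi$.

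The stable case is identical, with strict inequalities for $0\neq A\subsetneq E$. Here I use the strict part of the dictionary: $\theta(A)>0$ if and only if $\phi(A)<\phi$.

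The one point needing care is the zero object. Under King's convention $0$ is $\theta$-semistable, while "semistable of phase $\phi$" presupposes a phase and hence $E\neq0$. I would state the lemma for nonzero $E$, or adopt the convention that $0$ is semistable of every phase. The other point to check is that the restriction $\phi(A),\phi\in(0,1]$ is exactly what keeps $\phi(A)-\phi$ inside $(-1,1)$, where $\sin(\pi x)$ has no sign ambiguity. Neither step is a real obstacle.
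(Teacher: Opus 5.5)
Your proposal is correct and matches the paper's proof: write $Z(A)=re^{\pi\sqrt{-1}\psi}$, compute $\theta(A)=-r\sin(\pi(\psi-\phi))$, and use $|\psi-\phi|<1$ to get $\operatorname{sgn}\theta(A)=-\operatorname{sgn}(\psi-\phi)$. Your extra bookkeeping, namely deducing $\phi(E)=\phi$ from $\theta(E)=0$ and the remark about the zero object, makes explicit what the paper leaves implicit.
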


\begin{proof}
Let $0\neq A\in\cA$ and write $Z(A)=re^{\pi \sqrt{-1}\psi}$ for some $r>0$, $\psi\in (0,1]$.
Then
\[
\theta(A)=-r\operatorname{Im}e^{\pi \sqrt{-1}(\psi-\phi)}=-r\sin(\pi(\psi-\phi))
\]
and thus $\operatorname{sgn}(\theta(A))=-\operatorname{sgn}(\psi-\phi)$, using that $|\psi-\phi|<1$.
Hence $\theta(A)\geq 0$ iff $\phi(A)\leq \phi$, proving the equivalence.
\end{proof}

The following characterization of $\theta$-stability in terms of filtrations will be useful in Section~\ref{subsect:non-arch-quiver}.
This is similar to the discussion in~\cite[Section 3]{king_reps}.
See Section~\ref{subsect:conventions} in the introduction for definitions and conventions on $\bR$-filtrations.

\begin{lemma}
\label{lem:thetastablefilt}
Let $\cA$ be an abelian category and $\theta\colon K_0(\cA)\to\bR$. 
Then $E\in\cA$ is $\theta$-semistable iff 
\begin{equation}\label{eq:thetafilt}
\langle\theta,\cF_\bullet\rangle\coloneqq \sum_{\alpha\in\bR}\alpha\theta\left(\cF_{\alpha}/\cF_{<\alpha}\right)\leq 0
\end{equation}
for all complete, finite $\bR$-filtrations $\cF_{\bullet}$ of $E$.
Moreover, $E$ is $\theta$-stable iff $E\neq 0$ and strict inequality holds in \eqref{eq:thetafilt} for all filtrations with at least two steps.
\end{lemma}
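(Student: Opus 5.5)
The plan is to rewrite the pairing $\langle\theta,\cF_\bullet\rangle$ by summation by parts, so that it becomes a positive combination of the values $\theta(\cF_\alpha)$ on the intermediate steps plus a boundary term involving $\theta(E)$. Both directions then follow by testing with one-step and two-step filtrations.

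First I will establish the summation-by-parts formula. Let $\alpha_1<\dots<\alpha_n$ be the jumps of a complete finite filtration $\cF_\bullet$, and write $F_k\coloneqq\cF_{\alpha_k}$. By right-continuity and finiteness, $\cF_{<\alpha_k}=F_{k-1}$ with $F_0=0$, and completeness gives $F_n=E$. Additivity of $\theta$ then gives
\[
\langle\theta,\cF_\bullet\rangle=\sum_{k=1}^n\alpha_k\bigl(\theta(F_k)-\theta(F_{k-1})\bigr)=\alpha_n\theta(E)-\sum_{k=1}^{n-1}(\alpha_{k+1}-\alpha_k)\,\theta(F_k).
\]
Every coefficient $\alpha_{k+1}-\alpha_k$ is strictly positive, and for $1\le k\le n-1$ each $F_k$ is a nonzero proper subobject of $E$.

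For the semistable case, suppose first that $E$ is $\theta$-semistable. Then $\theta(E)=0$ and $\theta(F_k)\ge 0$ for all $k$, so the formula gives $\langle\theta,\cF_\bullet\rangle\le 0$. Conversely, suppose the inequality holds for all such filtrations. The one-step filtration jumping at $\alpha$ has $\langle\theta,\cF_\bullet\rangle=\alpha\theta(E)$. Requiring this to be $\le 0$ for both signs of $\alpha$ forces $\theta(E)=0$. For a subobject $0\neq A\subsetneq E$, take the two-step filtration $0\subset A\subset E$ with jumps at $0$ and $1$. It gives $\langle\theta,\cF_\bullet\rangle=-\theta(A)\le 0$. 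The cases $A=0$ and $A=E$ are trivial, so $E$ is semistable.

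For the stable case I read the "moreover" clause as: $E\ne0$, \eqref{eq:thetafilt} holds for all filtrations, and it holds strictly for those with at least two steps. The issue to flag is that if $E$ is simple, the strictness condition alone is vacuous, so $\theta(E)=0$ must come from the non-strict part. Under this reading, suppose $E$ is $\theta$-stable and the filtration has $n\ge2$ steps. Then $\theta(F_1)>0$ since $F_1$ is nonzero and proper, so the sum above is strictly positive and $\langle\theta,\cF_\bullet\rangle<0$. Conversely, the semistable case already gives $\theta(E)=0$. The two-step filtration $0\subset A\subset E$ then yields $\theta(A)>0$ for every proper nonzero $A$, so $E$ is stable. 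The only real work is the summation-by-parts identity together with keeping track of the right-continuity conventions.
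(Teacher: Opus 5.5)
Your proof is correct and follows the paper's argument. The paper uses the same summation by parts, except that it assumes $\theta(E)=0$ from the start so the boundary term $\alpha_n\theta(E)$ never appears; it then uses one-step filtrations to force $\theta(E)=0$ and the two-step filtration $0\subset A\subset E$ with jumps at $0$ and $1$ for the converse. Your reading of the stability clause (non-strict inequality for all filtrations, strict for those with at least two steps) is exactly what the paper's proof implicitly relies on when it invokes one-step filtrations, and you are right that this reading is needed when $E$ is simple.
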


\begin{proof}
Suppose $\theta(E)=0$ and $\cF_\bullet$ is a complete $\bR$-filtration of $E$ that jumps precisely at $\alpha_1<\ldots<\alpha_n$.
Then
\begin{equation}\label{eq:thetafilt_pf}
\langle\theta,\cF_\bullet\rangle=\sum_{i=1}^n\alpha_i\left(\theta(\cF_{\alpha_i})-\theta(\cF_{<\alpha_{i}})\right)=\sum_{i=1}^{n-1}(\alpha_i-\alpha_{i+1})\theta(\cF_{\alpha_i}).
\end{equation}
Thus, if $E$ is $\theta$-semistable, then \eqref{eq:thetafilt_pf} is non-positive, and moreover negative if $E$ is $\theta$-stable and $n\geq 2$.

For the converse note that given a proper subobject $0\neq A\subsetneq E$ we can construct a 2-step filtration $\cF_\bullet$ with
\[
\cF_\alpha\coloneqq\begin{cases}0 & \text{for }\alpha<0 \\ A & \text{for }0\leq \alpha<1 \\ E & \text{for }\alpha\geq 1\end{cases}
\]
for which $\langle\theta,\cF_\bullet\rangle=-\theta(A)$ under the assumption $\theta(E)=0$.
If $\theta(E)\neq 0$, then~\eqref{eq:thetafilt} is violated already for suitable 1-step filtrations.
\end{proof}

Much of the above discussion, in particular Lemma~\ref{lem:thetastablefilt}, generalizes to (Quillen) exact categories, if we take ``subobject'' to mean \textit{equivalence class of admissible mono}, as usual.

\subsection{Stability in triangulated categories}\label{subsec:bridgeland}

The definition of a stability condition on a triangulated category is due to Bridgeland~\cite{bridgeland07}, inspired by ideas of Douglas on the stability of D-branes in string theory~\cite{douglas_icm} and properties of semistable bundles on curves~\cite{harder_narasimhan75}, already mentioned in the previous subsection.

Let $\cC$ be an essentially small triangulated category. 
Fix a homomorphism $\mathrm{cl}\colon K_0(\cC)\to\Gamma$ to a finite rank abelian group $\Gamma$. This will play the role of a ``Chern character''. 

\begin{definition}\label{def:stability}
A \textbf{stability condition} on $(\cC,\Gamma,\mathrm{cl})$ is given by 
\begin{itemize}
\item an additive map $Z\colon \Gamma\to\bC$, the \textit{central charge},
\item full additive subcategories $\cC_\phi\subset\cC$, $\phi\in\bR$, the \textit{semistable objects of phase $\phi$},
\end{itemize}
such that:
\begin{enumerate}[(1)]
\item $\cC_{\phi+1}=\cC_\phi[1]$.
\item $\Hom(E_2,E_1)=0$ for $E_i\in\cC_{\phi_i}$, $\phi_1<\phi_2$.
\item Any $E\in\cC$ has a \textit{Harder--Narasimhan (HN) filtration}: A tower of triangles 
\[
\begin{tikzcd}[column sep=tiny]
0=E_0 \arrow{rr} & & E_1\arrow{rr}\arrow{dl} & & E_2\arrow{rr}\arrow{dl} & &\cdots \arrow{rr} &  & E_{n-1} \arrow{rr} & & E_n\cong E \arrow{dl} \\
& A_1 \arrow[dashed]{ul} & & A_2 \arrow[dashed]{ul} & & & &  & & A_n \arrow[dashed]{ul}
\end{tikzcd}
\]
with \textit{semistable factors} $0\neq A_i\in\cC_{\phi_i}$, $\phi_1>\phi_2>\ldots>\phi_n$;
\item $Z(E)\coloneqq Z(\mathrm{cl}(E))\in\bR_{>0}e^{\pi \sqrt{-1}\phi}$ for $E\in\cC_{\phi}$, $E\neq 0$.
\item The \textit{support property} (from~\cite{KS_motivicdt}): There is a norm $\|\cdot\|$ on $\Gamma\otimes_{\bZ}\bR$ and $C>0$ such that
\[
\|\gamma\|\leq C|Z(\gamma)|
\]
for any $\gamma\in\Gamma$ which is the class of a semistable object. 
\end{enumerate}
\end{definition}

Given an object $0\neq E\in\cC$ with HN filtration with semistable factors $0\neq A_i\in\cC_{\phi_i}$, $\phi_1>\phi_2>\ldots>\phi_n$, one defines 
\[
\phi_+(E)\coloneqq\phi_1,\qquad \phi_-(E)\coloneqq\phi_n, \qquad m(E)\coloneqq\sum_{i=1}^n|Z(A_i)|,
\]
where $m(E)$ is referred to as the \textit{mass} of $E$.
Given an interval $I\subset\bR$, $\cC_I$ is the full subcategory of those objects $E$ with $\phi_{\pm}(E)\in I$, together with the zero objects. These are closed under extensions. If the length, $\ell(I)$ of $I$ is strictly less than $1$, we have the following elementary two-sided bound on the mass:
\begin{equation}\label{eq:two-sided}
  \ell(I)<1 \implies \forall\, E\in\cC_I:\quad |Z(E)| \leq m(E) \leq \frac{1}{\cos\frac{\pi \ell(I)}{2}}\,|Z(E)|
\end{equation}

There is an equivalent definition of stability conditions via t-structures.

\begin{proposition}[\cite{bridgeland07}]
    Let $\sigma=(Z,(\cC_{\phi})_\phi)$ be a stability condition, then
    \begin{enumerate}[(1)]
        \item $\cA\coloneqq \cC_{(0,1]}$ is the heart of a bounded t-structure on $\cC$, in particular an abelian category. It follows that $K_0(\cA)\cong K_0(\cC)$.
        \item $Z\colon \Gamma\to\mathbb C$ is a central charge on $\cA$ satisfying the HN property (see Definition~\ref{def:slopestability}, Definition~\ref{def:HNabelian}) and the support property.
        Moreover, an object $E\in\cA$ is $Z$-semistable iff $E\in\cC_{\phi(E)}$.
    \end{enumerate}
    Conversely, given $\cA\subset\cC$ and $Z$ satisfying the above conditions, there is a unique corresponding stability condition where for $\phi\in(0,1]$, $\cC_{\phi}$ is given by slope-semistable objects $E\in\cA$ of phase $\phi(E)=\phi$.
\end{proposition}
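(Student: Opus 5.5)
The plan is to follow Bridgeland's original argument and prove the two directions separately, taking the t-structure to be the one cut out by phases.

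\textbf{From a stability condition to a heart.} Set $\cD^{\leq 0}\coloneqq\cC_{(0,\infty)}$ and $\cD^{\geq 0}\coloneqq\cC_{(-\infty,1]}$, so that $\cD^{\leq 0}\cap\cD^{\geq 0}=\cC_{(0,1]}=\cA$.
\begin{enumerate}[(a)]
\item Axiom (1) gives $\cD^{\leq 0}[1]\subset\cD^{\leq 0}$.
\item Axiom (2), applied by d\'evissage along HN filtrations, gives $\Hom(X,Y)=0$ for $X\in\cC_{>\phi}$ and $Y\in\cC_{\leq\phi}$. Hence $\Hom(\cD^{\leq 0},\cD^{\geq 1})=0$.
\item For the truncation triangle of an object $E$, cut its HN filtration at phase $0$. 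Let $E_k$ be the last filtration step with $\phi_k>0$. The octahedral axiom produces a triangle $E_k\to E\to E'$, in which $E'$ has HN factors $A_{k+1},\dots,A_n$ and so lies in $\cC_{\leq 0}=\cD^{\geq 1}$.
\end{enumerate}
Boundedness holds because each object has finitely many HN phases, and then (1) places it in $\cC_{[a,b]}$, a finite extension of shifts of $\cA$.

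It follows that $\cA$ is abelian and that $K_0(\cA)\cong K_0(\cC)$; the latter is standard for a bounded heart, via $[E]\mapsto\sum(-1)^i[H^i(E)]$. The nonzero objects of $\cA$ are extensions of semistables with phases in $(0,1]$, so axiom (4) puts their central charges in a convex cone inside $\bR_{>0}e^{\pi\sqrt{-1}(0,1]}$. Therefore $Z$ is a central charge on $\cA$.

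\textbf{Semistability and the HN property in $\cA$.} Short exact sequences in $\cA$ are exactly the triangles with all three vertices in $\cA$.
\begin{enumerate}[(a)]
\item If $E\in\cC_\phi$ and $A\subset E$, take the first HN factor $A_1$ of $A$. If $\phi(A)>\phi$, then $\phi_+(A)>\phi$, and the composite $A_1\to A\to E$ is nonzero, since $\Hom(A_1,E/A)=0$ when $E/A$ has phases $\leq\phi$. This contradicts axiom (2).
\item Conversely, if $E\in\cA$ is $Z$-semistable, compare with its triangulated HN filtration. The first factor $E_1$ is a subobject of $E$ in $\cA$ of phase $\phi_+(E)$, so semistability forces $\phi_+=\phi(E)$. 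This also forces the filtration to have a single step, because otherwise the phase of $E$ would be a strict average.
\item The triangulated HN filtration of $E\in\cA$ has all its pieces in $\cA$, so it is an HN filtration in $\cA$.
\end{enumerate}
The support property transfers verbatim.

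\textbf{Converse.} Given a heart $\cA$ and a central charge $Z$ with the HN property, define $\cC_\phi$ for $\phi\in(0,1]$ as the $Z$-semistable objects of $\cA$ of phase $\phi$, and extend by $\cC_{\phi+n}=\cC_\phi[n]$. Then:
\begin{enumerate}[(a)]
\item Hom-vanishing between semistables of phases $\phi_1<\phi_2$ in $(0,1]$ is the usual abelian argument: the image of such a map would have phase both $\geq\phi_2$ and $\leq\phi_1$. Vanishing across different shifts follows from the t-structure axioms, which kill negative Ext between objects of the heart.
\item For HN filtrations, take the cohomology filtration of $E$ with respect to the bounded t-structure. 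Refine each $H^i(E)[-i]$ by its abelian HN filtration, and splice the pieces with the octahedral axiom. Phases decrease because the phases from $H^i[-i]$ lie in $(-i,-i+1]$.
\end{enumerate}
Uniqueness holds because the $\cC_\phi$ are forced by the second statement.

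\textbf{Main obstacle.} I expect the careful octahedral bookkeeping in two places to be the only real work: the truncation triangle in the first part, and splicing the filtrations in the converse.
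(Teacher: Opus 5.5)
Your argument is the standard one of Bridgeland, which is all the paper relies on, since it cites \cite{bridgeland07} without giving a proof. In one direction the slicing yields a t-structure with aisle $\cC_{(0,\infty)}$; in the other, you refine the cohomology filtration of a bounded t-structure by the HN filtrations in the heart. The proposal is correct apart from one misjustified step.

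The faulty step is where you show that $E\in\cC_\phi$ is $Z$-semistable. The composite $A_1\to A\to E$ is nonzero because $\Hom(A_1,(E/A)[-1])=0$, not because $\Hom(A_1,E/A)=0$. From the triangle $(E/A)[-1]\to A\to E$, a map $A_1\to A$ dies in $E$ exactly when it lifts to $(E/A)[-1]$. Your stated reason is also false as written: a nonzero quotient of $E$ in $\cA$ has all HN phases $\geq\phi$, not $\leq\phi$.

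The repair is immediate, in either of two ways:
\begin{itemize}
\item $(E/A)[-1]\in\cC_{(-1,0]}$ while $A_1\in\cC_{\phi_+(A)}$ with $\phi_+(A)>0$, so $\Hom(A_1,(E/A)[-1])=0$.
\item $A_1\in\cA$, the map $A_1\to A$ is nonzero, and $A\to E$ is a monomorphism in $\cA$, so the composite is nonzero.
\end{itemize}
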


In the special case where $\cA$ is finite-length with finitely many isomorphism classes of simple objects $S_1,\ldots,S_n$, the central charge $Z$ is uniquely determined by the numbers $z_i\coloneqq Z(S_i)$, and any choice of $z_i\in\bR_{>0}e^{\sqrt{-1}(0,\pi]}$ yields a central charge automatically satisfying the HN and support properties.

The set of stability conditions on $(\cC,\Gamma,\mathrm{cl})$ is denoted $\mathrm{Stab}(\cC,\Gamma,\mathrm{cl})$ or just $\mathrm{Stab}(\cC)$.
There is a natural topology on this space, characterized by the requirement that the maps $\sigma=(Z,(\mathcal C_\phi)_\phi)\mapsto Z\in\Hom(\Gamma,\bC)$, $\sigma\mapsto \phi_\pm(E)\in \bR$, $0\neq E\in\cC$, are continuous.
With respect to this topology, the map
\[
\mathrm{Stab}(\cC,\Gamma,\mathrm{cl})\longrightarrow \Hom(\Gamma,\bC),
\]
which sends a stability condition to its central charge and forgets which objects are semistable, is a local homeomorphism~\cite[Theorem 1.2]{bridgeland07}.
In particular, $\mathrm{Stab}(\cC)$ naturally has the structure of a complex (in fact complex affine) manifold.

The group $\mathrm{GL}^+(2,\bR)$ acts on $\mathrm{Hom}(\Gamma,\bC)$, and its universal cover $\widetilde{\mathrm{GL}}^+(2,\bR)$ naturally acts on the space of stability conditions by
\[
g\cdot (Z,(\cC_{\phi})_\phi)\coloneqq (g\cdot Z,(\cC_{g^{-1}\cdot\phi})_\phi)
\]
where $\widetilde{\mathrm{GL}}^+(2,\bR)$ acts on phases via the natural action on $\widetilde{\bC^{\times}/\bR_{>0}}\cong\bR$.

There is an identification $\mathrm{Stab}(\cC^{\mathrm{op}})\cong \overline{\mathrm{Stab}(\cC)}$ induced by the following construction:

\begin{propdef}\label{propdef:conjstab}
Given a stability condition $\sigma=(Z,(\cC_{\phi})_\phi)$ on a triangulated category $\cC$, there is a corresponding stability condition on the opposite triangulated category $\cC^{\mathrm{op}}$, the \textbf{conjugate stability condition}, given by $\overline{\sigma}=(\overline{Z},(\cC_{-\phi})_\phi)$.
(Thus, the semistable objects of phase $\phi$ in $\cC^{\mathrm{op}}$ are the semistable objects of phase $-\phi$ in $\cC$.)
\end{propdef}

\begin{proof}
Note that $\cC^{\mathrm{op}}$ has the triangulated structure with shift functor $[-1]^{\mathrm{op}}$, where $[-1]$ is the inverse of the shift functor on $\cC$.
The only axiom to check is the Harder--Narasimhan property.
Let $E\in\cC$ and let 
\[
\begin{tikzcd}[column sep=tiny]
0=E_0 \arrow{rr} & & E_1\arrow{rr}\arrow{dl} & & E_2\arrow{rr}\arrow{dl} & &\cdots \arrow{rr} &  & E_{n-1} \arrow{rr} & & E_n\cong E \arrow{dl} \\
& A_1 \arrow[dashed]{ul} & & A_2 \arrow[dashed]{ul} & & & &  & & A_n \arrow[dashed]{ul}
\end{tikzcd}
\]
be its HN filtration, i.e.\ $A_1,\ldots,A_n$ are $\sigma$-semistable of decreasing phase.
For $i=0,\ldots,n$, the composite morphism $E_{n-i}\to E$ in $\cC$ corresponds to $E\to E_{n-i}$ in $\cC^{\mathrm{op}}$ which we complete to a triangle
\[
F_i\to E\to E_{n-i}\to F_i[1].
\]
The octahedral axiom then yields triangles $F_i\to F_{i+1}\to A_{n-i}\to F_i[1]$. Moreover, $A_n,\ldots,A_1$ have decreasing phases $-\phi_n>\cdots>-\phi_1$ with respect to $\overline{\sigma}$.
The $F_i$ and the morphisms between them thus constitute the HN filtration of $E$ in $\cC^{\mathrm{op}}$. 
\end{proof}

\subsection{Mass and triangle inequality}\label{subsec:mass}

Throughout this subsection we fix a triangulated category $\cC$ with a stability condition $\sigma$.
Recall that the mass of a semistable object $E\in\cC_\phi$ is $m(E)\coloneqq |Z(E)|$, and the mass of a general object of $\cC$ is the sum of the masses of its semistable HN factors.

We first observe a well-known consequence of the support property, which is useful in inductive arguments (cf.\ Section~\ref{section:comonads}).
The first assertion is already found in~\cite[Remark~1 in Section~1.2]{KS_motivicdt}, while the second uses $m(E\oplus F)=m(E)+m(F)$.

\begin{proposition}\label{prop:massdiscrete}
The set of central charges of semistable objects is a discrete subset of $\bC$.
The set of masses of objects, $M\coloneqq \left\{m(E)\mid E\in\cC\right\}$, is a discrete submonoid of $(\mathbb R_{\geq 0},+)$.
\end{proposition}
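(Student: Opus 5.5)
The plan is to derive both assertions from the support property (axiom (5) of Definition~\ref{def:stability}) together with the fact that $\Gamma$ is a finite rank lattice.

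\emph{First assertion.} Fix $R>0$. Let $\gamma\in\Gamma$ be the class of a nonzero semistable object with $|Z(\gamma)|\leq R$. The support property gives $\|\gamma\|\leq C|Z(\gamma)|\leq CR$. Only finitely many $\gamma$ satisfy this bound, because the image of $\Gamma$ in $\Gamma\otimes_{\bZ}\bR$ is a discrete subgroup: it is a quotient of a finite rank free group by its torsion, and hence a lattice in its span. (Torsion classes are irrelevant, since $Z$ only depends on the image.) Therefore the set $\{Z(E)\mid E\text{ semistable}\}$ meets every disc $\{|z|\leq R\}$ in a finite set. This is exactly the statement that it is discrete in $\bC$; in fact it is even closed, with no accumulation points.

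\emph{Second assertion.} Closure under addition follows from $m(E\oplus F)=m(E)+m(F)$, and $0=m(0)\in M$. The additivity holds because the HN filtration of $E\oplus F$ is obtained by merging those of $E$ and $F$: group the factors by phase, and take direct sums of factors of equal phase. Since $Z$ is additive and factors of equal phase have central charges on the same ray, the moduli add.

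For discreteness, fix $R$. Each $m(E)\leq R$ is a sum $\sum_i|Z(A_i)|$ of masses of nonzero semistable factors, each of which is at most $R$. By the first assertion the set $S_R=\{|Z(A)|: A\text{ semistable},\ 0<|Z(A)|\leq R\}$ is finite, and it is bounded below by some $\delta>0$ because $0$ is excluded. Hence the number of summands is at most $R/\delta$. So $M\cap[0,R]$ is contained in the finite set of sums of at most $R/\delta$ elements of $S_R$, and it is therefore finite. A subset of $\bR_{\geq0}$ with finite intersection with every bounded interval is discrete.

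The only step requiring care is that nonzero semistable objects have $Z\neq0$ (axiom (4)) and that their classes are nonzero in $\Gamma\otimes\bR$ (which follows from the support inequality), so that $\delta>0$ is genuinely positive. Everything else is bookkeeping.
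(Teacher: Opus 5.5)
Your argument is correct and follows the paper's route. The paper cites Kontsevich--Soibelman for the first assertion as a consequence of the support property, and derives the second from additivity $m(E\oplus F)=m(E)+m(F)$; you add the finiteness count that makes $M$ discrete, which the paper leaves implicit. One small correction: the classes of nonzero semistable objects are nonzero in $\Gamma\otimes_{\bZ}\bR$ because of axiom (4), since $Z$ factors through $\Gamma\otimes_{\bZ}\bR$, not because of the support inequality. In any case, $\delta>0$ only needs $|Z(A)|>0$.
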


Another important fact is the triangle inequality for exact triangles.

\begin{proposition}[Triangle inequality for mass]
\label{prop:mass}
Let $A\to B\to C\xrightarrow{\delta}A[1]$ be an exact triangle, then
\begin{enumerate}[(1)]
    \item $m(B)\leq m(A)+m(C)$,
    \item if $\phi_-(A)\geq\phi_+(C)$, then $m(B)=m(A)+m(C)$,
    \item if $\phi_+(A)<\phi_-(C)$ and $0\neq\delta\in\Ext^1(C,A)$, then $m(B)<m(A)+m(C)$.
\end{enumerate}
\end{proposition}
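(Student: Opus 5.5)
The plan is to express the mass as an average of ``rotated'' central charges of truncations to hearts, so that the triangle inequality reduces to positivity of an additive function on an abelian category. For $\theta\in\bR$ let $\cH_\theta\coloneqq\cC_{(\theta-1/2,\theta+1/2]}$; this is the heart of a bounded t-structure, by the Bridgeland correspondence recalled above applied to the rotated stability condition. Write $H^0_\theta$ for the associated cohomology functor. On $\cH_\theta$ the function $g_\theta\coloneqq\operatorname{Re}(e^{-\pi\sqrt{-1}\theta}Z)$ is additive and non-negative, since phases in $(\theta-1/2,\theta+1/2]$ have $\cos\ge0$. For $E$ with HN factors $A_i\in\cC_{\phi_i}$, the truncation $H^0_\theta(E)$ has as HN factors exactly those $A_i$ with $\phi_i$ in the window, so
\[
g_\theta(H^0_\theta E)=\sum_{|\phi_i-\theta|<1/2}|Z(A_i)|\cos(\pi(\phi_i-\theta))
\]
(the boundary phase contributes $0$). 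Since $\int_{\phi-1/2}^{\phi+1/2}\cos(\pi(\phi-\theta))\,d\theta=2/\pi$, I get the integral formula
\[
m(E)=\frac{\pi}{2}\int_{\bR} g_\theta\big(H^0_\theta(E)\big)\,d\theta .
\]

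For (1), apply $H^0_\theta$ to $A\to B\to C\to A[1]$. The long exact sequence in $\cH_\theta$ exhibits $H^0_\theta B$ as an extension of a subobject of $H^0_\theta C$ (the kernel of $H^0_\theta C\to H^1_\theta A$) by a quotient of $H^0_\theta A$. Since $g_\theta\ge 0$ is additive on $\cH_\theta$, it is monotone under sub- and quotient objects. Hence $g_\theta(H^0_\theta B)\le g_\theta(H^0_\theta A)+g_\theta(H^0_\theta C)$ for every $\theta$, and integrating gives $m(B)\le m(A)+m(C)$.

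For (2) I would argue directly. Concatenating the HN filtrations of $A$ and $C$ via the octahedral axiom gives a filtration of $B$ whose factors have non-increasing phases. The only possible coincidence is $\phi_-(A)=\phi_+(C)$, and there the two adjacent factors are merged into an extension, which still lies in $\cC_\phi$. The result is the HN filtration of $B$. Merged factors have collinear central charges, so their masses add, giving $m(B)=m(A)+m(C)$.

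For (3) it suffices to find an open set of $\theta$ on which the inequality in (1) is strict, which happens as soon as the connecting map $H^0_\theta C\to H^0_\theta(A[1])$ has nonzero image $I$ with $g_\theta(I)>0$. Write $C$'s HN factors as $C_j$ and $A$'s as $A_k$. Since $\delta\neq0$, filtering $C$ and $A[1]$ by their HN filtrations shows that some composite $C_j\to C\xrightarrow{\delta}A[1]\to\ldots$ yields a nonzero morphism between an HN factor $C_j$, of phase $\gamma$, and a subquotient built from $A_k[1]$, of phase $\alpha+1$. Choose the extremal such pair, using the finite filtrations. Nonvanishing of $\Hom$ forces $\gamma\le\alpha+1$, and the hypothesis $\phi_+(A)<\phi_-(C)$ gives $\alpha+1-\gamma<1$. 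Hence for $\theta$ in a nonempty open interval, both $\gamma$ and $\alpha+1$ lie in the interior of $(\theta-1/2,\theta+1/2)$. For such $\theta$, I would then show that the connecting map in $\cH_\theta$ is nonzero and that its image has HN phases in the open window, so that $g_\theta(I)>0$. Integrating over this interval yields $m(B)<m(A)+m(C)$.

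The main obstacle is this last step: extracting, from $\delta\neq0$, a nonzero connecting morphism in a suitable heart $\cH_\theta$ whose image avoids the boundary phase $\theta+1/2$. I would attempt it by an induction on the number of HN factors of $A$ and $C$. The base case is $A$ and $C$ semistable, where the window can be chosen to contain both $C$ and $A[1]$ entirely, so $H^0_\theta$ is the identity on them and the connecting map is just $\delta$.
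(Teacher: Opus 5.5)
Your proofs of (1) and (2) are correct. For (1) you take a different route from the paper, which only cites Ikeda. The identity $m(E)=\frac{\pi}{2}\int_{\bR}g_\theta(H^0_\theta E)\,d\theta$ reduces subadditivity to additivity and non-negativity of $g_\theta$ on the hearts $\cH_\theta$. It also writes the defect $m(A)+m(C)-m(B)$ as an integral of $g_\theta$ over images of connecting morphisms, which is what makes your plan for (3) viable. Your semistable case of (3) is correct too, and tidier than the paper's Step~1: it treats $\phi(C)=\phi(A)+1$ uniformly and needs no analysis of two-step HN filtrations of $B$.

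The gap is the general case of (3), which you leave at ``I would then show'' and ``I would attempt it by an induction''. That step is the entire content of (3), and it must use $\phi_+(A)<\phi_-(C)$ essentially. Without it the step fails: for semistable $C,A$ with $\phi(A[1])-\phi(C)>1$, a nonzero $\delta$ has $H^0_\theta(\delta)=0$ for every $\theta$. As written, the composite $C_j\to C\xrightarrow{\delta}A[1]$ is not defined (only $C^j\to C$ exists), and ``extremal pair'' is not specified. Moreover, an induction on HN lengths naturally yields inequalities between integrated masses of auxiliary triangles, not non-vanishing of $H^0_\theta(\delta)$ for the original triangle.

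You can close the gap in either of two ways.

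\emph{First way.} Keep your base case and induct on masses as in the paper's Steps~2--3. For semistable $C$, let $\varepsilon\colon C\xrightarrow{\delta}A[1]\to A_n[1]$. If $\varepsilon\neq0$, use (1) for $A^{n-1}\to B\to\operatorname{Cocone}(\varepsilon)$ and the base case for $\varepsilon$. If $\varepsilon=0$, lift $\delta$ to $A^{n-1}[1]$, reorder by the octahedral axiom, and induct. Then handle general $C$ via the $\cC^{\mathrm{op}}$-dual.

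\emph{Second way: finish your own argument.}
\begin{enumerate}
\item Let $C^i$, $A^l$ be the HN filtrations, with factor phases $\gamma_i$, $\alpha_l$.
\item Take $j$ minimal with $\delta\circ(C^j\to C)\neq0$. Then take $k$ maximal with $\pi_{k-1}\circ\delta\circ(C^j\to C)\neq0$, where $\pi_{k-1}\colon A[1]\to(A/A^{k-1})[1]$.
\item The hypothesis gives $\Hom(X,Y)=0$ for all subquotients $X$ of $C$ and $Y$ of $A$. Hence precomposition with $C^j\to C_j$ is injective on $\Hom(C_j,(A/A^l)[1])$. This yields a nonzero map $C_j\to A_k[1]$, so $\alpha_k<\gamma_j\le\alpha_k+1$.
\item Let $\theta\in(\alpha_k+\frac12,\gamma_j+\frac12)$ and suppose $H^0_\theta(\delta)=0$. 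Consider the restriction of $\delta$ to the part of $C$ of phases $>\theta-\frac12$, which contains $C^j$. It then factors through the part of $A[1]$ of phases $>\theta+\frac12$, namely $A^{a}[1]$ with $a<k$. This forces $\pi_{k-1}\circ\delta\circ(C^j\to C)=0$, a contradiction.
\item Hence $I_\theta\coloneqq\operatorname{Im}H^0_\theta(\delta)\neq0$ on this interval. For all but finitely many such $\theta$ we have $\phi_+(I_\theta)\le\phi_+(H^0_\theta A[1])<\theta+\frac12$, so $g_\theta(I_\theta)>0$. Integrating gives the strict inequality.
\end{enumerate}
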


\begin{proof}
The first part was stated in~\cite[Section 4.5]{DHKK} and proven in~\cite[Proposition 3.3]{Ikeda21}.
The equality in (2) holds because under the assumption we can concatenate the HN filtration of $A$ with the HN filtration of $C$, possibly merging adjacent factors of equal phase, to obtain the HN filtration of $B$.
It remains to show (3).

\textbf{Step 1:} \textit{Case of semistable $A$ and $C$.}
By assumption, there is a non-zero morphism $\delta\colon C\to A[1]$, so $\phi(C)\leq \phi(A[1])=\phi(A)+1$.
Suppose first that $\phi(C)<\phi(A)+1$. 
We also have $\phi(A)=\phi_+(A)<\phi_-(C)=\phi(C)$, thus $Z(A)$ and $Z(C)$ are contained in some conical sector of angle less than $\pi$, and $A,B,C$ are contained in the heart of a bounded t-structure on $\cC$.
Let $B_1,\ldots,B_n$ be the semistable factors of $B$.
Then $\phi(A)\leq \phi(B_i)\leq \phi(C)$, $i=1,\ldots,n$, which implies that the piecewise linear path from $0\in\mathbb C$ to $Z(B)$ formed by the vectors $Z(B_i)$ is no longer than the path formed by the vectors $Z(A),Z(C)$, see Figure~\ref{fig:extHNfilt}.
To show strict inequality, we need to exclude the possibility that $B$ has a 2-step HN filtration with semistable factors $B_1$ and $B_2$ with $Z(B_1)=Z(C)$ and $Z(B_2)=Z(A)$.
For contradiction, suppose this is the case.
If $A\cap B_1=0$, then 
\[
Z(B/(A+ B_1))=Z(B)-Z(A)-Z(B_1)=0,
\]
thus $B=A\oplus B_1$, contradicting $\delta\neq 0$.
If $A\cap B_1\neq 0$, then the mono $A/(A\cap B_1)\hookrightarrow B/B_1$, together with semistability of $A$ and $B/B_1\cong B_2$ imply
\[
\phi(A)\leq \phi(A/(A\cap B_1))\leq\phi(B/B_1)=\phi(A)
\]
thus $\phi(A\cap B_1)=\phi(A)$.
Similarly, using $B_1/(A\cap B_1)\hookrightarrow B/A=C$:
\[
\phi(B_1)\leq \phi(B_1/(A\cap B_1))\leq \phi(B/A)=\phi(B_1)
\]
thus $\phi(A\cap B_1)=\phi(B_1)\neq \phi(A)$, a contradiction.

\begin{figure}
    \centering
    \begin{tikzpicture}
        \draw[gray] (-5,0) -- (5,0);
        \draw[thick,->] (0,0) -- (-2,2) node[midway,below left]{$Z(C)$};
        \draw[thick,->] (-2,2) -- (3,4) node[midway,above left]{$Z(A)$};
        \draw[thick,->] (0,0) -- (-.5,1) node[midway,right]{$Z(B_1)$};
        \draw[thick,->] (-.5,1) -- (-.4,1.8) node[midway,right]{$Z(B_2)$};
        \draw[thick,->] (-.4,1.8) -- (1,3);
        \draw[thick,->] (1,3) -- (3,4) node[midway,below right]{$Z(B_n)$};
    \end{tikzpicture}
    \caption{Showing $m(B)=m(B_1)+\ldots+m(B_n)< m(A)+m(C)$.}
    \label{fig:extHNfilt}
\end{figure}

Suppose now that $\phi(C)=\phi(A)+1\eqqcolon \phi$. 
Then $\delta\colon C\to A[1]$ is a morphism in the abelian category $\cC_\phi$ of semistable objects of phase $\phi$.
Since $\delta\neq 0$, $0\neq \operatorname{Im}(\delta)\subseteq A[1]$, thus $m(\mathrm{Coker}(\delta))< m(A)$.
Similarly, $\mathrm{Ker}(\delta)\subsetneq C$, thus $m(\mathrm{Ker}(\delta))<m(C)$.
But since $B\cong \mathrm{Cocone}(\delta)$ is an extension of $\mathrm{Ker}(\delta)$ and $\mathrm{Coker}(\delta)[-1]$ we conclude that
\[
m(B)\leq m(\mathrm{Coker}(\delta))+m(\mathrm{Ker}(\delta))<m(A)+m(C).
\]

\textbf{Step 2:} \textit{Case of semistable $C$, general $A$.}
Let 
\[
\begin{tikzcd}[column sep=tiny]
0=A_0 \arrow{rr} & & A_1\arrow{rr}\arrow{dl} & & A_2\arrow{rr}\arrow{dl} & &\cdots \arrow{rr} &  & A_{n-1} \arrow{rr} & & A_n\cong A \arrow{dl} \\
& S_1 \arrow[dashed]{ul} & & S_2 \arrow[dashed]{ul} & & & &  & & S_n \arrow[dashed]{ul}
\end{tikzcd}
\]
be the HN filtration of $A$. 
We prove the strict mass inequality by induction on $n$, the base case $n=1$ having been completed in the previous step.

Let $\varepsilon\colon C\xrightarrow{\delta} A[1]\to S_n[1]$ be the composition.
If $\varepsilon\neq 0$, then by part (1) of the proposition and the previous step:
\[
\begin{aligned}
m(B) &\leq m(A_{n-1})+m(\mathrm{Cocone}(C\xrightarrow{\varepsilon} S_n[1])) \\
&<m(A_{n-1})+m(S_n)+m(C)=m(A)+m(C).
\end{aligned}
\]
On the other hand, if $\varepsilon = 0$, then we can switch the order of $S_n$ and $C$ in the filtration of $B$ to obtain, using the octahedral axiom, a filtration of the following form:
\[
\begin{tikzcd}[column sep=tiny]
0=A_0 \arrow{rr} & & A_1\arrow{rr}\arrow{dl} & &\cdots \arrow{rr} &  & A_{n-2} \arrow{rr} & & A_{n-1} \arrow{rr}\arrow{dl} & & Y  \arrow{rr}\arrow{dl} & & B \arrow{dl} \\
& S_1 \arrow[dashed]{ul} & & & &  & & S_{n-1} \arrow[dashed]{ul} & & C \arrow[dashed]{ul} & & S_n \arrow[dashed]{ul}
\end{tikzcd}
\]
The morphism $C\to A_{n-1}[1]$ is necessarily non-zero, since its composition with $A_{n-1}[1]\to A[1]$ is $\delta$.
Thus, by induction, 
\[
m(Y)<m(C)+m(A_{n-1})=m(C)+m(S_1)+\ldots+m(S_{n-1})
\]
hence $m(B)\leq m(Y)+m(S_n)<m(C)+m(A)$.

\textbf{Step 3:} \textit{Case of general $A$ and $C$.}
Passing to $\cC^{\mathrm{op}}$, we obtain from the previous step also the case where $A$ is semistable and $C$ is general.
We can then repeat the argument as in Step 2, but with $C$ not necessarily semistable, and with appeals to Step~1 replaced by the dual of Step~2.
\end{proof}

\subsection{Effective curves and central charge}\label{subsec:effective}

Throughout this subsection, $\mathbf k$ is a field and $\cC$ is a $\mathbf k$-linear triangulated category. 

\begin{definition}\label{def:curveinob}
A \textbf{curve in $\mathrm{Ob}(\cC)$} is an exact $\mathbf k$-linear functor $F\colon \mathrm{Perf}(X)\to \cC$, where $X$ is a connected smooth projective curve over $\mathbf k$, and satisfying the following representability conditions:
\begin{enumerate}[(1)]
    \item There is a functor $R_F\colon\cC\to D^-_{\mathrm{coh}}(X)$ such that
    \[
    \Hom_{\cC}(F(E),A)\cong \Hom_{D^-_{\mathrm{coh}}(X)}(E,R_F(A))
    \]
    for $E\in \mathrm{Perf}(X)$, $A\in\cC$.
    \item There is a functor $K_F\colon\cC^{\mathrm{op}}\to D^+_{\mathrm{coh}}(X)$ such that
    \[
    \Hom_{\cC}(A,F(E))\cong H^0R\Gamma(X;E\otimes^LK_F(A))
    \]
    for $E\in \mathrm{Perf}(X)$, $A\in\cC$.
\end{enumerate}
\end{definition}

In particular, the representability assumptions are satisfied if $F$ has both a right and a left adjoint. This is a reasonable assumption if $\cC$ is the homotopy category of a smooth and proper dg category over $\mathbf k$.

Suppose that $\mathrm{cl}\colon K_0(\cC)\to\Gamma$ and a stability condition $\sigma=(Z,(\cC_\phi)_\phi)$ on $(\cC,\Gamma,\mathrm{cl})$ have been fixed.
Given $\phi\in\bR$ and $\gamma\in\Gamma$ such that $Z(\gamma)\in \bR_{>0}e^{\pi \sqrt{-1}\phi}$, we let $\cC_{\phi,\gamma}$ be the full subcategory of $\cC_\phi$ of those objects $E$ which have class $\mathrm{cl}(E)=\gamma$.
We can define the group of 1-cycles (modulo algebraic equivalence) and its submonoid of effective cycles on $\mathrm{Ob}(\cC_{\phi,\gamma})$.
Effective cycles are defined as follows:

\begin{definition}
A \textbf{curve in $\mathrm{Ob}(\cC_{\phi,\gamma})$} is a curve $F\colon \mathrm{Perf}(X)\to \cC$ in $\mathrm{Ob}(\cC)$ such that if $E$ is any coherent sheaf on $X$ with 0-dimensional support and $n=\dim_{\mathbf k} \Gamma(X,E)$, then $F(E)\in \mathrm{Ob}(\cC_{\phi,n\gamma})$.
\end{definition}

For any curve in $\mathrm{Ob}(\cC_{\phi,\gamma})$ we define its \textit{degree} with respect to $Z$ to be
\[
-\operatorname{Im}\left(e^{-\pi\sqrt{-1}\phi} Z(F(\cO_X))\right)\in \mathbb R.
\]

\begin{theorem}\label{thm:ample} 
The degree of any curve in $\mathrm{Ob}(\cC_{\phi,\gamma})$ is non-negative. 
\end{theorem}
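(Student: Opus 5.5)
The plan is to rephrase the degree in terms of King's additive function $\theta\coloneqq-\operatorname{Im}(e^{-\pi\sqrt{-1}\phi}Z)$ from Section~\ref{subsec:stabab}. Here $\theta$ is additive on $\Gamma$ and vanishes on every object of $\cC_{\phi,n\gamma}$, in particular on $F(T)$ for every torsion sheaf $T$ on $X$. The degree of the curve is $\theta(F(\cO_X))$.

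\textbf{Step 1: the degree is independent of the line bundle.} For any line bundle $L$ and effective divisor $D$ there is a short exact sequence $0\to L(-D)\to L\to L|_D\to 0$. It gives an exact triangle $F(L(-D))\to F(L)\to F(L|_D)$, and $F(L|_D)\in\cC_{\phi,(\deg D)\gamma}$. Hence $\theta(F(L))=\theta(F(L(-D)))$. Since any two line bundles are linked by such sequences, $c\coloneqq\theta(F(L))$ is the same for all line bundles $L$, and it equals the degree. So it suffices to show $c\geq 0$ for one well-chosen $L$, or in a limit.

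\textbf{Step 2: split $\theta$ along the HN filtration.} For any object $E$ with HN factors $E_j$ of phases $\psi_j$,
\[
\theta(E)=\sum_j -|Z(E_j)|\sin\bigl(\pi(\psi_j-\phi)\bigr).
\]
Factors with $\psi_j\in(\phi-1,\phi)$ contribute positively, and those with $\psi_j\in(\phi,\phi+1)$ contribute negatively. Factors at distance $\geq 1$ from $\phi$ contribute with either sign. The goal is to show that for $L=\cO_X(-kp)$, $k\gg0$, the part of the HN filtration of $F(L)$ in phases outside $(\phi-1,\phi]$ is negligible.

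For the half with phases $>\phi$, I would use representability (2): $\Hom(A,F(L))\cong H^0R\Gamma(X;L\otimes^L K_F(A))$. Since $K_F(A)\in D^+_{\mathrm{coh}}(X)$, only finitely many cohomology sheaves contribute. Their locally free parts have no sections after twisting by a sufficiently negative $L$. Any torsion contribution corresponds to morphisms factoring through $F$ of a torsion sheaf, i.e.\ through objects of $\cC_\phi$, and these vanish on objects of phase $>\phi$ by axiom (2) of Definition~\ref{def:stability}. For phases $\leq\phi-1$, I would argue dually with $R_F$ and $L$ of large positive degree, transferring between the two ends using the triangles $F(L(-p))\to F(L)\to F(\cO_p)$ with $F(\cO_p)\in\cC_\phi$.

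\textbf{Step 3: control through mass, and the main obstacle.} By the triangle inequality (Proposition~\ref{prop:mass}), each such triangle changes the HN data only by a semistable piece of phase $\phi$. I would track $m_{>\phi}(F(L))$, the total mass of HN factors of phase $>\phi$, together with the analogous quantity $m_{\leq\phi-1}(F(L))$. The aim is to show these are eventually zero, or bounded independently of $k$ while $c$ stays fixed; discreteness of masses (Proposition~\ref{prop:massdiscrete}) should force stabilization.

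The main obstacle is uniformity. The destabilizing HN factor $A$ depends on $L$, so the Serre-vanishing-type argument for $H^0(L\otimes K_F(A))$ must be made uniform over $k$. I would first try to fix $A$ as the top HN factor of $F(\cO_X)$ itself, and show that a nonzero map $A\to F(\cO_X)$ would propagate, via the triangles above and axiom (2), to nonzero maps $A\to F(\cO_X(-kp))$ for all $k$, contradicting vanishing for large $k$. Once all HN phases of $F(L)$ lie in $(\phi-1,\phi]$, every term in the formula for $\theta(F(L))$ is $\geq 0$, so $c\geq 0$.
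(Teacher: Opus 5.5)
\textbf{The gap is in Step 2.} There you try to obtain $\Hom(A,F(L))=0$ for $A\in\cC_{(\phi,\infty)}$ by twisting $L$ negatively. On a curve, $H^0R\Gamma(X;L\otimes K_F(A))$ is an extension of $H^0(X,L\otimes H^0K_F(A))$ by $H^1(X,L\otimes H^{-1}K_F(A))$. Negative twisting has three effects:
\begin{itemize}
\item it kills only the sections of the locally free part of $H^0K_F(A)$;
\item it leaves the torsion part of $H^0K_F(A)$ untouched;
\item it makes the $H^1$ term grow whenever $H^{-1}K_F(A)$ has positive rank.
\end{itemize}
Your sentence that torsion contributions ``factor through $F$ of a torsion sheaf'' has no mechanism behind it.

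In fact twisting cannot help at all. Apply $\Hom(A,-)$ to $F(\cO_X(-(k+1)p))\to F(\cO_X(-kp))\to F(\cO_p)$ and use $\Hom(A,F(\cO_p))=\Hom(A,F(\cO_p)[-1])=0$. This gives $\Hom(A,F(\cO_X(-kp)))\cong\Hom(A,F(\cO_X))$ for all $k$. So your propagation step actually yields isomorphisms, and vanishing for $k\gg 0$ is exactly as hard as vanishing at $k=0$.

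\textbf{The missing idea} is to insert the skyscraper sheaves themselves into the representability isomorphisms; they are perfect because $X$ is smooth. For all $n\geq 0$ and all closed points $p$,
\[
H^{-n}R\Gamma(X;\cO_p\otimes^L K_F(A))\cong\Ext^{-n}(A,F(\cO_p))=0 .
\]
Nakayama, at a point in the support of the lowest nonzero cohomology sheaf, then forces $K_F(A)\in D^{\geq 1}_{\mathrm{coh}}(X)$. Hence $\Hom(A,F(\cO_X))\cong H^0R\Gamma(X;K_F(A))=0$ for every such $A$ at once.

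Dually, for $A\in\cC_{(-\infty,\phi)}$ one has $\Ext^{\leq 0}(\cO_p,R_F(A))\cong\Ext^{\leq 0}(F(\cO_p),A)=0$. Serre duality and heredity of $\mathrm{Coh}(X)$ then give $R_F(A)\in D^{\geq 0}_{\mathrm{coh}}(X)$, so $\Ext^{-1}(F(\cO_X),A)=0$. Together these put $F(\cO_X)$ in $\cC_{[\phi-1,\phi]}$. There is no uniformity problem, no mass bookkeeping, and no need for your Step 1. This is exactly the paper's argument.

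\textbf{Your target interval is also wrong.} HN factors of phase exactly $\phi-1$ can occur; they are harmless only because they contribute $-|Z|\sin(-\pi)=0$. Here is an example.
\begin{itemize}
\item Let $f\colon X\to\mathrm{Spec}\,\mathbf k$ be the structure map.
\item Give $\mathrm{Perf}(\mathbf k)$ the stability condition with $\Gamma=\bZ$, $\mathrm{cl}=\chi$, and $\mathbf k\in\cC_\phi$.
\item Take $F=Rf_*=R\Gamma(X,-)$. It satisfies both representability conditions via $f^*$ and $f^!=f^*(-)\otimes\omega_X[1]$, and it sends a torsion sheaf $E$ to $\Gamma(X,E)$ in degree $0$.
\end{itemize}
Then $F(\cO_X(-kp))\simeq H^1(X,\cO_X(-kp))[-1]$ is nonzero and lies entirely in $\cC_{\phi-1}$ for $k\geq 2$.

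So for your choice $L=\cO_X(-kp)$ with $k\gg 0$, the intermediate claim that all HN phases lie in $(\phi-1,\phi]$ is false. Your dual argument excluding phases $\leq\phi-1$ is therefore aimed at a false statement. The correct target is $\cC_{[\phi-1,\phi]}$, so only phases $<\phi-1$ need to be excluded. This is also the only version in which your ``transfer'' between positive and negative twists works. Membership in $\cC_{(-\infty,\phi]}$ and in $\cC_{[\phi-1,\infty)}$ is preserved by the triangles whose third term is $F(\cO_p)\in\cC_\phi$. Membership in $\cC_{(\phi-1,\infty)}$ is not preserved under negative twists.
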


In the context of derived categories of coherent sheaves, this is part of \cite[Positivity Lemma 3.3]{bayer_macri_projectivity}.
The proof of Theorem~\ref{thm:ample} will be completed after the following lemma.

\begin{lemma}\label{lem:pointwise_to_global}
Let $X$ be a smooth projective curve over $\mathbf k$. 
\begin{enumerate}[(1)]
    \item Suppose $E\in D^+_\mathrm{coh}(X)$ with $H^{\leq 0}(\cO_p\otimes^L E)=0$ for all closed points $p\in X$, then $E\in D^{\geq 1}_\mathrm{coh}(X)$, in particular $H^{\leq 0}R\Gamma(X;E)=0$.
    \item Suppose $E\in D^-_\mathrm{coh}(X)$ with $\mathrm{Ext}^{\leq 0}(\cO_p,E)=0$ for all closed points $p\in X$, then $E\in D^{\geq 0}_\mathrm{coh}(X)$, in particular $\mathrm{Ext}^{\leq -1}(\cO_X,E)=0$.
\end{enumerate}
\end{lemma}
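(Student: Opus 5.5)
Both parts reduce to a pointwise statement about the cohomology sheaves of $E$. On a smooth curve every coherent sheaf $\mathcal H$ is a direct sum of a locally free sheaf and a torsion sheaf, and every local ring $\cO_{X,p}$ is a DVR, so $\cO_p$ has the two-term locally free resolution $0\to\cO_X(-p)\to\cO_X\to\cO_p\to 0$. Hence $\cO_p\otimes^L(-)$ and $R\mathcal{H}om(\cO_p,-)$ have Tor/Ext amplitude in two adjacent degrees. In each part we look at the extremal nonzero cohomology sheaf of $E$ and show it would produce a nonzero class in a forbidden degree.

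For (1), suppose $E\notin D^{\geq 1}$ and let $k\leq 0$ be the smallest index with $\mathcal H^k(E)\neq 0$; it exists since $E\in D^+$. Using the hypercohomology spectral sequence (or the truncation triangle $\mathcal H^k(E)[-k]\to E\to\tau^{>k}E$), tensoring with $\cO_p$ gives $H^{k}(\cO_p\otimes^L E)\cong \mathcal H^k(E)\otimes\cO_p$. This holds because $\cO_p\otimes^L-$ is right t-exact, with $L$-amplitude $[-1,0]$, so $\tau^{>k}E$ contributes nothing in degree $\leq k$. By Nakayama, we can choose $p$ in the support of the coherent sheaf $\mathcal H^k(E)$, and then the fiber is nonzero, contradicting the hypothesis. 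So $E\in D^{\geq1}_{\mathrm{coh}}$, and $R\Gamma$, being left t-exact, gives $H^{\leq0}R\Gamma(X;E)=0$.

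For (2), dually suppose $E\notin D^{\geq 0}$ and take the smallest $k\leq -1$ with $\mathcal H^k(E)\neq 0$. The obstacle is that $E$ is only bounded above, but the smallest nonvanishing degree still exists as long as $E\neq0$ is not bounded below. If $E$ is unbounded below we need a different argument, so I first aim to treat the bounded-below case, and then handle the general case by applying the same reasoning at a suitable degree. Since $R\mathcal{H}om(\cO_p,-)$ has amplitude $[0,1]$, we get $\mathrm{Ext}^{k}(\cO_p,E)\cong\Hom(\cO_p,\mathcal H^k(E))$ and $\mathrm{Ext}^{k+1}(\cO_p,E)$ receiving $\mathrm{Ext}^1(\cO_p,\mathcal H^k(E))$ modulo the contribution of $\Hom(\cO_p,\mathcal H^{k+1}(E))$. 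The cleaner route is to use $\mathrm{Ext}^{j}(\cO_p,\mathcal F)\cong \mathcal F\otimes\cO_p$ shifted by one via local duality, since $R\mathcal Hom(\cO_p,\cO_X)\cong\cO_p[-1]$. This gives $R\mathcal Hom(\cO_p,E)\cong \cO_p\otimes^L E[-1]$ locally, so
\[
\mathrm{Ext}^{j}(\cO_p,E)\cong H^{j-1}(\cO_p\otimes^L E).
\]
The hypothesis then says $H^{\leq -1}(\cO_p\otimes^L E)=0$ for all $p$, and the argument of (1), shifted by one, with right t-exactness handling the possibly unbounded-below tail, shows $E\in D^{\geq0}$. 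Note that the top-degree argument in (1) actually needs the lowest degree. So for $E\in D^-$ I will instead argue by descending through degrees: if $\mathcal H^k(E)\neq 0$ for some $k\leq -1$, pick the largest such $k$ below $0$ and a point $p$ in its support. The amplitude $[-1,0]$ of $\cO_p\otimes^L$ then forces a nonzero class in degree $k$ or $k-1$, both $\leq -1$. Finally, $\mathrm{Ext}^{\leq-1}(\cO_X,E)=H^{\leq-1}R\Gamma(E)=0$ by left t-exactness.

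The main technical step is the degree bookkeeping when $E$ is unbounded: correctly choosing the extremal cohomology sheaf so that the amplitude of $\cO_p\otimes^L(-)$ cannot cancel the contribution.
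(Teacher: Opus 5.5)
Your part (1) follows the paper's argument, but the step ``$\tau^{>k}E$ contributes nothing in degree $\leq k$'' is false. From $\tau^{>k}E\in D^{\geq k+1}$ you only get $\cO_p\otimes^L\tau^{>k}E\in D^{\geq k}$, and its $\mathcal H^k$ is $\mathrm{Tor}_1(\cO_p,\mathcal H^{k+1}(E))$, which can be nonzero. For example, take $E=\cO_X\oplus\cO_p[-1]$ and $k=0$: then $\mathcal H^0(\cO_p\otimes^L E)\cong k(p)^{\oplus 2}$, while $\mathcal H^0(E)\otimes\cO_p\cong k(p)$. So the isomorphism you state, which the paper's proof also writes, fails in general.

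What does hold, and is all either argument needs, is the injection $\mathcal H^k(E)\otimes\cO_p\hookrightarrow\mathcal H^k(\cO_p\otimes^L E)$. It comes from the long exact sequence of the triangle $\cO_X(-p)\otimes E\to E\to\cO_p\otimes^L E$. It holds for \emph{every} $k$ and every $E$, with no boundedness and no extremal choice of degree. This precise statement should replace your vague ``the amplitude forces a nonzero class in degree $k$ or $k-1$'' in (2). Take any $k\leq -1$ with $\mathcal H^k(E)\neq 0$ and any $p$ in its support. Nakayama plus the injection give $\mathcal H^k(\cO_p\otimes^L E)\neq 0$, that is, $\Ext^{k+1}(\cO_p,E)\neq 0$ with $k+1\leq 0$. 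The ``degree bookkeeping'' you single out as the main difficulty therefore disappears. The false starts in (2) can be deleted: a smallest $k$ need not exist for $E\in D^-$, and ``the argument of (1) shifted by one'' does not apply as stated.

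With that repair your (2) is correct, and it takes a different route from the paper.

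\emph{Your route.} You use local duality $R\mathcal Hom(\cO_p,\cO_X)\cong\cO_p[-1]$ to rewrite the hypothesis as $\mathcal H^{\leq -1}(\cO_p\otimes^L E)=0$. This works globally, not just locally: $R\mathcal Hom(\cO_p,E)\cong R\mathcal Hom(\cO_p,\cO_X)\otimes^L E$ because $\cO_p$ is perfect, and the resulting cohomology sheaves are skyscrapers at $p$. The rewriting turns (2) into a shifted copy of (1).

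\emph{The paper's route.} It applies Serre duality to get $\Ext^{\geq 1}(E,\cO_p)=0$. It then splits $E\simeq\bigoplus_n\mathcal H^n(E)[-n]$, using that $\mathrm{Coh}(X)$ is hereditary, so $\Hom(\mathcal H^n(E),\cO_p)$ is a summand of $\Ext^{-n}(E,\cO_p)$. It concludes because a nonzero coherent sheaf maps nontrivially to some $\cO_p$.

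\emph{Comparison.} Your route handles both parts by a single local mechanism and uses neither properness (needed for Serre duality) nor the hereditary splitting. The paper's route disposes of the unbounded-below tail through the splitting, which is exactly the point where your draft lost its footing.
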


\begin{proof}
(1) We claim $E\in D^{\geq 1}_\mathrm{coh}(X)$. For contradiction, suppose that $n\leq 0$ is minimal such that $H^nE\neq 0$.
Choose a closed point $p\in X$ in the support of $H^nE$, then
\[
H^n(\cO_p\otimes^L E)\cong H^nE\otimes_{\cO_X}k(p)
\]
which vanishes by assumption, contradicting Nakayama's lemma.

(2) By Serre duality, and using $\cO_p\otimes\omega_X\cong\cO_p$, the hypothesis is equivalent to $\mathrm{Ext}^{\geq 1}(E,\cO_p)=0$ for all closed points $p\in X$.
Because $\mathrm{Coh}(X)$ is hereditary, each $(H^nE)[-n]$, $n\in\bZ$, is a direct summand of $E$.
Moreover, $\Hom(H^nE,\cO_p)$ is a direct summand of $\mathrm{Ext}^{-n}(E,\cO_p)$.
Thus, if $n<0$ then the assumption implies $\Hom(H^nE,\cO_p)=0$ for all closed points $p\in X$, thus $H^nE=0$ by coherence of $H^nE$.
This shows $E\in D^{\geq 0}_\mathrm{coh}(X)$.
\end{proof}

\begin{proof}[Proof of Theorem~\ref{thm:ample}]
We will show the stronger statement that $F(\cO_X)\in\cC_{[\phi-1,\phi]}$.
This is equivalent to the combination of:
\begin{enumerate}[(1)]
    \item $\mathrm{Hom}(A,F(\cO_X))=0$ for $A\in\cC_{(\phi,\infty)}$ and
    \item $\mathrm{Ext}^{-1}(F(\cO_X),A)=0$ for $A\in\cC_{(-\infty,\phi)}$.
\end{enumerate}

Let us first show (1). Let $K_F$ be as in Definition~\ref{def:curveinob}.
Suppose $A\in\cC_{(\phi,\infty)}$, then since $F(\cO_p)\in\cC_{\phi}$ we have for any $p\in X$:
\[
H^{\leq 0}R\Gamma(X;\cO_p\otimes^LK_F(A))=\mathrm{Ext}^{\leq 0}(A,F(\cO_p))=0.
\]
By the first part of Lemma~\ref{lem:pointwise_to_global}, 
\[
\Hom(A,F(\cO_X))\cong H^{0}R\Gamma(X;K_F(A))=0.
\]

For the proof of (2) we use a similar argument.
Let $R_F$ be as in Definition~\ref{def:curveinob}.
Suppose $A\in\cC_{(-\infty,\phi)}$, then for any $p\in X$:
\[
\mathrm{Ext}^{\leq 0}(\cO_p,R_F(A))=\mathrm{Ext}^{\leq 0}(F(\cO_p),A)=0.
\]
By the second part of Lemma~\ref{lem:pointwise_to_global}, 
\[
\Ext^{-1}(F(\cO_X),A)\cong \Ext^{-1}(\cO_X,R_F(A))=0.
\]
\end{proof}

Assume that the finite rank abelian group $\Gamma$ in Definition~\ref{def:stability} is free\footnote{One can always replace the finitely generated abelian group $\Gamma$ by $\Gamma/\Gamma_{\mathrm{torsion}}$.} of rank $r\ge 0$ and the homomorphism $\mathrm{cl}\colon K_0(\cC)\to \Gamma\cong \bZ^r$ is given by the $r$-tuple of Euler characteristics
\[
\mathrm{cl}(A)=(\chi( G_i(A)))_{i=1,\dots,r}
\]
where $G_1,\dots, G_r$ are functors from $\cC$ to $\mathrm{Perf}(\mathbf k)$. Then for each object $A\in \cC$ we have an $r$-tuple of determinant lines (1-dimensional spaces over $\mathbf k$)
\[
\cL_i(A)\coloneqq\operatorname{Det} G_i(A),\,\,i=1,\dots r\,.
\]
So, in a sense, we get $r$ line bundles on the ``stack of objects'' $\mathrm{Ob}(\cC)$. For example, in the case of representations of quivers (see Section~\ref{sec:quivers_arch}) the group $\Gamma$ is freely generated by the set $Q_0\cong \{1,\dots,r\}$ of vertices and the corresponding functor $G_i$ associates with each representation $E$ its component $E_i$ at vertex $i$. Then we obtain actual line bundles on the derived Artin stack $\mathrm{Ob}(\cC)$. In general, for any curve $F\colon\mathrm{Perf}(X)\to \cC$ we obtain ordinary line bundles on $X$. The fiber of the $i$-th bundle $F^*\cL_i$ at point $p\in X$ is defined as the determinant line $\operatorname{Det}\big( (G_i\circ F)(\cO_p)\big)$. Theorem \ref{thm:ample} means that for any curve in $\mathrm{Ob}(\cC_{\phi,\gamma})$ we have
\[
-\sum_{i=1}^r \operatorname{Im}\left(e^{-\pi\sqrt{-1}\phi } z_i\right)\cdot \deg F^*\cL_i\ge 0.
\]
Here $z_1,\dots ,z_r\in \bC$ are parameters of the central charge:
\[
Z\colon (d_1,\dots,d_r)\in \bZ^r\cong \Gamma\mapsto \sum_{i=1}^r d_i z_i \in \bC.
\]
The conclusion is that we have a real linear combination of first Chern classes of line bundles on $\mathrm{Ob}(\cC_{\phi,\gamma})$ which is numerically effective, i.e.\ informally this stack (or better, the coarse moduli space parametrizing polystable objects) ``wants'' to be a quasi-projective variety, which is the essence of Mumford's theory of stability. The main theme of our paper is to upgrade this purely algebraic non-negativity to some kind of  K\"ahler metric (or at least non-negative $(1,1)$-current) on the moduli of polystable objects, whose integral over effective curves is the degree determined by the central charge.

\

The group $\bG_{m,\mathbf{k}}$ of rescalings of the object $A\in \cC$ acts on the line $\cL_i(A)$ by weight $\chi(G_i(A))$. Therefore, for any object $A\in \cC_\phi$, on the formal tensor product of ``real tensor powers''
\[ 
\bigotimes_{i=1}^r \cL_i(A)^{\otimes -\operatorname{Im}\left(e^{-\pi\sqrt{-1}\phi } z_i\right)}
\]
the group of rescalings acts trivially.

To make rigorous sense of the tensor product of real tensor powers, suppose that the ground field $\mathbf k$ is $\bR$, $\bC$, or a non-Archimedean field, so we have a norm homomorphism
\[|\cdot|:\mathbf k^\times\to \bR_{>0}\,.\]
For a one-dimensional $\mathbf k$-vector space $L$, denote by $|L|$ the
oriented real line obtained from the $\mathbf k^\times$-torsor
$L^\times$ by extension of structure group along $|\cdot|$.  Thus, with
$A\in\cC$ we associate the oriented real lines $|\cL_i(A)|$.

For an oriented real line $V$ and $a\in\bR$, we denote by
$V^{\otimes a}$ the oriented real line obtained from the
$\bR_{>0}^{\times}$-torsor $V_{>0}$ of positive nonzero vectors by
extension of structure group along the homomorphism
$\lambda\mapsto\lambda^a$. With this convention,   the tensor product 
\[
|\cL_{Z,\phi}(A)|\coloneqq\bigotimes_{i=1}^r |\cL_i(A)|^{\otimes -\operatorname{Im}\left(e^{-\pi\sqrt{-1}\phi }z_i \right)}
\]
is well defined. The rescaling group acts \textit{trivially} on this real line. Moreover, one can easily see that the whole group $\mathrm{Aut}(A)$ acts trivially on the oriented real line  $|\cL_{Z,\phi}(A)|$, and that this line depends only on the isomorphism class of the semisimplification of $A$ considered as an object of the finite-length category $\cC_\phi$. Therefore, we obtain, at least set-theoretically, an oriented real line bundle on the space of isomorphism classes of polystable objects. 

Presumably, in reasonable situations, this oriented real line bundle is independent of the choice of functors $G_i$ which categorify the components of the map $\mathrm{cl}\coloneqq K_0(\cC)\to \bZ^r$, and even defined when there is no natural categorification at all. The central charge should be replaced from this perspective by \textit{two} oriented real line bundles on $\mathrm{Ob}(\cC)$ with fibers  
\[
|\cL_{\mathrm{Re}}|(A)\coloneqq\bigotimes_{i=1}^r |\cL_i(A)|^{\otimes \operatorname{Re}z_i },\quad |\cL_{\mathrm{Im}}|(A)\coloneqq\bigotimes_{i=1}^r |\cL_i(A)|^{\otimes \operatorname{Im}z_i }
\]
which implies that 
\[ 
|\cL_{Z,\phi}|=|\cL_{\mathrm{Re}}|^{\sin \pi\phi}\otimes |\cL_{\mathrm{Im}}|^{-\cos \pi\phi}.
\]

\subsection{Ascending-phase filtrations}\label{subsec:antihn}

The constructions in this subsection will only be used in Section~\ref{section:spectral}, but are also of interest in their own right.
Fix throughout a triangulated category $\cC$ with stability condition $\sigma=(Z,(\cC_\phi)_\phi)$ and heart $\cA\coloneqq \cC_{(0,1]}$.

\begin{definition}\label{def:antiHN}
    An \textbf{ascending-phase filtration} of an object $E\in\cA$ is a filtration 
    \[
    0=E_0\subset E_1\subset E_2\subset\cdots\subset E_n=E
    \]
    such that $0\neq A_i\coloneqq E_i/E_{i-1}$ are semistable for $i=1,\ldots,n$ with strictly ascending phases $\phi(A_1)<\phi(A_2)<\cdots<\phi(A_n)$. We write $E_{\leq\phi}\coloneqq E_k$, where $k=\max\left(\{0\}\cup\{i\mid\phi(A_i)\leq\phi\}\right)$ and regard the collection $(E_{\leq\phi})_\phi$ as an ascending $[0,1]$-filtration of $E$.

    If $E,F\in\cA$ are equipped with ascending-phase filtrations $(E_{\leq\phi})_\phi$, $(F_{\leq\phi})_\phi$, then a morphism $f\colon E\to F$ is said to be \textit{compatible} if it is filtration preserving, i.e.\ $f(E_{\leq\phi})\subseteq F_{\leq\phi}$.
\end{definition}

The following is~\cite[Lemma 4.13]{HKS}, where ascending-phase filtrations were called \textit{anti-HN filtrations}.

\begin{lemma}\label{lem:strictness}
    Let $f\colon E\to F$ be a compatible morphism between objects equipped with ascending-phase filtrations.
    Then $f$ is strict, i.e.\ $f(E_{\leq\phi})=F_{\leq\phi}\cap\operatorname{Im}(f)$.
\end{lemma}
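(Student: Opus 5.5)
The plan is to reduce strictness to a statement about the associated graded pieces, and then kill the possible obstruction using the Hom-vanishing axiom (2) of Definition~\ref{def:stability}.

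The inclusion $f(E_{\leq\phi})\subseteq F_{\leq\phi}\cap\operatorname{Im}(f)$ is just compatibility, so only the reverse inclusion needs work. Set $\operatorname{gr}_\psi E\coloneqq E_{\leq\psi}/E_{<\psi}$, and similarly for $F$. These are zero or semistable of phase $\psi$. Compatibility gives induced maps $\operatorname{gr}_\psi f\colon \operatorname{gr}_\psi E\to \operatorname{gr}_\psi F$. Each $\operatorname{gr}_\psi f$ is a morphism in the abelian category $\cC_\psi$, which is closed under kernels, cokernels and images inside $\cA$. Hence $\operatorname{Ker}(\operatorname{gr}_\psi f)$ and $\operatorname{Coker}(\operatorname{gr}_\psi f)$ are zero or semistable of phase $\psi$.

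I would organise the argument as the standard analysis of the filtered two-term complex $E\xrightarrow{f}F$, i.e.\ its spectral sequence. A filtered morphism of finite filtrations is strict precisely when this spectral sequence degenerates at the first page. The first page consists of the kernels and cokernels of the maps $\operatorname{gr}_\psi f$. To avoid spectral-sequence machinery, I would run the argument by induction on the total number of jumps of the two filtrations. Let $\psi$ be the largest jump of $F$, and choose an element (or, abstractly, a subobject) of $F_{\leq\phi}\cap\operatorname{Im}(f)$ with preimage in $E_{\leq\chi}$, where $\chi$ is minimal.

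If $\chi>\phi$, then the component in $\operatorname{gr}_\chi E$ maps to zero in $\operatorname{gr}_\chi F$, so it lies in $\operatorname{Ker}(\operatorname{gr}_\chi f)$. Lowering $\chi$ further is then obstructed only by a ``connecting'' morphism. This is a morphism from a subquotient of $\operatorname{Ker}(\operatorname{gr}_\chi f)$ to a subquotient of $\operatorname{Coker}(\operatorname{gr}_{\chi'}f)$ for some $\chi'<\chi$, obtained by the usual diagram chase through the exact sequences $0\to E_{<\chi}\to E_{\leq\chi}\to\operatorname{gr}_\chi E\to 0$ and their analogues for $F$. The higher differentials are subquotients of the first ones, so they again live in $\cC_\chi$ and $\cC_{\chi'}$ respectively. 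Since $\chi>\chi'$, the Hom-vanishing axiom $\Hom(\cC_\chi,\cC_{\chi'})=0$ forces every such connecting map to vanish. This is exactly the point where the \emph{ascending} order of phases is used. For a descending (HN-type) filtration the analogous maps would go from low phase to high phase and need not vanish.

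Once the connecting maps vanish, the induction lets one replace the preimage by one in $E_{\leq\phi}$, which gives $F_{\leq\phi}\cap\operatorname{Im}(f)\subseteq f(E_{\leq\phi})$. To make this rigorous in an abstract abelian category, I would phrase the induction via subobjects. Concretely, I would show that $X\coloneqq (F_{\leq\phi}\cap\operatorname{Im} f)/(f(E_{\leq\phi})\cap F_{\leq\phi})$ vanishes. Filter $X$ using the filtrations on $E$ and $F$; each step of this filtration is a subquotient of the image of a connecting map of the type above, and is therefore zero.

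The main obstacle is making the diagram chase precise. One has to identify each graded piece of the obstruction object $X$ with the image of a morphism from a subquotient of some $\operatorname{Ker}(\operatorname{gr}_\chi f)$ to a subquotient of some $\operatorname{Coker}(\operatorname{gr}_{\chi'} f)$ with $\chi'<\chi$. I would first do the case where $E$ and $F$ each have two steps, handling it directly with the snake lemma. I would then reduce the general case to it by induction, passing to $E_{<\max}$ and to $F/F_{\leq\min}$, which carry induced ascending-phase filtrations.
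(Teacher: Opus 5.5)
The paper does not prove this lemma; it only quotes \cite[Lemma 4.13]{HKS}. So there is no internal argument to compare with, and the question is whether your proposal stands on its own.

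Your primary route does. Reindex by the finitely many jumps and view $E\xrightarrow{f}F$ as a finitely filtered two-term complex. Its $E_1$-terms are $\operatorname{Ker}(\operatorname{gr}_\chi f)$ and $\operatorname{Coker}(\operatorname{gr}_\chi f)$, which lie in $\cC_\chi$ or vanish. Every $d_r$ goes from filtration index $\chi$ in degree $0$ to index $\chi'<\chi$ in degree $1$, so it dies by axiom (2). Deligne's criterion (Hodge II, Prop.~1.3.2: a finitely filtered complex degenerates at $E_1$ iff its differential is strict) then gives exactly $f(E_{\le\phi})=F_{\le\phi}\cap\operatorname{Im}(f)$. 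This is a complete proof, and it pinpoints where ascending rather than descending phases matter.

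Two points need repair.

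First, you justify that the higher differentials ``live in $\cC_\chi$ and $\cC_{\chi'}$'' because they are subquotients of the first ones. That reason is wrong as stated: subquotients of semistable objects need not be semistable, e.g.\ $A\subset B$ in the $A_2$ example with $\phi(A)<\phi(C)$. The correct argument is induction on $r$. Since $d_1=0$, we get $E_2=E_1$, so $d_2$ has the same source and target and vanishes for the same reason, and so on.

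Second, your elementary fallback does not close as sketched. Let $\alpha$ be the top jump of $E$ and $\beta$ the bottom jump of $F$. Strictness of $E_{<\alpha}\to F$ and of $E\to F/F_{\le\beta}$ only reduces you to $F_{\le\beta}\cap f(E)\subseteq f(E_{\le\beta})$, which is the original problem at $\phi=\beta$. The obstruction $f^{-1}(F_{\le\beta})/E_{\le\beta}$ is a \emph{sub}object of $E/E_{\le\beta}\in\cC_{(\beta,1]}$, so Hom-vanishing does not apply to it.

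An induction that does work is on the number of jumps of $E$.
\begin{itemize}
\item \emph{One jump, $E\in\cC_\alpha$.} Then $I=\operatorname{Im}(f)$ is a quotient of $E$ and a subobject of $F_{\le\alpha}\in\cC_{(0,\alpha]}$, so $I\in\cC_\alpha$. For $\phi<\alpha$, the same reasoning shows $I/(F_{\le\phi}\cap I)\hookrightarrow F_{\le\alpha}/F_{\le\phi}$ lies in $\cC_\alpha$. Hence $F_{\le\phi}\cap I$ is a kernel of a morphism in $\cC_\alpha$, so lies in $\cC_\alpha$. It also lies in $\cC_{(0,\phi]}$, so it is zero.
\item \emph{Inductive step.} By induction $f|_{E_{<\alpha}}$ is strict. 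This makes $F/f(E_{<\alpha})$ inherit an ascending-phase filtration with graded pieces $\operatorname{Coker}(\operatorname{gr}_\phi f|_{E_{<\alpha}})$. Apply the one-jump case to $E/E_{<\alpha}\to F/f(E_{<\alpha})$. This gives $F_{\le\phi}\cap f(E)\subseteq f(E_{<\alpha})$ for $\phi<\alpha$, and strictness of $f|_{E_{<\alpha}}$ finishes the argument.
\end{itemize}
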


\begin{proposition}
    Fix $n\geq 1$ and consider the category whose objects are objects $E\in\cA$ equipped with $n$  ascending-phase filtrations $\left(E^{i}_{\leq\phi}\right)_\phi$ of $E$, $i=1,\ldots,n$, and whose morphisms are those morphisms $E\to F$ which send the $i$-th filtration on $E$ to the $i$-th filtration on $F$. Then this category is abelian and finite-length.
\end{proposition}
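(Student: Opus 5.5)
Call this category $\cA^{(n)}$. The plan is to show that it is additive, that kernels and cokernels exist and are computed in $\cA$ with induced filtrations, and that every morphism is strict. Strictness then gives the canonical iso $\mathrm{Coim}\to\mathrm{Im}$. Finite length will come from a length function.

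\emph{Additive structure.} Direct sums are clear. If $E,F$ carry ascending-phase filtrations, then $E_{\leq\phi}\oplus F_{\leq\phi}$ is again one: its subquotients are $A_i\oplus B_j$, or a single one of them, at each jump phase. Direct sums of semistable objects of equal phase are semistable.

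\emph{Kernels and cokernels.} Let $f\colon E\to F$ be a morphism in $\cA^{(n)}$. For each $i$ I put the induced filtrations
\[
\ker(f)^i_{\leq\phi}\coloneqq\ker(f)\cap E^i_{\leq\phi},\qquad \operatorname{coker}(f)^i_{\leq\phi}\coloneqq\operatorname{Im}\bigl(F^i_{\leq\phi}\to\operatorname{coker} f\bigr).
\]
The key claim is that these are again ascending-phase filtrations. For the kernel, by Lemma~\ref{lem:strictness} applied to each $i$ separately, $f$ is strict for the $i$-th filtrations. Hence
\[
0\to \ker f\cap E^i_{\leq\phi}/\ker f\cap E^i_{<\phi}\to E^i_{\leq\phi}/E^i_{<\phi}\to f(E^i_{\leq\phi})/f(E^i_{<\phi})\to 0
\]
is exact, and the right-hand term is a subobject of $F^i_{\leq\phi}/F^i_{<\phi}$ by strictness. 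Both middle and target graded pieces are semistable of phase $\phi$, and $\cC_\phi$ is abelian, being a full subcategory closed under kernels and cokernels in $\cA$ for $\phi\in(0,1]$. So the image piece, a morphism image in $\cC_\phi$, is semistable of phase $\phi$, and therefore so is the kernel piece. The cokernel is dual, or follows by the same exact sequence for $F\to\operatorname{coker} f$. Nonzero pieces then occur at strictly ascending phases automatically. One must check these objects satisfy the universal properties in $\cA^{(n)}$. This is formal: morphisms into $\ker f$ compatible with the filtrations are those into $E$ landing in $\ker f$, since the filtration on $\ker f$ is the induced one, and dually for $\operatorname{coker}$.

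\emph{Image equals coimage.} The coimage $E/\ker f$ carries the quotient filtration, i.e.\ $f(E^i_{\leq\phi})$. The image $\ker(F\to\operatorname{coker}f)$ carries $F^i_{\leq\phi}\cap\operatorname{Im} f$. These agree precisely by Lemma~\ref{lem:strictness}, so $\mathrm{Coim}\to\mathrm{Im}$ is an isomorphism in $\cA^{(n)}$, and the category is abelian. Finite length follows because a monomorphism in $\cA^{(n)}$ is a mono in $\cA$ with the induced filtration, by the kernel description. So a strict chain of subobjects in $\cA^{(n)}$ gives a strict chain in $\cA$ after noting that equal underlying subobjects have equal induced filtrations. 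It therefore suffices that objects of $\cA$ admitting such filtrations have finite length. Each such $E$ is an iterated extension of finitely many semistable objects, so it suffices that semistable objects of $\cA$ have finite length in $\cA$.

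\emph{Main obstacle.} I expect this last point to be the main obstacle, since $\cC_\phi$ need not be finite length in general. The fallback is to measure length by a numeric invariant, e.g.\ the mass or $\operatorname{Im}$/$|Z|$ of graded pieces. For a nonzero subobject in $\cA^{(n)}$, the graded pieces with respect to the first filtration are subobjects of semistable pieces of the same phase. So a proper inclusion strictly decreases $\sum_\phi |Z(\mathrm{gr}_\phi)|$. By Proposition~\ref{prop:massdiscrete}, central charges of semistable objects are discrete, so this quantity lies in a discrete submonoid. Hence chains terminate in both directions, and $\cA^{(n)}$ is finite length.
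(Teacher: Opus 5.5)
Your proposal is correct and is essentially the paper's argument: the abelian structure comes from strictness (Lemma~\ref{lem:strictness}, as in \cite[Proposition 4.12]{HKS}), and finite length comes from short exact sequences inducing short exact sequences of the semistable graded pieces, combined with the support property. The paper measures length inside the finite-length categories $\cC_\phi$, while your invariant $\sum_\phi|Z(\mathrm{gr}_\phi)|$ plus discreteness is just the standard proof of that finiteness, written out directly.

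The one misstatement is your remark that $\cC_\phi$ need not be of finite length; this is false under the standing support-property assumption. The real reason your first reduction fails is that semistable objects need not have finite length in the heart $\cA$ itself (e.g.\ $\cO_X\supset\cO_X(-p)\supset\cO_X(-2p)\supset\cdots$ in $\mathrm{Coh}(X)$ for a curve $X$), which is why one must pass to graded pieces as in your fallback.
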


\begin{proof}
    The proof that the category is abelian is identical to the argument in~\cite[Proposition 4.12]{HKS}. 
    To see that it is of finite length, note that by the support property each category $\cC_\phi$ is of finite lengthh, and any short exact sequence in the category of objects with ascending-phase filtrations induces a short exact sequence of each of the semistable factors.
\end{proof}

The case $n=2$, i.e.\ objects with a pair of ascending-phase filtrations, plays a special role.

\begin{definition}
    The \textbf{vertex category}\footnote{Following catchy terminology of Bousseau--Bridgeland--Giovenzana, one might also call this the \textit{boomerang category}.}, $\mathcal V(\sigma)$, of a stability condition $\sigma$ with heart $\cA$, is the finite-length abelian category of objects $E\in\cA$, together with a pair of ascending-phase filtrations $\left(E^\pm_{\leq\phi}\right)_\phi$.
\end{definition}

Given an object of $\mathcal V(\sigma)$ we can draw a diagram as in Figure~\ref{vertexdiagram}, cf.~\cite[Figure 4.9]{HKS}, which we interpret as a vertex of a spectral network with coefficients in $\cC$, see Section~\ref{section:spectral}.

\begin{figure}[ht]
\centering
\begin{tikzpicture}
\draw[purple,thick] (-5,0) -- (5,0);
\draw[thick] (0,0) -- (130:3) node[anchor=south east] {$A_1^+$};
\draw (-1,0) arc (180:130:1);
\node at (-.6,.3) {$\scriptstyle{\pi\phi_1}$};
\draw[thick] (0,0) -- (100:3) node[anchor=south] {$A_2^+$};
\draw (-1.2,0) arc (180:100:1.2);
\draw[thick] (0,0) -- (30:3) node[anchor=south west] {$A_m^+$};
\draw (-1.6,0) arc (180:30:1.6);
\node at (-1.8,.9) {$\scriptstyle{\pi\phi_m}$};
\node at (1,2.5) {$\ddots$};
\draw[thick] (0,0) -- (-30:3) node[anchor=west] {$A_1^-$};
\draw (1,0) arc (0:-30:1);
\node at (.7,-.2) {$\scriptstyle{\pi\psi_1}$};
\draw[thick] (0,0) -- (-50:3) node[anchor=north west] {$A_2^-$};
\draw (1.2,0) arc (0:-50:1.2);
\draw[thick] (0,0) -- (-170:3) node[anchor=east] {$A_n^-$};
\draw (1.6,0) arc (0:-170:1.6);
\node at (1.9,-.5) {$\scriptstyle{\pi\psi_n}$};
\node at (-1,-2.5) {$\ddots$};
\end{tikzpicture}
\caption{Diagram of a pair of ascending-phase filtrations with semistable subquotients $A^+_1,\ldots,A_m^+$ of the first filtration and $A^-_1,\ldots,A_n^-$ of the second filtration. The phases are $\phi_i=\phi(A^+_i)$ and $\psi_i=\phi(A^-_i)$.}
\label{vertexdiagram}
\end{figure}

\begin{theorem}\label{thm:vcatgl2}
    Assume that $\cC$ admits an $\infty$-categorical enhancement, i.e.\ $\cC$ is equivalent to the homotopy category, $\mathrm h\cC'$, of a stable $\infty$-category $\cC'$ in the sense of~\cite{LurieHA}. 
    Then the construction $\sigma\mapsto\cV(\sigma)$ is $\widetilde{\mathrm{GL}}^+(2,\bR)$-invariant, i.e.\ for $g\in\widetilde{\mathrm{GL}}^+(2,\bR)$ there is an equivalence $\cV(\sigma)\to\cV(g\cdot \sigma)$.
\end{theorem}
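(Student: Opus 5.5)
The plan is to reduce to ``elementary moves'' in which a single ray of the vertex diagram of Figure~\ref{vertexdiagram} crosses the boundary of the heart, and to construct an explicit equivalence for each such move.

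\textbf{Reduction.} Recall that $g\cdot\sigma$ has the same semistable objects as $\sigma$, relabelled by the increasing homeomorphism $\phi\mapsto g\cdot\phi$ of $\bR$, which commutes with $\phi\mapsto\phi+1$. I would proceed in four steps.
\begin{itemize}
\item First, I dispose of elements that preserve the heart. If $g$ fixes the set $\bZ$ of phases, then $g\cdot\sigma$ has the same heart $\cA$ and the same semistable objects up to an order-preserving relabelling of phases. Ascending-phase filtrations and compatible morphisms are then literally unchanged, so $\cV(\sigma)=\cV(g\cdot\sigma)$.
\item Second, the deck transformation $\phi\mapsto\phi+1$ replaces $\cA$ by $\cA[1]$. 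Here the shift functor gives the equivalence.
\item Third, I reduce to a small move. Every $g$ can be written as a composite of elements of the above type and of small deformations. The relevant feature of a small deformation is that the new heart is $\cC_{(\epsilon,1+\epsilon]}$ in the old labelling, with $\epsilon$ of fixed sign and $|\epsilon|$ small.
\item Fourth, objects of $\cV(\sigma)$ involve only finitely many phases. So for a given small move it suffices to treat the case in which a single phase $\phi_0$ crosses the boundary of the heart.
\end{itemize}

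\textbf{The elementary move.} Concretely, take $\epsilon>0$ small and let $\phi_0\in(0,\epsilon]$ be the only phase of any factor lying in $(0,\epsilon]$. In the new heart, the $\phi_0$-factors are replaced by their shifts to phase $\phi_0+1$. For $(E,E^+_\bullet,E^-_\bullet)$, the bottom $+$-factor $A^+_1=E^+_{\leq\phi_0}$ may have phase $\phi_0$. Reading the figure, it should migrate to become the top factor of the $-$-filtration, where $\psi=-\phi$ measures angles clockwise. Symmetrically, the $-$-filtration factor of phase $\phi_0$ migrates to the $+$ side.

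I would define the new object $E'$ as a cofibre, namely
\[
E'\coloneqq\mathrm{cofib}\bigl(E^+_{\leq\phi_0}\to E\bigr)\ \text{modified by gluing in } E^-\text{-data along the phase-}\phi_0\text{ factor}.
\]
More precisely, $E'$ is the total object of the cyclically rotated diagram. I would realise this as follows: encode an object of $\cV(\sigma)$ as a functor from a finite poset indexing the two flags into the stable $\infty$-category $\cC'$, with prescribed semistable cofibres. Then take $E'$ to be the appropriate iterated (co)fibre in this diagram, with the two new filtrations read off from the diagram.

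This is the point where the $\infty$-enhancement is needed. The homotopy category $\cC$ alone does not give functorial cones, so it would not make the construction a functor.

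\textbf{Verifications.} For the move one must check the following:
\begin{enumerate}[(a)]
\item $E'$ lies in the new heart $\cC_{(\epsilon,1+\epsilon]}$. This follows because $E'$ is an iterated extension of semistable objects with phases in that interval.
\item The two new filtrations are ascending-phase filtrations.
\item Compatible morphisms induce compatible morphisms, giving a functor $\cV(\sigma)\to\cV(g\cdot\sigma)$.
\item The reverse move gives a quasi-inverse.
\end{enumerate}
Lemma~\ref{lem:strictness} (strictness) will be used to show that filtration-preserving morphisms correspond to maps of the entire flag diagrams. This makes the functor fully faithful, since morphisms in $\cV(\sigma)$ become morphisms of diagrams in $\cC'$.

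\textbf{Main obstacle.} I expect the main difficulty to be the elementary move itself. One has to identify precisely which fibre/cofibre construction implements the migration of a factor from the $+$-flag to the $-$-flag, and show that it is an equivalence rather than merely a functor. The crucial point is that the extension data stored in the two filtrations is exactly enough to reconstruct $E'$, and conversely to recover $E$. My first attempt would be the smallest nontrivial case, with one factor on each side, where everything reduces to a single triangle $A^-\to E\to A^+$ or its rotation. I would then induct on the total number of factors, using the finite-length property of $\cV(\sigma)$.
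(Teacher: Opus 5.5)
\textbf{There is a genuine gap.} The step you flag as the main obstacle is the whole content of the proof, and the proposal never supplies it. Neither the functor nor a quasi-inverse is defined; ``$\operatorname{cofib}(E^+_{\leq\phi_0}\to E)$ modified by gluing in $E^-$-data'' and ``the total object of the cyclically rotated diagram'' are not constructions.

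The missing idea is to take the total cofiber of both bottom pieces at once. For $t\in(0,1)$, set
\[
\Phi(E)\coloneqq\operatorname{cofib}\bigl(E^-_{\leq t}\oplus E^+_{\leq t}\to E\bigr).
\]
There are canonical identifications $\Phi(E)\simeq\operatorname{cofib}(E^\mp_{\leq t}\to E/E^\pm_{\leq t})$, so your guess pointed in the right direction. They give triangles $E/E^\pm_{\leq t}\to\Phi(E)\to E^\mp_{\leq t}[1]$ whose outer terms lie in $\cC_{(t,1]}$ and $\cC_{(1,t+1]}$. Hence $\Phi(E)\in\cC_{(t,t+1]}$. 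Concatenating the induced filtrations on $E/E^+_{\leq t}$ and on $E^-_{\leq t}[1]$ (resp.\ on $E/E^-_{\leq t}$ and $E^+_{\leq t}[1]$) gives the two new ascending-phase filtrations. This is exactly the migration of factors you describe.

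The quasi-inverse is the total fiber
\[
\Psi(F)\coloneqq\operatorname{fib}\bigl(F^-_{\leq 1}\oplus F^+_{\leq 1}\to F\bigr).
\]
Then $\Psi\Phi\simeq 1$ and $\Phi\Psi\simeq 1$, compatibly with both filtrations, by the $3\times 3$ identity $E\simeq\operatorname{fib}\bigl(E/A\oplus E/B\to\operatorname{cofib}(A\oplus B\to E)\bigr)$. A genuine cofiber, rather than a cokernel in $\cA$, is essential because $E^+_{\leq t}\cap E^-_{\leq t}$ can be nonzero. For instance, for $E$ semistable of phase at most $t$ with trivial filtrations, $\Phi(E)\simeq E[1]$, and the shift comes from the kernel of $E^-_{\leq t}\oplus E^+_{\leq t}\to E$.

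Your reductions also need repair. For a fixed small $\epsilon$ you cannot assume only one phase lies in $(0,\epsilon]$. This varies from object to object, semistable phases may be dense, and the functor must be defined uniformly on all of $\cV(\sigma)$. Likewise, an induction on the number of factors of an object does not produce a functor, let alone an equivalence.

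The total-cofiber construction makes both steps unnecessary. It moves all phases in $(0,t]$ simultaneously for any $t\in(0,1)$, and general rotations follow by composing such moves. Your first two steps (elements fixing phase $0$ act trivially, and the deck transformation is the shift) are fine.

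Finally, Lemma~\ref{lem:strictness} is not what gives full faithfulness. A filtration-preserving morphism already induces a map of flag diagrams, because the filtration steps are subobjects in $\cA$, on which the mapping spaces of $\cC'$ are discrete. The equivalence itself comes from the explicit inverse $\Psi$.
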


\begin{proof}
The group $\widetilde{\mathrm{GL}}^+(2,\bR)$ acts on the set $\bR$ of phases. If an element $g\in\widetilde{\mathrm{GL}}^+(2,\bR)$ fixes the phase $0\in \bR$, and thus $\bZ\subset\bR$, then, since $g$ fixes the heart and the order of the phases, it is clear that there is a canonical equivalence $\cV(\sigma)\simeq\cV(g\cdot \sigma)$.
Thus, we can restrict to the action of the subgroup $\bR\subset \widetilde{\mathrm{GL}}^+(2,\bR)$ which covers $U(1)\subset \mathrm{GL}^+(2,\bR)$.
In fact, it suffices to consider the following situation: Let $t\in (0,1)$, then we want an equivalence, $\Phi$, from $\cV(\sigma)$, the category of objects in $\cC_{(0,1]}$ with a pair of ascending-phase filtrations, to $\cV_t(\sigma)$, the category of objects in $\cC_{(t,t+1]}$ with a pair of ascending-phase filtrations (indexed by $(t,t+1]$).
Intuitively, we are tilting the horizontal line in~Figure~\ref{vertexdiagram} by an angle $\pi t$ in the clockwise direction.
For other values of $t\in \bR$, the equivalence follows by transitivity. 

We use the stable $\infty$-categorical enhancement $\cC'$. 
The heart $\cA=\cC_{(0,1]}$ embeds in $\cC'$, and for $X,Y\in\cA$ we have 
\[ 
\pi_n\operatorname{Map}_{\cC'}(X,Y) \cong \Hom_{\cC}(X[n],Y)=0, \qquad n>0. 
\] 
Thus the full $\infty$-subcategory of $\cC'$ spanned by the objects of $\cA$ has discrete mapping spaces and is canonically equivalent to the nerve of the ordinary category $\cA$. 
The same applies to finite diagram categories of filtrations in $\cA$. Hence the vertex category may be regarded as an ordinary category of coherent diagrams in $\cC'$. 

Let 
\[ 
(E,(E^\pm_{\leq\phi})_\phi)\in\cV(\sigma) 
\] 
and define 
\begin{equation}\label{eq:Phi-total-cofiber} 
\Phi(E)\coloneqq \operatorname{cofib}\left(E^-_{\leq t}\oplus E^+_{\leq t}\longrightarrow E\right), 
\end{equation} 
where the two components of the displayed morphism are the inclusions. 
Since $E\mapsto E^-_{\leq t}$ and $E\mapsto E^+_{\leq t}$ are functorial and cofibers are functorial in a stable $\infty$-category, this defines a functor on the underlying coherent diagrams. 

The canonical total-cofiber equivalence for a composable pair gives natural equivalences 
\[ 
\Phi(E) \simeq \operatorname{cofib}\left( E^-_{\leq t}\longrightarrow E/E^+_{\leq t} \right) 
\] 
and 
\[ \Phi(E) \simeq \operatorname{cofib}\left( E^+_{\leq t}\longrightarrow E/E^-_{\leq t} \right). 
\] 
Equivalently, there are natural exact triangles \begin{equation}\label{eq:tiltingtri1} 
E^-_{\leq t} \longrightarrow E/E^+_{\leq t} \longrightarrow \Phi(E) \longrightarrow E^-_{\leq t}[1] 
\end{equation} 
and 
\begin{equation}\label{eq:tiltingtri2} 
E^+_{\leq t} \longrightarrow E/E^-_{\leq t} \longrightarrow \Phi(E) \longrightarrow E^+_{\leq t}[1]. 
\end{equation} 

Since 
\[ 
E/E^+_{\leq t}\in\cC_{(t,1]}, \qquad E^-_{\leq t}[1]\in\cC_{(1,t+1]}, 
\] 
the first triangle shows that $\Phi(E)\in\cC_{(t,t+1]}$. 
The ascending-phase filtration on $E/E^+_{\leq t}$ induced by the first filtration on $E$, followed by the shifted ascending-phase filtration on $E^-_{\leq t}$ induced by the second filtration on $E$, gives the first filtration, $\Phi(E)^+$, on $\Phi(E)$. 

Similarly, the second triangle combines the filtration on $E/E^-_{\leq t}$ induced by the second filtration on $E$ with the shifted filtration on $E^+_{\leq t}$ induced by the first filtration on $E$, and gives the second filtration, $\Phi(E)^-$, on $\Phi(E)$. 

All these constructions are natural in $E$. Hence a morphism in $\cV(\sigma)$ induces a morphism in $\cV_t(\sigma)$ preserving both filtrations, and $\Phi$ is a well-defined functor. 

Conversely, let 
\[ 
(F,(F^\pm_{\leq\phi})_\phi)\in\cV_t(\sigma). 
\] 
Cut both filtrations at phase $1$, and define 
\[ 
\Psi(F)\coloneqq \operatorname{fib}\left( F^-_{\leq1}\oplus F^+_{\leq1}\longrightarrow F \right). 
\] 
The total-fiber identities give natural exact triangles \[ 
\bigl(F/F^+_{\leq 1}\bigr)[-1] \longrightarrow \Psi(F) \longrightarrow F^-_{\leq 1} \longrightarrow F/F^+_{\leq 1} 
\] 
and 
\[ \bigl(F/F^-_{\leq 1}\bigr)[-1] \longrightarrow \Psi(F) \longrightarrow F^+_{\leq 1} \longrightarrow F/F^-_{\leq 1}. 
\]
Since 
\[ 
F^\pm_{\leq1}\in\cC_{(t,1]},\quad (F/F^\pm_{\leq1})[-1]\in\cC_{(0,t]}, 
\] 
after the evident reindexing of phases these triangles define two ascending-phase filtrations on $\Psi(F)$, and $\Psi(F)\in\cC_{(0,1]}$.
We obtain a well-defined functor 
\[ 
\Psi\colon\cV_t(\sigma)\longrightarrow\cV(\sigma). 
\] 

Lastly, we check that $\Phi$ and $\Psi$ are mutually inverse. We use the following standard identity in a stable $\infty$-category. Given morphisms $A\to E$ and $B\to E$, set \[ T\coloneqq\operatorname{cofib}(A\oplus B\to E). \] Then the canonical maps $E/A\to T$ and $E/B\to T$ fit into a natural fiber sequence \[ E\longrightarrow E/A\oplus E/B\longrightarrow T, \] where one of the two components of the first map is taken with the usual sign convention. Equivalently, \[ E\simeq \operatorname{fib}\bigl(E/A\oplus E/B\to T\bigr). \] This follows from the standard $3\times3$ bicartesian diagram 
associated with the pair of morphisms $A,B\to E$. 

For $E\in\cV(\sigma)$, the definitions of the filtrations on $\Phi(E)$ give \[ \Phi(E)^-_{\leq1}\simeq E/E^-_{\leq t}, \qquad \Phi(E)^+_{\leq1}\simeq E/E^+_{\leq t}. \] Applying the preceding identity with $A=E^-_{\leq t}$ and $B=E^+_{\leq t}$ therefore gives a natural equivalence \[ \Psi\Phi(E)\simeq E. \] Under this equivalence, the two filtrations reconstructed by $\Psi$ identify with the original two filtrations on $E$, as follows from the same bicartesian diagram. 

Dually, for $F\in\cV_t(\sigma)$, the total-fiber version of the same identity gives a natural equivalence \[ \Phi\Psi(F)\simeq F, \] again compatible with both filtrations. Hence $\Phi$ and $\Psi$ are mutually inverse equivalences.
\end{proof}

\begin{remark}
Call two slicings of a triangulated category \textit{equivalent} if they differ by a reparameterization (orientation preserving homeomorphism) of the set $\bR$ of phases.
The proof of Theorem~\ref{thm:vcatgl2} shows, more generally, that $\cV(\sigma)$ only depends on the equivalence class of the slicing.
In the terminology of \textit{walls of the first/second kind} of~\cite{KS_motivicdt}, we can say that to any stability condition $\sigma$ we can attach two abelian categories:
\begin{enumerate}[(1)]
    \item The vertex category $\cV(\sigma)$, which is constant along a path of stability conditions not crossing any walls of the first kind.
    \item The heart $\cA=\cC_{(0,1]}$, which is constant along a path of stability conditions not crossing any walls of the second kind.
\end{enumerate}
\end{remark}

\begin{example}
    Fix a coefficient field $\mathbf k$ and let $\cA=\mathbf{Rep}(\bullet\to\bullet)$ be the category of representations of the $A_2$ quiver over  $\mathbf k$ (see also Section~\ref{subsec:quivers}). This category contains three indecomposable objects $A,B,C$, up to isomorphism, which fit into a non-split short exact sequence $0\to A\to B\to C\to 0$.
    We consider the triangulated category $\cC\coloneqq D^b(\cA)$ with any stability condition $\sigma$ with heart $\cA$.

\begin{itemize}
    \item     Suppose first that $\phi(A)>\phi(C)$, i.e.\ $B$ is not semistable. Then $B$ does not admit an ascending-phase filtration and the category $\cV(\sigma)$ is semisimple with two simples,  up to isomorphism.
    \item In the case $\phi(A)=\phi(C)$ there are no non-trivial ascending-phase filtrations, so $\cV(\sigma)=\cA$.
    \item   The case $\phi(A)<\phi(C)$ is more interesting. In this case, $B$ is stable and admits two distinct ascending-phase filtrations: the trivial one and $0\subset A\subset B$.
    It can be deduced from the discussion in~\cite[Section 6]{HKS} that $\cV(\sigma)$ is the category of coherent sheaves with 0-dimensional support (i.e.\ torsion sheaves) on a stacky $\bA^1$ with two orbifold points with stabilizers $\bZ/3$ and $\bZ/2$. This is the stacky $\mu_6$-quotient of a plane elliptic curve $[\{y^2=x^3+1\}/\mu_6]$. (Assume $\operatorname{char}\mathbf k\neq 2,3$.)
\end{itemize}
    It would be interesting to generalize this analysis to $A_n$-type quivers with larger $n$.
    This could shed some light on the complicated chamber structure of $\mathrm{Stab}(D^b(A_n))$.
\end{example}

\begin{example}
    Let $E=\bC^{\times}/q^{\bZ }$ be an elliptic curve over $\bC$ and $\cC=D^b(\mathrm{Coh}(E))$ with arbitrary numerical stability condition $\sigma$. (There is a single orbit under the action of $\widetilde{\mathrm{GL}}^+(2,\bR)$.)
    Then $\cV(\sigma)$ is equivalent to the category of holonomic modules over the algebra $$\left.\bC\langle X^{\pm 1},Y^{\pm 1}\rangle\middle/YX=qXY\right..$$
    This can be deduced, with some work, from the results of~\cite{sauloy09}, see also~\cite{RSZ}. 

    Bousseau--Bridgeland--Giovenzana have announced upcoming work in which they identify certain moduli spaces of objects in $\cV(\sigma)$ with complements of anticanonical divisors in del Pezzo surfaces.
\end{example}

\section{Heuristics from Fukaya categories}
\label{section:heuristics}

\subsection{Stability conditions from SCFT}\label{subsec:scft}

Recall that the very notion of stability condition by Bridgeland~\cite{bridgeland07} came as a formalization of ideas of Douglas~\cite{douglas_icm} who tried to reconcile the notion of $D$-brane with the $A_\infty$-category viewpoint on mirror symmetry proposed in~\cite{kontsevich_hms}.

The modern perspective is the following:
\begin{itemize}
    \item There is a well-defined notion of a $\cN=(2,2)$ supersymmetric 2-dimensional conformal field theory (SCFT in short), as a Hilbert space valued symmetric monoidal functor on a certain category of 2-dimensional bordisms with conformal structures. 
    
    Roughly, it is given by a countable-dimensional super representation of
the product of two $N=2$ super-Virasoro algebras, with a pre-Hilbert norm,
together with correlation functions which are real-analytic sections of
appropriate determinant (Berezinian) line bundles on moduli spaces of
$N=2$ superconformal curves (equivalently, complex $(1|1)$-dimensional
supermanifolds), the powers being determined by the central charges.
After either of the two topological twists, these pull back to closed
differential forms on the ordinary moduli spaces of complex curves with
marked points and formal coordinates, compatibly with sewing; see
\cite{Schwarz_superanalogs}.
It is notoriously difficult to work with this definition and to deduce
rigorously useful consequences from it.
    \item A typical connected component of the moduli space of SCFTs with central charge $(c,\bar c)=(3n,3n)$ for some integer $n\ge  0$ is expected to be a product $\cM_A\times \cM_B$ where each factor is the moduli space of complex structures of some complex projective Calabi--Yau varieties $X,X^\vee$ of complex dimension $n$. 
    The two isomorphism classes of CY varieties up to deformation are \textit{not} canonically defined. The actual mirror pair of varieties arises only near the product of two cusps (points at infinity of maximal degeneration) in the product $\cM_A\times \cM_B$. Complexified K\"{a}hler parameters for $X$ correspond to complex parameters for $X^\vee$ via so-called \textit{flat coordinates}, and vice versa.
    \item If we approach in the moduli space $\cM_A\times \cM_B$ as above a limiting point at infinity which is the product of a cusp in $\cM_A$ and some point in the interior of $\cM_B$ (so-called \textit{large volume limit}), we will have an approximate Lagrangian description of the SCFT as a $\sigma$-model whose target space $X$ is a Calabi--Yau variety with the metric rescaled by a large constant. The latter is a compact K\"{a}hler manifold $X$ endowed with a holomorphic volume form $\Omega^{n,0}$ such that 
  \begin{equation}
     \Omega^{n,0}\wedge \overline{\Omega^{n,0}}= \lambda\cdot (\omega^{1,1})^n \text { for some non-zero constant }\lambda\,.
  \end{equation}
  implying that the Ricci curvature of the underlying Riemannian metric vanishes. Another geometric piece of data is a $U(1)$-gerbe with flat connection (called \textit{$B$-field}). The cohomology class of the $B$-field,  $[i B]\in H^2(X,i\bR/2\pi i \bZ)$, is the imaginary counterpart of the K\"{a}hler class $[\omega^{1,1}]\in H^2(X,\bR)$. The most basic example of such a class is the one coming from a purely imaginary cohomology class in $H^2(X,i\bR)$ via the long exact sequence
  $$\dots \to H^2(X,2\pi i \bZ)\to H^2(X,i \bR)\to H^2(X,i\bR/2\pi i \bZ)\to H^3(X,\bZ)_{\text{tors}}\to 0\,.$$
  In this case the flat gerbe can be represented by the closed purely imaginary 2-form $iB$. The complexified K\"{a}hler class of a K\"{a}hler manifold with $B$-field is defined as 
  $$  [\omega^{1,1}]+[iB]\in H^2(X,\bR)\oplus H^2(X,i\bR/2\pi i \bZ)\simeq H^2(X,\bC^\times)\,.$$

    \item For each SCFT one has two canonically defined triangulated categories $\cC_A,\cC_B$ over $\bC$, corresponding to the two topological twists of the SCFT. The  category $\cC_A$ depends holomorphically on the $\cM_A$-component, and is locally constant in the $\cM_B$-component, whereas the category $\cC_B$ is holomorphic along $\cM_B$ and locally constant along $\cM_A$. In the case when the moduli space is a product $\cM_A\times \cM_B$ as above, the category $\cC_A$ is the Fukaya category of $X$ whose objects are roughly $B$-field twisted local systems  on Lagrangian submanifolds $L\subset X$ or, equivalently, the bounded derived category of coherent sheaves $D^b(\mathrm{Coh}\,X^\vee)$ on the mirror. Similarly, the category $\cC_B$ is the Fukaya category of $X^\vee$, or, equivalently, $D^b(\mathrm{Coh}\,X)$.
    \item There is a local system of $\bZ/2$-graded finitely-generated $\bZ$-modules $K_{top}$ on the moduli space of SCFTs. This local system is simultaneously identified with the ``topological K-theory'' of the categories $\cC_A$ and $\cC_B$. There is a natural map $\mathrm{cl}\colon K_0(\cC)\to K_{top}$ where $\cC$ is either $\cC_A$ or $\cC_B$.  In the case of a mirror pair as above, the fiber of the local system is the topological $K$-theory of the variety $X$ (the sum of the even and of the odd part). The holonomy along the factor $\cM_A$ comes from autoequivalences of Fourier-Mukai type of $D^b(\mathrm{Coh}\,X)$ corresponding to   symplectomorphisms of $X^\vee$, whereas the holonomy along  $\cM_B$ comes from symplectomorphisms of $X$.
    \item Each of the categories $\cC_A,\cC_B$ carries a stability condition, where the central charge $Z$ (up to scalar in $\bC^\times$) is controlled by the functional ``pairing with the holomorphic volume form''.
    \item Stable objects in $\cC_A$, $\cC_B$ are exactly $D$-branes, i.e.\ extensions of the SCFT to a so-called boundary SCFT, preserving a certain part of the supersymmetry.
\end{itemize}
  Heuristically, $D$-branes should be thought of as ``stable objects together with a choice of harmonic metrization''. 
  At the large volume limit, $D$-branes of type $A$ have a nice geometric description\footnote{One expects that at a possibly everywhere dense collection of proper complex subvarieties in $\cM_A$ the corresponding categories $\cC_A$ can have other stable objects called \textit{coisotropic branes} which are roughly ``supported'' on coisotropic subvarieties of dimensions $n+2,n+4,\dots$. However, by Hodge-theoretic reasons, for any given class $\gamma$ in $K_{top}$ which does not correspond to a middle-dimensional Lagrangian cycle, for any point of $\cM_A$ sufficiently close to the large volume limit there are no objects $\cE$ in the corresponding category $\cC_A$ such that $\operatorname{cl}(\cE)=\gamma$.}: Away from possible singularities of the support, it is approximately a \textit{special Lagrangian} submanifold
  $$ \exists \theta\in \bR /2\pi\bZ: \quad \arg {\Omega^{n,0}}|_{L}=\theta$$
  where the angle $\theta$ can be determined as the phase of the integral of $\Omega^{n,0}$ over $L$, i.e.\ the argument of the central charge. More precisely, in the case of a $B$-field coming from a closed purely imaginary 2-form $i B$ on $X$, $L$ carries  a Hermitian vector bundle $\mathcal E$ with unitary connection $\nabla$ whose curvature $\nabla^2$ is the identity endomorphism multiplied by $i B$ restricted to $L$.
  
  \
  
  As noted above, it is almost impossible to work with the definition of SCFT as a symmetric monoidal functor. What we will try below is an attempt to strip off some unnecessary structures (in particular, we will forget about interaction of the two categories, and will deal exclusively with $\cC_A$) and try to extract the geometric origin of stability in the framework of Fukaya categories.

\subsection{Fukaya categories: general idea}\label{subsec:fukayageneralidea}

Let $(X,\omega)$ be a symplectic manifold of dimension $2n$, not necessarily compact.
The Fukaya category associated with $(X,\omega)$ (possibly with an auxiliary $B$-field) is linear over the so-called \textit{Novikov field}
$$Nov=\left\{\sum_{i=1}^\infty c_i T^{A_i}\mid c_i\in \bC, A_i\in \bR, \lim_i A_i=+\infty\right\}$$
which is a complete  non-Archimedean field\footnote{In the case when $[\omega]\in H^2(X,\bR)$ belongs to the image of $H^2(X,\bZ)$, one can define a version of the Fukaya category over the smaller and more common Laurent series field $\bC(\!(T)\!)$.} with valuation group $\bR$.

From the SCFT perspective, the role of the formal parameter $T$ (which can be thought of as a ``small positive number'' $0<T\ll 1$) is that we consider not an individual $\bC$-linear $A$-model category $\cC_A$ but rather a one-parameter family of such categories corresponding to the path in the complexified 
K\"{a}hler cone (identified with a domain in the moduli space $\cM_A$)
\[ 
0<T\ll 1 \mapsto \log (T^{-1})\cdot [\omega^{1,1}]+[iB]\in H^2(X,\bC^\times).
\]
In what follows, we will assume for simplicity that the $B$-field vanishes and omit it from the notation.

Recall that an almost complex structure $J$ on $X$ is called $\omega$-\textit{compatible} iff for any point $x\in X$ and any non-zero tangent vector $v\in T_x X, v\ne 0$, we have
  $$\omega(v, Jv)>0\,.$$
The space of $\omega$-compatible almost complex structures is contractible, being the space of sections of a bundle with contractible fibers.
Let us choose an $\omega$-compatible $J$. This choice is used in the traditional definition of the Fukaya category; the equivalence class does not depend on $J$.
  
To first approximation, the simplest class of  objects of the \textit{compactly supported} Fukaya category $\cF_c(X,\omega)$ consists of compact Lagrangian submanifolds $L\subset X$, with morphisms from $L_1$ to $L_2$ given by linear combinations of intersection points $L_1\cap L_2$ (if the intersection is transverse), and cyclic $A_\infty$ structure given by counting with signs $J$-pseudo-holomorphic discs with boundary on a cyclic chain of Lagrangians $L_0,L_1,\dots, L_k=L_0$ in $X$, with weight $T^{\int_D\omega}$ ensuring adic convergence by Gromov compactness (generically there will be only finitely many discs with symplectic area bounded from above).

  \ 

There are well-known problems with this definition concerning grading and orientation, potential pathological behavior of the symplectic structure at infinity (if $X$ is non-compact), genericity and transversality issues, as well as the fact that some Lagrangian submanifolds are ``obstructed'' and not actual objects of the category (this can happen when there exists a non-trivial $J$-holomorphic disc with boundary on $L$). We will pretend that the problem of obstructedness does not exist, until it will be addressed later, in Section~\ref{sec:singular}.

  \
  
Let us first discuss the grading/orientation problem.
Recall that the symplectic structure leads to a reduction of the structure group $\mathrm{GL}(2n,\bR)$ on the tangent bundle to the subgroup $\mathrm{Sp}(2n,\bR)$ which is homotopy equivalent to $\mathrm U(n)$. Hence one has well-defined Chern classes $c_i\in H^{2i}(X,\bZ)$. In order to have a well-defined $\bZ$-grading on the Fukaya category it is enough to have $2c_1=0\in H^2(X,\bZ)$, together with the choice of ``trivialization'' on the cochain level. In order to alleviate the exposition it will be more convenient for us to assume the stronger condition $c_1=0$.

\begin{definition}
For $n\ge 1$, an \textit{almost Calabi--Yau structure} on $(X,\omega)$ is a choice of reduction of the structure group on  the tangent bundle from $\mathrm{Sp}(2n,\bR)$ to $\mathrm{SU}(n)$.
\end{definition}

Equivalently, such a structure on $(X,\omega)$ is the same as a choice of $\bC$-valued $n$-form $\Omega$ such that for each point $x\in X$ one can identify the tangent space $T_x X$ with standard $\bR^{2n}$ with coordinates $x_1,y_1,x_2,y_2,
\dots,x_n,y_n$ in such a way that
$$\omega|_{T_x X}=dx_1\wedge dy_1+\dots +dx_n\wedge dy_n,\quad \Omega|_{T_x X}=(dx_1+i dy_1)\wedge\dots\wedge(dx_n+idy_n).$$
Notice that an almost Calabi--Yau structure automatically gives an almost complex structure $J$ compatible with $\omega$. The restriction of $\Omega$ to any Lagrangian subspace in $T_x X$ is a non-vanishing top degree complex-valued form.

An almost Calabi--Yau structure exists iff $c_1=0$. The special case when $\Omega$ is closed  corresponds exactly to Calabi--Yau structures in the usual sense (natural from the perspective of large volume limits of SCFTs), i.e.\ $X$ carries a complex structure so that $\omega=\omega^{1,1}$ is  a K\"{a}hler metric with vanishing Ricci curvature, and $\Omega=\Omega^{n,0}$ is a everywhere non-vanishing holomorphic volume  form  normalized by a multiplicative constant so that
\begin{equation}\label{eq:normalized}{\frac{1}{n!}}\omega^n={\frac{1}{ (-2i)^n}}\Omega\wedge\overline{\Omega}\,.\end{equation}

\ 

For any oriented Lagrangian submanifold $L\subset X$ and an almost Calabi--Yau structure we have a well-defined map
$$\arg\Omega|_{L}\colon L\to \bR/2\pi \bZ\,.$$
Namely, at each point $x\in L$ consider the pairing between $\Omega$ and the wedge product of the oriented real basis of $T_x L$. The result is a non-zero complex number whose argument modulo $2\pi \bZ$ depends only on the orientation of $L$.
 The homotopy class of the map $\arg\Omega|_{L}$ from $L$ to the circle $S^1\simeq \bR/2\pi \bZ$ is an element of $H^1(L,\bZ)$ called the \emph{Maslov class} of $L$ in the presence of an almost Calabi-Yau structure. There exists (in the semi-classical approximation) an actual object of $\cF_c(X,\omega)$ (defined up to shift) corresponding to $L$ iff the Maslov class vanishes and also $L$ is endowed with a spin structure\footnote{The role of spin structures is to fix an orientation on the space of pseudo-holomorphic discs, in order to have a well-defined counting in the  definition of the structure constants in Fukaya category.}. Moreover, the choice of a shift of the actual object corresponds to the choice of a continuous  lift (called \textbf{grading})
 $$\arg^*\Omega|_{L}\colon L\to \bR$$
 of the $S^1$-valued map $\arg\Omega|_{L}$. For two Lagrangian spin submanifolds $L_1,L_2\subset X$ together with gradings $\arg^*\Omega|_{L_i},i=1,2$, for any transverse intersection point $p\in L_1\cap L_2$ one can define its index  in $\bZ$ using the lifts and the ``shortest clockwise'' path from $T_p L_1$ to $T_p L_2$ in the  Lagrangian Grassmannian $\mathrm{LGr}(T_p X)$. This index is the cohomological degree of the summand corresponding to $p$ in the Hom-complex of morphisms from $L_1$ to $L_2$ (the Floer complex). In the case of cohomological degree $+1$, one can resolve the intersection point by a little neck, the resulting object is an extension of $L_2$ by $L_1$.

Notice that the normal bundle to Lagrangian submanifold $L$ in $X$ is canonically isomorphic to the cotangent bundle $T^*L$, and the first-order infinitesimal deformations of $L$ as a \textit{Lagrangian} submanifold  correspond to \textit{closed} $1$-forms on $L$. It is known that exact deformations (corresponding to exact $1$-forms) give rise to an isomorphic object of $\cF_c(X,\omega)$ (ignoring temporarily other issues, like potential obstructedness of the object etc.). Therefore, we get a family of isomorphism classes of objects parametrized locally by an open domain in the real vector space $H^1(L,\bR)$. 

 \

In the case of Calabi--Yau metrics, the expected simplest stable objects of category $\cC$ in the classical limit of SCFT are (roughly) oriented graded compact Lagrangian submanifolds $L\subset X$ with spin structure, which are also \textit{special Lagrangian}:
$$ \arg^* \Omega|_{L}\text { is constant}.$$
Such submanifolds are known to be \textit{minimal} submanifolds, i.e.\ minimizing the volume with respect to the induced Riemannian metric among all oriented $n$-dimensional submanifolds (not necessarily Lagrangian or spin) in the given homology class. This follows from the classical calibration inequality
\[
\operatorname{vol}(Y)\ge \int_Y \operatorname{Re} (e^{-i \theta}\Omega)\quad \forall \theta\in \bR/2\pi\bZ,\quad \forall\,Y\subset X,\dim Y=n.
\]

The central charge is given by the composition of the ``Chern class'' map
$$\mathrm{cl}\colon K_0(\cF(X,\omega))\to \Gamma\coloneqq H_n(X,\bZ)$$
which associates to the object corresponding to Lagrangian $L$ its fundamental class $[L]\in \Gamma$,
 and of the homomorphism
$$  Z\colon \Gamma\to \bC,\qquad \gamma\mapsto \int_\gamma \Omega$$
where we use the fact that form $\Omega$ is closed in the Calabi--Yau case.

\subsection{Minimizing flow and  heuristic for Bridgeland stability}\label{subsec:minflow}

  Let us  assume that we have an almost Calabi--Yau structure $(\omega=\omega^{1,1},\Omega=\Omega^{n,0})$ on $X$ where $\Omega$ is closed and  normalized as in \eqref{eq:normalized}. Then on the space of oriented graded spin Lagrangian submanifolds we get a canonical flow
  \begin{equation}\label{eq:flow}\dot L=-d\arg^* \Omega|_{L}\end{equation}
by exact deformations\footnote{One can define an analogous flow (by non-exact deformations) on the space of Lagrangian submanifolds without a choice of orientation, spin structure, or trivialization of the Maslov class. It is a symplectic analog of the classical mean curvature flow for hypersurfaces in a Riemannian manifold.} of $L$. By definition, special Lagrangian submanifolds are exactly the fixed points of this flow.

The basic heuristic picture of Bridgeland's axiomatics is the following: For a given oriented graded spin Lagrangian submanifold $L$, run the flow \eqref{eq:flow} in positive time, obtaining a family $(L_t)_{t\ge 0}$ of submanifolds corresponding to \textit{isomorphic} objects of $\cF_c(X,\omega)$. The expectation is that this object is semistable iff the flow converges as $t\to+\infty$ to a special Lagrangian with phase $\phi$ where
\[
\pi\cdot \phi\coloneqq\arg^* \Omega|_{L_\infty}=\arg^* Z(cl(L_\infty))=\arg^* Z(cl(L)),
\]
here $\arg^*$ denotes also a choice of the argument of the central charge. More generally, 
  for large $t$ we expect that $L_t$ is close to the union of several special Lagrangians $L_1,\dots,L_k$  with different phases $\phi_1>\dots>\phi_k$. This decomposition  corresponds to the canonical Harder--Narasimhan filtration.

  There are several properties of \eqref{eq:flow} which give reasons to believe that the trajectory  $(L_t)_{t\ge 0}$ does not diverge at finite time and stays in a sense bounded:
\begin{itemize}
    \item the total Riemannian volume $\operatorname{vol}(L_t)=\int_{L_t}\left|\Omega|_{L_t}\right|$ decreases in $t$,
    \item the maximum of the value of $\arg^* \Omega|_{L_t}$ decreases in $t$,
    \item the minimum of the value of $\arg^* \Omega|_{L_t}$ increases in $t$.
\end{itemize}

\subsubsection{Complex-valued \texorpdfstring{$n$}{n}-forms non-vanishing on Lagrangians} \label{subsec:complex-forms}

One can extend the above heuristics to a more general class of closed complex-valued forms $\Omega$ on a symplectic manifold $(X,\omega)$, generalizing the Calabi--Yau case. Recall the generalized Siegel domain introduced by the first named author in \cite{H20}:
For a real symplectic vector space $V$ of dimension $2n$ with symplectic pairing $\omega$, we define the complex manifold $\cU(V)$ as the open domain in the space of primitive complex-valued $n$-linear skew-symmetric functionals\footnote{For a functional $\Omega\colon\bigwedge^n V\to \bC$ the condition of being primitive, $\omega^{-1}\llcorner \Omega=0$, is equivalent to the condition of being coprimitive, $\omega\wedge \Omega=0$.} $\Omega$ on $V$ such that $\Omega|_{L}\ne 0\in (\bigwedge^n L^\vee)\otimes \bC\simeq \bC$ for any Lagrangian subspace $L\subset V$. If $n\ge 1$ the space  $\cU(V)$ has two connected components; we denote by $\cU^+(V)$ the one containing translationally-invariant almost Calabi-Yau structures, see \eqref{eq:normalized}.

Any $\Omega\in \cU(V)$ gives a circle-valued function
\[
\phi_\Omega\colon\mathrm{LGr}(V)\to \bR/\pi\bZ,\quad L\mapsto \arg{\Omega|_{L}}
\]
on the Lagrangian Grassmannian of $V$.
\begin{lemma}\label{lem:positive}
     For $\Omega\in \cU^+(V)$ the derivative of $\phi_{\Omega}$ at any point $L\in \mathrm{LGr}(V)$ is a positive-definite symmetric bilinear form on $L^\vee$ after the natural identification $T^*_L\mathrm{LGr}(V)\simeq \mathrm{Sym}^2 L\simeq (\mathrm{Sym}^2 L^\vee)^\vee$.
\end{lemma}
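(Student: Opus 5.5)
The plan is to reduce the positive-definiteness statement to a one-dimensional monotonicity statement along rank-one directions, and then fix the sign by a connectedness argument. Throughout, I use the standard identification of $T_L\mathrm{LGr}(V)$ with $\mathrm{Sym}^2L^\vee$. Choose a Lagrangian complement $L'$, identified with $L^\vee$ via $\omega$. Nearby Lagrangians are then graphs $L_A=\{x+Ax\}$ of symmetric maps $A\colon L\to L'\cong L^\vee$. The derivative $q\coloneqq d\phi_\Omega|_L$ is a linear functional on $\mathrm{Sym}^2L^\vee$, i.e.\ an element of $\mathrm{Sym}^2L$, viewed as a quadratic form on $L^\vee$. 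Evaluating $q$ on $\xi\in L^\vee$ amounts to evaluating the derivative on the rank-one direction $A=\xi\otimes\xi$.

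\textbf{Step 1: $q$ is definite for every $\Omega\in\cU(V)$.} Fix $0\neq\xi\in L^\vee$ and let $W\coloneqq\ker\xi\subset L$, an isotropic subspace of dimension $n-1$. The Lagrangians containing $W$ form a circle, namely $\mathrm{LGr}(W^\omega/W)\cong\bR P^1$, and the path $t\mapsto L_{t\,\xi\otimes\xi}$ is (to first order) a path in this circle through $L$.
\begin{itemize}
\item Fix a basis $w_1,\ldots,w_{n-1}$ of $W$ and contract: $\alpha\coloneqq\Omega(w_1,\ldots,w_{n-1},\,\cdot\,)$ is a complex-valued linear form on $W^\omega$.
\item By skew-symmetry $\alpha$ vanishes on $W$, so it descends to a complex-valued linear form on the $2$-dimensional symplectic space $W^\omega/W$.
\item For a line $\ell\subset W^\omega/W$, the restriction $\Omega|_{W+\ell}$ equals $\alpha|_\ell$ up to a positive real factor, depending on orientation conventions.
\item The hypothesis $\Omega\in\cU(V)$ says $\alpha|_\ell\neq0$ for every line $\ell$. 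In coordinates, $\alpha=a\,dx+b\,dy$ with $a,b\in\bC$ linearly independent over $\bR$.
\end{itemize}
Consequently $\theta\mapsto\arg\alpha(\cos\theta,\sin\theta)$ is a strictly monotone function with nowhere vanishing derivative. Hence $q(\xi,\xi)\neq0$ for all $\xi\neq0$. Since the unit sphere in $L^\vee$ is connected for $n\geq2$, and the case $n=1$ is immediate, $q$ is definite.

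\textbf{Step 2: the sign is positive.} The sign of $q$ depends continuously on $(\Omega,L)\in\cU^+(V)\times\mathrm{LGr}(V)$. This space is connected, since $\mathrm{LGr}(V)$ is connected and $\cU^+(V)$ is a connected component. Since by Step 1 the sign never degenerates, it suffices to compute it at one point. Take the standard $\Omega_0=(dx_1+\sqrt{-1}dy_1)\wedge\cdots\wedge(dx_n+\sqrt{-1}dy_n)$ and $L=\bR^n_x$. Then $L_A=\{x+\sqrt{-1}Ax\}$ and $\Omega_0|_{L_A}=\det(I+\sqrt{-1}A)$, whose argument is $\sum_j\arctan\lambda_j(A)$. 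The derivative at $A=0$ is $\tr A$, which corresponds to the identity form on $L^\vee$, hence is positive definite.

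\textbf{Main obstacle.} I expect the main obstacle to be bookkeeping rather than substance. One has to check that the rank-one direction $\xi\otimes\xi$ really is tangent to the circle of Lagrangians containing $\ker\xi$. One must also verify that the orientation and sign conventions in the identification $T^*_L\mathrm{LGr}\simeq\mathrm{Sym}^2L$, together with the mod-$\pi$ phase, are consistent between Steps 1 and 2; the argument itself is insensitive to an overall sign convention. Primitivity of $\Omega$ seems not to be needed beyond the definition of $\cU^+(V)$.
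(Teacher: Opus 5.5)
Your proposal is correct and follows essentially the same route as the paper. Both arguments restrict to rank-one deformations that fix an $(n-1)$-dimensional isotropic subspace, where nonvanishing of $\Omega$ on Lagrangians makes the contracted form a real-linear isomorphism $\bR^2\to\bC$; the paper's $F(x,y)=ax+by$ is exactly your $\alpha$. The one difference is that you spell out the sign determination, via connectedness of $\cU^+(V)\times\mathrm{LGr}(V)$ and the computation at the standard $\Omega_0$, which the paper compresses into the assertion that $\Omega\in\cU^+(V)$ forces $F$ to be orientation-preserving.
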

 
\begin{proof}
Let $v_1,\dots, v_n$ be a basis of $L$, and choose a vector $u_1\in V$ such that $\omega(v_1, u_1)=1$ and $\omega(v_i, u_1)=0$, $i\ge 2$. We have two non-zero complex numbers
\[
a\coloneqq\Omega(v_1\wedge v_2\wedge \dots \wedge v_n),\quad b\coloneqq\Omega(u_1\wedge v_2\wedge \dots \wedge v_n)
\]
such that for any $(x,y)\in \bR^2\setminus \{(0,0)\}$ we have
\[
a\cdot x+b\cdot y=\Omega((v_1\cdot x +u_1\cdot y)\wedge v_2\wedge\dots\wedge v_n)\ne 0\in  \bC.
\]
Therefore, the map 
\[
F\colon \bR^2\to \bC,\quad (x,y)\mapsto ax+by
\]
is a real-linear isomorphism.  The condition $\Omega\in \cU^+(V)$ implies that $F$ is orientation-preserving, thus $\implies\operatorname{Im}\bar a b>0$. Consequently, the derivative of $\phi_\Omega$ for the tangent vector at $L$ corresponding to infinitesimal deformation 
\[
L_\epsilon\coloneqq\bR\cdot (v_1+\epsilon u_1)+\sum_{i=2}^n \bR \cdot v_i
\]
is strictly positive.
\end{proof}
For an arbitrary symplectic manifold $(X,\omega)$ of dimension $2n$ we can consider closed complex-valued $n$-forms $\Omega$ such that for any point $x\in X$ the primitive part of $\Omega|_{T_x X}$ belongs to $\cU^+(T_x X)$. For such a form we have a linear functional
\[
Z\colon H_n(X,\bZ)\to \bC,\quad \gamma\mapsto \int_\gamma \Omega.
\]
Like in the (almost) Calabi--Yau case, the form $\Omega$ gives a trivialization of $c_1$ on the cochain level, and we can define the flow as in \eqref{eq:flow}.  The only difference is that there is no obvious almost complex structure canonically associated with $\Omega$, nor is there an obvious canonical Riemannian metric. 
Nevertheless one can define the total volume by
\[
\operatorname{vol}(L)\coloneqq\int_L|\Omega|_{L}|\in \bR_{\ge 0}
\]
as well as two real numbers 
\[
\phi_+(L)\coloneqq\pi^{-1}\cdot \max \arg^*\Omega|_{L}, \quad \phi_-(L)\coloneqq\pi^{-1}\cdot\min \arg^*\Omega|_{L}.
\]
Lemma~\ref{lem:positive} implies that under the minimizing flow we have the same monotonicity properties as in the Calabi--Yau case:
\begin{itemize}
      \item total volume decreases, 
      \item $\phi_+(L)$ decreases,
      \item $\phi_-(L)$ increases.
\end{itemize}

Notice that we have a direct analogue of two-sided bound \eqref{eq:two-sided}: if $\phi_+(L)<\phi_-(L)+1$ then
\begin{equation}\label{eq:phase-volume-bound}
|Z([L])| \leq vol(L) \leq
\frac{1}{\cos\frac{\pi(\phi_+(L)-\phi_-(L))}{2}}\,|Z([L])|.
\end{equation}

\subsection{Singular Lagrangians and a conjectural global Fukaya category}\label{sec:singular}

Here we return to technical issues in the would-be definition of Fukaya category. 

First, in order to control behavior at infinity for non-compact $X$, at least for the definition of the compactly supported version $\cF_c(X,\omega)$, there are several sufficient criteria. One of them says that there exists  a complete Riemannian metric on $X$ such that tensors $\omega,\omega^{-1}$ and the Riemannian curvature tensors are uniformly bounded from above, and the injectivity radius is uniformly bounded from below. Nevertheless, some natural examples  related to spectral networks (see e.g. Section \ref{section:spectral} and \cite{HKS}) deal with a \textit{non-compact} version of the Fukaya category (so-called partially wrapped one) in which Lagrangian submanifolds go to infinity in a certain controlled manner.

\ 

The second issue concerns \textit{smoothness} of Lagrangian submanifolds representing objects of the Fukaya category. The current state-of-the-art situation where one deals only with smooth or at most immersed Lagrangian submanifolds (and possibly endowed with finite rank $\bC$-linear local systems) looks to us far from ideal, especially from the perspective of Bridgeland stability and minimizing flow. It looks very plausible that potential polystable objects are associated with special Lagrangian subsets which could have singularities (at least in real codimension $2$), like e.g.\ holomorphic curves in complex symplectic surfaces.

In what follows we will describe a hypothetical picture proposed some time ago by the third named author (see the initial proposal \cite{Kontsevich-symp-hom} and more recent update~\cite[Sections 3.3, 3.4]{Kontsevich-9ECM}). Also, in the next Section~\ref{sec:sheaf} we will describe an approach via sheaf theory which seems to work in simple toric situation. 

\ 

We assume that $c_1(X)=0$ and the first Chern class of $X$ is trivialized on the cochain level either by an almost Calabi--Yau structure, or more generally, by a complex-valued middle degree form as in Section~\ref{subsec:complex-forms}. 
The basic idea is that with any locally closed ``Lagrangian'' set $L\subset X$ with ``reasonable'' singularities (e.g.\ subanalytic subset which is Lagrangian at smooth points, although we do not want to be too precise here), we would like to associate a certain canonical small $A_\infty$-category $\cW \cF ^{\mathrm{local}}(L)$ defined over $\bZ$ and depending only on the germ\footnote{If $L$ is a retract of its Liouville tubular neighborhood $U$, the category $\cW \cF ^{\mathrm{local}}(L)$ is expected to be a part of an  appropriate wrapped Fukaya category of $U$ with stops, consisting of Lagrangian discs in $U$ transverse to smooth points of $L$.} of $(X,\omega)$ along $L$ such that
\begin{itemize}
  \item the set $\operatorname{Ob}\bigl(\mathcal{WF}^{\mathrm{local}}(L)\bigr)$
        consists of smooth points $x\in L$ together with a local orientation,
        local spin structure near $x$ and a local lift of
        $\arg(\Omega|_{T_xL})$ from $\mathbb{R}/2\pi\mathbb{Z}$ to
        $\mathbb{R}$,
  \item if $L$ is an embedded open $d$-dimensional disc, then all objects are
        isomorphic up to cohomological shift, and the endomorphism
        $A_\infty$-algebra of each object is quasi-isomorphic to $\mathbb{Z}$,
  \item every open embedding $L_1\hookrightarrow L_2$ induces a functor
        (tautological on objects)
        $\mathcal{WF}^{\mathrm{local}}(L_1)\to\mathcal{WF}^{\mathrm{local}}(L_2)$,
        so we get a co-presheaf of categories on every locally closed $L$,
  \item this co-presheaf is a cosheaf for every locally closed $L$,
  \item every closed embedding $L_1\hookrightarrow L_2$ induces a functor
        (obvious on objects)
        $\mathcal{WF}^{\mathrm{local}}(L_2)\to\mathcal{WF}^{\mathrm{local}}(L_1)\sqcup\{0\}$
        which identifies $\mathcal{WF}^{\mathrm{local}}(L_1)$ with the
        localization of $\mathcal{WF}^{\mathrm{local}}(L_2)$ by objects
        corresponding to $x\in L_2^{\mathrm{smooth}}\setminus L_1$.
\end{itemize}
The cosheaf of small categories on $L$ gives a sheaf
$\mathcal{WF}^{\mathrm{local},\vee}_{L}$ of large categories by associating with
each open $L'\subset L$ the category of triangulated functors from
$\mathcal{WF}^{\mathrm{local}}(L')$ to complexes of $\mathbb{Z}$-modules.

\ 

These (hypothetical) properties imply that when $L$ is a oriented graded connected Lagrangian submanifold with spin structure, the category $\cW \cF ^{\mathrm{local}}(L)$ is equivalent to the dg category  whose objects are points of $L$, and the complex of morphisms $\Hom(x_1,x_2)$ is the chain complex of the space of paths in $L$ from $x_1$ to $x_2$. The dual category $\Gamma(\mathcal{WF}^{\mathrm{local},\vee}_{L})$ is the category of complexes of sheaves whose cohomology sheaves are local systems on $L$, possibly of infinite rank.

 \

 The next step (largely undeveloped at the moment, except in the cases when $L$ is the image of a smooth immersion with transverse self-intersections, or when $\dim X=2$) is that pseudo-holomorphic discs in $X$ with boundary on a closed singular Lagrangian $L$ give rise to a canonical deformation of $\cW \cF ^{\mathrm{local}}(L)$ over the Novikov ring 
 \[Nov_+\coloneqq\{a\in Nov\,\mid\,|a|\le 1\}=\left\{\sum_{i=1}^\infty c_i T^{A_i}\mid c_i\in \bC, A_i\in \bR_{\ge 0}, \lim_i A_i=+\infty\right\}.\]

Assuming the existence of such a canonical deformation, we define the $Nov_+$-linear global Fukaya category \textit{supported} on $L$ (denoted by $\cF^\mathrm{global}_{L,+}(X,\omega)$) as the category  of  $\mathrm{Perf}(Nov_+)$-representations of the deformed $\cW \cF ^{\mathrm{local}}(L)$. The localization property implies that for $L_1\subset L_2$ we have a fully faithful embedding $\cF^\mathrm{global}_{L_1,+}(X,\omega)\hookrightarrow \cF^\mathrm{global}_{L_2,+}(X,\omega)$. The universal $Nov_+$-linear Fukaya category $\cF^\mathrm{global}_{+}(X,\omega)$ can  be defined as the inductive limit of $\cF^\mathrm{global}_{L,+}(X,\omega)$ over the poset of all compact singular Lagrangians $L\subset X$.
Finally, the Fukaya category over $Nov$ is defined as the localization
 $Nov\otimes_{Nov_+}\cF^\mathrm{global}_{+}(X,\omega)$.

 We expect that the hypothetical picture of the minimizing flow extends to our putative ``correct'' Fukaya category, and we will get a stability condition on it.

\subsection{A sheaf-theoretic model for Fukaya categories and stability}
\label{sec:sheaf}

Here we propose a  rigorous definition, motivated by mirror symmetry, of a  large $Nov_+$-linear triangulated category which seems to  contain as a full subcategory $\cF^\mathrm{global}_{+}(X,\omega)$ for the split symplectic torus $X\simeq (S^1)^{2n}$, the product of $n$ copies of the standard $2$-dimensional symplectic torus $(\bR/\bZ)^2$ of area $1$. Moreover, one can formulate a precise conjectural characterization of a Bridgeland stability condition on the localized $Nov$-linear category in terms of an arbitrary closed complex-valued $n$-form $\Omega$ on $X$ satisfying the non-vanishing condition from Section~\ref{subsec:complex-forms}. Finally, we propose a generalization to the case of ``Fukaya category with coefficients''.

\subsubsection{A large $Nov_+$-linear category associated to a Tate abelian variety}

Let us consider the following sheaf $\cO_{+,\bR^n}$ of $Nov_+$-algebras on $\bR^n$: Its space of sections over any open bounded convex domain $U\subset \bR^n$ is the set of Laurent series
$$  
\sum_{k=(k_1,\dots,k_n)\in\mathbb Z^n}  
c_k z^k,  
\qquad  
c_k\in Nov,  
\qquad  
z^k=z_1^{k_1}\cdots z_n^{k_n},  
$$
such that for every $k\in \bZ^n$ one has
\[ \log|c_k|+ \sup_{x\in U}\sum_{i=1}^n k_i x_i\le 0.\]
This sheaf is equivariant under the group of translations by $\bR^n$:
\[x_i\mapsto x_i+\delta x_i, i=1,\dots,n,\quad c_k\mapsto c_k\cdot T^{\sum_{i=1}^n k_i \delta x_i}.\]
By taking the quotient by the subgroup $\bZ^n\subset \bR^n$, we obtain a sheaf $\cO_{+,B}$ of $Nov_+$-algebras on $n$-dimensional torus $$B\coloneqq\bT^n=\bR^n/\bZ^n.$$ Extending scalars we get a sheaf \[\cO_{B}\coloneqq Nov\otimes_{Nov_+}\cO_{+,B}\]
of $Nov$-algebras on $B$. 
\begin{proposition}
The category of perfect complexes over $\cO_{B}$ is naturally equivalent to the category of perfect complexes on the abelian variety  over $Nov$ which is the $n$-th power of the Tate elliptic curve $\bE\coloneqq\bG_{m,Nov}^{an}/T^\bZ$. 
\end{proposition}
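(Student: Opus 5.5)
The plan is to identify $(B,\cO_B)$ with the \emph{tropicalization} of the Tate abelian variety $\bE^n$. Consider the valuation (tropicalization) map
\[
\mathrm{trop}\colon (\bG_{m,Nov}^{an})^n\to\bR^n,\qquad (z_1,\dots,z_n)\mapsto(-\log|z_1|,\dots,-\log|z_n|),
\]
normalized so that $|T|=e^{-1}$. It is $T^{\bZ^n}$-equivariant, with $T^{\bZ^n}$ acting by translations by $\bZ^n$, and so descends to a continuous proper map $\tau\colon\bE^n\to B=\bR^n/\bZ^n$. The key local claim is that for a bounded open convex $U\subset\bR^n$, the ring $\cO_{\bR^n}(U)=Nov\otimes_{Nov_+}\cO_{+,\bR^n}(U)$ is the ring of analytic functions on the polyannulus $\mathrm{trop}^{-1}(U)$, up to the sign conventions for the $x_i$. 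Indeed, a Laurent series $\sum c_kz^k$ converges on $\mathrm{trop}^{-1}(U)$ exactly when $|c_k|e^{\langle k,x\rangle}$ is bounded (after rescaling by $Nov$) uniformly on $U$. The condition in the definition of $\cO_{+,\bR^n}(U)$ is the integral unit-ball version of this, and inverting $T$ gives bounded functions; on a relatively compact convex $U$ these coincide with the relevant affinoid/Stein functions.

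Step 1 is to show that $\tau_*\cO^{an}_{\bE^n}\cong\cO_B$ as sheaves of $Nov$-algebras. This follows from the local claim together with the equivariance: the translation rule $c_k\mapsto c_kT^{\langle k,\delta x\rangle}$ is exactly pullback along $z\mapsto T^{\delta x}z$. Since $\tau$ is a local isomorphism onto $U$ for small $U$, this gives the identification. Step 2 is acyclicity: the preimages $\tau^{-1}(U)$ for convex $U$ are (unions of) affinoid polyannuli, so Tate/Kiehl acyclicity gives $R\tau_*\cO=\tau_*\cO$ on such $U$, and the same holds for coherent sheaves. Step 3 is the equivalence itself. Define $\mathrm{Perf}(\bE^n)\to\mathrm{Perf}(\cO_B)$ by $R\tau_*$, after using GAGA to pass from the algebraic abelian variety to its analytification. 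Full faithfulness and essential surjectivity are both local on $B$, via descent for perfect complexes over the cover of $B$ by small convex opens. Locally, the statement reduces to the fact that perfect complexes on an affinoid polyannulus are the same as perfect complexes over its ring of functions (Kiehl's theorem).

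The main obstacle is Step 3, specifically that $\tau^{-1}(U)$ is an affinoid only when $U$ is a rational polytope; general convex opens give Stein/quasi-Stein spaces. I would first restrict to a basis of small open convex sets exhausted by rational polytopes and use Kiehl's theorem for quasi-Stein spaces to get $\Gamma$-acyclicity and the equivalence between coherent sheaves and modules there. I would then glue using the descent of $\mathrm{Perf}$ for the $G$-topology, checking that $\tau_*$ takes admissible covers to open covers of $B$.
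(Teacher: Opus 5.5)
Your proposal follows the same route as the paper's (very brief) sketch: the log-norm map $\bE^n\to B$ is proper, $\tau_*\cO^{an}_{\bE^n}\cong\cO_B$, and the higher direct images vanish. Your Step 3 (GAGA, descent over small convex opens, and Kiehl's theorem on affinoid polyannuli to see that perfect complexes are locally pulled back from $B$) spells out the passage to $\mathrm{Perf}$ that the paper leaves implicit.

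Two small corrections, neither of which affects the argument: $\tau$ itself is not a local isomorphism (what you actually use is that $(\bG_{m,Nov}^{an})^n\to\bE^n$ is an isomorphism over $\mathrm{trop}^{-1}(U)$ for small $U$), and uniform boundedness of $|c_k|e^{\langle k,x\rangle}$ characterizes bounded rather than all analytic functions on the open polyannulus. The latter is harmless because $\cO_B$ is the sheafified scalar extension and only stalks matter.
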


\begin{proof}(Sketch). The analytic variety $\bE^n$ (with the Berkovich topology) maps  properly  to $B$ by the logarithm-of-norm map. The direct image of the sheaf of analytic functions on  $\bE^n$ coincides with $\cO_{B}$, and higher direct images vanish.
\end{proof}

Let us consider $Nov_+$-linear triangulated category
$$  
D_{B}\coloneqq D\big(\cO_{+,B}\text{-}\mathrm{Mod}\big).  
$$

Consider the endofunctor (cf.\ almost mathematics of Gabber and Ramero~\cite{GabberRamero2018}) 
\[
E\longmapsto \mathfrak m\otimes^L_{Nov_+}E,
\]
where
\[
\mathfrak m\coloneqq\{a\in Nov\mid |a|<1\}
\]
is the maximal ideal of $Nov_+$. Since $\mathfrak m$ is flat over $Nov_+$ and
$\mathfrak m^2=\mathfrak m$, the multiplication map
\[
\mathfrak m\otimes^L_{Nov_+}\mathfrak m\longrightarrow \mathfrak m
\]
is an isomorphism; hence this endofunctor is idempotent.
 We define
$$   
\mathsf C_{B}\subset D_{B}  
$$
to be its essential image.

\subsubsection{Reduced singular support, symplectic covariance}

The torus  $B$ carries a natural \textit{integral affine structure}, i.e.\ a family of commuting vector fields which form a lattice
  in the tangent space at any point $b\in B$.
  In this case we have a natural symplectic manifold
  \[X\coloneqq T^*B/T^*_\bZ B\simeq \bT^{2n}\]
  which is the quotient of the cotangent bundle by the dual lattice and is a Lagrangian torus fibration over $B$.

  For $t\in\bR$, set
\[
\mathfrak m_t\coloneqq\{a\in Nov\mid |a|<e^t\}.
\]
Let $\bR^\to$ be the real line with the topology generated by the right rays
$(a,+\infty)$. For $E\in D_B$, let $\cR^\to(E)$ be the sheaf on
$B\times\bR^\to$ defined on basic open sets by
\[
R\Gamma\bigl(U\times(a,+\infty),\cR^\to(E)\bigr)
=
R\Gamma\bigl(U,\mathfrak m_a\otimes^L_{Nov_+}E\bigr).
\]
The \emph{Rees sheaf} $\cR(E)$ is the pullback of $\cR^\to(E)$ to
$B\times\bR$ with the usual topology. Its stalk at $(b,t)$ is
\[
\cR(E)_{(b,t)}
\simeq
\mathfrak m_t\otimes^L_{Nov_+}E_b.
\]

For a sheaf $\cF$ on a real manifold $Y$, denote by
$SS(\cF)\subset T^*Y$ its singular support in the sense of
Kashiwara--Schapira; it is a closed conic coisotropic subset.
After pulling $\cR(E)$ back to $\bR^n\times\bR_t$, multiplication by powers
of $T$ and by Laurent monomials gives natural equivariances of the Rees
sheaf. 
 Consequently, $SS(\cR(E))$ is invariant under translations in the
$t$-direction and, after restriction to $\tau=1$, under translations of
the cotangent variable by the lattice $T^*_{\bZ}B$.

Write $(b,t;\xi,\tau)$ for coordinates on $T^*(B\times\bR)$.  Taking the
Hamiltonian reduction at $\tau=1$ and then dividing by the integral
cotangent lattice, we obtain a closed coisotropic subset
\[
SS_X(E)\coloneqq
\bigl(SS(\cR(E))\cap\{\tau=1\}\bigr)/
\bigl(\bR_t\times T^*_{\bZ}B\bigr)
\subset
X=T^*B/T^*_{\bZ}B,
\]
which we call the \emph{reduced singular support} of $E$.

For example, let the trivial line bundle on $\bE^n$ be endowed with a
metrization given by a smooth function $f\colon B\to\bR$, and let
$E_f\in\mathsf C_B$ be the corresponding object (locally consisting of
analytic functions $g$ such that $\log|g|<f$).  Then
\[
SS_X(E_f)=\operatorname{graph}(df)\pmod{T^*_{\bZ}B}\subset X.
\]
Thus varying the metrization of a fixed line bundle corresponds to exact
deformations of the associated Lagrangian section.

This picture is closely related to the non-Archimedean approach to mirror
symmetry for torus fibrations of Kontsevich--Soibelman
\cite{KS_torus}.  In particular, for Lagrangian sections the
construction above suggests a $Nov_+$-linear lift of the usual mirror
correspondence: the section
\[
\operatorname{graph}(df)\pmod{T^*_{\bZ}B}\subset X
\]
corresponds to the metrized line bundle $E_f\in\mathsf C_B$.
After extension of scalars from $Nov_+$ to $Nov$, this is compatible with
the usual mirror-symmetry picture.  Abouzaid's family Floer construction
extends the latter, on the $Nov$-linear level, to much more general smooth
Lagrangian submanifolds, associating to them twisted coherent sheaves on
the rigid analytic mirror \cite{Abouzaid_family}; it was subsequently
upgraded to a faithful functor from the Fukaya category to twisted perfect
complexes \cite{Abouzaid_family_functor}.

As further evidence for this picture, one can show that symplectomorphisms
of $X$ sufficiently close to the identity\footnote{Via a Weinstein neighborhood of the diagonal, the graph of a
sufficiently small symplectomorphism is encoded by a small closed
$1$-form, with its exact part described by a generating function.  This
data determines a kernel, and hence a Fourier--Mukai-type integral
transform on $\mathsf C_B$.}, as well as a natural cover of
$\mathrm{Sp}(2n,\bZ)$, act on $\mathsf C_B$ by functors represented by kernels in
$\mathsf C_{B\times B}$.  Moreover, the reduced singular support
construction is equivariant with respect to these symplectomorphisms.

A notable feature of the sheaf-theoretic construction proposed here is
that it involves no choice of a compatible almost-complex structure on
$X$.  Its present formulation does, however, use the choice of the
Lagrangian torus fibration $X\to B$, or equivalently, in the split case
considered here, the integral affine structure on $B$.

\subsubsection{Conjectural stability condition}

The discussion above suggests that, in the present toric situation, the
still conjectural category $\cF^\mathrm{global}_{+}(X,\omega)$ should admit
an alternative description as a suitable full subcategory of $\mathsf C_B$.
At present it is not clear what the optimal ``regularity'' condition defining
this subcategory should be.  For the purposes of constructing a stability
condition, however, it may be enough to work with a sufficiently rich class
of reasonable objects, for instance objects whose reduced singular support
is a piecewise-linear Lagrangian skeleton.

\ 

Fix a closed complex-valued $n$-form $\Omega$ on $X$, not necessarily
translation-invariant, satisfying the
non-vanishing condition of Section~\ref{subsec:complex-forms}.  Since in
our class one should not expect genuine special Lagrangian representatives,
we replace this condition by the existence of representatives with
arbitrarily small phase amplitude
\[
\phi_+(\Lambda)-\phi_-(\Lambda),
\]
where $\Lambda$ is the reduced singular support.  In view of
\eqref{eq:phase-volume-bound}, such representatives have volume arbitrarily
close to the lower bound $|Z([\Lambda])|$, and therefore provide a natural
notion of approximate special-Lagrangian representative.

\ 

We now make this approximation picture concrete by describing an elementary
class of objects in $\mathsf C_B$ for which the reduced singular support can
be read off explicitly.
Choose a finite rational polyhedral covering $\{P_i\}$ of $B$.  On a
neighborhood of each non-empty multiple intersection
\[
P_I\coloneqq\bigcap_{i\in I}P_i
\]
choose a bounded complex $E_I$ whose terms are finite direct sums of line
bundles endowed with piecewise-quadratic metrizations with rational purely
quadratic parts, and whose differential is non-expanding.  For
$P_J\subset P_I$ choose compatible non-expanding chain maps
\[
E_J\longrightarrow E_I|_{P_J}
\]
which become quasi-isomorphisms after applying
$\mathfrak m\otimes^L_{Nov_+}(-)$.  The finite \v{C}ech-type totalization of
this diagram, followed by $\mathfrak m\otimes^L_{Nov_+}(-)$, defines an
object of $\mathsf C_B$.  We denote its reduced singular support by
\[
\Lambda_E\coloneqq SS_X(E)\subset X.\] For the objects constructed above, $\Lambda_E$ is a piecewise-linear
Lagrangian subset of $X$, rational with respect to the integral affine
structure on $B$, and its projection $\Lambda_E\to B$ is finite.

There is one further piece of structure which will be important below.
On the smooth locus $\Lambda_E^{\mathrm{sm}}$ there is a naturally
associated local system of perfect complexes over the residue field
\[
\mathbf k\coloneqq Nov_+/\mathfrak m\simeq\bC.
\]
Indeed, for $\lambda\in\Lambda_E^{\mathrm{sm}}$, choose a lift
$\widetilde\lambda$ to $SS(\cR(E))$ at $\tau=1$.  The microlocal stalk
\[
\mu_{\widetilde\lambda}\cR(E)
\]
is naturally a complex over $\mathbf k$: microlocalization in the
$\tau=1$ direction extracts the associated graded of the strict norm
filtration, on which $\mathfrak m$ acts trivially.  For constructible
metrized perfect complexes these microlocal stalks are perfect and vary
locally constantly along the smooth sheets of $\Lambda_E$.  Thus one
obtains a local system
\[
\cV_E\in
\operatorname{Loc}\bigl(\Lambda_E^{\mathrm{sm}},
\operatorname{Perf}(\mathbf k)\bigr).
\]
The equivariance of the Rees construction under multiplication by powers
of $T$ and Laurent monomials identifies the microlocal stalks associated
with different lifts of the same point of $\Lambda_E$.

The cohomological grading of $\cV_E$ is part of the brane data: a shift by
$[1]$ shifts the normalized lift of the phase by $1$.  Thus the pair
\[
\bigl(\Lambda_E^{\mathrm{sm}},\cV_E\bigr)
\]
rather than the underlying Lagrangian subset alone is the appropriate
object on which to normalize $\arg^*\Omega$ and define $\phi_\pm$.

We can now formulate the proposed characterization of the stability
condition.  Recall that $\Omega$ determines the central charge
\[
Z_\Omega(\gamma)\coloneqq\int_\gamma\Omega .
\]

\begin{conjecture}
For $\phi\in\bR$, let $\cC_\phi$ be the full subcategory of the localized
category
\[
Nov\otimes_{Nov_+}\mathsf C_B
\]
consisting of objects $E$ with the following property: for every
$\varepsilon>0$, $E$ admits a constructible metrized model
$\widetilde E\in\mathsf C_B$ such that
\[
\phi-\varepsilon<
\phi_-(\Lambda_{\widetilde E})
\leq
\phi_+(\Lambda_{\widetilde E})
<
\phi+\varepsilon .
\]
Here the phases are normalized using the graded microlocal coefficient
system $\cV_{\widetilde E}$ on
$\Lambda_{\widetilde E}^{\mathrm{sm}}$.

The collection of subcategories $\{\cC_\phi\}_{\phi\in\bR}$ is the slicing
of a Bridgeland stability condition with central charge $Z_\Omega$.
Equivalently, together with $Z_\Omega$ it uniquely determines this
stability condition.
\end{conjecture}

\subsubsection{Extension to Fukaya category with  coefficients}\label{subsubsec:coeff}

Let us briefly explain how the preceding picture should extend to Fukaya
categories with coefficients.  Recall the heuristic from
Section~\ref{subsec:origins}: if a symplectic manifold is fibered over
$X$ and the symplectic volumes of the fibers become small, its Fukaya
category should be describable as the Fukaya category of $X$ with
coefficients in the categories associated with the fibers.  Geometrically,
Lagrangian submanifolds in the total space should degenerate to
Lagrangian subsets of $X$ labeled by objects of these coefficient
categories.

We first consider the simplest case of \emph{constant coefficients}.
Fix a $\bC$-linear enhanced triangulated category $\cC$ with
finite-dimensional morphism spaces, together with a stability condition
$\sigma_{\cC}$ on $\cC$.  We also fix, as above, a closed complex-valued
$n$-form $\Omega$ on $X$, not necessarily translation-invariant, satisfying
the non-vanishing condition of Section~\ref{subsec:complex-forms}.

There is a simple $Nov_+$-linear enlargement of $\cC$.  Morphism spaces
are extended from $\bC$ to $Nov_+$, and one allows formal deformations of
objects governed by Maurer--Cartan equations.  Thus, for example, an
infinitesimal deformation of an object $A\in\cC$ is described by an element
\[
\delta\in\operatorname{Hom}^1_{\cC}(A,A)\otimes_{\bC}\mathfrak m
\]
satisfying the $A_\infty$ Maurer--Cartan equation.  More generally one
allows finite twisted complexes of such objects; this is the same
formalism which will be recalled in
Section~\ref{subsec:defAinf}.

In the present toric situation this leads to a concrete analogue of
$\mathsf C_B$.  On a rational polyhedron $P\subset B$ consider finite
twisted complexes whose underlying terms are of the form
\[
\bigoplus_\alpha L_\alpha\otimes A_\alpha,
\qquad A_\alpha\in\cC,
\]
where the $L_\alpha$ are metrized line bundles of the type considered
above.  The matrix coefficients of the twisting cochain are analytic
sections with values in
$\operatorname{Hom}_{\cC}(A_\alpha,A_\beta)$; they are required to be
non-expanding and to satisfy the $A_\infty$ Maurer--Cartan equation.
Using the same finite polyhedral descent and almost quasi-isomorphisms on
overlaps as above, one obtains a category which we denote provisionally by
\[
\mathsf C_B(\cC).
\]
For $\cC=\operatorname{Perf}(\bC)$ this reduces to the construction of
$\mathsf C_B$.

This elementary construction may be regarded as a notion of an analytic
family of objects of $\cC$ sufficient for our purposes: locally it is
generated by constant objects of $\cC$, while all analytic dependence is
carried by the metrized line bundles and by the coefficients of the
Maurer--Cartan equation.

The Rees construction and reduced singular support extend without change.
For $E\in\mathsf C_B(\cC)$ we again obtain a rational piecewise-linear
Lagrangian subset
\[
\Lambda_E\subset X
\]
finite over $B$.  The essential difference is that the microlocal
coefficient system on its smooth locus is now a locally constant family
of objects of $\cC$,
\[
\cV_E\in
\operatorname{Loc}\bigl(\Lambda_E^{\mathrm{sm}},\cC\bigr),
\]
rather than a local system of perfect complexes over the residue field.

The stability condition on $\cC$ supplies an additional contribution to
the phase.  In the simplest situation, if a smooth sheet of $\Lambda_E$
is labeled by a semistable object $A\in\cC$ of phase $\phi_{\cC}(A)$,
its total phase is
\begin{equation}\label{eq:totphase}
\phi_{\mathrm{tot}}
=
\pi^{-1}\arg^*\Omega|_{\Lambda_E}
+\phi_{\cC}(A).
\end{equation}
Correspondingly, the expected central charge is obtained by weighting
$\Omega$ by the central charge of the microlocal coefficient system,
schematically
\[
Z(E)=
\int_{\Lambda_E}
Z_{\cC}\bigl([\cV_E]\bigr)\,\Omega .
\]
Thus the approximate special Lagrangian condition discussed above has a
direct analogue with the geometric phase replaced by this total phase.

\ 

More generally, the coefficient data should consist of the following.
Let
\[
\pi_{\mathrm{LGr}}\colon \operatorname{LGr}^{\mathrm{or}}(TX)\longrightarrow X
\]
be the bundle of oriented Lagrangian subspaces.  We assume given a local
system $\mathscr C$ of enhanced $\bC$-linear triangulated categories on
$\operatorname{LGr}^{\mathrm{or}}(TX)$, with finite-dimensional morphism
spaces, whose stalk at $(x,L_x)$ will be denoted by
$\mathscr C_{x,L_x}$.  The monodromy around the positive generator of the
fundamental group of a fiber is required to be the shift $[2]$.  In
particular, the induced monodromy on
$K_0(\mathscr C_{x,L_x})$ is trivial.

We further assume given a local system $\Gamma$ of finitely generated
abelian groups on $X$, together with a morphism of local systems
\[
\operatorname{cl}\colon
K_0(\mathscr C)\longrightarrow \pi_{\mathrm{LGr}}^*\Gamma,
\]
and a morphism of sheaves of abelian groups
\[
\Omega\colon
\Gamma\longrightarrow \Omega^n_{X,\mathrm{cl}}\otimes_{\bR}\bC.
\]
Thus every locally constant section $\gamma$ of $\Gamma$ determines a
closed complex-valued $n$-form $\Omega(\gamma)$ on $X$.

For every $(x,L_x)\in\operatorname{LGr}^{\mathrm{or}}(TX)$ and every
positive element
\[
v\in\bigwedge^n L_x,
\]
we obtain a homomorphism
\[
Z_{x,L_x,v}\colon K_0(\mathscr C_{x,L_x})\longrightarrow\bC,
\qquad
Z_{x,L_x,v}(E)
=
\Omega(\operatorname{cl}(E))_x(v).
\]
Replacing $v$ by a positive multiple rescales $Z_{x,L_x,v}$ by a positive
real number.  We require these central charges to lift, locally
continuously in $(x,L_x)$, to stability conditions on
$\mathscr C_{x,L_x}$, compatibly with parallel transport.  Equivalently,
one may regard this as a continuously varying family of stability
conditions defined up to positive real rescaling.

Finally, we impose the following positivity condition.  Identify
\[
T_{L_x}\operatorname{LGr}^{\mathrm{or}}(T_xX)
\simeq \operatorname{Sym}^2(L_x^\vee).
\]
Let $L_s$ be a curve of oriented Lagrangian subspaces with $L_0=L_x$ and
with $\dot L_0$ positive definite.  If
$0\neq E\in\mathscr C_{x,L_x}$ is semistable, and $E_s$ denotes its
parallel transport to $\mathscr C_{x,L_s}$, then
\[
\left.
\frac{d}{ds}
\arg Z_{x,L_s}(E_s)
\right|_{s=0}>0.
\]

 After choosing a section of $\pi_{\mathrm{LGr}}$, the local system
$\mathscr C$ restricts to a local system of $\bC$-linear categories on
$B\simeq\bT^n$.  Equivalently, after choosing a base point, this amounts
to a single category together with $n$ commuting autoequivalences.  To pass
to the non-Archimedean picture one should in addition assume that it makes
sense to speak about analytic families of objects of these categories over
non-Archimedean analytic spaces.  This is a genuine extra structure, but
it is quite concrete in examples; one may keep in mind, for instance,
categories of finite-dimensional representations of a finite quiver with
relations, as in Chapter~7.

Under this assumption the construction above should have a version with
coefficients in $\mathscr C$, producing a global $Nov$-linear category.
Its objects have Lagrangian supports equipped, on their smooth loci, with
objects of the corresponding fiber categories.  Using the family of
stability conditions described above, one can then define approximate special Lagrangian representatives exactly in
the same spirit as in the preceding subsection: at every smooth point of
the Lagrangian support, the corresponding object of the fiber category is
required to be semistable for the stability condition associated with the
oriented tangent Lagrangian plane, and the resulting phase is required to
be constant, or arbitrarily close to constant, respectively.  The
resulting notion should determine the expected stability condition on the
global $Nov$-linear category.

\ 

We have restricted the discussion to powers of the Tate elliptic curve
only in order to have a particularly explicit model in which all the
constructions can be written down concretely.  The same picture should
extend much further, both to more general torus fibrations and to
non-constant systems of coefficient categories.  We leave the formulation
of these generalizations to the reader.

A natural next challenge is to extend this picture to bases $B$ with
codimension-$2$ singularities, as arise in non-Archimedean SYZ fibrations
of the type considered in \cite{KS_affine}.
\section{A hypothetical framework for categorical Kähler geometry}
\label{section:axiomatic}

In this section we attempt to outline a framework which unifies the various examples discussed throughout the paper, to arrive at a provisional definition of ``categorical K\"ahler structure''.
In Section~\ref{subsec:generalframework} we list those structures present in both Archimedean and non-Archimedean settings. These include a notion of metric on an object, mass measures, complexified K\"ahler potential, and flow.
In the case when the base field is $\bC$, much of the structure can be encoded in terms of $*$-algebras with a functional, as proposed in Section~\ref{subsec:frameworkarch}. 
In Section~\ref{subsec:frameworknonarch} we highlight that over non-Archimedean fields, but also more general settings such as the one discussed in Section~\ref{section:comonads}, metrized objects form a category whose defining feature is a rescaling action.
In the final Section~\ref{subsec:lozenge}, we show that the \textit{Lozenge algebra} formalism from our earlier work~\cite{HKKPitlogs2} and the framework of Bhattacharya and the third named author~\cite{BK21} are essentially equivalent. 
We expect that they provide a close-to-complete picture of the local structure of categorical K\"ahler geometry.

\subsection{General features}\label{subsec:generalframework}


We now list the structures expected on a stable $\infty$-category $\mathcal C$ with stability condition which together encode a categorical K\"ahler metric.
The following axioms should be read more as a collection of desiderata rather than as a final definition. 
Our guiding principle is to unify examples coming from both sides of homological mirror symmetry, as well as others discussed in later sections of this paper.

\begin{itemize}
\item \textbf{Metrized objects.}
The first layer of a categorical K\"ahler structure is a notion of ``metric'' on any object in $\cC$.
We encode this by a functor $\mathrm{Met}\colon \mathbf{Iso}(\cC)\to(\bR_{>0},\cdot)\text{-}\mathbf{Set}$ from the groupoid of isomorphisms in $\cC$, up to homotopy, to the category of sets with an action of $(\bR_{>0},\cdot)$ by ``rescaling''. (As a generalization, one could also replace $(\bR_{>0},\cdot)$ by any of its sub-monoids.)
Moreover, there should be natural isomorphisms $\mathrm{Met}(X)\cong\mathrm{Met}(X[1])$, as well as natural maps $\mathrm{Met}(X)\times\mathrm{Met}(Y)\to \mathrm{Met}(X\oplus Y)$.

We refer to a pair $(X,h)$, $X\in\cC$, $h\in\mathrm{Met}(X)$, as a \textit{metrized object}.
By the above assumption, any metrized object has a shift $(X[1],h[1])$ and we can form direct sums of metrized objects.

In examples, $\mathrm{Met}(X)$ often carries additional structures, such as a topology or metric.
We also see that it is often possible to enlarge (``partially compactify'') the sets/spaces $\mathrm{Met}(X)$ so that we get a map $\mathrm{Met}(X)\times\mathrm{Met}(Z)\to\mathrm{Met}(Y)$ for any exact triangle $X\to Y\to Z\to X[1]$, not just direct sums.
Roughly, this amounts to considering limiting metrics which are filtrations of the underlying object together with a (non-degenerate) metric on each component.

\item \textbf{Mass measures.}
For each metrized object $(X,h)$ we have a finite measure $\mu_{X,h}$ on $\bR$ with bounded support.
The role of this measure is to record, for a particular choice of metric on an object, the distribution of mass across various phases.
The total measure of $\bR$, $m(X,h)\coloneqq \mu_{X,h}(\bR)$, is by definition the \textbf{mass} of $(X,h)$\footnote{This mass will be, in general, greater than the mass assigned to $X$ in terms of the HN filtration coming from the stability condition. Crucially, it really depends on the metric $h$, not just $X$.}.
We require:
\begin{enumerate}[(1)]
    \item $m(X,h)=0\implies X=0$.
    \item $\mu_{Y,\mathrm{Met}(f)(h)}=\mu_{X,h}$ for any isomorphism $f\colon X\to Y$ in $\cC$.
    \item $\mu_{X,t\cdot h}=\mu_{X,h}$, i.e.\ mass measures are invariant under rescaling.
    \item $\mu_{X[1],h[1]}=(\phi\mapsto\phi+1)_*\mu_{X,h}$, i.e.\ equivariance under shift,
    \item $\mu_{X\oplus X',h\oplus h'}=\mu_{X,h}+\mu_{X',h'}$,
    \item The central charge of the stability condition on $\cC$ is recovered from the mass measures by
    \[ Z(X)=\int_{\bR}e^{\pi\sqrt{-1}\phi}d\mu_{X,h}(\phi). \]
    As a consequence, we get (tautologically) the \textit{BPS inequality} $|Z(X)|\leq m(X,h)$.
    \item \textbf{Categorical DUY principle:} Define a metric $h$ to be \textit{harmonic of phase $\phi$} if $\mathrm{supp}(\mu_{X,h})=\{\phi\}$. A necessary condition for the existence of such a metric should be that $X$ is semistable of phase $\phi$.
    A sufficient condition for the existence of such a metric should be that $X\neq 0$ is polystable of phase $\phi$.  
\end{enumerate}

When $\mathrm{supp}(\mu_{X,h})\neq\emptyset$ we define
\[
\phi_-(X,h)\coloneqq\min\left(\mathrm{supp}(\mu_{X,h})\right),\qquad \phi_+(X,h)\coloneqq\max\left(\mathrm{supp}(\mu_{X,h})\right).
\]
Thus, $h$ is harmonic iff $\phi_-(X,h)=\phi_+(X,h)$.

\item \textbf{Complexified K\"ahler potential.}
Suppose $\cC$ is linear over a normed field $\mathbf k$ and that objects of $\cC$ are parametrized by a moduli stack $\mathrm{Ob}(\cC)$.
We have a pair of oriented real line bundles $|\cL_{\mathrm{Re}}|,|\cL_{\mathrm{Im}}|$ over $\mathrm{Ob}(\cC)$.
These correspond to the central charge $Z$ in the following sense:
On the fiber of $|\cL_{\mathrm{Re}}|$ (resp.\ $|\cL_{\mathrm{Im}}|$) over $X\in\cC$, $\bG_{m,\mathbf k}\cong\mathbf k^{\times}\cdot 1_X\subseteq\mathrm{Aut}(X)$ acts as $t\cdot v=|t|^{\operatorname{Re}(Z(X))}v$ (resp.\ $t\cdot v=|t|^{\operatorname{Im}(Z(X))}v$).
A \textit{complexified K\"ahler potential} is given by metrizations of $|\cL_{\mathrm{Re}}|$ and $|\cL_{\mathrm{Im}}|$.
Given a phase $\phi\in\bR$, the metrized real oriented line bundle
\[
|\cL_{Z,\phi}|\coloneqq |\cL_{\mathrm{Re}}|^{\sin \pi\phi}\otimes |\cL_{\mathrm{Im}}|^{-\cos \pi\phi}
\]
should induce local K\"ahler potentials (in the Archimedean or non-Archimedean sense) on the (smooth part of the) moduli space $\cC_{\phi,\mathrm{ps}}$ of polystable objects of phase $\phi$.
See also Section~\ref{subsec:effective} for a related discussion and some justification for this axiom.

\item \textbf{Minimizing flow.}
This is an action of $(\bR_{\geq 0},+)$, or in some examples just $(\bZ_{\geq 0},+)$, on each $\mathrm{Met}(X)$.
We expect the following quantities to vary monotonically under this action: $m(X,h)$ decreases, $\phi_-(X,h)$ increases, and $\phi_+(X,h)$ decreases.
Moreover, the fixed points of the action on $\mathrm{Met}(X)/\text{rescaling}$ are precisely the harmonic metrics.
For an object $X\in\cC$ not admitting a harmonic metric, the asymptotics of the flow will be controlled by a refinement of the Harder--Narasimhan filtration of $X$, such as the iterated balanced filtration defined in~\cite{HKKPitlogs1}, see also~\cite{HKKPitlogs2,ibaneznunez_refined,stlam}.
\end{itemize}

From this point on our discussion splits into the ``Archimedean'' and ``non-Archimedean'' cases.
A unified treatment of the two cases remains an open problem.

\subsection{The Archimedean case}\label{subsec:frameworkarch}

The general framework of the previous subsection treats metrized objects, mass measures,
potentials, and flows as separate pieces of structure. 
In the Archimedean setting we expect these pieces to be governed locally by a single object:
a $*$-algebra $\mathcal A_{X,h}$ together with a complex-valued linear functional $\Omega$. 
The algebra should be regarded as a noncommutative replacement for the complexified tangent space 
to the space of metrics at $h$, while $\Omega$ is the first variation of the complexified Kähler potential.
We will now discuss these structures in more detail.

We assume that for each pair $(X,h)$, $X\in\cC$, $h\in\mathrm{Met}(X)$, we are given a $*$-algebra $\cA=\cA_{X,h}$ over $\bC$.
Some basic requirements are:
\begin{enumerate}[(1)]
    \item isomorphisms $\cA_{X,h}\cong \cA_{Y,\mathrm{Met}(f)(h)}$ for any isomorphism $f\colon X\to Y$ in $\cC$,
    \item isomorphisms $\cA_{X,h}\cong \cA_{X[1],h[1]}$,
    \item isomorphisms $\cA_{X,h}\cong \cA_{X,t\cdot h}$ for $t>0$,
    \item injections $\cA_{X,h}\oplus \cA_{X',h'}\to \cA_{X\oplus X',h\oplus h'}$,
    \item The above maps are part of the data and satisfy various compatibility requirements.
\end{enumerate}
The subspace of self-adjoint elements
\[
\cA_{\mathrm{real}}\coloneqq \{a\in\cA\mid a=a^*\}
\]
is the naive tangent space to $\mathrm{Met}(X)$.
When $\cA$ is infinite-dimensional, it should come with a topology and a well-defined exponential map $\exp\colon\cA\to\cA$.\footnote{We would like to be more precise about what kind of algebra $\cA$ should be in the infinite-dimensional case, but leave this as an open problem.}

The algebra $\cA$ is further equipped with a linear functional $\Omega\colon \cA\to \bC$ which comes with a choice of argument in the following sense: we have $\arg\Omega\in \cA_{\mathrm{real}}$ such that 
\[
\Omega\left(e^{-\sqrt{-1}\arg\Omega}a^*a\right)\geq 0,\qquad a\in\cA.
\]
The functional $\Omega$ and its choice of argument $\arg\Omega$ should be invariant under isomorphisms of metrized objects and rescaling. Under shift, $\Omega$ changes sign: $\Omega_{X[1],h[1]}=-\Omega_{X,h}$ and $\arg \Omega_{X[1],h[1]}=\arg\Omega_{X,h}+\pi$.

The interpretation of $\Omega$ in relation to the framework discussed in Section~\ref{subsec:generalframework}, is that it is the 1-form $dS_{\bC}$, the derivative of the complexified K\"ahler potential $S_{\bC}$ along $\mathrm{Met}(X)$.
Moreover, the central charge, mass (measure), and flow are determined by $\Omega$ as follows:
\begin{itemize}
\item 
The central charge of $X$ should be $Z(X)=\Omega(1)$, where $1\in\cA$ is the unit.
\item We define the mass of $(X,h)$ as $m(X,h)\coloneqq \Omega\left(e^{-\sqrt{-1}\arg\Omega}\right)$ and, more generally, the mass measure by
\[
\int_{\bR}fd\mu_{X,h}=\Omega\left(e^{-\sqrt{-1}\arg\Omega}f\!\left(\frac{\arg\Omega}{\pi}\right)\right)
\]
for (at least) polynomial functions $f$.
\item The proposed minimizing flow on $\mathrm{Met}(X)$ is given by $-\arg\Omega\in \cA_{\mathrm{real}}$.
\end{itemize}

\

To illustrate and motivate all the above, we spell out what the structures are in two basic cases.

\begin{example}
The simplest example is the following. 
Take $\cC=\mathrm{Perf}(\bC)$, the category of finite-dimensional chain complexes over $\bC$.
For $X\in\mathrm{Perf}(\bC)$, an element of $\mathrm{Met}(X)$ is represented by a chain complex $(E^\bullet,d)$ where each $E^n$ is a Hermitian vector space, together with an isomorphism ($X$-framing) $\alpha\colon X\to (E^\bullet,d)$. 
Two such $X$-framed Hermitian chain complexes define the same element of $\mathrm{Met}(X)$ if there is an isometric isomorphism of complexes, compatible with the $X$-framings, between them.
The $*$-algebra $\cA$ attached to a metrized complex $(E^\bullet,d)$ is the algebra of degree-preserving endomorphisms of the underlying graded vector space $E^\bullet$, and $*$ is the usual Hermitian adjoint. It is thus simply a product of matrix algebras.

Fix $z=Z(\bC)\in\mathbb R_{>0}e^{\sqrt{-1}(0,\pi]}$, i.e.\ $\arg z\in(0,\pi]$, then
\[
\Omega(a)=(-1)^nz \operatorname{tr}\left(e^{-dd^*-d^*d}a\right),\qquad (\arg\Omega) a=(\arg{z}-\pi n)a
\]
on the summand $\mathrm{End}(E^n)$ of $\cA=\mathrm{End}^0(E^\bullet)$ and extended by linearity.
The flow is thus just rescaling (by different scalars) the metric on each graded component, under which the singular values of $d$ increase to $+\infty$, which we interpret as converging to the harmonic subcomplex $\mathrm{Ker}(d)\cap\mathrm{Ker}(d^*)\subseteq E^\bullet$.
This example is generalized to complexes of representations of quivers in Section~\ref{subsec:quivercomplexes}.
\end{example}

\begin{example}
To a compact Lagrangian submanifold $L\subset M$ (which we think of, heuristically, as a metric on the corresponding object in the Fukaya category) we attach the $*$-algebra $\cA_{L}=C^\infty(L,\bC)$ of smooth, complex-valued functions on $L$. Given a suitable middle-degree complex-valued form $\Omega$ on $M$ as in Section~\ref{subsec:complex-forms}, the corresponding functional on $\cA_L$ is $f\mapsto \int_L f\Omega$.\footnote{To avoid possible confusion, we note that the base field throughout most of Section~\ref{section:heuristics} is the non-Archimedean field $Nov$, as opposed to $\bC$.}
\end{example}

\subsection{Metrized objects in the non-Archimedean case} \label{subsec:frameworknonarch}

In non-Archimedean examples, there is a \textit{category of metrized objects}, $\widetilde{\cC}$, which comes equipped with a rescaling action by $(\bR,+)$ (or some submonoid thereof). Taking the quotient $\cC\coloneqq \widetilde{\cC}/\mathrm{rescaling}$ yields the category on which we are considering a categorical K\"ahler structure and stability condition.
The functor $\mathrm{Met}$ then arises by taking fibers of the localization functor $\widetilde{\cC}\to\cC$. 
Thus, in the non-Archimedean setting, the set $\mathrm{Met}(X)$ is secondary.
The primary object is the category $\widetilde{\mathcal C}$ of metrized or integral representatives.
Unlike in the Archimedean case, however, we will not relate the category of metrized objects to mass measures, K\"ahler potentials, or flow.
For simplicity, we work at the level of homotopy categories, i.e.\ with triangulated categories.

\begin{definition}
A \textbf{category of metrized objects} for a triangulated category $\cC$ is a triangulated category $\widetilde{\cC}$ with an exact functor $L\colon\widetilde{\cC}\to\cC$. 
Furthermore, $\widetilde{\cC}$ comes with an action of the monoidal poset category $\cP_\Lambda$ of a submonoid $\Lambda\subseteq (\bR,+)$ with the usual partial order $\leq$ induced from $\bR$. 
More precisely, we are given a monoidal functor $R\colon \cP_\Lambda\to \mathrm{Fun}^{\mathrm{ex}}(\widetilde{\cC},\widetilde{\cC})$, $R(t)\eqqcolon R_t$, $R(s\to t)\eqqcolon \lambda_{s,t}$.
Let $\cN\subseteq\widetilde{\cC}$ be the thick triangulated subcategory generated by cones of morphisms 
\[
(\lambda_{s,t})_X\in\mathrm{Hom}_{\widetilde{\cC}}(R_s(X),R_t(X)), \qquad s\leq t\in\Lambda, X\in\widetilde{\cC}.
\]
Then $L$ should induce an equivalence of triangulated categories $\widetilde{\cC}/\cN\to \cC$, where $\widetilde{\cC}/\cN$ is the Verdier quotient.
\end{definition}

Suppose $\widetilde{\cC}$ is a category of metrized objects as in the above definition.
The corresponding functor $\mathrm{Met}$ is constructed as follows.
An element of $\mathrm{Met}(X)$ is represented by a pair
\[
(\widetilde X,x),\qquad \widetilde X\in\widetilde{\mathcal C},\quad x\colon X\xrightarrow{\simeq}L(\widetilde X)
\]
where $x$ is an isomorphism.
Two pairs $(\widetilde X,x)$, $(\widetilde Y,y)$ are identified in $\mathrm{Met}(X)$ if there is an isomorphism $f\colon\widetilde X\to\widetilde Y$ with $L(f)\circ x=y$.
In other words, $\mathrm{Met}(X)$ is the set of isomorphism classes of objects of the 2-fiber of the functor $L$ over $X$.
Given an isomorphism $g\colon X\to Y$, let $\mathrm{Met}(g)(\widetilde{X},x)\coloneqq (\widetilde{X},x\circ g^{-1})$.

The action of $\cP_\Lambda$ on $\widetilde{\cC}$ induces an action of $\Lambda$ on each $\mathrm{Met}(X)$: 
\[
t\cdot (\widetilde{X},x)\coloneqq \begin{cases} (R_t\widetilde{X},L(\lambda_{0,t})\circ x)& \text{if } t\geq 0,\\
(R_t\widetilde{X},L(\lambda_{t,0})^{-1}\circ x) & \text{if } t\leq 0, \end{cases}
\]
where $t\in\Lambda$, $(\widetilde{X},x)\in\mathrm{Met}(X)$.
Here, we should identify $\Lambda\subseteq (\bR,+)$ with $\exp(\Lambda)\subseteq (\bR_{>0},\cdot)$ to match the multiplicative rescaling action as in Section~\ref{subsec:generalframework}.

\begin{example}
Let $K$ be a non-Archimedean field with value group $\Lambda\subseteq \bR$ and valuation ring $\cO$ (see Section~\ref{subsec:prelimnorm}).
Assume that we are given a group homomorphism $\sigma\colon\Lambda\hookrightarrow K^{\times}$ which splits the valuation $\nu\colon K^\times\twoheadrightarrow\Lambda$.
Suppose that $\widetilde{\cC}$ is linear over $\cO$.
Then there is a natural action, $R$, of $\cP_\Lambda$ on $\widetilde{\cC}$ where each $R_t$ is the identity functor and $(\lambda_{a,b})_X=\sigma(b-a)1_X$.
In this case, $\widetilde{\cC}/\cN\cong \cC$ is just the general fiber $\widetilde{\cC}\otimes_{\cO}K$.

In the particular case where $K=Nov$ as in Section~\ref{section:heuristics}, we have $\Lambda=\bR$, $\sigma(a)=T^a$, $\cO=Nov_+$.
The hypothetical $Nov_+$-linear Fukaya-type category $\cF^\mathrm{global}_{L,+}(X,\omega)$ outlined in Section~\ref{sec:singular} is an important motivating example.
\end{example}

\begin{example}
Given a \textit{triangulated persistence category} $\cC$ in the sense of~\cite{BCZ_TPFC}, its 0-level category $\cC_0$ comes with an action of $\cP_\bR$ by \textit{shift functors}, and can thus be considered as a category of metrized objects for its localization, the $\infty$-level category $\cC_\infty$.
\end{example}

\subsection{Local structure in the Archimedean case}\label{subsec:lozenge}

In this subsection we recall the definition of \textit{Lozenge algebras} from our paper~\cite{HKKPitlogs2}.
The original motivation for this notion was to unify (1) representations of quivers over $\bC$ and (2) holomorphic bundles over Riemann surfaces.
It turns out that there are many more examples which fit into this formalism, and we show this by comparing Lozenge algebras with the framework from~\cite{BK21} by Bhattacharya and the third named author, who discuss a much wider range of examples, including Nekrasov's noncommutative instantons.
Our expectation is that this framework captures, at least approximately, the local picture of ``categorical K\"ahler geometry'' over $\bC$.

Lozenge algebras have the following list of ingredients:
\begin{enumerate}[(L1)]
    \item A graded algebra $A=A^0\oplus A^1\oplus A^2$ over $\bC$ (unital, associative), concentrated in degrees $0,1,2$.
    \item A derivation $d\colon A^\bullet\to A^{\bullet+1}$, i.e.\ a linear map satisfying the graded Leibniz rule. (We do not require $d^2=0$.)
    \item An element $F\in A^2$, the \textit{curvature}, such that
    \[ d^2=[F,\_], \]
    i.e.\ $A$ is a \textit{curved dg-algebra}. (In~\cite{HKKPitlogs2}, the curvature was denoted by ``$\theta$'', however we use ``$F$'' here instead to avoid confusion with $\theta$ in the context of $\theta$-stability.) 
    \item A functional $\tau\colon A^2\to\bC$ such that
    \[ \tau([a,b])=0,\qquad \tau(da)=0\]
    and such that the induced pairings $A^n\otimes A^{2-n}\to\bC$, $a\otimes b\mapsto\tau(ab)$ are non-degenerate.
    Here and throughout, $[a,b]=ab-(-1)^{|a||b|}ba$ is the supercommutator.
    \item A conjugate-linear, degree-preserving\footnote{This goes against our convention that $|a^*|=-|a|$, as stated in the introduction. The resolution is that $A$ is really bigraded by (L6) and $*$ preserves the total degree $p+q$ while inverting $p-q$.} map $*\colon A\to A$ such that
    \[
    a^{**}=a,\qquad (ab)^*=(-1)^{|a||b|}b^*a^*,\quad da^*=(da)^*,\qquad F^*=-F,\qquad \tau(a^*)=\overline{\tau(a)}.
    \]
    \item A splitting $A^1=A^{1,0}\oplus A^{0,1}$ as an $A^0$-bimodule with
    \[
    A^{0,1}=\left(A^{1,0}\right)^*,\qquad \left(A^{1,0}\right)^2=0
    \]
    and such that the Hermitian pairing
    \[
    \overline{A^{0,1}}\otimes A^{0,1}\to\bC,\qquad a\otimes b\mapsto \sqrt{-1}\tau(a^*b)
    \]
    is positive definite, i.e.\ $\sqrt{-1}\tau(a^*a)\geq 0$ and $>0$ for $a\neq 0$.
    \item An element $\omega\in A^2$ such that the map $L\colon A^0\to A^2$, $a\mapsto\omega a$ is an isomorphism and such that the pairing 
    \[
    \overline{A^{0}}\otimes A^{0}\to\bC,\qquad a\otimes b\mapsto \tau(\omega a^*b)
    \]
    is positive definite. We write the inverse of $L$ as $\Lambda\colon A^2\to A^0$.
\end{enumerate}
Some comments on this list are in order:
\begin{itemize}
\item 
Given (L1-L7) we can impose the \textit{Yang--Mills (YM) condition}:
\[
F\text{ and }\omega\text{ are central, }\qquad d\Lambda F=0.
\]
A \textbf{Lozenge algebra}, as defined in~\cite{HKKPitlogs2}, is the structure (L1-L7) satisfying the YM condition and so that $H^\bullet A$ is finite-dimensional.
Note that we can only define $H^\bullet A$ if $F$ is central and thus $d^2=0$.

\item
Given (L1-L3), i.e.\ a curved dg-algebra we can impose \textit{harmonicity}, meaning $F=0$, which clearly implies the YM condition, assuming $\omega$ is central.

\item
Comparing this to the classical notion of a K\"ahler manifold, (L1-L4) are the analogue of a complex symplectic structure, (L5) defines a real slice, thus a real symplectic structure, and (L6-L7) define the complex structure.

\item
The relation to the structures proposed in Section~\ref{subsec:frameworkarch} is as follows: The (ungraded) $*$-algebra is $\cA\coloneqq A^0$ and the linear functional $\Omega\colon\cA\to\bC$ is $\Omega(a)=\sqrt{-1}\tau((F+\omega)a)$.
\end{itemize}

\begin{remark}\label{rem:A2dual}
Suppose we are given (L1-L6). 
By assumption, we have an injection $A^2\hookrightarrow(A^0)^\vee$, $a\mapsto\tau(a\_)$, into the $\bC$-dual of $A^0$.
It is not difficult to see that one can replace $A^2$ by $(A^0)^\vee$ and extend all the structure (L1-L6) in a canonical way.
For example, the extension of $\tau$ is given by $\mathrm{ev}_1$, evaluation at the unit $1\in A^0$, left action of $A^0$ on $(A^0)^\vee$ is the dual of right action on $A^0$, and vice versa.
Of course, (L7) will not extend unless $\dim A^0<\infty$.
\end{remark}

\begin{example}
Suppose $X$ is a compact Riemann surface with K\"ahler form $\omega$ and $E$ is a holomorphic bundle over $X$ with Hermitian metric.
This fits into the above framework as follows.
\begin{enumerate}[(L1)]
\item $A=\mathcal A^\bullet(X,\mathrm{End}(E))$ is the algebra of (smooth) forms on $X$ with values in the bundle $\mathrm{End}(E)$. The product is the usual combination of the wedge product of forms and the composition of endomorphisms.
\item $d$ comes from the Chern connection $\nabla$ on $E$, the unique connection compatible with the holomorphic and metric structures on $E$.
\item $F=F_\nabla$ is the curvature of $\nabla$. 
\item $\tau(a)=\int_X\operatorname{tr}(a)$.
\item $*$ is the pointwise Hermitian conjugate with respect to the metric on $E$.
\item $A^{p,q}=\mathcal A^{p,q}(X,\mathrm{End}(E))$ is the decomposition into forms of type $(1,0)$ and type $(0,1)$.
\item $\omega\in A^2$ is the K\"ahler form.
\end{enumerate}
For the above data, the YM condition becomes the \textit{constant central curvature} condition on the metric on $E$, or rather $F_\nabla$.
By a celebrated theorem of Narasimhan--Seshadri, a holomorphic bundle $E$ has a metric satisfying the YM condition if and only if $E$ is a sum of stable bundles of equal slopes.
\end{example}

\begin{example}
A more elementary example comes from quiver representations. 
As argued in~\cite[Section 3.2]{HKKPitlogs2}, this exhausts all finite-dimensional examples of Lozenge algebras, up to isomorphism.
We use here definitions and notation for quivers and their representations from Section~\ref{subsec:quivers}.

Suppose $Q$ is a quiver, $(E,\rho)$ a metrized representation of $Q$ over $\mathbb C$, $z_i\in\bC$, $i\in Q_0$ with $\operatorname{Im}(z_i)>0$, and $\phi\in (0,1)$.
From this we construct (L1-L7) as follows.
\begin{enumerate}[(L1)]
\item $A^0=A^2=\bigoplus_{i\in Q_0}\operatorname{End}(E_i)$, $A^{0,1}=\bigoplus_{a\colon i\to j}\Hom(E_i,E_j)$, $A^{1,0}=\bigoplus_{a\colon i\to j}\Hom(E_j,E_i)$.
\item $d=([\rho(a),\_],[\rho(a)^*,\_])\colon A^0\to A^1$ (where $\rho(a)^*=\sqrt{-1}\rho(a)^\dagger$, see (L5) below).
\item $F=\sum_{a\in Q_1}[\rho(a)^*,\rho(a)]+\sqrt{-1}\left(-\operatorname{Re}(z)+\cot\pi\phi\operatorname{Im}(z)\right)$, where $z=\sum_{i\in Q_0}\rho(e_i)z_i\in A^2$.
\item $\tau(a)=\operatorname{tr}(a)$.
\item If $\dagger$ is the usual Hermitian adjoint with respect to the Hermitian metrics on the $E_i$, then $a^*=a^\dagger$ for $|a|$ even, and $a^*=\sqrt{-1}a^\dagger$ for $|a|$ odd, cf.\ \eqref{eq:superadjoint} below.
\item See (L1).
\item $\omega=\operatorname{Im}(z)\in A^2$.
\end{enumerate}
The relation between the harmonicity condition $F=0$ and polystability of $(E,\rho)$ goes back to A.~King and is discussed extensively in Section~\ref{sec:quivers_arch}.
\end{example}

Next, we discuss three constructions which have as input and output the data (L1-L7). Together, they are the analogue of passing from the trivial line bundle with its canonical connection to arbitrary bundles with connection. The YM and harmonicity conditions are typically not preserved.

\begin{itemize}
    \item \textbf{Matrix algebras.} Fix $n\geq 1$.
Here we pass from $A$ to $\widetilde{A}=\mathrm{Mat}(n\times n,A)=\mathrm{End}_A(A^n)$.
All the structure (L1-L7) extends in the obvious way, for example $\tilde{\tau}(a)=\tau(\operatorname{tr}(a))$, and if $a$ has coefficients $a_{ij}$ then $a^*$ has coefficients $a_{ji}^*$.

\item \textbf{Twisting.} Here we choose $\delta\in A^1$ with $\delta^*=-\delta$.
\begin{enumerate}[(L1)]
    \item $\widetilde{A}=A$ as graded associative algebra
    \item $\tilde{d}=d+[\delta,\_]$
    \item $\tilde{F}=F+d\delta+\delta^2$, then
    \begin{align*}
        \tilde{d}^2a &=\tilde{d}(da+[\delta,a]) \\
        &=d^2a+[\delta,da]+[d\delta,a]-[\delta,da]+[\delta,[\delta,a]] \\
        &=[F,a]+[d\delta,a]+[\delta^2,a]=[\tilde{F},a]
    \end{align*}
    \item $\tilde{\tau}=\tau$, we have $\tilde{\tau}(\tilde{d}a)=\tau(da+[\delta,a])=0$
    \item $\tilde{*}=*$, we have $\tilde{F}^*=F^*+d\delta^*-(\delta^*)^2=-F-d\delta-\delta^2=-\tilde{F}$
    \item $\widetilde A^{p,q}=A^{p,q}$
    \item $\widetilde{\omega}=\omega$
\end{enumerate}
The harmonicity condition $\tilde{F}=0$ is also known as the \textit{Maurer--Cartan equation} for $\delta$.

\item \textbf{Idempotents.} Suppose $P\in A^0$ with $P^2=P$, $P^*=P$, and $[P,\omega]=0$.
\begin{enumerate}[(L1)]
\item $\widetilde{A}=P AP\subseteq A$ as graded associative subalgebra with unit $P$
\item $\tilde{d}a=P daP$, then
\begin{align*}
\tilde{d}(ab) &= P dabP+(-1)^{|a|}P adbP=P daP b+(-1)^{|a|}aP dbP=\tilde{d}ab+(-1)^{|a|}a\tilde{d}b
\end{align*}
and in particular $\tilde{d}P=0$
\item $\tilde{F}=P\left(F+(dP)^2\right)P$. We need to check $\tilde{d}^2=[\tilde{F},\_]$.
First note that
\[ P dP da P=(dP)^2a,\qquad P dadPP=(-1)^{|a|}a(dP)^2\]
thus
\[ \tilde{d}^2a=P d(P da P)P=P dP da P+P d^2a P-(-1)^{|a|}P dadPP=[\tilde{F},a].\]
\item $\tilde{\tau}=\tau|_{\widetilde{A}}$. To check $\tilde{\tau}(\tilde{d}a)=0$ we note that $dP=P dP+dPP$ and thus for $a\in\widetilde{A}$:
\[ \tau(dP a)=\tau(P dP a)+\tau(dPP a)=2\tau(dP a)\]
hence $\tau(dP a)=0$ and 
\[ \tilde{\tau}(\tilde{d}a)=\tau(P daP)=\tau(P da)=-\tau(dP a)=0. \]
\item $\tilde{*}=*|_{\widetilde{A}}$
\item $\widetilde A^{p,q}=P A^{p,q}P$
\item $\widetilde{\omega}=P\omega P$
\end{enumerate}

\end{itemize}

\subsubsection*{Comparison with the formalism of Bhattacharya--Kontsevich}

We recall the setup from~\cite{BK21}, changing some notation to avoid collisions.
The first (``algebra'') part consists of:
\begin{enumerate}
\item[(A1)] $\cA$, an associative unital $*$-algebra over $\bC$,
\item[(A2)] $\Omega^1$, an $\cA$-bimodule,
\item[(A3)] $D\colon \cA\to\Omega^1$, a derivation,
\item[(A4)] $h\colon \Omega^1\otimes_{\bC}\overline{\Omega^1}\to\bC$, a bilinear form with
\[ h(x,\overline{y})=\overline{h(y,\overline{x})},\qquad h(axb,\overline{y})=h(x,\overline{a^*yb^*})\]
and which is positive definite,
\item[(A5)] $\eta\colon \cA\to\bC$, a linear functional with $\eta(a^*)=-\overline{\eta(a)}$ and
\begin{equation}\label{eq:BKeta}
\eta([a,b])=\frac{\sqrt{-1}}{2}\left(h(Da,\overline{Db^*})-h(Db,\overline{Da^*})\right).
\end{equation} 
\end{enumerate}
The second (``module'') part of the setup is:
\begin{enumerate}[(M1)]
\item $E$, a finitely generated projective $\cA$-module,
\item $\nabla\colon E\to\Omega^1\otimes_{\cA}E$, a linear map satisfying the Leibniz rule
\[ \nabla(ax)=Da\otimes x+a\otimes\nabla x,\]
\item $\mathsf{H}\colon E\otimes_{\bC}\overline{E}\to\cA$, a bilinear form such that
\[ \mathsf H(ax,\overline{by})=a\mathsf H(x,\overline{y})b^* \]
and so that the induced map
\[ \overline{E}\to\Hom_{\cA}(E,\cA),\qquad \overline{y}\mapsto\mathsf H(\_,\overline{y})\]
is an isomorphism.
Furthermore, $\mathsf H$ should be positive definite in the sense that $\mathsf H(x,\overline{x})$ can be written as a finite sum of elements of the form $aa^*$, $a\in \cA$.
\end{enumerate}

We remark that given (A1-A5), there is a default choice for (M1-M3) which is $E=\cA$, $\nabla=D$, and $\mathsf H(a,\overline{b})=ab^*$.
Conversely, given arbitrary (M1-M3) one can reduce to this basic rank 1 case by the analogue of the operations (matrix algebra, twisting, idempotent) in this formalism.

\begin{proposition}\label{prop:LtoBK}
Given the structure (L1-L6) one obtains (A1-A5) via the following:
\begin{enumerate}[(A1)]
\item $\cA=A^0$ as $*$-algebra
\item[(A2)] $\Omega^1=A^{0,1}$ as $\cA$-bimodule
\item[(A3)] $D=\mathrm{pr}_{0,1}\circ d$, where $\mathrm{pr}_{0,1}\colon A^1\to A^{0,1}$ is the projection.
\item[(A4)] $h(x,\overline{y})=\sqrt{-1}\tau(xy^*)$
\item[(A5)] $\eta(a)=-\frac{1}{2}\tau(F a)$
\end{enumerate}
\end{proposition}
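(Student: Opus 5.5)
The plan is to verify the axioms (A1)--(A5) one at a time, with all data taken from (L1)--(L6). The only real content is the identity \eqref{eq:BKeta}.

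\textbf{(A1)--(A3).} (A1) is immediate, since $A^0$ is a unital $*$-subalgebra by (L1) and (L5). For (A2), (L6) says the splitting $A^1=A^{1,0}\oplus A^{0,1}$ is one of $A^0$-bimodules, so $\Omega^1=A^{0,1}$ is a bimodule. For (A3), $d|_{A^0}$ is a derivation $A^0\to A^1$ with no signs, because the elements involved have degree $0$. The projection $\mathrm{pr}_{0,1}$ is a bimodule map, so $D=\mathrm{pr}_{0,1}\circ d$ is again a derivation.

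We also record $D'\coloneqq\mathrm{pr}_{1,0}\circ d$. Since $*$ commutes with $d$ and exchanges $A^{1,0}$ and $A^{0,1}$, we get $D'(b)=(D(b^*))^*$.

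\textbf{(A4).} Set $h(x,\overline y)=\sqrt{-1}\tau(xy^*)$.
\begin{itemize}
\item Hermitian symmetry follows from $\tau(a^*)=\overline{\tau(a)}$ and the rule $(yx^*)^*=-xy^*$ for odd elements.
\item The bimodule identity $h(axb,\overline y)=h(x,\overline{a^*yb^*})$ follows from cyclicity of $\tau$ with respect to the degree-$0$ elements $a,b$. This cyclicity holds because $\tau([a,b])=0$, and it gives $(a^*yb^*)^*=by^*a$.
\item For positivity, graded cyclicity for odd $x$ gives $\tau(xx^*)=-\tau(x^*x)$. So $h(x,\overline x)$ is, up to a sign fixed by the conventions, the positive definite form $\sqrt{-1}\tau(x^*x)$ of (L6).
\end{itemize}
I expect this sign bookkeeping to be the only delicate point of (A4). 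I would resolve it by checking the orientation convention against the quiver example, where $a^*=\sqrt{-1}a^\dagger$ in odd degree.

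\textbf{(A5): the key step.} The reality condition is direct:
\[
\overline{\eta(a)}=-\tfrac12\tau\bigl((Fa)^*\bigr)=-\tfrac12\tau(a^*F^*)=\tfrac12\tau(Fa^*)=-\eta(a^*),
\]
using $F^*=-F$ and cyclicity.

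For \eqref{eq:BKeta}, first use cyclicity and (L3):
\[
\tau(F[a,b])=\tau([F,a]b)=\tau(d^2(a)\,b).
\]
Since $\tau(d(da\cdot b))=0$ and $d(da\cdot b)=d^2a\cdot b-da\,db$, this equals $\tau(da\,db)$.

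Next decompose $d=D+D'$. We have $(A^{1,0})^2=0$ by (L6), and hence also $(A^{0,1})^2=0$ by applying $*$. Therefore
\[
da\,db=Da\,D'b+D'a\,Db.
\]
For the first term, substitute $D'b=(Db^*)^*$ to get $\tau(Da\,(Db^*)^*)=-\sqrt{-1}\,h(Da,\overline{Db^*})$. For the second term, apply graded cyclicity to the two odd factors, then substitute $D'a=(Da^*)^*$, to get $-\tau(Db\,(Da^*)^*)=\sqrt{-1}\,h(Db,\overline{Da^*})$.

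Multiplying by $-\tfrac12$ gives exactly \eqref{eq:BKeta}. The main obstacle throughout is keeping the Koszul signs consistent, in particular in $(xy)^*$ for odd elements and in graded cyclicity of $\tau$. I would fix these once at the start and then run the computation above.
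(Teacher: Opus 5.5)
Your derivation of \eqref{eq:BKeta} is the paper's argument step for step. You use cyclicity to get $\tau(F[a,b])=\tau([F,a]b)=\tau(d^2a\,b)$, then $\tau(d(da\,b))=0$ to reach $\tau(da\,db)$, then the splitting $da=Da+(Da^*)^*$, the vanishing of $(A^{1,0})^2$ and $(A^{0,1})^2$, and graded cyclicity. Your signs there are correct. Your checks of (A1)--(A3), of Hermitian symmetry and the bimodule rule for $h$, and of $\eta(a^*)=-\overline{\eta(a)}$ are also fine. The paper only checks \eqref{eq:BKeta} and treats the rest as routine.

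The step that fails is positivity in (A4), and the hedge ``up to a sign fixed by the conventions'' hides this. You computed $h(x,\overline{x})=\sqrt{-1}\tau(xx^*)=-\sqrt{-1}\tau(x^*x)$. But (L6) says exactly that $\sqrt{-1}\tau(x^*x)>0$ for $0\neq x\in A^{0,1}$, so with the formulas as given $h$ is negative definite. No convention is left free: (L5), (L6), graded cyclicity of $\tau$ and the formula in (A4) together fix the sign.

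Checking the quiver example will not help, because the products $A^{0,1}\otimes A^{1,0}\to A^2$ are not specified there. The Riemann surface example, where everything is canonical, confirms the problem. Take $E$ of rank one and $u=f\,d\bar z$ supported in a chart. Then $\sqrt{-1}\tau(u^*u)=\int|f|^2\,\sqrt{-1}\,dz\wedge d\bar z>0$, while $h(u,\overline{u})=\int|f|^2\,\sqrt{-1}\,d\bar z\wedge dz<0$.

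So you cannot assert (A4) as stated. The statement itself needs a sign correction, which the paper's proof also passes over. Two consistent repairs are available:
\begin{enumerate}
\item Set $h(x,\overline{y})\coloneqq\sqrt{-1}\tau(y^*x)=-\sqrt{-1}\tau(xy^*)$. This is positive definite by (L6), and it satisfies Hermitian symmetry and the bimodule rule by the same arguments you gave. Your computation then yields \eqref{eq:BKeta} with $\eta(a)=\tfrac12\tau(Fa)$, which still satisfies $\eta(a^*)=-\overline{\eta(a)}$.
\item Take $\Omega^1=A^{1,0}$ and $D=\mathrm{pr}_{1,0}\circ d$. Then the stated formulas for $h$ and $\eta$ can be kept, and your computation goes through verbatim.
\end{enumerate}
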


\begin{proof}
The only non-trivial item to check is the identity~\eqref{eq:BKeta} for $\eta$.
We calculate:
\begin{align*}
-2\eta([a,b]) &= \tau(F[a,b]) \\
&=\tau([F,a]b) \\
&=\tau(d^2ab)\\
&=\tau(dadb)\\
&=\tau\left((Da+(Da^*)^*)(Db+(Db^*)^*)\right) \\
&=\tau(Da(Db^*)^*-Db(Da^*)^*) \\
&=-\sqrt{-1}\left(h(Da,\overline{Db^*})-h(Db,\overline{Da^*})\right)
\end{align*}
where we have used $da=Da+(Da^*)^*$ which is a consequence of $da^*=(da)^*$.
\end{proof}

\begin{proposition}\label{prop:BKtoL}
Given the structure (A1-A5) one obtains (L1-L6) via the following:
\begin{enumerate}[(L1)]
\item $A^0=\cA$, $A^{0,1}=\Omega^1$, $A^{1,0}=\overline{\Omega^1}$, $A^2=\cA^\vee$.
The product $A^1\otimes A^1\to A^2$ is defined by
\begin{gather*}
A^{0,1}\otimes A^{1,0}\to A^2, \qquad x\otimes \overline{y}\mapsto -\sqrt{-1}h(\_ x,\overline{y}),     \\
A^{1,0}\otimes A^{0,1}\to A^2, \qquad \overline{x}\otimes y\mapsto \sqrt{-1}h(\_ y,\overline{x}).     
\end{gather*}
\item The differential $d\colon A^0\to A^1$ is $da=Da+\overline{Da^*}$ and the differential $d\colon A^1=A^{0,1}\oplus A^{1,0}\to A^2$ is 
\[
d(x,\overline{y})=\left(a\mapsto-\sqrt{-1}h(x,\overline{Da^*})+\sqrt{-1}h(Da,\overline{y})\right)
\]
where $x,y\in\Omega^1$ and $a\in\cA$.
\item $F=-2\eta\in\cA^\vee$.
\item $\tau=\operatorname{ev}_1\colon \cA^\vee\to\bC$.
\item The $*$-structure on $A^0$ and $A^2$ comes from the $*$-structure on $\cA$, and the $*$-structure on $A^1$ is given by $x^*=\overline{x}$ for $x\in A^{0,1}$.
\item See (L1).
\end{enumerate}
\end{proposition}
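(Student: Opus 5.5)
This is a direct verification: with the displayed definitions, each of the axioms (L1)--(L6) has to be checked, together with the requirement $d^2=[F,\_]$ in (L3). I would first make the graded algebra structure explicit. The $A^0$-bimodule structure on $A^{0,1}=\Omega^1$ is the given one. On $A^{1,0}=\overline{\Omega^1}$ it is the conjugate one, $a\cdot\overline{x}\cdot b\coloneqq\overline{b^*xa^*}$. On $A^2=\cA^\vee$ it is the dual one, as in Remark~\ref{rem:A2dual}: $(a\xi b)(c)=\xi(bca)$.

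With these conventions I would check the following in turn.
\begin{itemize}
\item The two products $A^{0,1}\otimes A^{1,0}\to A^2$ and $A^{1,0}\otimes A^{0,1}\to A^2$ are balanced over $A^0$ and are bimodule maps. This follows from the sesquilinearity rule $h(axb,\overline{y})=h(x,\overline{a^*yb^*})$ in (A4).
\item We set $(A^{1,0})^2=(A^{0,1})^2=0$, so associativity only involves triple products landing in degree $\geq 3$, which vanish, or products with $A^0$, which are exactly the balancing conditions.
\item For (L4), the pairings $\tau(ab)$ between $A^0$ and $A^2$ are $\xi(a)$, hence nondegenerate. The pairing on $A^1$ is nondegenerate because $h$ is positive definite. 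The trace property $\tau([a,b])=0$ reduces, for $a\in A^0$ and $b\in A^2$, to $\xi(a)=\xi(a)$, and for $a,b\in A^1$ to the Hermitian symmetry of $h$ combined with the sign in the supercommutator.
\item The $*$-axioms in (L5), and the positivity $\sqrt{-1}\tau(x^*x)>0$ in (L6), follow from $h(x,\overline y)=\overline{h(y,\overline x)}$ and positive definiteness, once the constants $\pm\sqrt{-1}$ are tracked carefully.
\end{itemize}

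Next I would treat $d$. On $A^0$, $d$ is a derivation because $D$ is, and the term $\overline{Da^*}$ is a derivation into the conjugate bimodule. I would then show that $d\colon A^1\to A^2$ is the unique extension satisfying graded Leibniz together with $\tau(d\,\cdot)=0$. Concretely, $\tau(d(a\,dx))$ should equal $\tau(da\,dx)$ up to sign, and this is how the displayed formula for $d(x,\overline{y})$ arises. So I would verify graded Leibniz for $A^0\otimes A^1$ and $A^1\otimes A^0$ directly from the formula, then check $\tau\circ d=0$ on $A^1$ by evaluating the functional at $a=1$ and using $D1=0$. Compatibility $d(a^*)=(da)^*$ is immediate from the symmetric form $Da+\overline{Da^*}$. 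On $A^2$, $d$ maps into $A^3=0$, so the Leibniz rule in degree $2$ is automatic.

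The main obstacle is (L3): $d^2a=[F,a]$ with $F=-2\eta$, for $a\in A^0$. Both sides are functionals on $\cA$, so I would evaluate them at $b\in\cA$. The right-hand side gives $[F,a](b)=F(ab)-F(ba)=-2\eta([a,b])$.

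For the left-hand side, $d^2a=d(Da,\overline{Da^*})$, which evaluates to
\[
-\sqrt{-1}h(Da,\overline{Db^*})+\sqrt{-1}h(Db,\overline{Da^*}).
\]
By \eqref{eq:BKeta} this is $-2\eta([a,b])$. This step essentially reverses the computation in the proof of Proposition~\ref{prop:LtoBK}, and the only genuine difficulty is bookkeeping of signs and factors of $\sqrt{-1}$; I expect that is where errors would appear.

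Finally, in degree $\geq 1$, $d^2=[F,\_]$ holds trivially, since $d^2$ lands in $A^{\geq 3}=0$ and $F\cdot A^{\geq 1}\subseteq A^{\geq 3}=0$. The last condition, $F^*=-F$, follows from $\eta(a^*)=-\overline{\eta(a)}$ together with the induced $*$ on $\cA^\vee$, namely $\xi^*(a)=\overline{\xi(a^*)}$.
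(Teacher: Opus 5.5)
Your route is the paper's: a direct check whose only substantive identity is $d^2=[F,\_]$ on $A^0$, obtained exactly as in the paper from \eqref{eq:BKeta}. Your remark that graded Leibniz plus $\tau\circ d=0$ force $(du)(c)=\tau(u\,dc)$ is a nice explanation of the formula for $d$ on $A^1$. However, two of the checks you say go through ``once the constants $\pm\sqrt{-1}$ are tracked carefully'' in fact fail for the formulas as stated.

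The first is positivity in (L6). For $a\in A^{0,1}$ we have $a^*a=\overline{a}\,a$, hence $\tau(a^*a)=\sqrt{-1}\,h(a,\overline a)$ and $\sqrt{-1}\,\tau(a^*a)=-h(a,\overline a)<0$. So the pairing is negative definite. The same mismatch is built into $h(x,\overline y)=\sqrt{-1}\,\tau(xy^*)$ in Proposition~\ref{prop:LtoBK}: there (L6) and graded cyclicity give $h(x,\overline x)=-\sqrt{-1}\,\tau(x^*x)<0$. This is a sign error in the conventions, not something bookkeeping can absorb. One consistent repair is to feed $(-h,-\eta)$ into the formulas; this pair still satisfies \eqref{eq:BKeta} and the symmetry conditions. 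The repair flips the signs in (L1)--(L2) and gives $F=2\eta$.

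The second is the product $A^{1,0}\otimes A^{0,1}\to A^2$, $\overline x\otimes y\mapsto\sqrt{-1}\,h(\_\,y,\overline x)$. It is not balanced over $A^0$, i.e.\ associativity $(\overline x b)y=\overline x(by)$ fails, so sesquilinearity does not make it a bimodule map. With the conjugate bimodule structure you wrote down, $\overline x\,b=\overline{b^*x}$. Then $(\overline x b)y$ is the functional $c\mapsto\sqrt{-1}\,h(cy,\overline{b^*x})=\sqrt{-1}\,h(bcy,\overline x)$, whereas $\overline x(by)$ is $c\mapsto\sqrt{-1}\,h(cby,\overline x)$. The formula forced by $\tau\bigl((\overline x y)c\bigr)=\tau\bigl(\overline x(yc)\bigr)$ is $(\overline x y)(c)=\sqrt{-1}\,h(yc,\overline x)$.

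With the literal formula your Leibniz check also breaks. Using your bimodule structure on $\cA^\vee$, $d(bx)$ and $db\cdot x+b\,dx$ differ at $c$ by $\sqrt{-1}\,h(xc-cx,\overline{Db^*})$. With the corrected formula this difference vanishes.

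After both corrections your checklist goes through. The $d^2$ computation works verbatim for $(-h,-\eta)$, since it uses only \eqref{eq:BKeta} and the formula for $d$ on $A^1$, not the products. The paper's proof displays only that computation and calls the rest straightforward, so it does not catch these points either.
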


\begin{proof}
There is a long list of things to check, but the verification is completely straightforward in each case.
For example, the identity $d^2=[F,\_]$ is seen as follows:
\begin{align*}
(d^2a)(b)=-\sqrt{-1}h(Da,\overline{Db^*})+\sqrt{-1}h(Db,\overline{Da^*})=-2\eta([a,b])=([F,a])(b)
\end{align*}
Here we have used~\eqref{eq:BKeta}.
\end{proof}

The two constructions from Proposition~\ref{prop:LtoBK} and Proposition~\ref{prop:BKtoL} are almost inverse to each other, up to isomorphism, except for the following:
Going from (L1-L6) to (A1-A5) and then back to (L1-L6) replaces $A^2$ by $(A^0)^\vee$. See also Remark~\ref{rem:A2dual}.

\begin{remark}
The lozenge algebra formalism reveals the strange identity~\eqref{eq:BKeta} to be essentially the much more familiar $d^2=[F,\_]$ in disguise. This is at the expense of introducing the additional summands $A^{1,0}$ and $A^2$.
\end{remark}

Given (A1-A5) and (M1-M3), a certain moment map is constructed in~\cite{BK21}, the vanishing of which is a ``generalized King's equation'' whose solutions are the \textit{harmonic} connections.
The moment map is written there in two distinct ways, first in terms of a presentation of $E$ as a summand of a finitely generated free module~\cite[Proposition 2]{BK21}, and second in terms of a triple tensor product~\cite[Definition 5.2]{BK21}.
In the language of Lozenge algebras, the moment map is just $u\mapsto -\frac{1}{2}\tau(Fu)$, where $u\in A^0$ with $u=-u^*$, and so its vanishing is equivalent to the vanishing of the curvature $F$.
We will check this in the simplest case, where the bimodule $E$ is free of rank one with arbitrary connection $\nabla=D+a$, $a\in\Omega^1$.
The moment map is then (see~\cite[Equation (5.16)]{BK21})
\begin{align*}
H_u(a)&=\eta(u)+\frac{1}{2\sqrt{-1}}\left(-h(a,\overline{Du})+h(Du,\overline{a})-h([a,u],\overline{a})\right) \\
&=-\frac{1}{2}\left(\tau(Fu)+\tau(a(Du)^*)-\tau(Dua^*)-\tau((a^*a+aa^*)u)\right) \\
&=-\frac{1}{2}\tau\left(Fu+(a^*-a)(Du+(Du^*)^*)+(a^*-a)^2u\right) \\
&=-\frac{1}{2}\tau\left(Fu+d\delta u+\delta^2u\right)=-\frac{1}{2}\tau\left(\tilde{F}u\right)
\end{align*}
where $\delta=a^*-a$ and we used $\tau(d\delta u)=\tau(\delta du)$.
In particular, the value of the moment map at $a\in\Omega^1$, the functional $u\mapsto H_u(a)$, vanishes iff $\tilde{F}=0$, so the two notions of harmonicity agree.

\section{Representations of quivers: Archimedean case}
\label{sec:quivers_arch}

In this section we discuss aspects of categorical K\"ahler geometry in  the case of categories of representations of quivers over the complex numbers.
This setting serves as a toy-model for differential-geometric examples involving challenging analysis and infinite-dimensional spaces.
In Section~\ref{subsec:quivers} we discuss the relation with the general framework proposed in Section~\ref{section:axiomatic}. 
This relies on a theorem of A.~King, which is analogous to the DUY theorem mentioned in the introduction, and provides a K\"ahler metric on moduli spaces of polystable representations. 
In Section~\ref{subsec:staralg} we reformulate that result in terms of $*$-algebras and universal C$^*$-algebras. 
The invariance of these constructions under Bernstein--Gel'fand--Ponomarev reflection functors --- basic examples of functors inducing derived Morita equivalences --- is shown in Section~\ref{subsec:BGP}. 
In Section~\ref{subsec:quivercomplexes} we propose an extension of mass and flow to complexes of representations which is inspired by the Chern--Weil theory of superconnections.
Finally, in Section~\ref{subsec:rampfunctions}, we show that a choice of increasing convex function for each arrow of the quiver provides a more general source of K\"ahler metrics on spaces of polystable representations.

\subsection{Relation to general framework}\label{subsec:quivers}

In this subsection we explain how categories of quiver representations provide examples of the general framework discussed in Section~\ref{section:axiomatic}. Here and throughout most of Section~\ref{sec:quivers_arch}, we focus on the abelian category of representations, postponing a discussion of the more tentative extension to complexes to Section~\ref{subsec:quivercomplexes}.

We fix some notation.
A \textbf{quiver}, $Q$, consists of a set $Q_0$ of \textit{vertices}, a set $Q_1$ of \textit{arrows}, and maps $s,t\colon Q_1\to Q_0$ mapping an arrow to its source and target, respectively.
We often write $a\colon i\to j$ to mean that $a\in Q_1$, $i=s(a)$, $j=t(a)$.
For simplicity, we only consider quivers with $Q_0$ and $Q_1$ finite.
A \textit{path} in $Q$ is either a sequence of arrows $a_n\cdots a_2 a_1$ with $s(a_{i+1})=t(a_i)$ or a \textit{constant path}, $e_i$, at a vertex $i\in Q_0$.
Fixing a coefficient field $\mathbf k$, the set of paths forms a basis of the \textit{path algebra} $\mathbf k Q$ of $Q$, with product defined by concatenation of paths.
With the above conventions, left $\mathbf k Q$-modules, $(E,\rho\colon \mathbf k Q\to\mathrm{End}(E))$, are the same thing as representations of $Q$ given by vector spaces $E_i=\rho(e_i)E$ and linear maps $\rho(a)\colon E_{s(a)}\to E_{t(a)}$.
We let $\mathbf{Rep}(Q,\mathbf k)$ denote the abelian category of finite-dimensional representations of $Q$ over $\mathbf k$. 
For the rest of this section, $\mathbf k=\mathbb C$, and $\mathbf{Rep}(Q)=\mathbf{Rep}(Q,\mathbb C)$.

Given choices of numbers $z_i\in\bC$, $i\in Q_0$, with $\operatorname{Im}(z_i)>0$ or $z_i\in(-\infty,0)$, we define a central charge 
\[
Z\colon K_0(\mathbf{Rep}(Q))\to\bC,\qquad Z(E)=\sum_{i\in Q_0}\dim(E_i)z_i.
\]
This provides each $0\neq E\in \mathbf{Rep}(Q)$ with a phase $\phi(E)\coloneqq\arg(Z(E))/\pi\in (0,1]$ and a notion of (slope) stability (see Section~\ref{subsec:stabab}).
Any such central charge automatically satisfies the Harder--Narasimhan and support properties, since $\mathbf{Rep}(Q)$ is finite-length with finitely many simples.

Fixing vector spaces $E_i$, $i\in Q_0$, the global quotient stack
\begin{equation}\label{eq:repquotstack}
\left[\bigoplus_{a\colon i\to j}\Hom(E_i,E_j)\bigg/\prod_{i\in Q_0}GL(E_i)\right]\eqqcolon [X/G]    
\end{equation}
is the moduli stack of representations with dimension vector $(\dim E_i)_{i\in Q_0}\in \bZ_{\geq 0}^{Q_0}$.
Since $X$ is an affine space, line bundles on $[X/G]$, equivalently $G$-linearized line bundles on $X$, are given by characters of $G$, which are of the form
\[
G\ni (g_i)_{i\in Q_0}\mapsto \prod_{i\in Q_0} \det(g_i)^{\theta_i}\in\bC^\times
\]
where $\theta_i\in\bZ$, $i\in Q_0$. 
More generally, $(\theta_i)_{i\in Q_0}\in\bR^{Q_0}$ corresponds to a virtual character/line bundle $L_\theta\in \operatorname{Pic}([X/G])\otimes_\mathbb Z\mathbb R$.
In particular, the central charge $Z$ corresponds to a pair of virtual line bundles $L_{\mathrm{Re}(Z)}$, $L_{\mathrm{Im}(Z)}$ on the ind-stack of objects of $\mathbf{Rep}(Q)$.

A \textbf{metric} on a representation $E=(E,\rho)$ is simply a choice of Hermitian inner product, $h_i$, on each vector space $E_i$, $i\in Q_0$.
Fixing a reference metric on $E$, the space $\mathrm{Met}(E)$ of metrics on $E$ is thus identified with the symmetric space
\[
\prod_{i\in Q_0}GL(E_i)/U(E_i).
\]
The tangent space at the point $h$ is the subspace of self-adjoint elements in the $*$-algebra $\cA_{E,h}\coloneqq\prod_{i\in Q_0}\mathrm{End}(E_i)$, where $a^*=a^{*_h}=h^{-1}\bar{a}^Th$ is the Hermitian conjugate of $a$ with respect to $h$.
The linear functional on $\cA_{E,h}$ which represents the first variation of the complexified K\"ahler potential is $a\mapsto\mathrm{tr}(\Omega a)$, where
\begin{equation}\label{eq:omega_qrep}
\Omega\coloneqq \sum_{a\in Q_1}\left[\rho(a)^*,\rho(a)\right]+\sum_{i\in Q_0}z_i\rho(e_i).
\end{equation}
At this point we should also require $\operatorname{Im}(z_i)>0$ to ensure $\Omega$ is non-singular.
Note that $\Omega$ is normal as the sum of a self-adjoint and a central operator, thus expressions like $\arg\Omega$ are well-defined by functional calculus.
The complexified K\"ahler potential itself is
\[
S_{\bC}(h)\coloneqq -\sum_{a\in Q_1}\tr\left(\rho(a)^{*_h}\rho(a)\right)+\sum_{i\in Q_0}z_i\log \det h_i
\]
and depends on a choice of volume form on each $E_i$ to make sense of $\det h_i$.

Fix a phase $\phi\in (0,1)$, then the harmonicity condition $\mathrm{arg}(\Omega)=\pi\phi$, which we can also write as $\operatorname{Re}(\Omega)=\cot\pi\phi\operatorname{Im}(\Omega)$, becomes
\begin{equation}\label{kings_equation}
\sum_{a\in Q_1}\left[\rho(a),\rho(a)^{*_h}\right]=\sum_{i\in Q_0}\theta_i\rho(e_i),
\end{equation}
where
\[
\theta_i\coloneqq \operatorname{Re}z_i-\cot\pi\phi\operatorname{Im}z_i.
\]
The existence of solutions to~\eqref{kings_equation} is equivalent to polystability of the representation $E$ by the following.

\begin{theorem}[{\cite[Proposition 6.5]{king_reps}}]\label{thm:King}
Let $Q$ be a quiver and $\theta_i\in\bR$, $i\in Q_0$.
A representation $(E,\rho)\in\mathbf{Rep}(Q)$ is $\theta$-polystable (i.e.\ a direct sum of $\theta$-stable representations) if and only if equation~\eqref{kings_equation} holds for some choice of Hermitian metrics on the $E_i$.
\end{theorem}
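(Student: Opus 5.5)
The plan is the Kempf--Ness argument: realize the left-hand side of \eqref{kings_equation} as a moment map for $K=\prod_i U(E_i)$ acting on $X=\bigoplus_{a\colon i\to j}\Hom(E_i,E_j)$, shifted by the character $\theta$, and reduce both directions to convexity of a Kähler potential along geodesics in $\mathrm{Met}(E)\cong\prod_i GL(E_i)/U(E_i)$. Fix a reference metric and consider the functional
\[
\Phi(h)\coloneqq \sum_{a\in Q_1}\tr\left(\rho(a)^{*_h}\rho(a)\right)-\sum_{i\in Q_0}\theta_i\log\det h_i,
\]
which is (up to sign conventions) the real part of $S_{\bC}$ relevant for phase $\phi$. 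Note first that taking the trace of \eqref{kings_equation} gives $\sum_i\theta_i\dim E_i=0$; this is exactly the condition $\theta(E)=0$ needed for $\theta$-semistability and for $\Phi$ to be invariant under overall rescaling.

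The key steps are as follows.

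\textbf{Step 1.} Compute the first variation of $\Phi$ along the geodesic $h_t=h\,e^{t s}$, with $s$ self-adjoint. It equals $\tr\bigl(s\,(\sum_a[\rho(a),\rho(a)^{*_h}]-\sum_i\theta_i\rho(e_i))\bigr)$ up to sign and constant. Hence critical points of $\Phi$ are exactly the solutions of \eqref{kings_equation}.

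\textbf{Step 2.} Along $h_t$, diagonalize $s$ with eigenvalues $\alpha$ and eigenspaces giving an $\bR$-filtration $\cF_\bullet$ of each $E_i$. Then $\Phi(h_t)$ is a sum of terms $c\,e^{t(\beta-\alpha)}$ with $c\ge 0$, coming from the components of $\rho(a)$ between eigenspaces, plus the linear term $-t\sum\alpha\,\theta(\mathrm{gr}_\alpha)$. So $\Phi$ is convex along geodesics, and it is strictly convex unless $s$ commutes with all $\rho(a)$.

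\textbf{Step 3.} Read off the asymptotic slope as $t\to\infty$. It is $+\infty$ if $\cF_\bullet$ is not a filtration by subrepresentations (some component of $\rho(a)$ goes from a lower to a higher step). Otherwise it is $-\langle\theta,\cF_\bullet\rangle$ in the notation of Lemma~\ref{lem:thetastablefilt}.

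\textbf{Step 4 (solution $\Rightarrow$ polystable).} A critical point of a geodesically convex function is a minimum, so every slope is $\ge 0$. Lemma~\ref{lem:thetastablefilt} then gives semistability. For polystability, take a subrepresentation $A$ with $\theta(A)=0$. The slope along the corresponding two-step filtration is zero, so by Step 2 $\Phi$ is affine, hence constant, along that geodesic. Consequently $s$ commutes with $\rho$, and the $h$-orthogonal complement of $A$ is a subrepresentation. Inducting on dimension then splits $E$ into $\theta$-stable summands.

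\textbf{Step 5 (polystable $\Rightarrow$ solution).} Metrics on direct summands combine orthogonally, so it suffices to treat $E$ $\theta$-stable. Then Lemma~\ref{lem:thetastablefilt} gives strictly positive slope along every nontrivial geodesic ray modulo the center (scalars acting on all of $E$). Since $E$ is stable, the only $s$ commuting with $\rho$ are scalars, so $\Phi$ is strictly convex transversally to the center.

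The main obstacle is upgrading positivity of asymptotic slopes to properness of $\Phi$ on the finite-dimensional symmetric space modulo the center, so that a minimum exists. I would argue by compactness of the unit sphere of self-adjoint directions, using lower semicontinuity of the slope function together with convexity: if $\Phi$ were not proper, a sequence escaping to infinity would yield a limiting direction with nonpositive slope, contradicting stability. The slope is only lower semicontinuous (it jumps to $+\infty$), so this compactness argument needs care. An alternative is to reduce to the finitely many filtrations by subrepresentations using a Kempf-type uniform bound.
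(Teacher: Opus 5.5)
Your proposal is correct, but it follows a different route from the paper. The paper's sketch chains together two citations. First, the Hilbert--Mumford criterion, together with a $\bZ$-filtration version of Lemma~\ref{lem:thetastablefilt}, identifies GIT-polystable orbits in $X$ (linearized by $\theta$) with $\theta$-polystable representations. Second, Kempf--Ness identifies $X/\!\!/_{\theta}G$ with $\mu^{-1}(\theta)/U$, where $\mu(\rho)=\theta$ is \eqref{kings_equation}.

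You instead fix a single orbit and work on $G/U\cong\mathrm{Met}(E)$ with $\Phi$, which is the paper's $S(h)$ from \eqref{eq:quiverSh}. You then prove the needed instances of Kempf--Ness and Hilbert--Mumford by hand from the sum-of-exponentials formula along geodesics. Your route buys three things:
\begin{itemize}
\item it is self-contained;
\item strict convexity off the commutant of $\rho$ gives uniqueness of the solution up to $\mathrm{Aut}(E)$;
\item it is the Archimedean twin of the paper's proof of Theorem~\ref{thm:nonarch_harm} (Lemmas~\ref{lem:Sconvex}, \ref{lem:Smin}, \ref{lem:Sinfinity}, plus minimization on a Hadamard space).
\end{itemize}
Your Step~4 also explains why the Archimedean statement is an equivalence while the non-Archimedean one is not. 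Splitting off $A^{\perp_h}$ uses strict convexity of $\Phi$ unless $s$ commutes with $\rho$, and this has no analogue for the potential on the building (cf.\ the $D_4$ example). The paper's route buys brevity and, at the same time, the global identification of $\cM(Q,d,\theta)$ as a K\"ahler quotient, which is what is used afterwards.

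The step you flag as the main obstacle closes along the lines you suggest, and lower semicontinuity is exactly the right property. Work on the slice $\tr s=0$ transverse to the center, and set $q_v(t)=(\Phi(\exp_{h_0}(tv))-\Phi(h_0))/t$ for unit $v$.
\begin{itemize}
\item By convexity, $q_v(t)$ is nondecreasing in $t$ and continuous in $v$. The slope in direction $v$ is therefore $\sup_t q_v(t)$, which is lower semicontinuous in $v$.
\item The slope therefore attains its minimum on the compact unit sphere. That minimum is positive by stability, so there is $m>0$ with every slope $>m$.
\item The open sets $\{v : q_v(T)>m\}$ increase with $T$ and cover the sphere, so a single $T$ works for all $v$.
\item Hence $\Phi(\exp_{h_0}(tv))\geq\Phi(h_0)+mt$ for all $t\geq T$, uniformly in $v$.
\end{itemize}
This gives bounded sublevel sets, so a minimum exists.
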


\begin{proof}[Proof (sketch):]
Fix finite-dimensional vector spaces $E_i$, $i\in Q_0$ and consider the quotient stack $[X/G]$ as in~\eqref{eq:repquotstack} with the polarization (line bundle) given by $\theta$ as before.
We may also consider the geometric invariant theory (GIT) quotient $X\sslash_\theta G$, which is more generally defined for reductive groups acting on projective-over-affine varieties.
Using the Hilbert--Mumford criterion for stability, together with a variant of Lemma~\ref{lem:thetastablefilt} for $\bZ$-filtrations, one shows that a $G$-orbit in $X$ is polystable in the sense of GIT iff the corresponding representation is $\theta$-polystable, i.e.\ $X\sslash_\theta G$ parametrizes $\theta$-polystable representations (with chosen $\dim E_i$).
On the other hand, by the Kempf--Ness theorem, $X\sslash_\theta G=\mu^{-1}(\theta)/U$ where we have chosen Hermitian metrics on the $E_i$, $U\coloneqq\prod_{i\in Q_0}U(E_i)$ is a maximal compact subgroup of $G$, and $\mu\colon X\to \mathfrak{u}^*$ is the moment map of the $U$ action on $X$, where $X$ is endowed with the translation invariant K\"ahler metric.
The equation $\mu(\rho)=\theta$ is precisely~\eqref{kings_equation}.
\end{proof}

\begin{example}
The special case of the above theorem where $Q$ is the one-loop quiver is familiar from linear algebra: An operator $T$ on $\bC^n$ is \textit{normal}, i.e.\ satisfies $[T,T^*]=0$, with respect to some choice of Hermitian inner product on $\bC^n$, iff it is diagonalizable (semisimple as representation). 
\end{example}

Keep the notation as in the above proof. Since $U$ preserves the (constant) K\"ahler metric on $X$, the quotient $\mathcal{M}(Q,d,\theta)=\mu^{-1}(\theta)/U$, which is identified with the moduli of $\theta$-polystable representations with given dimension vector $d$, inherits a K\"ahler metric, at least on the smooth locus.
More explicitly, the K\"ahler potential on the total space of the polarizing line bundle, minus the zero section, $X\times\bC^\times$, is given by
\begin{equation*}
S(\rho,w)=\sum_{a\in Q_1}\tr\left(\rho(a)^*\rho(a)\right)+\log |w|.
\end{equation*}
Fixing a point $\rho\in X$, we can restrict $S$ to the orbit $G\cdot \rho$ and consider this as a function on $G/U$, which is identified with the space of Hermitian metrics $(h_i)_i$ on the $(E_i)_i$:
\begin{equation}\label{eq:quiverSh}
S(h)=\sum_{a\colon i\to j}\tr\left(\rho(a)^{*_h}\rho(a)\right)-\sum_{i\in Q_0}\theta_i\log \det h_i.
\end{equation}
This is related to the complexified K\"ahler potential $S_{\bC}$ by
\[
S=(\csc\pi\phi)\operatorname{Im}\left(e^{-\pi\sqrt{-1}\phi}S_{\bC}\right).
\]
The critical points of $S$ are then precisely the solutions to~\eqref{kings_equation}. These are global minima on $G/U$ by the general theory.
To find solutions to \eqref{kings_equation} numerically on the computer, one thus follows the negative gradient flow of $S$, given by
\begin{equation}\label{eq:quiverflow}
m_ih_i^{-1}\frac{d h_i}{dt}=\sum_{a\colon i\to j}\rho(a)^{*_h}\rho(a)-\sum_{a\colon j\to i}\rho(a)\rho(a)^{*_h}+\theta_i
\end{equation}
where $i\in Q_0$ and $m_i>0$ are parameters of the Riemannian metric on $G/U$, cf.~\cite{HKKPitlogs1}. (Warning: $\theta$ here and in~\cite{king_reps} corresponds to $-\theta$ in~\cite{HKKPitlogs1}.)
This is a simplification and approximation of the general flow $h^{-1}dh=-\arg\Omega$, where $m_i=|z_i|$.

\subsection{\texorpdfstring{$*$}{*}-algebras and their completions}\label{subsec:staralg}

Theorem~\ref{thm:King} can be reformulated in terms of $*$-algebras.
Here, by \textit{$*$-algebra} we mean an associative algebra over $\bC$ with a conjugate-linear involution $a\mapsto a^*$ such that $(ab)^*=b^*a^*$.
A \textit{$*$-representation} of a $*$-algebra $A$ is a Hilbert space $H$ together with a $*$-homomorphism $\rho\colon A\to B(H)$ to the C$^*$-algebra $B(H)$ of bounded operators on $H$.

Let $B(Q,\theta)$ be the $*$-algebra generated by the path algebra $A=\bC Q$, subject to the relations $e_i^*=e_i$ and
\begin{equation}\label{kingsRelation}
\sum_{a\in Q_1}\left[a,a^*\right]=\sum_{i\in Q_0}\theta_ie_i.  
\end{equation}
Thus, $B(Q,\theta)$ is the quotient of the path algebra of the quiver obtained from $Q$ by adding for each arrow $a$ an arrow $a^*$ with the reverse orientation, modulo the two-sided ideal generated by~\eqref{kingsRelation}.
The underlying algebra is a \textit{deformed preprojective algebra} of Crawley-Boevey--Holland~\cite{CBH_defpre}.

\begin{theorem}[Reformulation of Theorem~\ref{thm:King}]
A representation $E\in\mathbf{Rep}(Q)$ is $\theta$-polystable iff it is the restriction of a finite-dimensional $*$-representation of $B(Q,\theta)$.
\end{theorem}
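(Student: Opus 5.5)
The plan is to deduce this directly from Theorem~\ref{thm:King} by checking that finite-dimensional $*$-representations of $B(Q,\theta)$ are the same thing as representations of $Q$ together with Hermitian metrics satisfying~\eqref{kings_equation}.

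\emph{From metrics to $*$-representations.} Suppose $E$ is $\theta$-polystable. By Theorem~\ref{thm:King}, we can choose Hermitian metrics $h_i$ on the $E_i$ such that~\eqref{kings_equation} holds. Give $H\coloneqq\bigoplus_i E_i$ the orthogonal direct sum inner product. Then define $\rho\colon B(Q,\theta)\to B(H)$ as follows:
\begin{itemize}
\item $e_i$ acts as the orthogonal projection onto $E_i$, which is self-adjoint, so $e_i^*=e_i$ is respected;
\item $a$ acts by $\rho(a)$, extended by zero on the other summands;
\item $a^*$ acts by the Hermitian adjoint $\rho(a)^{*_h}$.
\end{itemize}
The path algebra relations hold because the $e_i$ are orthogonal idempotents summing to $1$ and $\rho(a)=e_{t(a)}\rho(a)e_{s(a)}$. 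Relation~\eqref{kingsRelation} is literally~\eqref{kings_equation}. Because each $a^*$ is sent to the adjoint of the image of $a$, this homomorphism is compatible with $*$ on generators, hence on all of $B(Q,\theta)$. By construction, restricting along $\bC Q\to B(Q,\theta)$ gives back $E$.

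\emph{From $*$-representations to metrics.} Conversely, let $\rho\colon B(Q,\theta)\to B(H)$ be a $*$-representation with $\dim H<\infty$ whose restriction to $\bC Q$ is isomorphic to $E$. Set $E_i\coloneqq\rho(e_i)H$. The $\rho(e_i)$ are self-adjoint idempotents, hence orthogonal projections. Since $e_ie_j=\delta_{ij}e_i$ and $\sum_i e_i=1$ in $\bC Q$, their images give an orthogonal decomposition $H=\bigoplus_i E_i$. Give each $E_i$ the inner product restricted from $H$. Then $\rho(a^*)=\rho(a)^*$ is the Hilbert adjoint, and that adjoint is exactly $\rho(a)^{*_h}\colon E_j\to E_i$ for $a\colon i\to j$. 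Consequently~\eqref{kingsRelation} becomes~\eqref{kings_equation} for this metric, and the converse direction of Theorem~\ref{thm:King} gives $\theta$-polystability.

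\emph{Isomorphism invariance.} Finally, the class of representations admitting a solution of~\eqref{kings_equation} is closed under isomorphism, since metrics can be transported along isomorphisms. So ``is the restriction of'' may be read up to isomorphism without loss.

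There is no real obstacle beyond Theorem~\ref{thm:King} itself. The one point needing care is the converse direction: one must use that the $\rho(e_i)$ are orthogonal projections, so that the adjoint in $H$ restricts to the componentwise Hermitian adjoints. This relies on the relation $e_i^*=e_i$ together with $\sum_i e_i=1$.
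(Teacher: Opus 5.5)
Your proposal is correct and is the intended argument. The paper gives no separate proof: it treats the statement as an immediate rephrasing of Theorem~\ref{thm:King}, because for a $*$-representation the $\rho(e_i)$ are orthogonal projections and the relation~\eqref{kingsRelation} becomes literally~\eqref{kings_equation} for the induced metrics. The one bookkeeping point, that $\rho(1)=\mathrm{id}_H$ so the spaces $\rho(e_i)H$ exhaust $H$, is automatic here, since the restriction is assumed isomorphic to the unital module $E$.
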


We want to complete $B(Q,\theta)$ to a $\mathrm C^*$-algebra, playing an analogous role to the algebra of continuous functions on the topological space $X(\bC)$ of $\bC$-points of a projective variety $X$.
The problem is to find a suitable norm on $B(Q,\theta)$.
This is a special case of the problem of defining $\mathrm C^*$-algebras via generators $S$ and relations $R$.
In some cases, a solution is given by the notion of a universal $\mathrm C^*$-algebra in the sense of Blackadar~\cite{blackadar85}.
The idea is to consider all $*$-representations $(E,\rho)$, where $E$ is a Hilbert space, of the free $*$-algebra generated by $S$, which satisfy the relations $R$. The norm of an element $b$ in the free $*$-algebra generated by $S$ is then defined as 
\[
\|b\|\coloneqq\sup_{(E,\rho)}\|\rho(b)\|
\]
where $\|\rho(b)\|$ is the operator norm. 
If $\|b\|=+\infty$, which happens for instance in the case of a single generator and no relations, then this approach fails and the universal $\mathrm C^*$-algebra is undefined.
Assuming $\|b\|<+\infty$ for all $b$, we can take the quotient by the two-sided ideal of elements with vanishing norm and then the completion, which will be a $\mathrm C^*$-algebra.

\begin{proposition}\label{prop:univcstar}
Suppose $Q$ is acyclic, then the universal $\mathrm C^*$-algebra with generators and relations as in $B(Q,\theta)$, denoted $C(Q,\theta)$, exists.
\end{proposition}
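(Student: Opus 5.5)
To prove existence of the universal C$^*$-algebra it suffices to show that each generator has a uniform bound on its operator norm over all $*$-representations of the free $*$-algebra on $\{e_i,a,a^*\}$ satisfying the relations of $B(Q,\theta)$ on Hilbert spaces. Indeed, once $\sup_{(E,\rho)}\|\rho(s)\|<\infty$ holds for every generator $s$, the same holds for every noncommutative polynomial in the generators by submultiplicativity and the triangle inequality. The universal seminorm is then finite, and one quotients by its kernel and completes, as described above.

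For the idempotents I would first note that the relations should include $e_ie_j=\delta_{ij}e_i$, $\sum_i e_i=1$, $e_i^*=e_i$ (the path algebra relations). Then each $\rho(e_i)$ is an orthogonal projection, so $\|\rho(e_i)\|\le 1$, and $H=\bigoplus_i H_i$ is an orthogonal decomposition with $\rho(a)\colon H_{s(a)}\to H_{t(a)}$. Projecting relation~\eqref{kingsRelation} onto $H_i$ by compressing with $e_i$ gives, at each vertex $i$,
\[
\sum_{a\colon i\to j}\rho(a)^*\rho(a)=\sum_{b\colon j\to i}\rho(b)\rho(b)^*-\theta_i\rho(e_i),
\]
up to the sign convention in $[a,a^*]$. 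Here I use that $aa^*$ lives at $t(a)$ and $a^*a$ at $s(a)$. All terms are positive operators except the $\theta_i$ term.

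The key step is induction along a topological order of the acyclic quiver, starting from sinks and working backwards (or from sources, depending on the sign convention; I would pick the direction that makes the following estimate work). At a source $i$ no arrows come into $i$, so $\sum_{a\colon i\to j}\rho(a)^*\rho(a)=-\theta_i e_i$ (with suitable sign). Hence $\|\rho(a)\|^2\le|\theta_i|$ for every arrow $a$ out of $i$. Proceeding in topological order, suppose all arrows $b$ into $i$ already satisfy $\|\rho(b)\|\le C_b$. Then
\[
\Bigl\|\sum_{a\colon i\to j}\rho(a)^*\rho(a)\Bigr\|\le \sum_{b\colon j\to i}C_b^2+|\theta_i|,
\]
and positivity of each summand bounds each $\|\rho(a)\|^2=\|\rho(a)^*\rho(a)\|$ by the norm of the sum. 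Acyclicity guarantees that this recursion covers all arrows, yielding constants $C_a$ that depend only on $Q$ and $\theta$. If the sign convention instead makes the outgoing terms appear with the opposite sign, I would run the same argument from sinks, bounding $\rho(b)\rho(b)^*$ for incoming arrows.

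The main obstacle is bookkeeping the sign conventions so that the positive side of the vertex equation is the one controlled inductively. Since the relation is symmetric under reversing the order (sources versus sinks), one of the two orderings always works. Note also that no bound follows for cyclic quivers: the one-loop quiver with $[a,a^*]=0$ admits normal operators of arbitrary norm. This shows that acyclicity is exactly what is used.
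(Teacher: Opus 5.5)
Your proof is correct and is essentially the paper's argument: the paper compresses the relation at each vertex, uses the Loewner-order bound $\rho(a)^*\rho(a)\le\sum_{b\colon k\to i}\rho(b)\rho(b)^*-\theta_i\rho(e_i)$ to get $\|\rho(a)\|^2\le\sum_{b\colon k\to i}\|\rho(b)\|^2+|\theta_i|$, and inducts from sources using acyclicity. Your displayed vertex equation already has the right signs for the convention $[a,a^*]=aa^*-a^*a$, so the hedging is unnecessary: the induction runs from sources.
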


\begin{proof}
Let $\rho\colon B(Q,\theta)\to B(E)$ be some $*$-representation on a Hilbert space $E$.
We need to show that for $a\in Q_1$ there is a universal bound on $\|\rho(a)\|$, independent of $E$ and $\rho$.
The relation \eqref{kingsRelation} can be written as a system of equations
\begin{equation}\label{kingsRelVert}
\sum_{a\colon j\to i}a a^* - \sum_{a\colon i\to j}a^* a=\theta_ie_i,\qquad i\in Q_0,
\end{equation}
which under $\rho$ becomes
\[
\sum_{a\colon j\to i}\rho(a) \rho(a)^* - \sum_{a\colon i\to j}\rho(a)^* \rho(a)=\theta_i\rho(e_i),\qquad i\in Q_0,
\]
in $B(E)$.
For each $a\colon i\to j$ we thus have
\[
\rho(a)^* \rho(a) \leq \sum_{b\colon k\to i}\rho(b) \rho(b)^*-\theta_i\rho(e_i)
\]
with respect to the Loewner partial order on $B(E)$, which implies the upper bound
\[
\|\rho(a)\|^2 \leq \sum_{b\colon k\to i}\|\rho(b)\|^2 +|\theta_i|.
\]
In particular, we get the bound $\|\rho(a)\|\leq \sqrt{|\theta_i|}$ if $i$ is a source, and by induction and using acyclicity of $Q$ we get a bound for all $a\in Q_1$.
\end{proof}

\begin{example}
Let $Q$ be the quiver with $n$ vertices $Q_0=\{1,2,\ldots,n\}$ and $n-1$ arrows $i\to i+1$, $1\leq i<n$.
Assume $\theta_1<\theta_2<\ldots<\theta_n$ and $\sum\theta_i=0$.
Then $B(Q,\theta)=C(Q,\theta)\cong M_n(\bC)$ is the algebra of $n\times n$-matrices.
\end{example}

\begin{example}
Let $Q$ be the \textit{Kronecker quiver} with two vertices $Q_0=\{1,2\}$ and two arrows, both from $1$ to $2$.
The path algebra $\bC Q$ arises as endomorphisms of the rank 2 bundle $\cO\oplus \cO(1)$ in $D^b(\mathrm{Coh}(\bC\bP^1))$.
Suppose $-\theta_1=\theta_2>0$, then $C(Q,\theta)$ is isomorphic to the algebra $C(\bC\bP^1,\mathrm{End}(\cO\oplus \cO(1)))$ of continuous sections of the bundle of endomorphisms of the rank 2 bundle.
In particular, $C(Q,\theta)$ is (strongly) Morita equivalent to the commutative $\mathrm C^*$-algebra $C(\bC\bP^1)$.
\end{example}

If $Q$ has oriented cycles, then $C(Q,\theta)$ still exists as a complete Hausdorff topological $*$-algebra, in fact a \textit{pro-$\mathrm C^*$-algebra} in the sense of~\cite{phillips88}.
It is constructed by taking the Hausdorff completion of $B(Q,\theta)$ with respect to the coarsest topology which makes the functions $B(Q,\theta)\to\bR$, $a\mapsto \|\rho(a)\|$, where $(E,\rho)$ ranges over all $*$-representation on Hilbert spaces $E$, continuous, see \cite[Example 1.3(6)]{phillips88}.

Another extension is to the case of \textit{quivers with relations}, where the path algebra $\bC Q$ is replaced by a quotient $\bC Q/I$ by a two-sided ideal $I\subset\bC Q$ which is usually required to be contained in the ideal generated by the arrows of $Q$.
We then impose the same relations in the $*$-algebra $B(Q,\theta)$.

\begin{example}
Let $X\subseteq \bC^n$ be an affine variety over $\bC$. The algebra $A(X)$ of regular (polynomial) functions on $X$ is of the form $\bC Q/I$ where $Q$ has a single vertex and $n$ arrows and $I$ is generated by the commutators and regular functions vanishing on $X$. 
The unital $*$-algebra $B(Q,0)$ is generated by $z_1,\ldots,z_n$ with relations coming from the defining equations of $X$ as well as the single additional relation
\[
\sum_{i=1}^n[z_i,z_i^*]=0.
\]
The abelianization of $B(Q,0)$ is then the $*$-closure of $A(X)$ in the algebra $C(X)$ of continuous functions on $X$ and dense with respect to the topology of uniform convergence on compact subsets by the Stone--Weierstrass theorem (complex version).
\end{example}

\begin{conjecture}
Let $A=\mathbb CQ/I$ be the path algebra of a quiver $Q$ modulo the ideal of relations $I$. Fix $\theta\colon Q_0\to\bR$, which can be more abstractly viewed as an additive map $K_0(\mathbf{Mod}^{\mathrm{fd}}(A))\to\bR$.
Then the (pro-)$\mathrm C^*$-algebra $C(Q,I,\theta)$ depends only on $A$ and $\theta$, not the particular presentation in terms of a quiver with relations.
\end{conjecture}

\begin{remark}
In~\cite{Frohlich1998Supersymmetric}, Fr{\"o}hlich--Grandjean--Recknagel give a definition of \textit{K\"ahler spectral data} based on $N=(2,2)$ supersymmetry.
In a different direction, \'O Buachalla studied a notion of noncommutative K\"ahler structures on quantum homogeneous spaces~\cite{obuachalla}, see also subsequent work by \'O Buachalla and collaborators.
It would be interesting to investigate if there are connections between those proposals and the (Archimedean) framework and examples discussed here.
\end{remark}

\subsection{Invariance under BGP reflection}\label{subsec:BGP}

In this subsection we provide some evidence for the belief that the K\"ahler metrics and $\mathrm C^*$-algebras discussed above do not depend on the choice of heart of a bounded t-structure $\mathbf{Rep}(Q)\subset D^b(\mathbf{Rep}(Q))$. 
We consider the special case where two different hearts are related by mutation of $Q$ at a sink or source.

Suppose $Q$ is a quiver and $i^-\in Q_0$ is a sink (no outgoing arrows), then we obtain the \textit{mutated} quiver, $s_{i^-}Q$, by replacing $i^-$ by a vertex $i^+$ and each arrow $a\colon j\to i^-$ of $Q$ by an arrow $a^{op}\colon i^+\to j$ of $s_{i^-}Q$. 
Thus, $i^+$ is a source of $s_{i^-}Q$, and the inverse construction takes a quiver $Q$ with source $i^+$ to a quiver $s_{i^+}Q$ with sink $i^-$.

In their seminal paper~\cite{BGP}, Bernstein--Gel'fand--Ponomarev introduced \textit{reflection functors} $S_{i^\pm}\colon \mathbf{Rep}(Q)\to \mathbf{Rep}(s_{i^\pm}Q)$ defined as follows.
If $i^-$ is a sink of $Q$ and $(E,\rho)\in\mathbf{Rep}(Q)$, then $S_{i^-}(E,\rho)=(E',\rho')$ with $E'_j\coloneqq E_j$ for $j\neq i^-$,
\begin{equation}\label{eq:sinkBGPformula}
E'_{i^+}\coloneqq \mathrm{Ker}\left(\bigoplus_{a\colon j\to i^-}E_j\xrightarrow{\left(\rho(a)\right)_{a\colon j\to i^-}}E_{i^-}\right)    
\end{equation}
and for an arrow $a^{op}\colon i^+\to j$ the map $\rho'(a^{op})\colon E'_{i^+}\to E'_j=E_j$ is the component of the inclusion map.
This defines $S_{i^-}$ on objects and extends to morphisms of representations  in the obvious way.
The definition of $S_{i^+}$ is dual.

From a more conceptual point of view, the appropriate left/right derived functor of $S_{i^\pm}$ provides an equivalence of derived categories $D^b\left(\mathbf{Rep}(Q)\right)\to D^b\left(\mathbf{Rep}(s_{i^\pm}Q)\right)$, see~\cite{happel87}.

Under the assumption that the map in~\eqref{eq:sinkBGPformula} is surjective, the dimension vector $d'\in\bZ^{(s_{i^-}Q)_0}_{\geq 0}$ of $(E',\rho')$ is related to the dimension vector $d\in\bZ^{Q_0}_{\geq 0}$ of $(E,\rho)$ by
\[
d_{i^+}'=-d_{i^-}+\sum_{a\colon j\to i^-}d_j,\qquad d_j' = d_j,\quad j\neq i^{\pm}
\]
which describes, more generally, the induced map $K_0(RS_{i^-})$ on Grothendieck groups.
This is compatible, in the sense that $\langle\theta',d'\rangle=\langle\theta,d\rangle$, with the following definition of $\theta'\in\bR^{s_{i^-}Q_0}$:
\begin{equation}
    \theta'_{i^+}=-\theta_{i^-}, \qquad \theta'_j=\theta_j+n_{i^-,j}\theta_{i^-},\qquad j\neq i^\pm,
\end{equation}
where $n_{k,j}$ denotes the number of arrows from $j$ to $k$.
We will always assume that $\theta_{i^-}\geq 0$.

In the following it will be convenient to consider the quiver $\widetilde Q$ which is the union of $Q$ and $s_{i^-}Q$. Thus, $\widetilde Q$ is obtained from $Q$ by adding the single vertex $i^+$ and an arrow $a^{op}\colon i^+\to j$ for every arrow $a\colon j\to i^-$.

\begin{theorem}\label{thm:mutation-invariance}
Let $Q$ be a quiver with sink $i^-$, $d\in\bZ_{\geq 0}^{Q_0}$ a dimension vector, $\theta\in\bR^{Q_0}$ with $\theta_{i^-}> 0$, and the mutated data $Q'\coloneqq s_{i^-}Q$, $d'$, and $\theta'$ as above.
Then the moduli spaces $\cM(Q,d,\theta)$ and $\cM(Q',d',\theta')$ are isomorphic as K\"ahler quotients.
\end{theorem}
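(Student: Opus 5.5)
The plan is to realize both moduli spaces as symplectic reductions of a single Kähler manifold attached to the enlarged quiver $\widetilde Q$, reducing in two stages in two different orders. Write $E_j=\bC^{d_j}$ for $j\neq i^\pm$, $V\coloneqq\bigoplus_{a\colon j\to i^-}E_j$, and $E_{i^\pm}=\bC^{d_{i^\pm}}$, $E'_{i^+}=\bC^{d'_{i^+}}$. A representation of $Q$ at the vertex $i^-$ is a linear map $\alpha\colon V\to E_{i^-}$, and a representation of $Q'$ at $i^+$ is a map $\beta\colon E'_{i^+}\to V$. The arrows not touching $i^\pm$ contribute a common factor $X_0=\bigoplus\Hom(E_j,E_k)$ with the action of $G_0=\prod_{j\neq i^\pm}GL(E_j)$, and $V$ is a $G_0$-representation.

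Step 1 is the local finite-dimensional statement. With $\theta_{i^-}>0$, King's equation~\eqref{kings_equation} at the sink reads $\alpha\alpha^*=\theta_{i^-}\cdot 1_{E_{i^-}}$. Dividing by $U(E_{i^-})$ identifies $\mu_{U(E_{i^-})}^{-1}(\theta_{i^-})/U(E_{i^-})$ with the Grassmannian of $d_{i^-}$-dimensional quotients of $V$, i.e.\ with $\mathrm{Gr}(d'_{i^+},V)$ via $\alpha\mapsto\Ker\alpha$. This Grassmannian carries the Fubini--Study-type form scaled by $\theta_{i^-}$, and it is the GIT quotient by $GL(E_{i^-})$ of the surjective maps. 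Dually, the equation at the source $i^+$ of $Q'$ is $\beta^*\beta=-\theta'_{i^+}=\theta_{i^-}$. The $U(E'_{i^+})$-reduction is then $\mathrm{Gr}(d'_{i^+},V)$ via $\beta\mapsto\Imm\beta$, which is the BGP correspondence~\eqref{eq:sinkBGPformula}. It carries the Kähler form with the same scale $\theta_{i^-}$, because the Grassmannians of subspaces and of quotients are isometric through orthogonal complements: $\alpha^*\alpha/\theta$ and $1-\beta\beta^*/\theta$ are the same orthogonal projection onto $(\Ker\alpha)^\perp$. I would check this equality of projections explicitly. It shows that the two reductions give the same $U(V)$-equivariant Kähler manifold, together with the same $U(V)$-moment map up to a constant: $\mu=\alpha^*\alpha$ versus $-\beta\beta^*=\alpha^*\alpha-\theta_{i^-}\cdot 1_V$.

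Step 2 uses reduction in stages. We have $\cM(Q,d,\theta)=(X_0\times\Hom(V,E_{i^-}))/\!/_{\theta}(G_0\times U(E_{i^-}))$, and we reduce first by $U(E_{i^-})$. This gives $X_0\times\mathrm{Gr}$ with the residual $U_0$-action, whose moment map at vertex $j$ picks up $\sum_{a\colon j\to i^-}$ of the diagonal blocks of $\alpha^*\alpha$. Doing the same on the $Q'$ side, the constant shift $-\theta_{i^-}1_V$ restricts at vertex $j$ to $-n_{i^-,j}\theta_{i^-}1_{E_j}$. This is exactly absorbed by $\theta'_j=\theta_j+n_{i^-,j}\theta_{i^-}$, keeping track of signs with the convention of~\eqref{kingsRelVert}. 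The remaining reductions by $U_0$ at the corresponding levels therefore coincide, and the two quotients are isomorphic as Kähler quotients.

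The main obstacle is making the stage-wise reduction rigorous and compatible with stability, rather than working only on smooth loci. The issue is that the identification requires $\alpha$ surjective and $\beta$ injective on the full level sets. This does hold automatically, since the equations force $\alpha\alpha^*$ and $\beta^*\beta$ to be positive definite. The care needed is that the dimension formula for $d'$ presupposes this surjectivity. I would also verify, via Theorem~\ref{thm:King}, that the complex-analytic identification is the one given by the reflection functor $S_{i^-}$. This amounts to the statement that $\theta$-polystable representations of $Q$ with $\theta_{i^-}>0$ have no summand $S_{i^-}$, so the map at $i^-$ is surjective. Finally, the sign conventions for $\theta'$ need to be tracked carefully.
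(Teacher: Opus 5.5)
Your proposal is correct and follows essentially the same route as the paper: first reduce by $U(E_{i^-})$ (resp.\ $U(E_{i^+})$) to obtain the same $\theta_{i^-}$-scaled Grassmannian $\mathrm{Gr}(d'_{i^+},V)$. Then use the identity $A^*A+BB^*=\theta_{i^-}$ (your equality of projections) to see that the residual $U_0$-moment maps differ exactly by the shift $\theta_j\mapsto\theta_j+n_{i^-,j}\theta_{i^-}$, and conclude by reduction in stages. Your extra check against the reflection functor $S_{i^-}$ goes beyond what the paper proves. Your opening mention of $\widetilde Q$ is never actually used in your argument; the paper uses $\widetilde Q$ only for the $\mathrm C^*$-algebra statement.
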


Technically, we define $\cM(Q',d',\theta')$ to be empty whenever $d'_{i^+}<0$ for the statement of the theorem to hold in that degenerate case. 

\begin{proof} 
Fix Hermitian vector spaces $E_i$ with $\dim E_i=d_i$, $i\in Q_0$, and $E_{i^+}$ with $\dim E_{i^+}=d'_{i^+}$.
It will be convenient to write
\[
V \coloneqq \bigoplus_{a\colon j\to i^-} E_j
\]
and let
\[ 
\mathrm{inc}_a\colon E_j\longrightarrow V, \qquad \mathrm{pr}_a\colon V\longrightarrow E_j 
\] 
be the inclusion and projection associated with the summand of $V$ corresponding to the arrow $a\colon j\to i^-$. 

Let
\[
X^-\coloneqq \bigoplus_{a\in Q_1}\Hom(E_{s(a)},E_{t(a)}), \qquad X^+\coloneqq \bigoplus_{a\in Q'_1}\Hom(E_{s(a)},E_{t(a)})
\]
and  
\[
U^-\coloneqq\prod_{i\in Q_0}U(E_i),\qquad U^+\coloneqq\prod_{i\in Q'_0}U(E_i),
\]
then $\mathcal{M}(Q,d,\theta)$ (resp. $\mathcal{M}(Q',d',\theta')$) is by definition the K\"ahler quotient of $X^-$ by $U^-$ (resp. $X^+$ by $U^+$).
The idea of the proof is that the quotient of $X^-$ by the smaller group $U(E_{i^-})\subseteq U^-$ already coincides with the quotient of $X^+$ by $U(E_{i^+})\subseteq U^+$.

To calculate the quotient of $X^-$ by $U(E_{i^-})$, we assign to a given $\rho\in X^-$ the linear map 
\[
A\colon V\longrightarrow E_{i^-},\qquad A\coloneqq (\rho(a))_{a:j\to i^-}
\]
which collects maps assigned to the arrows to the sink $i^-$.
Then the moment map equation for $U(E_{i^-})$ is $AA^*=\lambda$, where $\lambda \coloneqq \theta_{i^-}>0$. 
If we let
\[
\mathrm{Gr}(d_{i^+},V)\coloneqq \left\{A\colon V\longrightarrow E_{i^-}\mid AA^*=\lambda\right\}/U(E_{i^-}),
\]
which is isomorphic to the Grassmannian of $d_{i^+}$-dimensional subspaces in $V$ via $A\mapsto\operatorname{Ker}A$, then the K\"ahler quotient of $X^-$ by $U(E_{i^-})$ is thus
\begin{equation}\label{eq:quot1}
\mathrm{Gr}(d_{i^+},V)\times \bigoplus_{a\in Q_1,t(a)\neq i^-}\Hom(E_{s(a)},E_{t(a)}).    
\end{equation}

Similarly, we assign to a given $\rho'\in X^+$ the linear map 
\[
B\colon E_{i^+}\longrightarrow V,\qquad B\coloneqq (\rho(a^{\mathrm{op}}))_{a:j\to i^-}
\]
which collects maps assigned to the arrows from the source $i^+$.
Then the moment map equation for $U(E_{i^+})$ is $-B^*B = -\lambda= \theta'_{i^+}$. 
If we let
\[
\mathrm{Gr}'(d_{i^+},V)\coloneqq \left\{B\colon E_{i^+}\longrightarrow V\mid B^*B=\lambda\right\}/U(E_{i^+}),
\]
which is isomorphic to the Grassmannian of $d_{i^+}$-dimensional subspaces in $V$ via $B\mapsto\operatorname{Im}B$, then the K\"ahler quotient of $X^+$ by $U(E_{i^+})$ is thus
\begin{equation}\label{eq:quot2}
\mathrm{Gr}'(d_{i^+},V)\times \bigoplus_{a\in Q'_1,s(a)\neq i^+}\Hom(E_{s(a)},E_{t(a)}).
\end{equation}

It is clear that the two quotients~\eqref{eq:quot1} and \eqref{eq:quot2} are naturally identified, even as K\"ahler spaces, since the quotient yields the standard homogeneous K\"ahler form scaled by $\lambda$ on both Grassmannians. 
The remaining group 
\[
U_0\coloneqq \prod_{i\in Q_0,i\neq i^{-}}U(E_i)=U^-/U(E_{i^-})=U^+/U(E_{i^+})
\]
acts on both in a compatible way.
We claim that the two moment map equations cut out corresponding subsets.

Suppose $\rho\in X^-$ satisfies the moment map equation for $U^-$.
A corresponding $\rho'\in X^+$ (unique up to $U(E_{i^+})$) is obtained by choosing a map $B$ fitting into a short exact sequence
\begin{equation}\label{mutationSES}
0 \to E_{i^+}\xrightarrow{B}V\xrightarrow{A}E_{i^-}\to 0
\end{equation}
and satisfying $B^*B=\lambda$.
By definition of $A$, $\rho(a)=A\circ\mathrm{inc}_a\colon E_j\longrightarrow E_{i^-}$, and we define $\rho'$ by $\rho'(a^{\mathrm{op}})\coloneqq \mathrm{pr}_a\circ B\colon E_{i^+}\longrightarrow E_j$.
Also, $\rho'(a)\coloneqq \rho(a)$ for arrows not incident to $i^{\pm}$. 
This is the explicit description of the identification between \eqref{eq:quot1} and \eqref{eq:quot2}.

Note that 
\[
A^*A+BB^*=\lambda. 
\] 
Indeed, $\lambda^{-1}A^*A$ and $\lambda^{-1}BB^*$ are the orthogonal projections onto $\operatorname{Im}(A^*)=(\operatorname{Ker} A)^\perp$ and $\operatorname{Im}(B)=\operatorname{Ker}(A)$, respectively. 
The rearranged $BB^*=\lambda -A^*A$ implies
\[ 
\rho'(a^{\mathrm{op}}) \bigl(\rho'(a^{\mathrm{op}})\bigr)^* = \mathrm{pr}_a\,BB^*\,\mathrm{inc}_a = \lambda - \mathrm{pr}_a\,A^*A\,\mathrm{inc}_a = \lambda -\rho(a)^*\rho(a)
\] 
for $a\colon j\to i^-$.
Summing over all arrows from $j$ to $i^-$, we obtain 
\[ 
\sum_{a\colon j\to i^-} \rho'(a^{\mathrm{op}}) \bigl(\rho'(a^{\mathrm{op}})\bigr)^* = n_{i^-,j}\lambda - \sum_{a\colon j\to i^-}\rho(a)^*\rho(a). 
\] 
In the moment map equation at $j$, an arrow $a\colon j\to i^-$ contributes $-\rho(a)^*\rho(a)$, while after reflection the arrow 
$a^{\mathrm{op}}\colon i^+\to j$ contributes $\rho'(a^{\mathrm{op}}) \bigl(\rho'(a^{\mathrm{op}})\bigr)^*$.  
Consequently, the moment map parameter at $j$ changes from $\theta_j$ to 
\[ 
\theta_j+n_{i^-,j}\lambda = \theta_j+n_{i^-,j}\theta_{i^-} = \theta'_j
\] 
and the claim follows.

The above moment map calculation shows that the K\"ahler quotient of \eqref{eq:quot1} by $U_0$, which is just $\mathcal{M}(Q,d,\theta)$ by reduction in stages, is naturally identified with the K\"ahler quotient of \eqref{eq:quot2} by $U_0$, which is $\mathcal{M}(Q',d',\theta')$.
\end{proof}

By the same mechanism as in the proof of the above theorem, we obtain a strong Morita equivalence\footnote{In the modern operator algebra literature, one typically just refers to these as \textit{Morita equivalences}. We retain the adjective here to avoid potential confusion, as much of the rest of paper works in a purely algebraic setting.} of $\mathrm C^*$-algebras.

\begin{theorem}\label{thm:C-star}
Let $Q$ be an acyclic quiver with sink ${i^-}$, $\theta\in\bR^{Q_0}$ with $\theta_{i^-}> 0$, and $s_{i^-}Q$, $\theta'$ as above.
Then the $\mathrm C^*$-algebras $C(Q,\theta)$ and $C(s_{i^-}Q,\theta')$ are strongly Morita equivalent.
\end{theorem}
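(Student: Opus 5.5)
The plan is to mimic the proof of Theorem~\ref{thm:mutation-invariance} at the level of universal $\mathrm C^*$-algebras. I would construct both algebras as corners of a single universal $\mathrm C^*$-algebra attached to the enlarged quiver $\widetilde Q$. Let $D$ be the universal $\mathrm C^*$-algebra generated by the vertex projections $e_k$ ($k\in\widetilde Q_0$) and the arrows of $\widetilde Q$. Its relations are the King relations~\eqref{kingsRelVert} at all vertices $j\neq i^\pm$ with parameter $\theta_j$. Writing $V=\bigoplus_{a\colon j\to i^-}e_j$, $A=(a)_{a\colon j\to i^-}\colon V\to e_{i^-}$ and $B=(a^{op})\colon e_{i^+}\to V$, we impose in addition
\[
AA^*=\lambda e_{i^-},\qquad B^*B=\lambda e_{i^+},\qquad A^*A+BB^*=\lambda\, 1_V,\qquad AB=0,
\]
with $\lambda=\theta_{i^-}$. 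The last two relations encode the short exact sequence~\eqref{mutationSES} in its unitary normalization.

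Existence of $D$ follows as in Proposition~\ref{prop:univcstar}. The new arrows are bounded by $\sqrt\lambda$, and $\widetilde Q$ is acyclic, so the inductive Loewner bound still gives universal norm bounds. Next, set $p=1-e_{i^+}$ and $q=1-e_{i^-}$. I would show that $pDp\cong C(Q,\theta)$ and $qDq\cong C(s_{i^-}Q,\theta')$. The moment-map computation in the proof of Theorem~\ref{thm:mutation-invariance}, namely $\rho'(a^{op})\rho'(a^{op})^*=\lambda-\rho(a)^*\rho(a)$, shows that the King relation at $j$ for $\theta$ is equivalent to the one for $\theta'$, given the relations above.

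For $pDp\cong C(Q,\theta)$, the direction $C(Q,\theta)\to pDp$ is a $*$-homomorphism sending generators to generators, since the relations hold in the corner. For the inverse, I use that $pDp$ is generated by $\{e_k,a\}_{k\ne i^+}$ together with products through $e_{i^+}$, which have the form $\mathrm{pr}_a BB^*\mathrm{inc}_b=\lambda\delta_{ab}-\mathrm{pr}_aA^*A\,\mathrm{inc}_b$. Those products are therefore already in the $*$-algebra generated by the $Q$-generators. It then suffices to extend every $*$-representation $\rho$ of $C(Q,\theta)$ on a Hilbert space $H$ to a representation of $D$. I would take $E_{i^+}=\ker\rho(A)\subseteq \rho(V)H$ and let $B=\sqrt\lambda\cdot$inclusion. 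Since $\rho(A)\rho(A)^*=\lambda$, the operator $\lambda^{-1}\rho(A)^*\rho(A)$ is the projection onto $(\ker \rho(A))^\perp$, which gives $A^*A+BB^*=\lambda$. Universality then makes the map isometric, so the corner is the universal algebra. The symmetric argument applies to $qDq$: extend a representation of $C(s_{i^-}Q,\theta')$ by $E_{i^-}=\operatorname{coker}B\cong(\operatorname{Im}B)^\perp$ and $A=\sqrt\lambda\cdot$ the orthogonal projection. Here the assumption $\theta_{i^-}>0$ is what makes $B^*B=\lambda$ force $B$ to be a scaled isometry.

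Finally, both corners are full. For $p$, note $e_{i^+}=\lambda^{-1}B^*B=\lambda^{-1}B^*pB$, since $B$ lands in $V\le p$, so $e_{i^+}\in DpD$ and hence $DpD$ contains $1$. Similarly $e_{i^-}=\lambda^{-1}AqA^*\in DqD$. Two full corners of the same $\mathrm C^*$-algebra are strongly Morita equivalent to $D$, hence to each other, via the imprimitivity bimodule $pDq$.

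I expect the main obstacle to be the identification of the corner $pDp$ with the universal algebra $C(Q,\theta)$, rather than with some quotient or enlargement of it. This needs both the generation statement (elements routed through $e_{i^+}$ are expressible in $Q$-generators) and the extension of arbitrary, possibly infinite-dimensional, representations. The kernel construction is also where the sign assumption on $\theta_{i^-}$ enters.
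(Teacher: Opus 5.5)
Your proposal is correct and follows essentially the same route as the paper: both realize $C(Q,\theta)$ and $C(s_{i^-}Q,\theta')$ as full corners of the universal $\mathrm C^*$-algebra on $\widetilde Q$ with the extra relations $AB=0$ and $A^*A+BB^*=\lambda$. In both, density comes from rewriting paths through $i^\pm$, isometry from extending representations via $\ker A$ or $(\operatorname{Im}B)^\perp$, and fullness from $B^*B=\lambda e_{i^+}$ (resp.\ $AA^*=\lambda e_{i^-}$). The differences are cosmetic: you keep $\lambda$ instead of normalizing $\theta_{i^-}=1$, you impose only the $Q$-King relations at intermediate vertices (equivalent given $A^*A+BB^*=\lambda$), and you spell out the extension of representations more explicitly.
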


The proof will make use of the following elementary fact about strong Morita equivalence of $\mathrm C^*$-algebras, see for example~\cite[II.7.6.5(iii)]{blackadar_oa}.

\begin{lemma}\label{lem:mecorner}
Let $B$ be a $\mathrm C^*$-algebra, $p\in B$ a projection such that
\[
\overline{\operatorname{Span}(BpB)}=B. 
\]
Then $A\coloneqq pBp$ is strongly Morita equivalent to $B$.
\end{lemma}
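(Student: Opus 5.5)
The plan is to exhibit an explicit $A$--$B$ imprimitivity bimodule. The candidate is the closed right ideal $X\coloneqq pB$. We give it two inner products:
\[
{}_A\langle x,y\rangle\coloneqq xy^*\in pBp=A, \qquad \langle x,y\rangle_B\coloneqq x^*y\in B,
\]
for $x,y\in X$. Both actions are given by multiplication inside $B$: $A=pBp$ acts on the left and $B$ acts on the right. Strong Morita equivalence of $A$ and $B$ will then follow from the definition, once $X$ is shown to be a full Hilbert bimodule on both sides satisfying the compatibility ${}_A\langle x,y\rangle z=x\langle y,z\rangle_B$.

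\textbf{Step 1: $X$ is a Banach space and the inner products land where claimed.} The map $b\mapsto pb$ is a bounded idempotent on $B$, so its image $pB$ is closed. For $x=pb$ and $y=pc$ we have $xy^*=pbc^*p\in pBp$. Moreover $X$ is invariant under left multiplication by $pBp$ and right multiplication by $B$.

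\textbf{Step 2: algebraic axioms.} Sesquilinearity, $\langle x,y\rangle^*=\langle y,x\rangle$, positivity $x^*x\geq 0$ and $xx^*\geq 0$, and $A$- resp.\ $B$-linearity are all immediate, because every operation is multiplication inside the single $\mathrm C^*$-algebra $B$. The compatibility identity reduces to associativity, $(xy^*)z=x(y^*z)$. The norms induced by the two inner products agree with the norm of $B$, since $\|x^*x\|=\|x\|^2=\|xx^*\|$ by the $\mathrm C^*$-identity. So no completion issues arise.

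\textbf{Step 3: fullness on both sides.} This is the only step with content.
\begin{itemize}
\item On the $B$-side, $\operatorname{Span}\{x^*y\}=\operatorname{Span}\{b^*pc\}=\operatorname{Span}(BpB)$, using $p=p^*=p^2$. This is dense by hypothesis.
\item On the $A$-side, $\operatorname{Span}\{xy^*\}=\operatorname{Span}(pBBp)$, and we must show it is dense in $pBp$. Take an approximate unit $(e_\lambda)$ of $B$. For $b\in B$, the element $pbe_\lambda p=(pb)(pe_\lambda)^*$ lies in the span, and it converges to $pbp$.
\end{itemize}

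I expect this $A$-side fullness to be the only point needing care, and only mildly so: it is where an approximate unit is needed, in case $B$ is non-unital. The $B$-side fullness is exactly the hypothesis $\overline{\operatorname{Span}(BpB)}=B$. With both done, $X$ is an $A$--$B$ imprimitivity bimodule, so $A=pBp$ is strongly Morita equivalent to $B$.
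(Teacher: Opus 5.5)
The paper gives no argument of its own and simply cites Blackadar's book (II.7.6.5(iii)); your proof, which uses the corner $pB$ as an $A$--$B$ imprimitivity bimodule, is the standard argument behind that reference and is correct. One small simplification: you do not need the approximate unit for $A$-side fullness, because $p=p\cdot p\in pB$ gives $(pb)p^*=pbp$, so the left inner products already span all of $pBp$.
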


\begin{proof}[Proof of Theorem~\ref{thm:C-star}]
We may assume that $\theta_{i^-}=1$ after rescaling the generators corresponding to arrows $a\colon j\to i^-$.

Let $\widetilde B$ be the $*$-algebra generated by the quiver $\widetilde Q$ with the relations of $B(Q,\theta)$, $B(s_{i^-}Q,\theta')$, and the additional relations
\begin{equation}\label{BtildeRels}
\sum_{a\colon j\to i^{-}}aa^{op}=0,\qquad a^*b+a^{op}(b^{op})^*=\begin{cases} e_{s(a)} & a=b \\ 0 & \text{else} \end{cases}
\end{equation}
for $a,b\in Q_1$ with $t(a)=t(b)=i^-$.
Let $a_1,\ldots,a_n$ be the arrows to $i^-$, then in terms of 
\[
A\coloneqq (a_1,\ldots,a_n),\qquad B\coloneqq (a_1^{op},\ldots,a_n^{op})^{\mathrm{T}}
\]
the relations become $AB=0$ and 
\[
A^*A+BB^*=\mathrm{diag}\left(e_{s(a_1)},\ldots,e_{s(a_n)}\right).
\]
We also have $AA^*=e_{i^-}$ and $B^*B=e_{i^+}$ from~\eqref{kingsRelation}. 
Thus, for any $*$-representation of $\widetilde B$ (in Hilbert spaces) the sequence~\eqref{mutationSES} is exact with the first map, $\rho(B)$, an isometric inclusion and the second map, $\rho(A)$, an orthogonal projection.
This means we can reconstruct the entire sequence, up to isometry, from either one of the maps, i.e.\ any $*$-representation of $B(Q,\theta)$ (or $B(s_{i^-}Q,\theta')$) extends uniquely up to unitary isomorphism to $\widetilde B$.
Here we have also used that the relation from $B(Q,\theta)$ at a vertex $j\neq i^-$, \eqref{kingsRelVert}, together with the second set of relations in \eqref{BtildeRels}, imply the corresponding relation from $B(s_{i^-}Q,\theta')$ at $j$:
\begin{align*}
\sum_{\substack{a\in Q_1 \\ t(a)=j}}a a^* &- \sum_{\substack{a\in Q_1 \\ s(a)=j}}a^* a-\theta_je_j=\sum_{\substack{a\in Q_1 \\ t(a)=j}}a a^* -\sum_{a\colon j\to i^-}a^* a- \sum_{\substack{a\in Q_1 \\ s(a)=j,t(a)\neq i^-}}a^* a-\theta_je_j \\
&=\sum_{\substack{a\in s_{i^-}Q_1 \\ t(a)=j,s(a)\neq i^+}}a a^* + \sum_{a\colon j\to i^-}a^{op}(a^{op})^* - n_{i^-,j}e_j - \sum_{\substack{a\in s_{i^-}Q_1 \\ s(a)=j}}a^* a-\theta_je_j \\
&=\sum_{\substack{a\in s_{i^-}Q_1 \\ t(a)=j}}a a^* - \sum_{\substack{a\in s_{i^-}Q_1 \\ s(a)=j}}a^* a-\theta_j'e_j.
\end{align*}

We conclude that $C(Q,\theta)$ is a sub-$\mathrm C^*$-algebra of the universal $\mathrm C^*$-algebra, $\widetilde{C}$, completing $\widetilde B$.
Let $e\coloneqq\sum_{j\in Q_0}e_j$ be the unit of $C(Q,\theta)$.
We claim that $C(Q,\theta)=e\widetilde{C}e$.
It suffices to show $B(Q,\theta)=e\widetilde{B}e$. 
To see this, note that if a path starting and ending in $Q\subset \widetilde Q$ contains an arrow starting or ending at $i^+$, then that is part of a term of the form $a^{op}(b^{op})^*$ as in \eqref{BtildeRels}, and so we can use that relation to remove such a pair. By induction we obtain a linear combination of paths containing only arrows of the type $a$ or $a^*$ with $a\in Q_1$.

By the above, to show that $C(Q,\theta)$ is strongly Morita equivalent to $\widetilde C$, it suffices to show that $e\in\widetilde{C}$ satisfies the condition of Lemma~\ref{lem:mecorner}. 
To see this, note that the relation $B^*B=e_{i^+}$ implies that $e_{i^+}\in\operatorname{Span}(\widetilde{C}e\widetilde{C})$ and thus $1\in\operatorname{Span}(\widetilde{C}e\widetilde{C})$.

Similarly, $C(s_{i^-}Q,\theta')$ is also strongly Morita equivalent to $\widetilde C$, so the theorem follows.
\end{proof}

\subsection{Extension of mass and flow to complexes}\label{subsec:quivercomplexes}

In this subsection we extend the notions of \textit{mass} and \textit{minimizing flow} from metrized quiver representations to complexes of such. 
Our construction is formally analogous to Quillen's extension of Chern--Weil theory to (Chern) superconnections~\cite{quillen_superconnection,BGS_torsion}.
The rough idea is that when we rescale the differential on the complex by a large positive constant, the mass should concentrate more and more on the harmonic part.

First, given a quiver $Q$, we take a certain double, $\cA(Q)$, which is a graded $*$-algebra and roughly Koszul dual to the $*$-algebra $B(Q,\theta)$ considered in Section~\ref{subsec:staralg}. 
Our requirements for a graded $*$-algebra are $|a^*|=-|a|$ and the Koszul sign rule $(ab)^*=(-1)^{|a||b|}b^*a^*$.

\begin{definition}
Given a quiver $Q$, define $\cA(Q)$ to be the graded $*$-algebra with the following basis:
\begin{itemize}
    \item $e_i$, $f_i$ for each vertex $i\in Q_0$, $|e_i|=|f_i|=0$, $e_i^*=e_i$, $f_i^*=f_i$,
    \item $a$, $a^*$ for each arrow $a\in Q_1$, $|a|=1$, $|a^*|=-1$. 
\end{itemize}
The non-zero products are:
\begin{gather*}
    e_i^2=e_i, \qquad e_if_i=f_ie_i=f_i, \qquad
    e_ja=ae_i=a,\qquad e_ia^*=a^*e_j=a^*, \\
    aa^*=\sqrt{-1}f_j, \qquad a^*a=-\sqrt{-1}f_i,
\end{gather*}
where $a\colon i\to j$ is any arrow.
\end{definition}

\begin{remark}
The algebra $\cA(Q)$ is formally analogous to the algebra of forms on a Riemann surface, with the $e_i$ corresponding to $0$-forms, the $a$'s corresponding to $(1,0)$-forms, the $a^*$'s corresponding to $(0,1)$-forms, and the $f_i$'s corresponding to $2$-forms. However, the grading on $\cA(Q)$ does not correspond to form degree, but rather to the \textit{exotic degree} considered in~\cite{QiangThesis}.
\end{remark}

Given a $Q_0\times \mathbb Z$-graded vector space $E=\bigoplus_{i\in Q_0,n\in\bZ}E_i^n$ with $\dim E<\infty$ and a Hermitian inner product on each homogeneous component, we consider the tensor product algebra
\[
\cA(Q,E)\coloneqq \cA(Q)\otimes_{\bC} \mathrm{End}(E)
\]
where $\mathrm{End}(E)$ is graded by cohomological degree and the $*$-involution is given by
\begin{equation}\label{eq:superadjoint}
T^*\coloneqq \begin{cases} T^\dagger & \text{for }T\text{ even} \\ \sqrt{-1}T^\dagger & \text{for }T\text{ odd}\end{cases}
\end{equation}
where $T^\dagger$ is the usual (ungraded) Hermitian adjoint of a linear map (cf.\ \cite[(4.4.6)]{deligne_morgan_susy}).

Suppose further that $E$ has the structure of a metrized complex of representations of $Q$.
Concretely, each $E^\bullet_i\coloneqq \bigoplus_{n\in\bZ}E_i^n$, $i\in Q_0$, is a cochain complex of Hermitian vector spaces with differential $d_i$ of degree $+1$ and for each $a\colon i\to j$ we have a chain map
$\rho(a)\colon E_i^\bullet\to E_j^\bullet$ of degree 0.
This data is encoded in terms of the element
\[
A\coloneqq \sum_{i\in Q_0}e_i\otimes d_i+\sum_{a\in Q_1}a\otimes \rho(a)\qquad \in \cA^1(Q,E).
\]
Then $A^2=0$ is equivalent to the identities $d^2=0$ and $[d,\rho(a)]=0$ for a complex of representations.

\begin{remark}
The suitable choice of $*$-algebra $\cA_{E,h}$, as in our general framework in Section~\ref{subsec:frameworkarch}, is $\bigoplus_{i,n}\mathrm{End}(E_i^n)\subseteq\cA^0(Q,E)$.
An element of $\mathrm{Met}(E)$ is however not just given by a Hermitian metric on each $E_i^n$, since this would not be invariant under isomorphism in $D^b\mathbf{Rep}(Q)$. Instead, an element of $\mathrm{Met}(E)$ is represented by any metrized complex of representations $E'$ which is isomorphic to $E$ as an object of $D^b\mathbf{Rep}(Q)$.
\end{remark}

Assume that $\operatorname{Im}(z_i)>0$ for all $i\in Q_0$. 
Set
\[
F_0\coloneqq (A-A^*)^2=-AA^*-A^*A,\qquad F_z\coloneqq \sqrt{-1}\sum_{i\in Q_0}f_i\otimes z_i,\qquad F\coloneqq F_0+F_z
\]
then $F_0$ is skew-Hermitian ($F_0^*=-F_0$), $F$ is normal ($F^*F=FF^*$) and of degree 0.
More explicitly, we compute
\begin{align*}
-\sqrt{-1}F=&-\sum_{i\in Q_0}e_i\otimes \left(d_id_i^\dagger+d_i^\dagger d_i\right) \\
&+\sum_{a\colon i\to j} a\otimes\left(d_j^\dagger \rho(a)-\rho(a)d_i^\dagger\right)+\sqrt{-1}\sum_{a\colon i\to j} a^*\otimes\left(\rho(a)^\dagger d_j-d_i\rho(a)^\dagger\right) \\
&+\sum_{a\colon i\to j}\left(f_i\otimes\rho(a)^\dagger\rho(a)-f_j\otimes\rho(a)\rho(a)^\dagger\right)+\sum_{i\in Q_0}f_i\otimes z_i.
\end{align*}
Using the exponential function $\exp\colon \cA(Q,E)\to\cA(Q,E)$ we define
\[
\Omega\coloneqq (-1)^\mathrm{deg}\exp(-\sqrt{-1}F)_f\qquad \in\mathrm{End}^0(E)
\]
where for $X\in \cA(Q,E)$, $X_f$ denotes the endomorphism of $E$ obtained by projecting $X$ to the summand
\[
\bigoplus_{i\in Q_0} f_i\otimes \mathrm{End}(E_i^\bullet)
\]
and then identifying $f_i\otimes T$ with $T$ on $E_i^\bullet$.
Note that $\Omega$ specializes to~\eqref{eq:omega_qrep} for complexes concentrated in degree zero.

\begin{lemma}\label{lem:omega_halfplane}
$\Omega$ is normal and $(-1)^{\mathrm{deg}}\Omega=\exp(-\sqrt{-1}F)_f$ has spectrum in the upper half-plane.
\end{lemma}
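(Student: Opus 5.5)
The plan is to exploit the grading of $\cA(Q)$ by ``form degree'': $e_i$ in degree $0$, the $a,a^*$ in degree $1$, $f_i$ in degree $2$. By the multiplication table, any product of total form degree $>2$ vanishes. So $\cA(Q,E)=\cB_0\oplus\cB_1\oplus\cB_2$ with $\cB_1\oplus\cB_2$ a nilpotent ideal, and $\exp$ can be computed by a finite Duhamel expansion.

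\textbf{Step 1: split off $F_z$.} Since each $z_i$ is a scalar, $f_i\otimes z_i$ commutes with every $e_j\otimes T$: both orders give $\delta_{ij}z_if_i\otimes T$. It also commutes with $a\otimes\rho(a)$ and $a^*\otimes\rho(a)^\dagger$, because $f_ia=af_i=0$. Hence $F_z$ commutes with $F_0$ and $\exp(-\sqrt{-1}F)=\exp(-\sqrt{-1}F_0)\exp(F_z')$, where $F_z'=\sum_i f_i\otimes z_i$. Since $(F_z')^2=0$, we get $\exp(F_z')=1+F_z'$. Projecting to the $f$-component gives
\[
\exp(-\sqrt{-1}F)_f=\exp(-\sqrt{-1}F_0)_f+\sum_{i\in Q_0} z_i\,\exp(-\sqrt{-1}F_0)_{e_i}.
\]
The $e$-component of $\exp(-\sqrt{-1}F_0)$ only sees the $\cB_0$-part $-\sum_i e_i\otimes\Delta_i$, where $\Delta_i=d_id_i^\dagger+d_i^\dagger d_i$. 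So the second term equals $K\coloneqq\bigoplus_i z_i e^{-\Delta_i}$.

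\textbf{Step 2: hermiticity of the remaining term.} Write $-\sqrt{-1}F_0=X_0+X_1+X_2$ by form degree. Expanding, $H\coloneqq\exp(-\sqrt{-1}F_0)_f$ is
\[
H=\int_0^1 e^{(1-s)X_0}X_2e^{sX_0}\,ds+\iint_{0\le s\le t\le1}e^{(1-t)X_0}X_1e^{(t-s)X_0}X_1e^{sX_0}\,ds\,dt,
\]
read off in the $f$-component. Since $F_0^*=-F_0$, the element $-\sqrt{-1}F_0$ is $*$-self-adjoint. I would then check how $*$ on $\cA(Q,E)$ restricts to the $f_i\otimes\mathrm{End}(E_i^\bullet)$ summand. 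It combines $f_i^*=f_i$, the super-adjoint \eqref{eq:superadjoint}, and the factors $\pm\sqrt{-1}$ in $aa^*,a^*a$. I expect this restriction to be $T\mapsto(-1)^{\deg}T^\dagger(-1)^{\deg}$ up to the sign $(-1)^{\mathrm{deg}}$ built into $\Omega$. The conclusion should be that $(-1)^{\deg}H$, the factor appearing in $\Omega$, is Hermitian in the ordinary sense. This is a sign bookkeeping exercise using the explicit formula for $-\sqrt{-1}F$ displayed above. As a sanity check, in degree zero it reproduces $\sum[\rho(a)^*,\rho(a)]$ in \eqref{eq:omega_qrep}.

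\textbf{Step 3: spectrum.} The operator $\exp(-\sqrt{-1}F)_f$ equals $H+K$ with $H$ Hermitian and $\operatorname{Im}K=\bigoplus_i\operatorname{Im}(z_i)e^{-\Delta_i}$ positive definite. For a unit eigenvector $v$ with eigenvalue $\lambda$,
\[
\operatorname{Im}\lambda=\sum_i\operatorname{Im}(z_i)\langle v,e^{-\Delta_i}v\rangle>0.
\]
So the whole numerical range, hence the spectrum, lies in the open upper half-plane.

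\textbf{Main obstacle: normality.} A Hermitian operator plus $K$ is normal only if the Hermitian part commutes with $e^{-\Delta}$, which is not automatic. I would first try to show that $H$ commutes with $\Delta=\bigoplus_i\Delta_i$ by checking that each $X_1,X_2$-insertion in the Duhamel expansion intertwines correctly; $\Delta$ should commute with the relevant combinations because $A^2=0$ gives $[d,\rho(a)]=0$. Failing that, I would interpret ``normal'' with respect to the induced graded $*$-structure and deduce it from normality of $F$ (noted before the lemma) together with the block structure produced by the nilpotent expansion. I expect this step to be where the real difficulty lies.
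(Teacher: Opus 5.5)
Your Steps 1--3 are the paper's argument for the spectral claim. The paper also splits off $F_z$ using $(\sum_i f_i\otimes z_i)^2=0$ and obtains $\exp(-\sqrt{-1}F)_f=\exp(-\sqrt{-1}F_0)_f+\sum_i z_ie^{-\Delta_i}$. It then reads off $\frac{(-1)^{\deg}}{2\sqrt{-1}}(\Omega-\Omega^*)=\sum_i\operatorname{Im}(z_i)e^{-\Delta_i}>0$. Your numerical-range step makes explicit that this alone puts the spectrum in the open upper half-plane, without using normality. Step 2 needs neither sign bookkeeping nor the Duhamel expansion. The element $-\sqrt{-1}F_0$ is $*$-self-adjoint, hence so is its exponential. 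Moreover, $*$ maps the $f$-summand to itself and acts on $f_i\otimes\mathrm{End}^0(E_i^\bullet)$ as the ordinary adjoint, since degree-zero maps are even and $|f_i|=0$. So $H$ is Hermitian.

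The gap is normality, and it cannot be filled because the assertion is false in general. Your second fallback is exactly the paper's one-line justification (``follows directly from normality of $F$''), and that inference is invalid. The map $X\mapsto X_f$ is not multiplicative, since $(XY)_f=X_eY_f+X_fY_e+(X_aY_{a^*}+X_{a^*}Y_a)_f$. Hence normality of $\exp(-\sqrt{-1}F)$ in $\cA(Q,E)$ says nothing about $X_f^\dagger X_f-X_fX_f^\dagger$.

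Your first fallback also fails, and your own Duhamel formula shows why. Take $Q=(1\xrightarrow{a}2)$ with standard metrics and the following data:
\begin{itemize}
\item $E_1^\bullet=\bC$ in degree $1$, with $d_1=0$;
\item $E_2^\bullet=(\bC\xrightarrow{d_2}\bC^2)$ in degrees $0,1$, with $d_2(1)=(1,0)^T$;
\item $\rho(a)(1)=(1,1)^T\in E_2^1$, which is a chain map for degree reasons.
\end{itemize}
On $E_2^1$ we have $\Delta_2=\mathrm{diag}(1,0)$. The $X_1X_1$-term does not contribute there: it lives on $E_2^0$, because $d_2^\dagger\rho(a)$ lands in $E_2^0$. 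Therefore
\[
\exp(-\sqrt{-1}F)_f\big|_{E_2^1}=-\int_0^1e^{-(1-s)\Delta_2}\rho(a)\rho(a)^\dagger e^{-s\Delta_2}\,ds+z_2e^{-\Delta_2}=\begin{pmatrix}(z_2-1)e^{-1}&e^{-1}-1\\ e^{-1}-1&z_2-1\end{pmatrix}.
\]
Its Hermitian part has nonzero off-diagonal entries. Its skew-Hermitian part is $\sqrt{-1}\operatorname{Im}(z_2)\,\mathrm{diag}(e^{-1},1)$, which has distinct eigenvalues. So the two parts do not commute, and this block is not normal when $\operatorname{Im}z_2>0$. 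Since $\Omega$ is block-diagonal and $(-1)^{\deg}=-1$ on $E_2^1$, $\Omega$ itself is not normal.

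What can be proved is the spectral statement you proved. Normality holds only in special situations. Examples are complexes concentrated in a single degree (where $\Delta=0$) and the rank-one case of Proposition~\ref{prop_omega_rk1} (where all degree-zero endomorphisms are diagonal). In general, $\arg\Omega$ and $|\Omega|$ in \eqref{eq:massflowcomplexes} need a different definition, for instance via polar decomposition.
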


\begin{proof}
Normality of $\Omega$ follows directly from normality of $F$.
Since $F_0$ commutes with $F_z$ we get
\begin{align*}
(-1)^{\mathrm{deg}}\Omega &= \left(\exp(-\sqrt{-1}F_0)\exp(-\sqrt{-1}F_z)\right)_f\\
&=\left(\exp(-\sqrt{-1}F_0)\left(1+\sum_{i\in Q_0}f_i\otimes z_i\right)\right)_f\\
&=\exp(-\sqrt{-1}F_0)_f+\sum_{i\in Q_0}z_i\exp\left(-d_id_i^\dagger-d_i^\dagger d_i\right)
\end{align*}
thus
\[
\frac{(-1)^{\mathrm{deg}}}{2\sqrt{-1}}(\Omega-\Omega^*)=\sum_{i\in Q_0}\operatorname{Im}(z_i)\exp\left(-d_id_i^\dagger-d_i^\dagger d_i\right)
\]
which is positive definite.
\end{proof}

\begin{lemma}\label{lem:omega_trace}
$\operatorname{tr}\Omega=Z(E)$.
\end{lemma}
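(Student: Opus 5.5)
The plan is to split $\operatorname{tr}\Omega$ into a part coming from $F_z$ and a part coming only from $F_0$, exactly as in the proof of Lemma~\ref{lem:omega_halfplane}. Since $\operatorname{tr}((-1)^{\mathrm{deg}}T)=\operatorname{str}(T)$ is the supertrace on $\mathrm{End}(E)$, that proof gives
\[
\operatorname{tr}\Omega=\operatorname{str}\bigl(\exp(-\sqrt{-1}F_0)_f\bigr)+\sum_{i\in Q_0}z_i\operatorname{str}\exp\bigl(-d_id_i^\dagger-d_i^\dagger d_i\bigr).
\]
Here $Z(E)$ means $\sum_i z_i\chi(E_i^\bullet)$, the central charge of the class of the complex. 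It therefore suffices to prove two things: each summand of the second term equals $z_i\chi(E_i^\bullet)$, and the first term vanishes.

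The second term is a finite-dimensional McKean--Singer argument. Write $\Delta_i=d_id_i^\dagger+d_i^\dagger d_i$ and decompose $E_i^\bullet$ into eigenspaces of $\Delta_i$. On each eigenspace with eigenvalue $\lambda\neq0$, the maps $d_i$ and $d_i^\dagger$ give an acyclic complex, so it has vanishing Euler characteristic. Its contribution $e^{-\lambda}\cdot(\text{super-dimension})$ is therefore zero. The kernel $\ker\Delta_i$ consists of the harmonic elements, which by Hodge theory are isomorphic to $H^\bullet(E_i^\bullet)$. Hence $\operatorname{str}\exp(-\Delta_i)=\chi(E_i^\bullet)$.

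For the first term I would use a Chern--Weil transgression argument, as for Quillen superconnections. First define the linear functional $\epsilon\colon\cA(Q,E)\to\bC$ by $\epsilon(X)=\operatorname{str}(X_f)$, that is, $\epsilon=t\otimes\operatorname{str}$ where $t(f_i)=1$ and $t$ vanishes on the other basis elements of $\cA(Q)$. The key check is that $t$ is a graded trace on $\cA(Q)$, meaning it kills supercommutators. The only nontrivial case is $[a,a^*]=aa^*+a^*a=\sqrt{-1}(f_j-f_i)$, whose trace is $0$; the remaining products either vanish or involve even idempotents in the obvious way. Since $\operatorname{str}$ is a supertrace, $\epsilon$ is then a supertrace on the tensor product for the total grading.

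Next put $D=A-A^*$, which is odd with $D^2=F_0$, and consider the rescaling $A\mapsto sA$ for $s\in[0,1]$. Then
\[
\frac{d}{ds}\,\epsilon\bigl(\exp(-\sqrt{-1}s^2D^2)\bigr)=-2\sqrt{-1}s\,\epsilon\bigl(D^2\exp(-\sqrt{-1}s^2D^2)\bigr),
\]
and this vanishes. To see it, write $D^2e^{(\cdot)}=\tfrac12[D,De^{(\cdot)}]$, using that $D$ supercommutes with functions of $D^2$, and apply the supertrace property. At $s=0$ the exponential is the unit $\sum_i e_i\otimes1$, whose $f$-component is zero. Hence $\epsilon(\exp(-\sqrt{-1}F_0))=0$, and combining the two pieces gives $\operatorname{tr}\Omega=Z(E)$.

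The main obstacle I expect is the sign bookkeeping. One has to verify that $\epsilon$ really is a supertrace under the $\sqrt{-1}$-twisted $*$-conventions and the grading conventions of the tensor product, with particular care for the odd$\otimes$odd terms. One also has to verify that the identity $D^2e^{(\cdot)}=\tfrac12[D,De^{(\cdot)}]$ holds with the correct Koszul signs.
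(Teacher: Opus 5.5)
Your proof is correct. Its core is the same as the paper's: a supertrace on $\cA(Q,E)$ obtained from $f_i\mapsto 1$, together with a rescaling whose derivative is the supercommutator $\tfrac12[D,De^{(\cdot)}]_s$. You organize it differently, however.

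The paper does not split off $F_z$ first. It considers $\operatorname{str}\bigl(\exp(-\sqrt{-1}(tF_0+F_z))_f\bigr)$ for $t\in[0,1]$ and uses that $F_z$ commutes with $A-A^*$, so the $t$-derivative is still a supercommutator. At $t=0$ it reads off $\exp(-\sqrt{-1}F_z)_f=\sum_i z_i\,\mathrm{id}_{E_i^\bullet}$, whose supertrace is already the chain-level Euler characteristic, i.e.\ $Z(E)$. No Hodge theory is needed.

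Your route starts from the decomposition in the proof of Lemma~\ref{lem:omega_halfplane} and then needs two separate facts. The first is McKean--Singer for $\operatorname{str}e^{-\Delta_i}$, which is itself the same transgression applied to $d_i$ alone, since $\Delta_i=[d_i,d_i^\dagger]_s$. The second is the vanishing of the pure-$F_0$ term. The paper's version is therefore more economical. Yours gives slightly finer information: the $f$-part of $\exp(-\sqrt{-1}F_0)$ is supertraceless, and the $z$-weighted part localizes on $\ker\Delta_i$. This matches the heuristic that mass concentrates on the harmonic part.

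On your anticipated sign worries: the $\sqrt{-1}$-twisted $*$-conventions play no role in the supertrace property. Only the Koszul rule matters, together with $t([a,a^*]_s)=\sqrt{-1}\,t(f_j-f_i)=0$, which you already checked.

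One precision point. The functional $X\mapsto\operatorname{str}(X_f)$, as defined in the paper, keeps only the $E_i\to E_i$ block of the $f_i$-coefficient. It agrees with your $t\otimes\operatorname{str}$ only on the subalgebra spanned by elements $x\otimes T$ with $x\in e_j\cA(Q)e_i$ and $T\in\Hom(E_i^\bullet,E_j^\bullet)$, and on the unit. That subalgebra contains $A$, $A^*$ and all the exponentials you use, so nothing breaks. You should nonetheless state the identification there rather than on all of $\cA(Q,E)$. On the full algebra it is your $t\otimes\operatorname{str}$, not $\operatorname{str}((\cdot)_f)$, that kills every supercommutator.
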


\begin{proof}
Consider
\[
Z_t\coloneqq \operatorname{tr}\left((-1)^{\mathrm{deg}}\exp\left(-\sqrt{-1}tF_0+F_z\right)_f\right)
\]
for $t\in [0,1]$, then $Z_0=Z(E)$ and $Z_1=\operatorname{tr}\Omega$, so it suffices to show that $Z_t$ is constant in $t$.
We compute
\begin{align*}
\frac{d}{dt}\exp\left(-\sqrt{-1}tF_0+F_z\right)&=\underbrace{\exp\left(-\sqrt{-1}tF_0+F_z\right)}_{\eqqcolon E}(-\sqrt{-1}F_0) \\
&= -\sqrt{-1} E(A-A^*)^2 \\
&= -\frac{\sqrt{-1}}{2}[E(A-A^*),A-A^*]_s
\end{align*}
using the fact that $[E,A-A^*]=0$. Here, $[a,b]_s$ denotes the supercommutator.
But the functional $a\mapsto \operatorname{tr}((-1)^{\mathrm{deg}}a_f)$ on $\cA(Q,E)$ vanishes on supercommutators, showing $\frac{d}{dt}Z_t=0$.
\end{proof}

The mass and flow on the space of metrics are then given by
\begin{equation}\label{eq:massflowcomplexes}
m\coloneqq \operatorname{tr}|\Omega|, \qquad h^{-1}\frac{dh}{dt}=-\arg(\Omega)    
\end{equation}
where $h$ is the Hermitian metric on $E$ and on the summand $E^n$ the branch of $\arg$ is chosen in $(-\pi n,\pi(1-n))$.
Lemma~\ref{lem:omega_trace} gives the ``BPS inequality'' $|Z(E)|=|\operatorname{tr}\Omega|\leq m$.

\begin{question}
Does the mass decrease monotonically under the flow~\eqref{eq:massflowcomplexes}?
\end{question}

Our next goal is to provide more explicit formulas for the mass and flow in the simplest examples where both $d$ and $\rho$ are non-trivial.

\subsubsection*{The rank 1 case}

\begin{proposition}\label{prop_omega_rk1}
Suppose that $E$ is a complex of ``rank 1'' representations of $Q$, i.e.\ $\dim E_i^n\leq 1$ for all $i\in Q_0,n\in\bZ$.
Then the component of $\Omega$ on $E_i^n$ (complex scalar) is given by
\begin{align*}
\Omega_i^n= \,&(-1)^n  e^{-\beta_i^n-\beta_i^{n+1}}\Bigg(z_i \\
&+\sum_{a\colon i\to j}\left(\alpha_a^n +(\alpha_a^n-\alpha_a^{n-1})(\beta_i^n-\beta_j^n)\varphi_2(\beta_i^n-\beta_j^n+\beta_i^{n+1}-\beta_j^{n-1})\right) \\
&+\sum_{a\colon k\to i}\left(-\alpha_a^n+(\alpha_a^{n+1}-\alpha_a^{n})(\beta_i^{n+1}-\beta_k^{n+1})\varphi_2(\beta_i^{n+1}-\beta_k^{n+1}+\beta_i^{n}-\beta_k^{n+2})\right)\Bigg)    
\end{align*}
where
\[
\alpha_a^n\coloneqq\rho^n(a)\rho^n(a)^\dagger,\qquad \beta_i^n\coloneqq d^n_i(d^n_i)^\dagger,\qquad \varphi_2(s)\coloneqq\frac{e^s-s-1}{s^2}.
\]
Here, $d_i^n\colon E_i^{n-1}\to E_i^n$ and $\rho^n(a)\colon E_i^n\to E_j^n$ for $a\colon i\to j$ and $\alpha_a^n$, $\beta_i^n$ are considered as non-negative real scalars.
\end{proposition}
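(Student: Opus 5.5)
Since $F_0$ commutes with $F_z$, the proof of Lemma~\ref{lem:omega_halfplane} already gives
\[
(-1)^{\deg}\Omega=\exp(-\sqrt{-1}F_0)_f+\sum_{i\in Q_0}z_i\exp(-d_id_i^\dagger-d_i^\dagger d_i).
\]
In the rank one case $d_id_i^\dagger+d_i^\dagger d_i$ acts on $E_i^n$ by the scalar $\beta_i^n+\beta_i^{n+1}$. Indeed, $d_i^\dagger d_i=(d_i^{n+1})^\dagger d_i^{n+1}$, and in rank one this equals $d^{n+1}_i(d^{n+1}_i)^\dagger$. This accounts for the prefactor and the $z_i$ term. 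What remains is to compute the $f$-component of $\exp(-\sqrt{-1}F_0)$. We write
\[
-\sqrt{-1}F_0=-D+N+M,
\]
where $D=\sum_i e_i\otimes(d_id_i^\dagger+d_i^\dagger d_i)$ is diagonal, $N$ collects the mixed $a$ and $a^*$ terms, and $M$ collects the $f$-terms.

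The key structural observation is that products in $\cA(Q)$ land in the $f_i$ only through $aa^*$ and $a^*a$. Moreover, the $f_i$ form a square-zero ideal: the product of any two elements of $\operatorname{Span}(f_i,a,a^*)$ lies in $\operatorname{Span}(f_i)$, and the span of the $f_i$ is annihilated by $a$, $a^*$ and the $f_j$. So we truncate the exponential with respect to this grading and apply Duhamel's formula to second order:
\[
\exp(-D+N+M)_f=\Big(\int_0^1 e^{-(1-s)D}Me^{-sD}\,ds+\iint_{0\le s_1\le s_2\le 1}e^{-(1-s_2)D}Ne^{-(s_2-s_1)D}Ne^{-s_1D}\,ds_1ds_2\Big)_f+e^{-D}\text{-part}.
\]
The $e^{-D}$ part has no $f$-component. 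Since $M$ is built from $f_i\otimes(\text{scalar on }E_i^n)$, with $f_i$ acting through $e_i$, the first-order term gives $e^{-\beta_i^n-\beta_i^{n+1}}\cdot(\pm\alpha_a^n)$. Here we use that in rank one, $\rho(a)^\dagger\rho(a)$ on $E_i^n$ equals $\rho^n(a)\rho^n(a)^\dagger=\alpha_a^n$. This produces the $\alpha_a^n$ and $-\alpha_a^n$ terms.

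For the second-order term, consider an arrow $a\colon i\to j$. The product of the $a$-term of $N$ (coefficient $d_j^\dagger\rho(a)-\rho(a)d_i^\dagger$) with the $a^*$-term, with $a^*a=-\sqrt{-1}f_i$, yields an $f_i$ contribution on $E_i^n$. The path goes $E_i^n\to E_j^{n-1}\to E_i^n$, because $d^\dagger$ lowers degree. So the middle exponential involves the eigenvalue $\beta_j^{n-1}+\beta_j^n$ of $D$ on $E_j^{n-1}$, while the outer ones involve $\beta_i^n+\beta_i^{n+1}$. The double integral $\iint e^{-(1-s_2+s_1)x}e^{-(s_2-s_1)y}$ evaluates to $e^{-x}\varphi_2(x-y)$ times the appropriate factor. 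The scalar coefficient is
\[
|d_j^n{}^\dagger\rho^n(a)-\rho^{n-1}(a)d_i^n{}^\dagger|^2 .
\]
Using the chain map identity $d_j\rho(a)=\rho(a)d_i$ in rank one, we rewrite it as $(\alpha^n_a-\alpha^{n-1}_a)(\beta_i^n-\beta_j^n)$, noting $\alpha\beta$ products such as $|\rho^n d_i^n|^2=\alpha_a^n\beta_i^n=\alpha_a^{n-1}\beta_j^n$. The ordering with $N$ as $a^*$ first and then $a$ gives the other arrow type $a\colon k\to i$, with the path through $E_k^{n+1}$. This produces the third line after relabeling. The signs $(-1)^n$ and $\sqrt{-1}$ factors combine using \eqref{eq:superadjoint} and the Koszul rule.

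The main obstacle is this bookkeeping. We must get the supersigns, the $\sqrt{-1}$'s from the odd adjoint, and the exact arguments of $\varphi_2$ right. The arguments should come out as $\beta_i^n-\beta_j^n+\beta_i^{n+1}-\beta_j^{n-1}$, the difference of the two $D$-eigenvalues. We must also verify that the squared-norm coefficient simplifies, via the chain-map relation, to the factored form $(\alpha^n_a-\alpha^{n-1}_a)(\beta_i^n-\beta_j^n)$. Here rank one is essential, since it makes all operators scalars that commute. We would check this first in the two-term example $E_i^{n-1}\to E_i^n$ with a single arrow.
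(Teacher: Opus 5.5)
Your proposal is essentially the paper's proof. The paper also splits $\exp(-\sqrt{-1}F)_f$ into the contribution of the $f_i$-terms plus, for each arrow $a$ and degree $n$, the contribution of the pair of $a$- and $a^*$-terms. Your Duhamel double integral $e^{-x}\varphi_2(x-y)$ is exactly the $(3,1)$ entry of the exponential of the lower-triangular $3\times 3$ matrix that represents the subalgebra $\cA_{a,n}$ in the paper. Your formulation has one small advantage: it makes explicit why contributions of different arrows and degrees simply add, which the paper attributes to ``the structure of $\cA(Q,E)$''.

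One correction in the sign bookkeeping you deferred. Set $X_a\coloneqq(d_j^n)^\dagger\rho^n(a)-\rho^{n-1}(a)(d_i^n)^\dagger$. The relation $\alpha_a^n\beta_i^n=\alpha_a^{n-1}\beta_j^n$ then gives $|X_a|^2=-(\alpha_a^n-\alpha_a^{n-1})(\beta_i^n-\beta_j^n)$, not $+(\alpha_a^n-\alpha_a^{n-1})(\beta_i^n-\beta_j^n)$; the left side is nonnegative while the product is nonpositive.

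The stated second line is nevertheless correct, because the coefficient is not $|X_a|^2$. The $a^*$-coefficient is $\sqrt{-1}X_a^\dagger$, and the Koszul sign is $-1$. Hence $(a^*\otimes\sqrt{-1}X_a^\dagger)(a\otimes X_a)=-a^*a\otimes\sqrt{-1}X_a^\dagger X_a=-f_i\otimes|X_a|^2$, so the coefficient is $-|X_a|^2$.

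For an arrow $a\colon k\to i$ the product comes in the other order, with $X_a$ now the component $E_k^{n+1}\to E_i^n$. There $(a\otimes X_a)(a^*\otimes\sqrt{-1}X_a^\dagger)=+f_i\otimes|X_a|^2$, and $+|X_a|^2=(\alpha_a^{n+1}-\alpha_a^n)(\beta_i^{n+1}-\beta_k^{n+1})$. So the third line does not arise by a bare relabeling with the same sign: the $(\beta-\beta)$ factor flips, just as the paper's second representation produces $-\lambda_3\lambda_4$.
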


\begin{proof}
We calculate $\exp(-\sqrt{-1}F)_f$.
Given the structure of $\cA(Q,E)$, this will be a sum of terms coming from each of the $f_i$-terms of $F$ and each pair of $a$- and $a^*$-terms of $F$.
Since we are in the rank 1 case, the $e_i$-terms commute with the $f_i$-terms. 
Thus, the contribution of the $f_i$-terms of $F$ to $(-1)^n\Omega_i^n$ is
\[
e^{-\beta_i^n-\beta_i^{n+1}}\left(z_i+\sum_{a\colon i\to j}\alpha_a^n-\sum_{a\colon k\to i}\alpha_a^n\right).
\]

Fix an arrow $a\colon i\to j$ and $n\in\bZ$ such that $E_i^n$ and $E_j^{n-1}$ are non-zero, thus 1-dimensional. 
Let $p\colon E_i^n\to E_i^n$ and $q\colon E_j^{n-1}\to E_j^{n-1}$ be the respective identity maps and choose linear maps $x\colon E_i^n\to E_j^{n-1}$ and $y\colon E_j^{n-1}\to E_i^n$ which are inverse isomorphisms.
The contribution of the $a$- and $a^*$-terms in $F$ to $\Omega$ only involves the 6-dimensional subalgebra $\cA_{a,n}\subseteq\cA(Q,E)$ with basis
\begin{gather*}
e_i\otimes p,\quad e_j\otimes q,\quad a\otimes x,\quad -\sqrt{-1}a^*\otimes y,\quad f_i\otimes p,\quad f_j\otimes q,
\end{gather*}
in which
\begin{gather*}
(a\otimes x)(-\sqrt{-1}a^*\otimes y)=\sqrt{-1}aa^*\otimes xy=-f_j\otimes q, \\ (-\sqrt{-1}a^*\otimes y)(a\otimes x)=\sqrt{-1}a^*a\otimes yx=f_i\otimes p.    
\end{gather*}
The action of $\cA_{a,n}$ on the left ideal with basis $e_i\otimes p$, $a\otimes x$, and $f_i\otimes p$ gives the (non-faithful) representation
\begin{align*}
\lambda_1e_i\otimes p+\lambda_2 e_j\otimes q+\lambda_3a\otimes x&-\lambda_4\sqrt{-1}a^*\otimes y+\lambda_5f_i\otimes p+\lambda_6f_j\otimes q \\
&\mapsto\quad\begin{pmatrix}
    \lambda_1 & 0 & 0 \\ 
    \lambda_3 & \lambda_2 & 0 \\
    \lambda_5 & \lambda_4 & \lambda_1
\end{pmatrix}
\end{align*}
under which the part of $-\sqrt{-1}F$ in $\cA_{a,n}$,
\begin{equation}\label{eq:Fapart}
\begin{split}
&e_i\otimes \left(-d^n_i(d^n_i)^\dagger-(d^{n+1}_i)^\dagger d^{n+1}_i\right)+e_j\otimes \left(-d^{n-1}_j(d^{n-1}_j)^\dagger-(d^{n}_j)^\dagger d^{n}_j\right) \\&+a\otimes \left((d_j^n)^\dagger\rho^n(a)-\rho^{n-1}(a)(d_i^n)^\dagger\right)+\sqrt{-1}a^*\otimes \left(\rho^n(a)^\dagger d_j^n-d_i^n\rho^{n-1}(a)^{\dagger}\right),    
\end{split}
\end{equation}
is mapped to
\[
\begin{pmatrix}
    -\beta_i^n-\beta_i^{n+1} & 0 & 0 \\ 
    \lambda_3 & -\beta_j^{n-1}-\beta_j^{n} & 0 \\
    0 & \lambda_4 & -\beta_i^n-\beta_i^{n+1}
\end{pmatrix}
\]
with
\begin{align*}
\lambda_3\lambda_4&=-\left((d_j^n)^\dagger\rho^n(a)-\rho^{n-1}(a)(d_i^n)^\dagger\right)\left(\rho^n(a)^\dagger d_j^n-d_i^n\rho^{n-1}(a)^{\dagger}\right) \\
&=-(d_j^n)^\dagger\rho^n(a)\rho^n(a)^\dagger d_j^n+(d_j^n)^\dagger\rho^n(a)d_i^n\rho^{n-1}(a)^{\dagger} \\
&\quad\,\, +\rho^{n-1}(a)(d_i^n)^\dagger \rho^n(a)^\dagger d_j^n-\rho^{n-1}(a)(d_i^n)^\dagger d_i^n\rho^{n-1}(a)^{\dagger}\\
&=(\alpha_a^n-\alpha_a^{n-1})(\beta_i^n-\beta_j^n).
\end{align*}
The lower-left entry of the matrix exponential of a matrix of this type is given by
\[
\left(\exp\begin{pmatrix}
    \lambda_1 & 0 & 0 \\ 
    \lambda_3 & \lambda_2 & 0 \\
    0 & \lambda_4 & \lambda_1
\end{pmatrix}\right)_{31}=e^{\lambda_1}\lambda_3\lambda_4\underbrace{\frac{e^{\lambda_2-\lambda_1}-(\lambda_2-\lambda_1)-1}{(\lambda_2-\lambda_1)^2}}_{\varphi_2(\lambda_2-\lambda_1)}
\]
hence the contribution to $(-1)^n\Omega_i^n$ is
\[
e^{-\beta_i^n-\beta_i^{n+1}}(\alpha_a^n-\alpha_a^{n-1})(\beta_i^n-\beta_j^n)\varphi_2(\beta_i^n-\beta_j^n+\beta_i^{n+1}-\beta_j^{n-1}).
\]

To calculate the contribution to $(-1)^{n-1}\Omega_j^{n-1}$ we consider the left ideal of $\cA_{a,n}$ with basis $e_j\otimes q$, $-\sqrt{-1}a^*\otimes y$, and $f_j\otimes q$ which yields the representation
\begin{align*}
\lambda_1e_i\otimes p+\lambda_2 e_j\otimes q+\lambda_3a\otimes x&-\lambda_4\sqrt{-1}a^*\otimes y+\lambda_5f_i\otimes p+\lambda_6f_j\otimes q \\
&\mapsto\quad\begin{pmatrix}
    \lambda_2 & 0 & 0 \\ 
    \lambda_4 & \lambda_1 & 0 \\
    \lambda_6 & -\lambda_3 & \lambda_2
\end{pmatrix}
\end{align*}
under which~\eqref{eq:Fapart} is mapped to
\[
\begin{pmatrix}
    -\beta_j^{n-1}-\beta_j^{n} & 0 & 0 \\ 
    \lambda_4 & -\beta_i^n-\beta_i^{n+1} & 0 \\
    0 & -\lambda_3 & -\beta_j^{n-1}-\beta_j^{n}
\end{pmatrix}.
\]
The contribution to $(-1)^{n-1}\Omega_j^{n-1}$ is then
\[
e^{-\beta_j^{n-1}-\beta_j^{n}}(\alpha_a^n-\alpha_a^{n-1})(\beta_j^n-\beta_i^n)\varphi_2(\beta_j^n-\beta_i^n+\beta_j^{n-1}-\beta_i^{n+1})
\]
which upon the substitution $n\mapsto n+1$, $i\mapsto k$, $j\mapsto i$ becomes
\[
e^{-\beta_i^{n}-\beta_i^{n+1}}(\alpha_a^{n+1}-\alpha_a^{n})(\beta_i^{n+1}-\beta_k^{n+1})\varphi_2(\beta_i^{n+1}-\beta_k^{n+1}+\beta_i^{n}-\beta_k^{n+2})
\]
which is the final term in the claimed formula.
\end{proof}

\subsubsection*{A minimal example}

We consider the following complex of representations of the $A_2$ quiver, concentrated in degrees $0$ and $1$.
\[
\begin{tikzcd}
    (\bC,e^{x_2}) \arrow[r] & 0 \\
    (\bC,e^{x_0}) \arrow[r,swap,"\rho(a)=1"] \arrow[u,"d=1"] & (\bC,e^{x_1}) \arrow[u] 
\end{tikzcd}
\]
Here, the horizontal map is the action of the unique arrow $a\colon 0\to 1$, the vertical map is the differential, and $x_i$, $i=0,1,2$, are coordinates on the space $(0,\infty)^3\cong\bR^3$ of Hermitian metrics on the complex of representations. 
Let 
\[
x\coloneqq x_1-x_0,\qquad y\coloneqq x_2-x_0,\qquad \alpha\coloneqq e^x=\alpha_a^0,\qquad \beta\coloneqq e^y=\beta_0^1.
\]
Fix central charge parameters $z_0,z_1$ in the upper half-plane.
Then from Proposition~\ref{prop_omega_rk1} we get
\begin{align*}
\Omega_0 &\coloneqq\Omega_0^0=e^{-\beta}(z_0+\alpha), \\
\Omega_1 &\coloneqq\Omega_1^0=z_1-\alpha+(-\alpha)(-\beta)\varphi_2(-\beta)=z_1-\alpha\varphi_1(-\beta), \\
\Omega_2 &\coloneqq\Omega_0^1=-e^{-\beta}(z_0+(-\alpha)\beta\varphi_2(\beta))=-e^{-\beta}(z_0+\alpha)+\alpha\varphi_1(-\beta)
\end{align*}
where $\varphi_1(s)\coloneqq(e^{s}-1)/s$.
The flow is 
\[
\frac{dx}{dt}=\arg(\Omega_0)-\arg(\Omega_1),\qquad \frac{dy}{dt}=\arg(\Omega_0)-\arg(\Omega_2)
\]
where $\arg(\Omega_0),\arg(\Omega_1)\in (0,\pi)$ and $\arg(\Omega_2)\in (-\pi,0)$.
The dynamics of the system depend on the order of $\arg(z_0)$ and $\arg(z_1)$. Especially when the angle between $z_0$ and $z_1$ is large, the vector field decomposes the $(x,y)$-plane into regions separated by curves along which the direction of the flow changes abruptly, see Figures~\ref{fig:2dsys_stable} and~\ref{fig:2dsys_unstable}.

\begin{figure}[ht]
    \centering
    \includegraphics[width=0.7\linewidth]{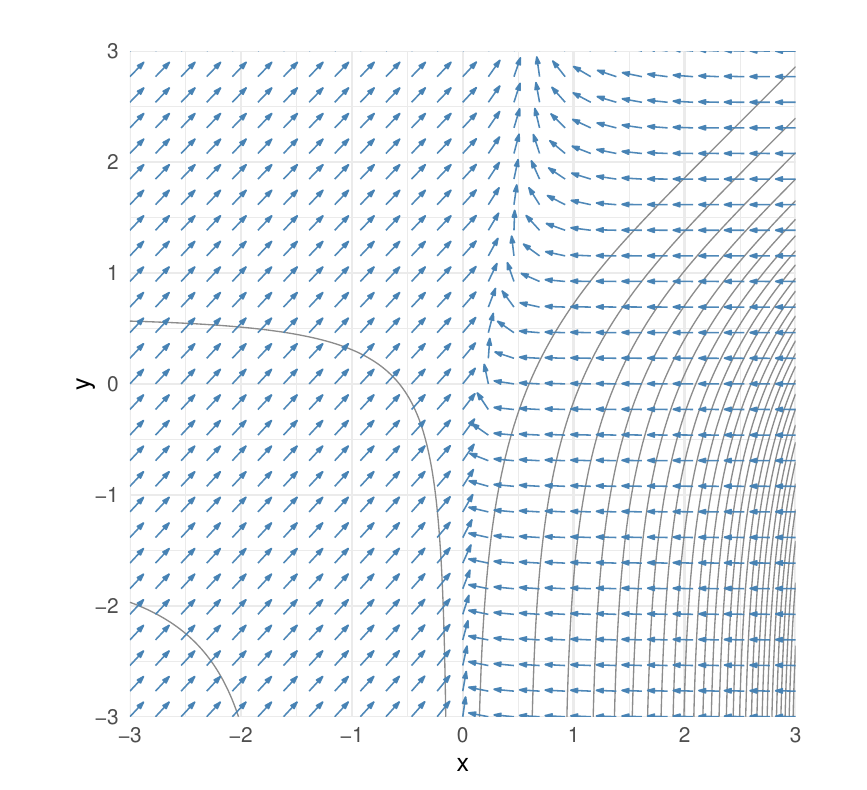}
    \caption{Vector field for $z_0=-1+0.1i$, $z_1=1+0.1i$ (stable case). Contour lines show mass $M$.}
    \label{fig:2dsys_stable}
\end{figure}

\begin{figure}[ht]
    \centering
    \includegraphics[width=0.7\linewidth]{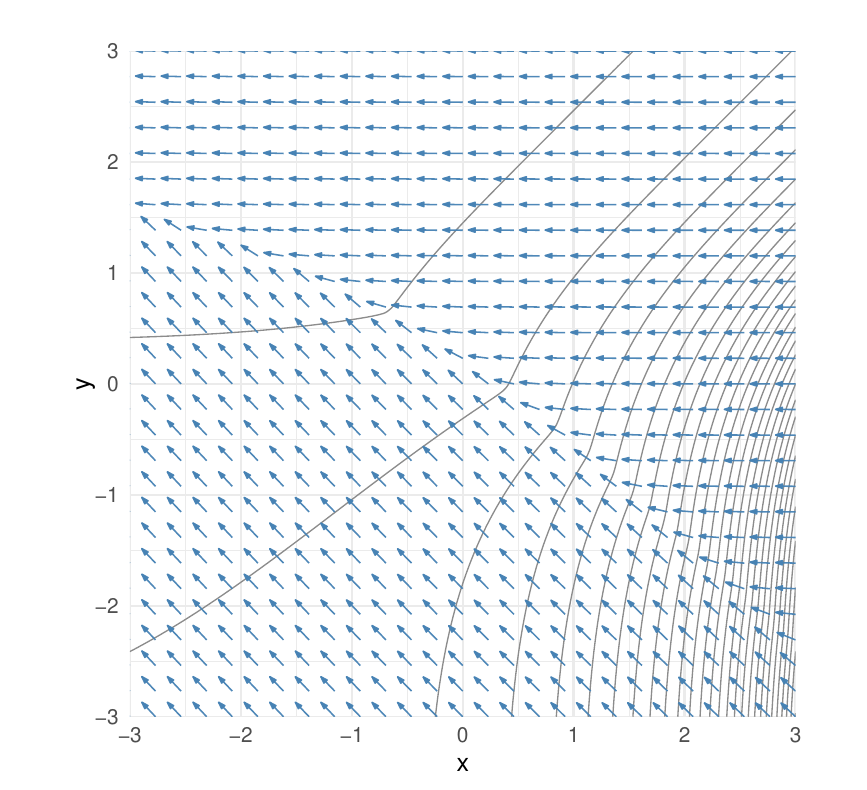}
    \caption{Vector field for $z_0=1+0.1i$, $z_1=-1+0.1i$ (unstable case). Contour lines show mass $M$.}
    \label{fig:2dsys_unstable}
\end{figure}

\subsection{K\"ahler metrics from increasing convex functions}\label{subsec:rampfunctions}

Fix a quiver $Q=(Q_0,Q_1,s,t)$ and for each vertex $i\in Q_0$ a stability parameter $\theta_i\in\bR$ and a Hermitian vector space $E_i$, $\dim E_i<\infty$.
In Section~\ref{subsec:quivers} we considered the K\"ahler quotient of the vector space $X=\bigoplus_{a\colon i\to j}\Hom(E_i,E_j)$, equipped with the standard translation-invariant (flat) metric, by the compact group $U=\prod_{i\in Q_0}U(E_i)$ of isometries. 
This quotient depends on $\theta\in \bR^{Q_0}\subseteq \mathfrak u^\vee$ and is identified with the moduli space of $\theta$-polystable representations of $Q$ with fixed dimension vector $(\dim(E_i))_{i\in Q_0}$ by Theorem~\ref{thm:King}.

Instead of starting with the flat metric on $X$, we could, more generally, take any $U$-invariant K\"ahler metric. 
According to our philosophy, the various K\"ahler metrics for different dimension vectors should be determined by a single ``noncommutative K\"ahler metric'' for $Q$.
Here, we propose examples of such metrics which depend on a choice of function $f_a\colon [-\infty,+\infty)\to \bR$ for each arrow $a\in Q_1$, satisfying certain properties. 
Given such functions, we replace the terms $\tr(\rho(a)^*\rho(a))$ in the K\"ahler potential by $\tr f_a\left(\log \rho(a)^*\rho(a)\right)$.
The following definition provides sufficient conditions on the $f_a$ to ensure positivity.

\begin{definition}
A \textbf{ramp function} is a function $f\colon [-\infty,+\infty)\to \bR$ which satisfies the following conditions:
\begin{enumerate}[(1)]
    \item $f\circ \log\colon [0,+\infty)\to\bR$ is of class $C^2$,
    \item $f'(\lambda)>0$ and $f''(\lambda)>0$ for $\lambda\in\bR$,
    \item $\lim_{x\to 0^+}\frac{f''(\log x)}{x}=\frac{d}{dx}\Big|_{x=0}f\circ\log>0$,
    \item $\lim_{\lambda\to+\infty}f'(\lambda)=+\infty$.
\end{enumerate}
\end{definition}

It follows from differentiability of $f\circ \log\colon [0,+\infty)\to\bR$ at $0$ that $\lim_{\lambda\to -\infty}f'(\lambda)=0$. 
Thus, $f'\colon \bR\to(0,+\infty)$ is a $C^1$ diffeomorphism.

\begin{example}
The function $f(\lambda)=Ae^{\lambda}+B$ is a ramp function for $A,B\in\bR$, $A>0$.
\end{example}

A $C^2$ function $\varphi\colon \bC^n\supseteq U\to\bR$ is \textit{strongly plurisubharmonic} if $\sqrt{-1}\partial\overline{\partial}\varphi$ is a K\"ahler form. More explicitly, this means that the matrix with entries $\frac{\partial^2\varphi}{\partial z_i\partial\bar z_j}$ is positive definite.

\begin{lemma}\label{lem:fmetric}
Suppose $f\colon [-\infty,+\infty)\to \bR$ is a ramp function, then the function
\[
\Hom(\bC^n,\bC^m)\to\bR,\qquad T\mapsto \tr f\left(\log TT^*\right)
\]
is strongly plurisubharmonic.\footnote{Using $T^*T$ instead of $TT^*$ changes the trace only by a constant depending on $m,n$, hence does not affect the associated Kähler form.}
\end{lemma}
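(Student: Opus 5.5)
The plan is to reduce to diagonal $T$ by unitary invariance and compute the Levi form explicitly using the first-order perturbation formula for matrix functions (the Daleckii--Krein formula). Set $g(x)\coloneqq f(\log x)$ for $x\in[0,+\infty)$; by condition (1) this is $C^2$. Then $\varphi(T)=\tr g(P)$ with $P=TT^*$. The function $\varphi$ is invariant under $T\mapsto UTV$ for unitary $U\in U(m)$, $V\in U(n)$, and this action preserves the standard Hermitian structure on $\Hom(\bC^n,\bC^m)$. So it suffices to prove positivity of $\sqrt{-1}\partial\overline\partial\varphi$ at a matrix $T$ in singular value form, $T_{kl}=\sigma_k\delta_{kl}$ with $\sigma_k\geq 0$ (extended by zeros). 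Write $x_k\coloneqq\sigma_k^2$, using $x_k=0$ for indices beyond $\min(m,n)$.

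Fix a direction $H$ and consider $s\mapsto\varphi(T+sH)$ for $s\in\bC$. We have $\partial_sP=HT^*$, $\partial_{\bar s}P=TH^*$ and $\partial_s\partial_{\bar s}P=HH^*$. Differentiating $\tr g(P)$ twice gives
\[
\partial_s\partial_{\bar s}\varphi=\tr\bigl(g'(P)HH^*\bigr)+\sum_{k,l}(g')^{[1]}(x_k,x_l)\,(HT^*)_{kl}(TH^*)_{lk}.
\]
Here $(g')^{[1]}(x,y)=(g'(x)-g'(y))/(x-y)$ is the divided difference, interpreted as $g''(x)$ on the diagonal. Since $(HT^*)_{kl}=H_{kl}\sigma_l$ and $(TH^*)_{lk}=\sigma_l\overline{H_{kl}}$, the Levi form becomes
\[
\sum_{k,l}|H_{kl}|^2\,c(x_k,x_l),\qquad c(x,y)\coloneqq g'(x)+y\,(g')^{[1]}(x,y)=\frac{u(x)-u(y)}{x-y},
\]
where $u(x)\coloneqq xg'(x)=f'(\log x)$. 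So $c$ is just the divided difference of $u$.

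Positivity of $c$ then comes from the ramp conditions:
\begin{itemize}
\item $u$ is strictly increasing on $(0,\infty)$, because $f''>0$.
\item $u(0)=\lim_{\lambda\to-\infty}f'(\lambda)=0$, as noted after the definition of ramp function.
\item On the diagonal, $u'(x)=f''(\log x)/x>0$ for $x>0$, and at $x=0$ condition (3) gives $u'(0)=\frac{d}{dx}\big|_{0}f\circ\log>0$.
\end{itemize}
Hence $c(x,y)>0$ for all $x,y\geq 0$, including the mixed case $c(x,0)=u(x)/x>0$. So the Levi form is positive definite in $H$.

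The main obstacle is rigor at degenerate points, where singular values vanish or coincide. There one must justify that $\varphi$ is $C^2$ and that the divided-difference formula holds. My first attempt would be to cite the standard result that $P\mapsto\tr g(P)$ is $C^2$ on Hermitian matrices with spectrum in $[0,\infty)$ whenever $g\in C^2$, with second derivative given by the divided differences of $g'$. I would apply this with $g$ extended to a $C^2$ function slightly past $0$. Alternatively, one can compute at generic $T$, with distinct nonzero singular values, and pass to the limit by continuity of $c$ on $[0,\infty)^2$, which is exactly what conditions (1) and (3) guarantee.
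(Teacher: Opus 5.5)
Your proof is correct and is essentially the paper's argument: your kernel $c(x,y)=\frac{u(x)-u(y)}{x-y}$ with $u=f'\circ\log$ is exactly the paper's $\mathfrak Hf$, and you deduce its positivity from the ramp conditions in the same way. The only difference is how the Levi-form identity is obtained. The paper writes it basis-free as $(\mathfrak Hf)(T^*T\otimes I_m,I_n\otimes TT^*)$ and verifies it by polynomial approximation, checking only $f\circ\log(x)=x^k$. You instead diagonalize using unitary invariance and apply the Daleckii--Krein formula, which also makes the $C^2$ regularity at vanishing or repeated singular values follow from a standard citation.
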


In fact, the conclusion of the lemma requires only the first three conditions of a ramp function, not the fourth.

\begin{proof}
Suppose first that $f\colon [-\infty,+\infty)\to\bR$ is some function such that $g\coloneqq f\circ \log\colon [0,+\infty)\to\bR$ is $C^2$, but not necessarily satisfying the other conditions in the lemma.
Define $\mathfrak Hf\colon [0,+\infty)^2\to\bR$ by
\[
(\mathfrak Hf)(x,y)\coloneqq \begin{cases} \frac{f'(\log x)-f'(\log y)}{x-y} & \text{ for }x\neq y \\ \frac{f''(\log x)}{x} & \text{ for }x=y\end{cases}.
\]
To see that this is well-defined when $x=0$ or $y=0$, or both, write $\mathfrak Hf$ in terms of $g$:
\[
(\mathfrak Hf)(x,y)\coloneqq \begin{cases} \frac{g'(x)x-g'(y)y}{x-y} & \text{ for }x\neq y \\ g''(x)x+g'(x) & \text{ for }x=y\end{cases}.
\]
We claim that:
\begin{equation}\label{eq:magic}
\partial\overline{\partial}\tr f\left(\log TT^*\right)=\sum_{p,q,\bar p,\bar q}\left((\mathfrak Hf)(T^*T\otimes I_m,I_n\otimes TT^*)\right)_{q\bar p,\bar qp}dz_{pq}\wedge d\bar z_{\bar p\bar q}
\end{equation}
where $z_{pq}\colon \Hom(\bC^n,\bC^m)\to\bC$ is the $(p,q)$ matrix coefficient.
Given the above identity, the lemma follows, since $\mathfrak Hf$ is everywhere positive by our assumptions on $f$ and thus $(\mathfrak Hf)(T^*T\otimes I_m,I_n\otimes TT^*)$ is positive definite.

It remains to verify~\eqref{eq:magic}.
Note first that both sides of the equation depend linearly and continuously (in the $C^2$ topology) on $f$. 
By polynomial approximation, it suffices to check the special case where $g(x)=f(\log x)=x^k$, i.e.\ $f(\lambda)=e^{k\lambda}$, for some integer $k\geq 1$.
For $\mathfrak Hf$ we then get:
\[
(\mathfrak Hf)(x,y)=k\frac{x^k-y^k}{x-y}=k\sum_{r=0}^{k-1}x^ry^{k-1-r}
\]
so the right-hand side of~\eqref{eq:magic} is
\[
k\sum_{r=0}^{k-1}\sum_{p,q,\bar p,\bar q}\left((T^*T)^r\right)_{q\bar q}\left((TT^*)^{k-1-r}\right)_{\bar pp}dz_{pq}\wedge d\bar z_{\bar p\bar q}
\]
using the fact that $(A\otimes B)_{q\bar p,\bar qp}=A_{q\bar q}B_{\bar pp}$.
The left-hand side of~\eqref{eq:magic} is
\begin{align*}
\partial\overline{\partial}\tr(TT^*)^k &= k\partial\tr \left((TT^*)^{k-1}T\overline{\partial}T^*\right) \\
&= k\sum_{r=0}^{k-1}\tr \left(\partial T(T^*T)^r\wedge\overline{\partial}T^*(TT^*)^{k-1-r}\right) \\
&=k\sum_{r=0}^{k-1}\sum_{p,q,\bar p,\bar q}dz_{pq}\left((T^*T)^r\right)_{q\bar q}\wedge d\bar z_{\bar p\bar q}\left((TT^*)^{k-1-r}\right)_{\bar pp}
\end{align*}
which proves the claim.
\end{proof}

Let us fix for each $a\in Q_1$ a ramp function $f_a$.
Given vector spaces $E_i$, $i\in Q_0$ as before, we define $S\colon X\times\bC^\times\to\bR$ by
\begin{equation}\label{eq:quiverS_gen}
    S(\rho,w)\coloneqq \sum_{a\in Q_1}\tr f_a\left(\log\left(\rho(a)^*\rho(a)\right)\right)+\log|w|
\end{equation}
which is plurisubharmonic and strongly plurisubharmonic in $\rho$ by the previous lemma.
In particular, we obtain a K\"ahler metric on $X$ and on the K\"ahler quotient of $X$ by $U$.

The critical points of $S$ are given by the solutions to 
\[
\sum_{a\in Q_1}f_a'\left(\log\rho(a)\rho(a)^{*_h}\right)-f_a'\left(\log\rho(a)^{*_h}\rho(a)\right)=\sum_{i\in Q_0}\theta_i\rho(e_i)
\]
and this again has a solution for some choice of metric iff the representation is $\theta$-polystable.
This relies on a generalization of the Kempf--Ness theorem from the norm-squared function to suitable strongly plurisubharmonic functions~\cite{azadloeb93}.

In the next section we will consider the non-Archimedean limits of the K\"ahler metrics coming from the $f_a$.

\section{Representations of quivers: non-Archimedean case}
\label{sec:quivers_nonarch}

In this section we develop non-Archimedean analogues of some of the definitions and results which appeared in the previous section.
This is based on the well-known analogy between symmetric spaces and affine buildings.
Our main motivation for considering this case is that Fukaya categories are typically defined over a non-Archimedean (Novikov) field, instead of $\bC$.

In Section~\ref{subsec:nonarchlimit} we illustrate the passage from the Archimedean setting to the non-Archimedean one by considering families of metrics depending on a small parameter.
Section~\ref{subsec:prelimnorm} reviews definitions and results around norms on finite-dimensional vector spaces over a non-Archimdean field $K$, including the structure of the affine building of such a vector space.
Finally, in Section~\ref{subsect:non-arch-quiver}, we introduce a notion of \textit{harmonic norm} in this setting and prove an analogue of Theorem~\ref{thm:King} on the existence of such metrics.

\subsection{Passing to the non-Archimedean limit}\label{subsec:nonarchlimit}

We describe the passage from Archimedean to non-Archimedean K\"ahler geometry in the case of representations of quivers.
This motivates our constructions in Section~\ref{subsect:non-arch-quiver}, but is logically independent.

Throughout, we fix a quiver $Q$, for each vertex $i\in Q_0$ a stability parameter $\theta_i\in\bR$, and for each arrow $a\in Q_1$ a ramp function $f_a\colon [-\infty,+\infty)\to\bR$ as defined in Section~\ref{subsec:rampfunctions}.
Given a representation $(E,\rho)$ of $Q$ over $\bC$ with a reference Hermitian metric on each $E_i$ we consider the function 
\begin{equation}\label{eq:quiverSh_gen}
    S(h)\coloneqq \sum_{a\in Q_1}\tr f_a\left(\log\left(\rho(a)\rho(a)^{*_h}\right)\right)-\sum_{i\in Q_0}\theta_i\log\det h_i
\end{equation}
on the space of Hermitian automorphisms of $E$ which are a direct sum of Hermitian operators $h_i$ on $E_i$, $i\in Q_0$.
(This is the symmetric space $\prod_{i\in Q_0}GL(E_i)/U(E_i)$.)
Here, $\rho(a)^{*_h}=h_{s(a)}^{-1}\rho(a)^{*}h_{t(a)}$ is the adjoint with respect to the Hermitian metric determined by $h$.
Note that~\eqref{eq:quiverSh_gen} specializes to~\eqref{eq:quiverSh} when $f_a(\lambda)=e^\lambda$ for all $a\in Q_1$.

We let
\[
P\coloneqq -dS=\sum_{a\in Q_1}f_a'\left(\log\rho(a)^{*_h}\rho(a)\right)-f_a'\left(\log\rho(a)\rho(a)^{*_h}\right)+\sum_{i\in Q_0}\theta_i\rho(e_i)
\]
i.e.\ $-dS(h^{-1}\delta h)=\tr(h^{-1}\delta hP)$, where $dS$ is considered as a 1-form and $h^{-1}\delta h$ a tangent vector at $h$.

To find the non-Archimedean analogue of $S$ and $P$, we introduce a parameter $\epsilon>0$ and vary the metric by setting 
\[
f^{(\epsilon)}_a(\lambda)\coloneqq \epsilon^{-1}f_a(\epsilon \lambda).
\]
Note that $f^{(\epsilon)}_a$ will generally no longer be a ramp function, and the corresponding K\"ahler metric may be singular.
In spite of this, we will still consider
\begin{align*}
    S^{(\epsilon)} &= \epsilon^{-1}\sum_{a\in Q_1}\tr  f_a\left(\epsilon\log\left(\rho(a)\rho(a)^{*_h}\right)\right)-\sum_{i\in Q_0}\theta_i\log\det h_i, \\
    P^{(\epsilon)} &=\sum_{a\in Q_1}f_a'\left(\epsilon \log\rho(a)^{*_h}\rho(a)\right)-f_a'\left(\epsilon\log\rho(a)\rho(a)^{*_h}\right)+\sum_{i\in Q_0}\theta_i\rho(e_i).
\end{align*}

We want to make sense of the above expressions when $t=e^{-1/\epsilon}$ is a formal parameter.
To that end, consider as in Section~\ref{section:heuristics} the (Novikov) field $K=Nov$ of formal series of the form
\[
\sum_{\alpha\in\bR}c_\alpha t^{\alpha}
\]
where $c_\alpha\in\bC$ and the set of $\alpha\in\bR$ with $c_\alpha\neq 0$ is discrete and bounded below.
The field $K$ is non-Archimedean and complete with respect to the absolute value 
\[
\left|\sum_{\alpha\in\bR}c_\alpha t^\alpha\right|\coloneqq\exp\left(-\min\{\alpha\mid c_\alpha\neq 0\}\right)\in [0,+\infty).
\]
There is an operation of (coefficient-wise) complex conjugation which fixes the totally ordered subfield $K_{\bR}\subset K$ of series with coefficients in $\bR$.
The positive cone in $K_{\bR}$ consists of series whose leading coefficient (i.e.\ coefficient of the smallest power of $t$) is positive.
The order and absolute value are compatible in the sense that $0\leq x\leq y$ implies $|x|\leq|y|$.\footnote{$K$ is an \textit{ordered $*$-field} in the terminology of  Holland~\cite{holland80}, and the valuation $\nu(x)\coloneqq -\log|x|$ is its order valuation in the sense of~\cite[Theorem 4.4]{holland80}.}

\begin{definition}
A \textbf{Hermitian inner product} on a $K$-vector space $V$ is a $K$-linear map $\overline{V}\otimes_K V\to K$, $(v,w)\mapsto\langle v,w\rangle$ with $\overline{\langle v,w\rangle}=\langle w,v\rangle$ and $\langle v,v\rangle> 0$ for $v\neq 0$.
\end{definition}

The simplest example is $V=K$ with $\langle v,w\rangle=\bar vw$.
Intuitively, a $K$-vector space $V$ with Hermitian inner product is a family of Hermitian $\bC$-vector spaces depending on an infinitesimal parameter $t$. The limit of this family as $t\to 0$ is the $\bC$-vector space $$V_0\coloneqq V_{\leq 0}/V_{<0}\coloneqq \{v\in V\mid \|v\|\leq 1\}/\{v\in V\mid \|v\|<1\}$$ with the induced inner product.

\begin{definition}
A \textbf{refined norm} on a finite-dimensional $K$-vector space $V$ is given by:
\begin{enumerate}[(1)]
    \item a non-Archimedean lattice norm $\|\cdot\|$ on $V$ (see Section~\ref{subsec:prelimnorm} below),
    \item a Hermitian inner product $\langle\_,\_\rangle_0$ on the $\bC$-vector space $V_0$.
\end{enumerate}
\end{definition}

\begin{lemma}
A Hermitian inner product $\langle\_,\_\rangle$ on a $K$-vector space $V$ gives rise to a refined norm via $\|v\|\coloneqq\sqrt{|\langle v,v\rangle|}$, $\langle v,w\rangle_0\coloneqq \langle v,w\rangle\mod t^{>0}$.
\end{lemma}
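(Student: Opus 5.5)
The plan is to check the two defining conditions of a refined norm in turn: first that $\|v\|=\sqrt{|\langle v,v\rangle|}$ is a non-Archimedean lattice norm on $V$, and then that reducing $\langle\_,\_\rangle$ modulo $t^{>0}$ gives a well-defined, positive definite Hermitian form on $V_0=V_{\leq 0}/V_{<0}$. Since the lattice-norm notion is only recalled in Section~\ref{subsec:prelimnorm}, I take it to mean a norm that is diagonalizable, i.e.\ admits an orthogonal basis $e_1,\dots,e_n$ with $\|\sum c_ie_i\|=\max_i|c_i|\,\|e_i\|$.

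\textbf{Step 1: an orthogonal basis.} Run Gram--Schmidt over $K$. This is allowed because $\langle v,v\rangle>0$ for $v\neq 0$, and $K_{\bR}$ is a field, so we may divide by $\langle e_k,e_k\rangle$. The result is a basis with $\langle e_i,e_j\rangle=0$ for $i\neq j$ and $\langle e_i,e_i\rangle=r_i\in K_{\bR,>0}$.

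\textbf{Step 2: the norm formula.} For $v=\sum c_ie_i$ we have $\langle v,v\rangle=\sum_i \bar c_ic_i r_i$. Each summand is a positive element of $K_{\bR}$, and a sum of positive elements of $K_{\bR}$ has no cancellation of leading terms, because the leading coefficients are positive reals. The compatibility of order and absolute value noted above then gives
\[
|\langle v,v\rangle|=\max_i |c_i|^2|r_i|.
\]
Taking square roots yields $\|v\|=\max_i|c_i|\,\|e_i\|$. This shows at once that $\|\cdot\|$ is a norm (homogeneity, ultrametric inequality, definiteness), and that it is diagonalized by the $e_i$, hence is a lattice norm.

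\textbf{Step 3: the induced form on $V_0$.} I will check that $\langle\_,\_\rangle$ maps $V_{\leq 0}\times V_{\leq 0}$ into the ring of elements of absolute value at most $1$, and maps $V_{<0}\times V_{\leq 0}$ into $t^{>0}$. This uses a Cauchy--Schwarz bound $|\langle v,w\rangle|\leq\|v\|\,\|w\|$, which follows immediately in the orthogonal basis: $|\sum \bar c_i d_i r_i|\leq\max|c_i||d_i||r_i|\leq\|v\|\,\|w\|$. Consequently reduction modulo $t^{>0}$ gives a well-defined sesquilinear Hermitian form on $V_0$.

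For positivity, I rescale the basis by elements of $K$ so that each $\|e_i\|$ lies in the value group $e^{\bR}$. The images of those $e_i$ with $\|e_i\|=1$ then form a basis of $V_0$, and $\langle\_,\_\rangle_0$ is diagonal in this basis with positive real entries, namely the leading coefficients of the $r_i$. More directly, for $v\in V_{\leq 0}\setminus V_{<0}$, the constant term of $\langle v,v\rangle$ is positive by the no-cancellation argument of Step 2.

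\textbf{Main obstacle.} I expect the only subtle point to be the no-cancellation claim for sums of positive elements in $K_{\bR}$. Everything else reduces to it via the orthogonal basis of Step 1.
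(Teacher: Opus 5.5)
Your proof is correct, but it takes a different route from the paper's.

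\textbf{The paper's route.} The paper argues without a basis. It proves the Cauchy--Schwarz inequality $|\langle x,y\rangle|\leq\|x\|\,\|y\|$ ``in the usual way'', i.e.\ from positivity of $\langle x-\lambda y,x-\lambda y\rangle$ in the ordered field $K_{\bR}$ together with the compatibility of order and absolute value. It then gets the ultrametric inequality by applying the ultrametric property of $|\cdot|$ on $K$ to the expansion of $\langle x+y,x+y\rangle$. Everything else is declared immediate, including that $\|\cdot\|$ is a lattice norm and that the reduction is a positive definite Hermitian form on $V_0$.

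\textbf{Your route.} You build an orthogonal basis by Gram--Schmidt and reduce everything to one fact: sums of positive elements of $K_{\bR}$ have no cancellation of leading terms. This gives the explicit formula $\|\sum c_ie_i\|=\max_i|c_i|\,\|e_i\|$. That formula proves the norm axioms and diagonalizability at once, and it turns Cauchy--Schwarz and the well-definedness of the reduced form into coordinate computations.

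\textbf{What each approach buys.} Your route makes explicit the part the paper leaves implicit. Diagonalizability is not automatic over $Nov$, which is not spherically complete (Proposition~\ref{prop:normdiag}), and the definition of a refined norm requires a lattice norm rather than just an ultrametric norm. The paper's route, in turn, is coordinate-free and does not use finite-dimensionality for the norm axioms.

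\textbf{Two small corrections.} First, the rescaling in your Step 3 should read ``rescale so that $\|e_i\|=1$ for all $i$''. This is possible because the value group of $Nov$ is all of $\bR$, and it is exactly what upgrades ``diagonalizable'' to ``lattice norm'' in the sense of Section~\ref{subsec:prelimnorm}; as written, ``so that $\|e_i\|$ lies in the value group'' is vacuous. Second, for $\|v\|=1$, positivity of the constant term of $\langle v,v\rangle$ follows directly from $\langle v,v\rangle>0$ and $|\langle v,v\rangle|=1$. No no-cancellation argument is needed there.
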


\begin{proof}
The proof is standard.
Let us show the ultrametric triangle inequality 
\[
\|x+y\|\leq \max(\|x\|,\|y\|), \qquad x,y\in V.
\]
This follows from
\begin{align*}
    \|x+y\|^2 &= |\langle x+y,x+y\rangle| \leq \max\left(\|x\|^2,\|y\|^2,|\langle x,y\rangle|\right) 
    =\max\left(\|x\|^2,\|y\|^2\right)
\end{align*}
where we have used the Cauchy--Schwarz inequality
\[
|\langle x,y\rangle|\leq \|x\|\|y\|
\]
which is shown in the usual way.

The other statements are immediate.
\end{proof}

Suppose $a\in K_{\bR}$, $a>0$, converges absolutely for $t>0$ sufficiently small.
Write $a=t^\alpha c=e^{-\alpha/\epsilon}c$, where $|c|=1$, then
\[
\lim_{\epsilon\to 0^+}a^\epsilon=e^{-\alpha}\lim_{\epsilon\to 0^+}c^\epsilon=e^{-\alpha}=|a|.
\]
More generally, it is reasonable to define $\lim_{\epsilon\to 0^+}a^\epsilon\coloneqq|a|$ whenever $a\geq 0$.
Moreover, this extends to linear operators $Q\in\mathrm{End}(V)$ with $Q^*=Q$, $Q\geq 0$, where $V$ is a finite-dimensional Hermitian vector space over $K$.
In an orthonormal basis of $V$ where $Q=\mathrm{diag}(c_1,\ldots,c_n)$ we have
\[
\lim_{\epsilon\to 0^+}Q^\epsilon\coloneqq \mathrm{diag}\left(|c_1|,\ldots,|c_n|\right)\in\mathrm{End}(V_0)
\]
with respect to the induced orthonormal basis of $V_0$.

\begin{lemma}
\label{lem:limsvd}
Let $T\colon V\to W$ be a linear map between finite-dimensional Hermitian vector spaces over $K$.
Then
\[
\lim_{\epsilon\to 0^+}(T^*T)^\epsilon\in\mathrm{End}(V_0),\qquad \lim_{\epsilon\to 0^+}(TT^*)^\epsilon\in\mathrm{End}(W_0)
\]
depend only on the refined norms on $V$ and $W$.
\end{lemma}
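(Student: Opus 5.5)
The plan is to reduce to a singular value decomposition over $K$ and then read off the limit in terms of norm data. First I will diagonalize $T^*T$. Here $T^*T$ is self-adjoint and non-negative on the Hermitian $K$-space $V$. I will use that $K$ is a real-closed-type ordered $*$-field (its real part $K_{\bR}$ is real closed, since it is a field of Hahn-type series with real coefficients). This gives a spectral theorem, so there are orthonormal bases $v_1,\ldots,v_n$ of $V$ and $w_1,\ldots,w_m$ of $W$ with $Tv_k=s_kw_k$ for $s_k\in K_{\bR}$, $s_k\geq 0$, $k\leq\min(n,m)$, and $Tv_k=0$ otherwise. Then $T^*T=\mathrm{diag}(s_k^2)$ and $TT^*=\mathrm{diag}(s_k^2)$ (padded with zeros). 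By the definition preceding the lemma, $\lim(T^*T)^\epsilon$ is the operator on $V_0$ which is diagonal in the induced basis $\bar v_k$ with eigenvalues $|s_k|^2$. The same holds for $W_0$.

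The main step is to express this operator intrinsically. I will show that for each $r\geq 0$ the subspace of $V_0$ spanned by the $\bar v_k$ with $|s_k|\leq r$ coincides with the image in $V_0$ of $\{v\in V_{\leq0}\mid \|Tv\|\leq r\}$. The orthonormal basis $(v_k)$ is orthogonal for the norm, so $\|\sum c_kv_k\|=\max|c_k|$. The vectors $Tv_k=s_kw_k$ are likewise orthogonal with norms $|s_k|$, so $\|T\sum c_kv_k\|=\max_k|c_k||s_k|$. For $v$ with $\|v\|\leq1$, the residue $\bar v=\sum \bar c_k\bar v_k$ has nonzero entries only at $k$ with $|c_k|=1$. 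If moreover $\|Tv\|\leq r$, those $k$ satisfy $|s_k|\leq r$, which proves one inclusion; the converse is immediate. This gives the spectral filtration of the limit operator as $\mathcal F_r=\operatorname{im}\{v:\|v\|\leq1,\|Tv\|\leq r\}$. That filtration depends only on the norms on $V$ and $W$.

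Next I will pin down the operator itself, which needs more than the filtration. Its eigenspaces are the orthogonal complements of $\mathcal F_{<r}$ inside $\mathcal F_r$, taken with respect to $\langle\_,\_\rangle_0$. So the limit equals $\sum_r r^2\,\mathrm{pr}_{\mathcal F_r\ominus\mathcal F_{<r}}$, which involves only the lattice norms on $V,W$ and the inner product on $V_0$, that is, the refined norm. For $TT^*$ I will apply the same argument to $T^*$, or dually describe the filtration on $W_0$ via $\{w:\|w\|\leq1,\ \|T^*w\|\leq r\}$. The issue there is that $T^*$ itself depends on the full inner product. To handle it, I will instead characterize $\|T^*w\|=\sup_{v}|\langle w,Tv\rangle|/\|v\|$ and check that this supremum can be computed from norms alone, using the orthogonal bases: $|\langle w,Tv\rangle|$ is bounded by $\max_k |d_k||s_k||c_k|$ with equality attainable.

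The main obstacle I expect is exactly this last point: showing that the residual data is determined by the refined norm and not by the finer $t$-expansion of the inner product. The cleanest route may be to note that two Hermitian inner products inducing the same refined norm differ by a change of basis $g$ with $g\in\mathrm{GL}(V_{\leq0})$ reducing to a unitary on $V_0$. Replacing $T$ by $TgV$-adjusted versions then changes $s_k$ only by units whose residues are unitary conjugations, which leaves the limit invariant. I would try this invariance argument first and fall back on the explicit filtration description if it becomes messy.
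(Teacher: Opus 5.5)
Your treatment of $V_0$ is correct and is the paper's own argument: after the singular value decomposition, the spectral filtration of $\lim(T^*T)^\epsilon$ is $\{\bar v\mid \exists\text{ lift } v,\ \|v\|\le 1,\ \|Tv\|\le r\}$ (the paper's $\cF^-(T)$), and $\langle\_,\_\rangle_0$ splits it into eigenspaces.

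\textbf{The gap is on the $W_0$ side.} You propose to show that $\|T^*w\|=\sup_v|\langle w,Tv\rangle|/\|v\|$ can be computed from norms alone. This is false: $\|T^*w\|$ is not a function of the refined norms. Take $V=K$ and $W=K^2$ with the standard lattice norm, and let $T(1)=(1,0)$. Consider the two inner products on $W$ for which $\{(1,0),(0,1)\}$, resp.\ $\{(1,0),(t,1)\}$ with $|t|<1$, is orthonormal. Both induce the same lattice norm and the same inner product on $W_0$. Yet $\|T^*w\|$ equals $|w_1|$, resp.\ $|w_1-tw_2|$, and these differ at $w=(t,1)$. Your formula $\max_k|d_k||s_k|$ uses coordinates $d_k$ in an inner-product-dependent basis, so it says nothing about norm-dependence. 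Only the induced residual filtration is intrinsic, and proving that is the content of the lemma.

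Your fallback invariance argument does not close the gap either. The moduli $|s_k|$ are clearly intrinsic. What must be shown is that the residues of the left singular vectors $w_k$, i.e.\ the eigenspaces in $W_0$, do not change when the inner products vary within a refined-norm class. The assertion that $s_k$ changes only by units does not address this. Applying your $V$-side argument to $T^*$ just reproduces the problem, since $T^*$ changes with the inner product.

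\textbf{The missing idea} is to describe the $W_0$ side through the image of $T$, without $T^*$. The paper uses the decreasing filtration
\[
\cF^+(T)_\lambda=\{\bar w\in W_0\mid \exists v\in V:\ \|Tv\|\le 1,\ Tv\text{ lifts }\bar w,\ \|v\|\le e^{-\lambda}\},
\]
which involves only $T$ and the norms. Your orthonormal-basis computation shows it equals $\operatorname{span}\{\bar w_k\mid |s_k|\ge e^{\lambda}\}$:
\begin{itemize}
\item If $v=\sum c_kv_k$ satisfies these bounds, only indices with $|c_ks_k|=1$ contribute to the residue of $Tv=\sum c_ks_kw_k$. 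For those indices, $|s_k|=|c_k|^{-1}\ge e^{\lambda}$.
\item Conversely, $\sum a_k\bar w_k$ with $a_k\in\bC$ and $|s_k|\ge e^\lambda$ is the residue of $T$ applied to $v=\sum a_ks_k^{-1}v_k$, which satisfies both bounds.
\end{itemize}
The eigenspaces of $\lim(TT^*)^\epsilon$ are then the $\langle\_,\_\rangle_0$-orthogonal complements $\cF^+(T)_\lambda\ominus\cF^+(T)_{>\lambda}$. The zero eigenspace is the orthogonal complement of $\bigcup_\lambda\cF^+(T)_\lambda$.

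This also rescues your $T^*$ route. The residual filtration $\operatorname{span}\{\bar w_k\mid |s_k|\le r\}$ coming from $T^*$ is exactly the $\langle\_,\_\rangle_0$-orthogonal complement of $\cF^+(T)_{>\log r}$, so it is norm-determined. But you need this intrinsic description to conclude that.
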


\begin{proof}    
By the singular value decomposition for matrices with coefficients in $K$ (see Section~\ref{subsec:prelimnorm} for a review) we can write 
\[
T=\sum_{i=1}^rt^{-\sigma_i}\langle e_i,\_\rangle f_i,\qquad T^*=\sum_{i=1}^rt^{-\sigma_i}\langle f_i,\_\rangle e_i
\]
where $e_1,\dots,e_n$ is an orthonormal basis of $V$, $f_1,\ldots,f_m$ is an orthonormal basis of $W$, and $\sigma_1\geq\ldots\geq\sigma_r$, $r=\operatorname{rk} T$.
We compute
\[
T^*T=\sum_{i=1}^rt^{-2\sigma_i}\langle e_i,\_\rangle e_i, \qquad TT^*=\sum_{i=1}^rt^{-2\sigma_i}\langle f_i,\_\rangle f_i
\]
and
\[
\lim_{\epsilon\to 0^+}(T^*T)^\epsilon=\sum_{i=1}^re^{2\sigma_i}\langle e_i,\_\rangle_0e_i,\qquad  \lim_{\epsilon\to 0^+}(TT^*)^\epsilon=\sum_{i=1}^re^{2\sigma_i}\langle f_i,\_\rangle_0f_i
\]
where $e_i$ also denotes the image of that basis vector in $V_0$, and similarly for $f_i$.
If we forget the inner products on $V$, $W$ and only remember the induced refined norms, then $T^*$ is no longer defined, but we claim that the above limits can still be recovered from $T$ alone.
It suffices to know the set of real numbers $\Sigma=\{\sigma_1,\ldots,\sigma_r\}$ and the subspaces
\[
V_\lambda\coloneqq\mathrm{span}\{e_i\mid\sigma_i=\lambda\}\subseteq V_0,\qquad
W_\lambda\coloneqq\mathrm{span}\{f_i\mid\sigma_i=\lambda\}\subseteq W_0.
\]
In fact, using the splitting coming from the inner products on $V_0$ and $W_0$ it suffices to know the $\bR$-filtrations
\begin{align*}    
V_{\leq\lambda}\coloneqq V_{-\infty}\oplus\bigoplus_{\mu\leq\lambda}V_\mu,\qquad
W_{\geq\lambda}\coloneqq\bigoplus_{\mu\geq\lambda}W_\mu,
\end{align*}
where
\[
\qquad V_{-\infty}\coloneqq\left(\bigoplus_{\mu\in\bR}V_\mu\right)^\perp=(\Ker(T)\cap V_{\leq 0})/(\Ker(T)\cap V_{< 0}).
\]
But these can be defined alternatively using the non-Archimedean norms by
\begin{align*}
V_{\leq\lambda} = \cF^-(T)_{\lambda} &\coloneqq \left\{v\in V_0\mid \exists \text{ lift }\tilde{v}\in V\colon \|\tilde{v}\|\leq 1,\|T\tilde{v}\|\leq e^{\lambda}\right\}, \\
W_{\geq\lambda} = \cF^+(T)_{\lambda}&\coloneqq \left\{w\in W_0\mid\exists v\in V\colon \|Tv\|\leq 1,Tv\text{ lifts } w,\|v\|\leq e^{-\lambda}\right\}.
\end{align*}
Moreover, $\Sigma$ is precisely the set where $V_{\leq\lambda}$ (equivalently, $W_{\geq\lambda}$) jumps.
\end{proof}

The above discussion shows that we can make sense of $\lim_{\epsilon\to 0^+}\epsilon S^{(\epsilon)}$ and $\lim_{\epsilon\to 0^+}P^{(\epsilon)}$, and these only depend on the refined norms on the $K$-linear spaces $E_i$, $i\in Q_0$.
To write explicit formulas, let
\[
\rho(a)^{*_h}\rho(a)=\sum_{k=1}^{k_a}t^{-\lambda_{a,k}}p_{a,k}^-,\qquad
\rho(a)\rho(a)^{*_h}=\sum_{k=1}^{k_a}t^{-\lambda_{a,k}}p_{a,k}^+
\]
where $a\colon i\to j$ is an arrow, $\lambda_{a,1}<\ldots<\lambda_{a,k_a}$, and $p_{a,k}^\pm$ are orthogonal projectors (to eigenspaces).
We use the same notation for the induced projectors $p^-_{a,k}\colon (E_i)_0\to (E_i)_0$ and $p^+_{a,k}\colon (E_j)_0\to (E_j)_0$, $k=1,\ldots,k_a$.
Then
\[
\begin{aligned}
S^{(0)} &\coloneqq\lim_{\epsilon\to 0^+}\epsilon S^{(\epsilon)}=\sum_{a\colon i\to j}\sum_{k=1}^{k_a}f_a\left(\lambda_{a,k}\right)\tr\left(p^+_{a,k}\right)-\sum_{i\in Q_0}\theta_i\log|\det(h_i)|, \\
P^{(0)} &\coloneqq\lim_{\epsilon\to 0^+}P^{(\epsilon)}=\sum_{a\colon i\to j}\sum_{k=1}^{k_a}f_a'\left(\lambda_{a,k}\right)\left(p^-_{a,k}-p^+_{a,k}\right)+\sum_{i\in Q_0}\theta_i\mathrm{pr}_{(E_i)_0},
\end{aligned}
\]
where $|\det(h_i)|$ is the volume of the non-Archimedean norm on $E_i$, compared to some fixed reference norm.
We want to interpret the equation $P^{(0)}=0$ as an instance of King's equation~\eqref{kings_equation} for some quiver $Q^\star$.

\begin{definition}
Let $Q$ be a quiver with stability parameters $\theta_i\in\bR$, $i\in Q_0$, functions $f_a$, $a\in Q_1$ as before, and a representation $(E,\rho)$ over $K$ with a non-Archimedean norm on each $E_i$, $i\in Q_0$.
Remove any arrows $a\in Q_1$ with $\rho(a)=0$.
We define a quiver $Q^\star$ with $\theta_i^\star$, $i\in Q^\star_0$, and a representation $E^\star$ of $Q^\star$ over $\bC$ as follows.

The $\lambda_{a,k}$ are as before and, by Lemma~\ref{lem:limsvd} and its proof, are the well-defined values where the increasing filtration $\cF^-(\rho(a))_{\lambda}$ (equivalently: the decreasing filtration $\cF^+(\rho(a))_{\lambda}$) jumps.
The quiver $Q^\star$ has vertices of three types:
\begin{enumerate}[(1)]
    \item A vertex for each vertex $i\in Q_0$ with 
    \[ 
    \theta_i^\star \coloneqq \theta_i+\sum_{\substack{a\in Q_1: s(a)=i}} f_a'(\lambda_{a,k_a}),
    \]
    and $E^\star_i\coloneqq(E_i)_0$.
    \item A vertex $(a,k,-)$ for each pair $(a,k)$, $a\in Q_1$, $0\leq k\leq k_a-1$, with
    \[ 
    \theta_{a,k,-}^\star \coloneqq \begin{cases} -f_a'(\lambda_{a,1}), & k=0\\ f_a'(\lambda_{a,k})-f_a'(\lambda_{a,k+1}), & k=1,\ldots,k_a-1 \end{cases}
    \]
    and with an arrow to $s(a)$.
    We set 
    \[
    E^{\star}_{a,k,-}\coloneqq\begin{cases} \cF^-(\rho(a))_{-\infty}\coloneqq\cF^-(\rho(a))_{<\lambda_1}, & k=0 \\ \cF^-(\rho(a))_{\lambda_k}, & k=1,\ldots,k_a-1 \end{cases} 
    \]
    and assign the inclusion map to the arrow.
    \item A vertex $(a,k,+)$ for each pair $(a,k)$, $a\in Q_1$, $1\leq k\leq k_a$, with 
    \[
    \theta_{a,k,+}^\star \coloneqq \begin{cases} -f_a'(\lambda_{a,1}), & k=1 \\ f_a'(\lambda_{a,k-1})-f_a'(\lambda_{a,k}), & k=2,\ldots,k_a \end{cases}
    \]
    and with an arrow to $t(a)$.
    We set $E^{\star}_{a,k,+}\coloneqq\cF^+(\rho(a))_{\lambda_k}$ and assign the inclusion map to the arrow.
\end{enumerate}
\end{definition}

Note that our requirement that each $f_a$ is a ramp function implies that $\theta_{a,k,\pm}^\star<0$.

\begin{remark}
The above definition also makes sense if $\theta_i$ are replaced by central charges $z_i$ in the upper half-plane.
\end{remark}

\begin{proposition}
Suppose $Q$, $\theta_i$, $f_a$, $(E,\rho)$, and a non-Archimedean norm on each $E_i$ are given as above.
Then $P^{(0)}=0$ for some refinements of the norms if and only if $E^\star$ is $\theta^\star$-polystable.
\end{proposition}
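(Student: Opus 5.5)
The plan is to recognise the equation $P^{(0)}=0$ as King's equation~\eqref{kings_equation} for the representation $E^\star$ of $Q^\star$ with parameters $\theta^\star$, and then to apply Theorem~\ref{thm:King} to $(Q^\star,\theta^\star,E^\star)$.

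A refinement of the given non-Archimedean norms is the same as a choice of Hermitian inner product on each $E^\star_i=(E_i)_0$, $i\in Q_0$. By the proof of Lemma~\ref{lem:limsvd}, the operators $p^\pm_{a,k}$ appearing in $P^{(0)}$ depend only on two pieces of data:
\begin{itemize}
\item the filtrations $\cF^-(\rho(a))_\bullet$ on $(E_{s(a)})_0$ and $\cF^+(\rho(a))_\bullet$ on $(E_{t(a)})_0$;
\item these inner products.
\end{itemize}
Concretely, $p^-_{a,k}$ is the orthogonal projector onto the orthogonal complement of $\cF^-_{<\lambda_{a,k}}$ inside $\cF^-_{\lambda_{a,k}}$, and similarly for $p^+_{a,k}$ with the decreasing filtration $\cF^+$. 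I will also write $p^-_{a,-\infty}$ for the projector onto $\cF^-(\rho(a))_{-\infty}$.

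\textbf{Step 1: the auxiliary vertices.} Take a vertex $v=(a,k,\pm)$ of $Q^\star$. It has a single arrow, an inclusion $\iota_v\colon E^\star_v\hookrightarrow E^\star_{s(a)}$ or $E^\star_{t(a)}$, and no incoming arrows. King's equation at $v$ therefore reads
\[
-\iota_v^*\iota_v=\theta^\star_v\cdot 1.
\]
Since $\theta^\star_v<0$ (this is where the ramp function hypothesis enters), this says exactly that the metric on $E^\star_v$ is the restricted metric rescaled by $(-\theta^\star_v)^{-1}$. Hence $\iota_v\iota_v^*=-\theta^\star_v\,\pi_v$, where $\pi_v$ is the orthogonal projector onto the subspace $E^\star_v$. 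Such a metric exists, and it is unique, for any choice of metrics on the vertices $i\in Q_0$.

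\textbf{Step 2: telescoping at the original vertices.} At a vertex $i\in Q_0$, each arrow $a$ with $s(a)=i$ contributes $\sum_{k=0}^{k_a-1}(-\theta^\star_{a,k,-})\,\pi_{\cF^-_{\lambda_k}}$. Writing each $\pi_{\cF^-_{\lambda_k}}$ as a sum of the graded projectors and summing by parts, this becomes
\[
f_a'(\lambda_{a,k_a})\,p^-_{a,-\infty}+\sum_{j}\bigl(f_a'(\lambda_{a,k_a})-f_a'(\lambda_{a,j})\bigr)p^-_{a,j}.
\]
Because $p^-_{a,-\infty}+\sum_j p^-_{a,j}=1$, this equals $f_a'(\lambda_{a,k_a})-\sum_j f_a'(\lambda_{a,j})\,p^-_{a,j}$. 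In the same way, each arrow with $t(a)=i$ contributes $\sum_j f_a'(\lambda_{a,j})\,p^+_{a,j}$. The constant terms $f'_a(\lambda_{a,k_a})$ are exactly what was built into $\theta^\star_i$, so they cancel. King's equation at $i$ then becomes the $E_i$-component of $P^{(0)}=0$, with the sign conventions of~\eqref{kingsRelVert}.

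\textbf{Step 3: conclusion.} Steps 1 and 2 show that King's equation for $E^\star$ is solvable if and only if there are metrics on the $(E_i)_0$ with $P^{(0)}=0$. The auxiliary metrics are forced by Step 1, and any metrics on the $(E_i)_0$ define a refinement. Theorem~\ref{thm:King} then gives the equivalence with $\theta^\star$-polystability of $E^\star$.

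The main obstacle is Step 2, on two counts. First, the summation by parts must be done carefully at the endpoints: the $k=0$ term on the $-$ side (which involves the kernel part $\cF^-_{-\infty}$) and the $k=1$ term on the $+$ side have to be checked separately. Second, one has to confirm that the projectors produced by the King equation, which are orthogonal with respect to the chosen refinement, are the same operators as the $p^\pm_{a,k}$ coming from the $K$-linear singular value decomposition. I would settle the second point by using the explicit filtration description of $V_{\le\lambda}$ and $W_{\ge\lambda}$ given in the proof of Lemma~\ref{lem:limsvd}.
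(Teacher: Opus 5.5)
Your proposal is correct and takes essentially the same route as the paper. The paper also telescopes $P^{(0)}$ into the King moment map for $(Q^\star,\theta^\star)$, forces the metrics at the auxiliary vertices to be the restricted metrics rescaled by $(-\theta^\star_{a,k,\pm})^{-1}$ (using $\theta^\star_{a,k,\pm}<0$), and then applies Theorem~\ref{thm:King} in both directions; the only cosmetic difference is that the paper rewrites $P^{(0)}$ in terms of the projectors $p^-_{a,\le k}$ and $p^+_{a,\ge k}$ onto the filtration steps, whereas you expand those projectors back into graded pieces, and your endpoint bookkeeping checks out.
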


\begin{proof}
Suppose we are given some refinement of the norms on each $E_i$, thus $p_{a,k}^\pm$ and $P^{(0)}$ are defined.
For an arrow $a\colon i\to j$ let
\[
p^-_{a,\leq k}\coloneqq \mathrm{pr}_{(E_i)_0}-p^-_{a,k+1}-\ldots-p^-_{a,k_a},\qquad p^+_{a,\geq k}\coloneqq p^+_{a,k}+\ldots+p^+_{a,k_a}
\]
then
\begin{align*}
\sum_{k=1}^{k_a}f_a'\left(\lambda_{a,k}\right)p^-_{a,k}&=\sum_{k=0}^{k_a-1}\theta_{a,k,-}^\star p^-_{a,\leq k}+f'(\lambda_{a,k_a})\mathrm{pr}_{(E_i)_0} \\
-\sum_{k=1}^{k_a}f_a'\left(\lambda_{a,k}\right)p^+_{a,k}&=\sum_{k=1}^{k_a}\theta_{a,k,+}^\star p^+_{a,\geq k}
\end{align*}
by the telescoping sum.
Thus, we can re-write $P^{(0)}$ as
\[
P^{(0)}= \sum_{i\in Q_0}\theta_i
^\star\mathrm{pr}_{(E_i)_0} + \sum_{a\colon i\to j}\left(\sum_{k=0}^{k_a-1}\theta_{a,k,-}^\star p^-_{a,\leq k} + \sum_{k=1}^{k_a}\theta_{a,k,+}^\star p^+_{a,\geq k}\right).
\]

Now, suppose $P^{(0)}=0$ for our refined norms. Then we give each $E_{a,k}^{\star,\pm}$ the Hermitian inner product so that the inclusion $I_{a,k}^\pm\colon E_{a,k}^{\star,\pm}\to E_i^\star$, which is part of the representation $E^\star$, is $\sqrt{-\theta_{a,k,\pm}^\star}$ times an isometric inclusion, i.e.\
\[
I_{a,k}^-\left(I_{a,k}^-\right)^*=-\theta_{a,k,-}^\star p^-_{a,\leq k},\qquad I_{a,k}^+\left(I_{a,k}^+\right)^*=-\theta_{a,k,+}^\star p^+_{a,\geq k},\qquad
\left(I_{a,k}^\pm\right)^*I_{a,k}^\pm=-\theta_{a,k,\pm}^\star.
\]
With this choice, \eqref{kings_equation} is satisfied and $E^\star$ is polystable by Theorem~\ref{thm:King}.

In the other direction, if $E^\star$ is polystable, then by the other half of Theorem~\ref{thm:King} we have a metric on $E^\star$ satisfying  \eqref{kings_equation} which means precisely that the inclusions assigned to arrows are isometric up to scalar factor as above and that $P^{(0)}=0$.
\end{proof}

The proposition suggests an analogy between the following ``harmonicity'' conditions on a metrized quiver representation $E$:
\begin{enumerate}[(1)]
    \item Over $\bC$: $P=0$.
    \item Over $K$: $E^\star$ is polystable.
\end{enumerate}
This analogy motivates the definitions in Section~\ref{subsect:non-arch-quiver}.

\subsection{Preliminaries on norms}\label{subsec:prelimnorm}

Throughout this subsection, $K$ is a complete non-Archimedean field, i.e.\ $K$ comes with an absolute value $|\cdot|\colon K\to [0,\infty)$ satisfying the ultrametric inequality $|a+b|\leq \max(|a|,|b|)$ and is complete with respect to the metric $d(a,b)=|a-b|$.
There is a corresponding valuation on $K$ given by $\nu\colon K^\times\to \bR$, $\nu(a)=-\log |a|$.
We denote by $\mathcal O\coloneqq \{a\in K: |a|\leq 1\}$ the valuation ring, $\mathfrak m\coloneqq\{a\in K: |a|<1\}$ the maximal ideal in $\cO$, and $\mathbf k\coloneqq\mathcal O/\mathfrak m$ the residue field.

\textbf{Conventions:} We will consistently index filtrations and gradings by the ``energy'' $-\nu=\log|\cdot|$. 
\textit{Vector space} always means \textit{finite-dimensional vector space} in this subsection. 

\begin{definition}
A \textbf{norm} on a vector space $V$ over $K$ is a function $\|\cdot\|\colon V\to[0,\infty)$ such that
\begin{enumerate}[(1)]
    \item $\|v\|=0$ iff $v=0$,
    \item $\|av\|=|a|\cdot\|v\|$ for all $a\in K$, $v\in V$,
    \item $\|v+w\|\leq \max(\|v\|,\|w\|)$.
\end{enumerate}
\end{definition}

Let $V$ be a vector space over $K$, $b_1,\ldots,b_n$ a basis of $V$, and $c_1,\ldots,c_n\in\bR$.
Then 
\begin{equation}\label{eq:diagnorm}
\|v_1b_1+\ldots +v_nb_n\|_{b,c}\coloneqq \max(e^{c_1}|v_1|,\ldots,e^{c_n}|v_n|)
\end{equation}
is a norm on $V$. 
Norms of this form are said to be \textbf{diagonalizable}.
For general $K$, not all norms are diagonalizable, see Proposition~\ref{prop:normdiag} below.
The norms of the form $\|\cdot\|_{b,0}$, i.e.\ with $c_1=\ldots=c_n=0$, are the \textbf{lattice norms} corresponding to the $\cO$-submodule (lattice) with basis $b_1,\ldots,b_n$, giving the closed unit ball in $V$.

\begin{proposition}\label{prop:normdiag}
The following are equivalent for a non-Archimedean field $K$:
\begin{enumerate}[(1)]
    \item All norms on all $K$-vector spaces are diagonalizable.
    \item $K$ is maximally complete, i.e.\ there is no complete extension $L/K$ with the same value group and residue field.
    \item $K$ is spherically complete, i.e.\ any nested family of closed balls has non-empty intersection.
\end{enumerate}
\end{proposition}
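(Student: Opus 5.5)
I would prove the cycle of implications (2)$\Rightarrow$(3)$\Rightarrow$(1)$\Rightarrow$(2). The heart of the matter is a norm-theoretic characterization of diagonalizability: a norm $\|\cdot\|$ on $V$ is diagonalizable iff there is an orthogonal basis, i.e.\ a basis $b_1,\ldots,b_n$ with $\|\sum v_ib_i\|=\max_i\|v_ib_i\|$. Indeed, given such a basis, setting $c_i\coloneqq\log\|b_i\|$ recovers \eqref{eq:diagnorm}, and the converse is immediate. Orthogonality in turn reduces to a best-approximation property: if $W\subset V$ is a subspace and $v\notin W$ admits a closest point $w_0\in W$, then $v-w_0$ is orthogonal to $W$, meaning $\|v-w_0+w\|=\max(\|v-w_0\|,\|w\|)$ for all $w\in W$. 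This is the standard ultrametric argument: if $\|w\|\neq\|v-w_0\|$ the equality is automatic, and if the two norms are equal, strict inequality would produce a point of $W$ closer to $v$ than $w_0$.

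For (3)$\Rightarrow$(1) I would induct on $\dim V$. Pick a hyperplane $W$, which by induction carries an orthogonal basis, and some $v\notin W$. For each $r>d(v,W)\coloneqq\inf_{w\in W}\|v-w\|$, consider the set $B_r\coloneqq\{w\in W:\|v-w\|\le r\}$. Each $B_r$ is a closed ball in $W$ for the induced norm. Since $W$ is diagonalizable, $W\cong K^{n-1}$ with a weighted max norm, so $B_r$ is a product of closed balls in $K$. Spherical completeness of $K$ applied coordinatewise then gives $\bigcap_r B_r\neq\emptyset$. Any point in this intersection is a best approximation $w_0$, so $v-w_0$ extends the orthogonal basis of $W$. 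The one subtlety is that the radii in the coordinate balls vary with $r$, but they are nested decreasing families, so the nested-ball hypothesis applies directly.

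For (1)$\Rightarrow$(2), suppose $L/K$ is a proper complete immediate extension and pick $x\in L\setminus K$. Take $V=K^2$ with the norm $\|(a,b)\|\coloneqq|a+bx|_L$. If this were diagonalizable, there would be an orthogonal basis, and hence $x$ would have a best approximation in $K$. But in an immediate extension, for every $c\in K$ one can find $c'\in K$ with $|x-c'|<|x-c|$. To see this, note that the residue field and value group agree, so $x-c$ can be approximated by some $d\in K$ with $|x-c-d|<|x-c|$. This contradicts the existence of a best approximation, so the norm is not diagonalizable.

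For (2)$\Rightarrow$(3), which I expect to be the main obstacle, I would argue by contraposition: a nested family of closed balls with empty intersection is a pseudo-Cauchy sequence with no limit in $K$. By Kaplansky's theory of immediate extensions, such a sequence yields a proper immediate extension, either transcendental or algebraic. Its completion is still immediate, which contradicts maximal completeness. Rather than reprove Kaplansky's results, I would cite them (and Krull's equivalence of maximal and spherical completeness) for this step.
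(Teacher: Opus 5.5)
Your proof is correct, but it is organized differently from the paper's. The paper proves nothing directly: it cites \cite[Lemma 1.12]{BE21} for (1)$\Leftrightarrow$(2) and Kaplansky's theorem for (2)$\Leftrightarrow$(3).

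You instead run the cycle (2)$\Rightarrow$(3)$\Rightarrow$(1)$\Rightarrow$(2) and prove two of the three steps by hand:
\begin{enumerate}[(a)]
\item (3)$\Rightarrow$(1), via orthogonal bases and best approximation, applying spherical completeness coordinatewise to the nested balls $B_r$;
\item (1)$\Rightarrow$(2), via the two-dimensional norm $|a+bx|_L$ attached to an element $x$ of a proper immediate extension.
\end{enumerate}
This makes everything self-contained except maximal $\Rightarrow$ spherical completeness, the one genuinely hard implication. That step needs Kaplansky's construction of immediate extensions from pseudo-convergent sequences without a limit, and the paper outsources it too. So you cite strictly less than the paper and lose nothing. (Incidentally, the equivalence of maximal and spherical completeness is Kaplansky's; Krull's contribution was the existence of maximal immediate extensions.)

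One step in (1)$\Rightarrow$(2) needs a line of justification. An orthogonal basis does not, by definition, give you a closest point to $(0,1)$ on the line $K(1,0)$; you must show that orthogonal bases produce best approximations to lines. This follows from an exchange argument. Let $e_1,e_2$ be orthogonal and $w=\alpha_1e_1+\alpha_2e_2\neq 0$ with $\|\alpha_1e_1\|\ge\|\alpha_2e_2\|$. Then $\alpha_1\neq 0$, and $w,e_2$ is again an orthogonal basis, as a two-case check of the ultrametric inequality shows. Writing $v=aw+be_2$, we get $\|v-\lambda w\|=\max(|a-\lambda|\,\|w\|,|b|\,\|e_2\|)\ge\|v-aw\|$, so $aw$ is a closest point. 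Applied to $v=(0,1)$ and $w=(1,0)$, this gives the best approximation of $x$ in $K$ that your immediate-extension argument rules out. With this filled in, the proof is complete.
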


\begin{proof}
The equivalence between 1. and 2. is \cite[Lemma 1.12]{BE21}.
The equivalence between 2. and 3. is a theorem of I.~Kaplansky, cf.~\cite[Section 1.1]{BE21}.
\end{proof}

\begin{example}
(1) If $K$ is discretely valued, i.e.\ $\nu(K^\times)\subseteq\bR$ is discrete,  then $K$ is easily seen to be maximally complete.
In particular, this applies to any field with the trivial valuation ($\nu=0$), in which case norms on vector spaces are the same as complete $\bR$-filtrations.

(2) The Novikov field of formal power series $\sum_{\lambda\in\bR} c_\lambda t^\lambda$ where the set of $\lambda$ with $c_\lambda\neq 0$ is discrete and bounded below, as in Section~\ref{subsec:nonarchlimit}, is not maximally complete. Its maximal completion consists of those power series where the set of $\lambda$ with $c_\lambda\neq 0$ is only required to be well-ordered as a subset of $\bR$. This is an example of a Mal'cev--Neumann field, see~\cite{poonen93} for more details.
\end{example}

\subsubsection*{Non-expansive maps}

A linear map $f\colon V\to W$ between normed vector spaces is \textbf{non-expansive} if $\|f(v)\|\leq \|v\|$ for all $v\in V$.
Let $\mathbf{Norm}(K)$ denote the category whose objects are normed vector spaces over $K$ and whose morphisms are the non-expansive maps.
Thanks to the ultrametric inequality, $\mathbf{Norm}(K)$ is additive.
Furthermore, $\mathbf{Norm}(K)$ has kernels and cokernels: The kernel of a morphism $f\colon V\to W$ is the linear map kernel $\operatorname{Ker}(f)\hookrightarrow V$ together with the restriction of the norm on $V$.
The cokernel of a morphism $f\colon V\to W$ is the linear map cokernel $W\twoheadrightarrow\operatorname{Coker}(f)$ together with the quotient norm induced from $W$.

The category $\mathbf{Norm}(K)$ is moreover quasi-abelian: Kernels (strict monomorphisms) are stable under pushout and cokernels (strict epimorphisms) are stable under pullbacks~\cite{schneiders,benbassatkremnizer}.
The same is true for the full subcategory of diagonalizable norms, which is closed under subobjects and quotients by~\cite[Lemma 1.13]{BE21}, but not extensions (unless all norms are diagonalizable).

\subsubsection*{The space of norms}

If $V$ is a vector space over $K$, let $\cN(V)$ denote the set of norms on $V$.
The \textit{Goldman--Iwahori metric} on $\cN(V)$ is defined by
\[
d_\infty(\|\cdot\|,\|\cdot\|')\coloneqq \sup_{v\in V\setminus\{0\}}\left|\log\frac{\|v\|}{\|v\|'}\right|.
\]
It is easy to see that $(\cN(V),d_\infty)$ is complete, even without assuming completeness of $K$.

Let $\cN^{\mathrm{diag}}(V)\subseteq \cN(V)$ be the subspace of diagonalizable norms. This is a dense subset by~\cite[Theorem 1.19(ii)]{BE21} (and $\cN^{\mathrm{diag}}(V)= \cN(V)$ if $K$ is maximally complete, by the above).

Let $\Delta(V)\coloneqq \cN^{\mathrm{diag}}(V)/_\sim$ be the space of norms up to homothety (scaling).
This is an example of an affine (Bruhat--Tits) building~\cite{BT87,parreau00,rousseau09}. 
It is a simplicial building iff the valuation group $\nu(K^\times)\subseteq\bR$ is discrete.
See~\cite{RTW15} for a discussion on the relation with Berkovich spaces.
In the following, we will not assume any familiarity with the theory of affine buildings, however the more general perspective can be a useful conceptual device.

Since $\cN^{\mathrm{diag}}(V)$ is a trivial $\bR$-bundle over $\Delta(V)$, there is not much difference between these spaces, and we will routinely go back and forth between the two.

The non-positivity of the curvature of $\Delta(V)$ will play a key role in our considerations.
See \cite[Proposition 2.10]{parreau00}, \cite[Proposition 6.5]{rousseau09}.

\begin{proposition}[Bruhat--Tits]
Any affine building is a CAT(0) metric space with respect to the metric $d_2$ which restricts to the Euclidean metric on each apartment.  (This metric is bilipschitz equivalent to $d_\infty$.)
\end{proposition}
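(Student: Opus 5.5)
The plan is to reduce everything to the geometry of apartments: first establish the basic axioms of an affine building directly for $\cN^{\mathrm{diag}}(V)$, then obtain the CAT(0) property from a retraction argument.

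\textbf{Apartments and the metric.} For a basis $b=(b_1,\ldots,b_n)$ of $V$, let the apartment $A_b\subseteq\cN^{\mathrm{diag}}(V)$ be the set of norms $\|\cdot\|_{b,c}$ from \eqref{eq:diagnorm}, $c\in\bR^n$. The map $c\mapsto\|\cdot\|_{b,c}$ is a bijection $\bR^n\to A_b$. It descends to $\bR^n/\bR(1,\ldots,1)$ in $\Delta(V)$, and I give this quotient the Euclidean metric induced from the orthogonal complement of $(1,\ldots,1)$. Two bases defining the same apartment differ, up to rescaling the basis vectors by elements of $K^\times$, by a permutation. The corresponding change of coordinates on $\bR^n$ is therefore a permutation composed with a translation by $\nu(K^\times)$-valued vectors, hence an isometry. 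The metric on an apartment is thus intrinsic.

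\textbf{Common apartments.} The first substantive step is the Goldman--Iwahori lemma: any two diagonalizable norms $\|\cdot\|,\|\cdot\|'$ are simultaneously diagonalizable, i.e.\ they lie in a common apartment. I would prove this by induction on $\dim V$. Choose $v$ maximizing $\|v\|'/\|v\|$; the maximum is attained, using diagonalizability of $\|\cdot\|$ and compactness arguments in the residue field. Split off the line $Kv$ with a complement that is orthogonal for both norms, using the quotient norms and the fact, recalled above, that diagonalizable norms are closed under subobjects and quotients. Then apply induction.

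Given the lemma, define $d_2(x,y)$ as the Euclidean distance computed in any apartment containing both $x$ and $y$. Well-definedness needs the following: if $A,A'$ both contain $x$ and $y$, there is an isometry $A\to A'$ fixing $A\cap A'$ pointwise, and $A\cap A'$ is convex. This is shown by taking a basis adapted to both, as in the lemma applied to points of the intersection.

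\textbf{Triangle inequality and CAT(0).} For a point $m$ and an apartment $A\ni m$, construct the retraction $\rho_{A,m}\colon\Delta(V)\to A$ centered at (a germ of chamber at) $m$. For $x\notin A$, choose an apartment $A'$ containing $x$ and that germ, and map $x$ by the isometry $A'\to A$ fixing $A\cap A'$. The required properties are that $\rho_{A,m}$ is $1$-Lipschitz for $d_2$ and preserves distances from $m$. These give the triangle inequality, and show that segments inside apartments are the geodesics.

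The CAT(0) condition is then the Bruhat--Tits CN inequality. Take $p,q,r$ and let $m$ be the midpoint of $[q,r]$, computed in an apartment $A\supseteq\{q,r\}$. Retract onto $A$ from $m$. The distance $d(p,m)$ is preserved, while $d(\rho p,q)\le d(p,q)$ and $d(\rho p,r)\le d(p,r)$. The Euclidean parallelogram identity in $A$ then gives
\[
d(p,m)^2\le\tfrac12 d(p,q)^2+\tfrac12 d(p,r)^2-\tfrac14 d(q,r)^2 .
\]

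\textbf{Bilipschitz comparison.} Since any two points lie in a common apartment, the comparison with $d_\infty$ is local to an apartment. There $d_\infty$ is the sup-norm distance $\max_i|c_i-c_i'|$; on $\Delta(V)$ one uses the oscillation seminorm instead. The Euclidean norm is equivalent to either of these, with constants depending only on $n$, for instance $\sqrt n$.

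\textbf{Main obstacle.} I expect the hardest step to be the common-apartment lemma together with the $1$-Lipschitz property of the retraction. The latter requires checking that a geodesic between $x$ and $y$ can be subdivided into finitely many pieces, each lying in an apartment containing the chosen germ at $m$. My first attempt would be to prove this subdivision via the local finiteness of the "Weyl chamber" decomposition of a segment.
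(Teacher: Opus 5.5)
The paper gives no proof of this statement; it cites Parreau and Rousseau. Your route is essentially theirs: first verify the apartment axioms for $\Delta(V)$ by simultaneous diagonalization, then run the Bruhat--Tits retraction argument to get the CN inequality. The gap is in the step you flag yourself.

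In this paper the value group $\Gamma=\nu(K^\times)$ is typically dense; it is $\bR$ for the Novikov field and its Mal'cev--Neumann completion. Then the walls $\{c_i-c_j=\gamma\}$, $\gamma\in\Gamma$, are dense in every apartment, so there is no polysimplicial chamber structure. A segment crosses infinitely many walls, and ``local finiteness of the Weyl chamber decomposition'' is simply false. It therefore cannot produce the finite subdivision of $[x,y]$ on which $d(\rho x,\rho y)\le d(x,y)$ rests.

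What works is a compactness argument, and it needs a stronger input than your two-point Goldman--Iwahori lemma:
\begin{itemize}
\item For every point $z$ and every chamber germ at $z$, there must be an apartment containing it together with the fixed germ at $m$.
\item Apply this to germs at $z$ whose closures contain initial pieces of $[z,y]$ and of $[z,x]$. Each $z$ then has a neighbourhood in $[x,y]$ made of at most two subsegments, each lying in an apartment through the germ at $m$.
\item Compactness of $[x,y]$ leaves finitely many such pieces.
\end{itemize}
You also need this same germ axiom to define $\rho_{A,m}$ at all, since you must choose an apartment containing $x$ and the germ. For $\Delta(V)$ it amounts to a refined simultaneous diagonalization. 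Given norms $x,m$ and a complete flag of $\operatorname{gr}(V,m)$ compatible with the grading (a chamber germ at $m$, via Proposition~\ref{prop:tangentconefilt}), you must find a basis orthogonal for both norms whose image in $\operatorname{gr}(V,m)$ is adapted to the flag.

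Likewise, ``taking a basis adapted to both'' does not produce an isometry $A'\to A$ fixing $A\cap A'$. Two distinct apartments have no common adapted basis, so this axiom needs its own argument or a citation.

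Two smaller points.
\begin{itemize}
\item In your Goldman--Iwahori step the residue field need not be compact (it is $\bC$ for the Novikov field), but no compactness is needed. If $b$ diagonalizes $\|\cdot\|$, then $\|\sum_i v_ib_i\|'\le\max_i|v_i|\,\|b_i\|'$ shows that $\sup_v\|v\|'/\|v\|$ is attained at some $b_i$. The complement can then be taken to be any $\|\cdot\|'$-orthogonal complement of $Kv$, which is automatically $\|\cdot\|$-orthogonal by maximality of $\|v\|'/\|v\|$.
\item Well-definedness of $d_2$ does not need the isometry axiom. If $x=\|\cdot\|_{b,c}$ and $y=\|\cdot\|_{b,c'}$, the multiset $\{c'_i-c_i\}$ is the multiset of singular values of $\mathrm{id}\colon(V,x)\to(V,y)$ in the sense of Section~\ref{subsec:prelimnorm}. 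That depends only on $x$ and $y$, not on the apartment.
\end{itemize}
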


In the remainder of this subsection we will review some properties of $\cN_\mathrm{diag}(V)$ and $\Delta(V)$ using the terminology from the theory of buildings.

\subsubsection*{Apartments}

Apartments of $\cN_\mathrm{diag}(V)$ and $\Delta(V)$ correspond to splittings $V=L_1\oplus\ldots\oplus L_n$, where $\dim L_i=1$, $i=1,\ldots,n\coloneqq\dim V$, up to permutation of the $L_i$'s.
Choose $0\neq b_i\in L_i$, $i=1,\ldots,n$ then we have an isometric embedding
\[
\bR^n\hookrightarrow \cN_\mathrm{diag}(V), \qquad c\mapsto \|\cdot\|_{b,c}
\]
where the diagonalizable norm $\|\cdot\|_{b,c}$ is defined as in \eqref{eq:diagnorm}.
This induces a map $\bR^{n-1}=\bR^n/\bR\hookrightarrow\Delta(V)$ whose image is the apartment corresponding to the splitting.
Different choices of total order of the $L_i$'s and the basis give different parametrizations of the same apartment related by the action of the affine Weyl group $S_n\ltimes (\nu(K^\times))^{n-1}\subseteq S_n\ltimes \bR^{n-1}$.

\subsubsection*{Tangent cones and singular values}

Let $(V,\|\cdot\|)$ be a normed vector space over $K$. 
For $\alpha\in\bR$ consider the quotient of $\mathcal O$-modules
\[
V_\alpha\coloneqq\{v\in V:\|v\|\leq e^{\alpha}\}/\{v\in V:\|v\|<e^{\alpha}\}
\]
which is a vector space over the residue field $\mathbf k$.
Multiplication by $a\in K^\times$ defines an isomorphism $V_\alpha\to V_{\alpha-\nu(a)}$ which is well-defined up to a factor in $\mathbf k^\times$.
Let $\Gamma\coloneqq\nu(K^\times)\subseteq \bR$ be the valuation group and
\[
\operatorname{gr}V\coloneqq \bigoplus_{\alpha\in\bR/\Gamma}V_\alpha
\]
which is an $\bR/\Gamma$-graded vector space over $\mathbf k$, each summand being canonical up to rescaling by $\mathbf k^\times$.
We write $\operatorname{gr}(V,\|\cdot\|)$ if we wish to emphasize the dependence on the norm.
This construction is functorial in the following sense: If $f\colon (V,\|\cdot\|)\to (W,\|\cdot\|)$ is a non-expansive map, i.e.\ $\|f(v)\|\leq \|v\|$ for $v\in V$, then $f$ induces a grading-preserving map $\operatorname{gr}f\colon \operatorname{gr}V\to\operatorname{gr}W$ of $\bR/\Gamma$-graded vector spaces.

\begin{example}
    \begin{enumerate}[(1)]
        \item If $\Gamma=\{0\}$, then a norm is just a complete $\bR$-filtration on $V$, and $\operatorname{gr} V$ is the associated $\bR$-graded vector space.
        \item If $\|\cdot\|=\|\cdot\|_{b,0}$ is a lattice norm, then $\operatorname{gr} V=V_0$ is homogeneous.
    \end{enumerate}
\end{example}

While it is defined more generally, we will only consider $\operatorname{gr}(V,\|\cdot\|)$ when $\|\cdot\|$ is a diagonalizable norm.
The following proposition, whose proof we omit, explains the relevance of this definition.

\begin{proposition}\label{prop:tangentconefilt}
The tangent cone to $\cN^{\mathrm{diag}}(V)\subseteq \cN(V)$ at a diagonalizable norm $\|\cdot\|$ is canonically identified with the space of complete $\bR$-filtrations on $\operatorname{gr}(V,\|\cdot\|)$ as a graded vector space.
For $\Delta(V)$ the same is true if we identify $\bR$-filtrations under the action of $\bR$ on itself by translation.
\end{proposition}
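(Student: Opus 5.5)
Fix a diagonalizable norm $\|\cdot\|$ on $V$. The plan is to describe geodesic rays emanating from $\|\cdot\|$ explicitly, identify a ray with a filtration on $\operatorname{gr}V$, and then check that this identification respects the metric structure of the tangent cone. Since the norm is diagonalizable, we choose a basis $b_1,\ldots,b_n$ with $\|\cdot\|=\|\cdot\|_{b,c}$.

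\textbf{Rays in an apartment give filtrations.} In the apartment of the splitting $L_i=Kb_i$, a ray starting at $\|\cdot\|$ has the form $s\mapsto\|\cdot\|_{b,c+s\xi}$ for some $\xi\in\bR^n$, $s\ge 0$. To such a ray we attach the filtration of $\operatorname{gr}V$ defined by
\[
\cF_\lambda\coloneqq\operatorname{span}\{\bar b_i\mid \xi_i\le\lambda\}.
\]
Here $\bar b_i\in V_{c_i}$ is the image of $b_i$, and the span is taken inside each graded piece. This filtration is complete and compatible with the grading. It can also be defined intrinsically: $\bar v\in V_\alpha$ lies in $\cF_\lambda$ iff $\bar v$ has a lift $v$ with $\|v\|_s\le e^{\alpha+s\lambda}$ for all small $s$, where $\|\cdot\|_s$ denotes the norm at time $s$ on the ray. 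This is the non-Archimedean analogue of the filtrations $\cF^\pm(T)$ in Lemma~\ref{lem:limsvd}. The intrinsic description shows that the filtration depends only on the germ of the ray, not on the chosen basis.

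\textbf{Every filtration arises, and from a unique germ.} For surjectivity, start from a graded filtration of $\operatorname{gr}V$. Choose a homogeneous basis of $\operatorname{gr}V$ adapted to it and lift it to $V$. Such a lift is an orthogonal basis for $\|\cdot\|$, since a basis whose images in $\operatorname{gr}V$ are linearly independent is orthogonal. The resulting apartment then contains a ray realizing the given filtration.

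For injectivity, take two germs of rays with the same filtration. We must show they agree on some initial segment. By building theory, any two rays from a point lie in a common apartment; concretely, two diagonalizable norms admit a common orthogonal basis, which is \cite[Lemma 1.13]{BE21}-type material. Inside a common apartment, equal filtrations force equal slope vectors $\xi$ up to the Weyl-group relabeling.

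\textbf{Metric structure and the quotient $\Delta(V)$.} We then check that $d_\infty$ between $\|\cdot\|_{b,c+s\xi}$ and $\|\cdot\|_{b',c+s\xi'}$ equals $s\,d(\cF,\cF')+o(s)$, where $d(\cF,\cF')$ is the sup-distance between filtrations, computed via a common adapted basis. Hence the angle and cone structure agree. For $\Delta(V)$, homothety $\|\cdot\|\mapsto e^t\|\cdot\|$ corresponds to adding a constant to $\xi$, which is the translation action on filtrations.

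\textbf{Main obstacle.} The hard step is the common-apartment statement for two germs, because $K$ is not assumed spherically complete. Here I would use a Gram--Schmidt / adapted-basis argument over the residue field. Lift a basis of $\operatorname{gr}V$ that is simultaneously adapted to both filtrations; this is possible because any two filtrations of a vector space admit a common adapted basis, as in Bruhat decomposition. Then argue that for small $s$ both norms along the rays are diagonal in this lifted basis. This last point needs the ultrametric perturbation estimate that an orthogonal basis remains orthogonal under small changes of the norm.
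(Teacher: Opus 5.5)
The paper states this proposition without proof, so your argument has to stand on its own. Several parts of it are correct: the intrinsic description of $\cF$ in terms of the germ, the fact that any lift of a homogeneous basis of $\operatorname{gr}V$ is an orthogonal basis, and surjectivity via an adapted lift. The gap is the injectivity step, where both justifications you give are false.

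First, two rays from a point need not lie in a common apartment, and joint diagonalizability of two norms does not give this, since it is a statement about two points, not two rays. For $K=\bQ_p$ and the standard norm on $K^2$, consider the rays $s\mapsto\max(|x_1|,e^s|x_2|)$ and $s\mapsto\max(|x_1|,e^s|x_2-px_1|)$. They coincide for $s\le\log p$ but not afterwards (evaluate at $e_1+pe_2$). An apartment cannot contain two distinct rays from a point that share an initial segment.

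Second, the perturbation estimate you fall back on also fails. The basis $u_1=e_1$, $u_2=e_1+e_2$ is orthogonal for the standard norm. Along $\|\cdot\|_s=\max(e^s|x_1|,|x_2|)$, however, $\|u_2-u_1\|_s=1<e^s=\max(\|u_1\|_s,\|u_2\|_s)$ for every $s>0$. Your last paragraph has the right ingredient, an adapted lift, but what keeps such a basis orthogonal is adaptedness, not smallness of the perturbation.

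The statement you actually need is the following. Suppose the ray $s\mapsto\|\cdot\|_{b,c+s\xi}$ has filtration $\cF$, and $u_1,\ldots,u_n$ lift a homogeneous basis of $\operatorname{gr}V$ adapted to $\cF$, with $\bar u_j\in\cF_{\lambda_j}\setminus\cF_{<\lambda_j}$ and $c'_j\coloneqq\log\|u_j\|$. Then $\|\cdot\|_{b,c+s\xi}=\|\cdot\|_{u,c'+s\lambda}$ for all sufficiently small $s\ge 0$.

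Here is the proof. Write $u_j=\sum_i a_{ij}b_i$. Then $|a_{ij}|e^{c_i-c'_j}\le 1$, with equality only if $\xi_i\le\lambda_j$, because $\bar u_j\in\operatorname{span}\{\bar b_i\mid \xi_i\le\lambda_j\}$. Hence $|a_{ij}|e^{c_i+s\xi_i}\le e^{c'_j+s\lambda_j}$ for small $s$, which gives $\|\cdot\|_{b,c+s\xi}\le\|\cdot\|_{u,c'+s\lambda}$. For the reverse inequality, note two facts:
\begin{itemize}
\item $|\det(a_{ij})|=e^{\sum_j c'_j-\sum_i c_i}$, because both bases are orthogonal at $s=0$;
\item $\sum_i\xi_i=\sum_j\lambda_j$, because both are the jumps of $\cF$ counted with multiplicity.
\end{itemize}
So the rescaled determinant has absolute value $1$ for all $s$, and Cramer's rule bounds the inverse matrix in the same way. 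In the counterexample above, $\bar u$ is not adapted and indeed $\sum_j\lambda_j=2\neq 1=\sum_i\xi_i$.

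With this lemma, two rays with the same filtration both equal $\|\cdot\|_{u,c'+s\lambda}$ near $s=0$. So injectivity needs no common-apartment statement at all. Taking a homogeneous basis adapted to both $\cF$ and $\cF'$, initial segments of any two rays lie in one apartment. Your distance formula therefore holds exactly for small $s$, with the correct base weights $c'$ rather than $c$.

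Two smaller points. Spherical completeness is not the issue here, since every norm in sight is diagonalizable by hypothesis. You should also say which metric the tangent cone refers to. For $d_\infty$, geodesics are not unique and need not have a direction, so use $d_2$, for which geodesics from $\|\cdot\|$ are exactly straight segments in apartments, i.e.\ your rays.
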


In particular, the tangent cone to $\Delta(V)$ is the cone over the set of complete $\bR$-filtrations with at least two jumps on $\operatorname{gr}(V,\|\cdot\|)$, modulo the action of the orientation preserving affine group $\mathrm{Aff}^+(\bR)$ on $\bR$ by translation and scaling by positive constants.
If $\|\cdot\|$ is a lattice norm, this is nothing but the spherical building associated with $\operatorname{gr}V\simeq\mathbf k^n$.

The $\bR$-filtrations in Proposition~\ref{prop:tangentconefilt} are a special case of the following more general construction, which is the analogue of the singular value decomposition in the non-Archimedean setting.

\begin{definition}
    Let $T\colon (V,\|\cdot\|)\to (W,\|\cdot\|)$ be a linear map between vector spaces with diagonalizable norms.
    \begin{enumerate}[(1)]
        \item Define an increasing $\bR$-filtration, $\cF^-(T)$, on $\operatorname{gr}V$ whose restriction to $(\operatorname{gr}V)_\alpha$ is 
        \[
        \left(\cF^-(T)_{\leq\beta}\right)_\alpha\coloneqq\left\{v\in (\operatorname{gr}V)_\alpha\mid\exists\text{ lift }\tilde{v}\in V:\|T\tilde{v}\|\leq e^{\alpha+\beta}\right\}.
        \]
        (Note the inequality can be written, for $v\neq 0$, as $\log(\|T\tilde v\|/\|\tilde v\|)\leq \beta$.)
        
        \item Define a decreasing $\bR$-filtration, $\cF^+(T)$, on $\operatorname{gr}W$ whose restriction to $(\operatorname{gr}W)_\alpha$ is 
        \[
     \left(\cF^+(T)_{\geq\beta}\right)_\alpha\coloneqq\left\{w\in (\operatorname{gr}W)_\alpha\mid\exists v\in V:Tv \text{ lifts }w,\|v\|\leq e^{\alpha-\beta}\right\}.
        \]
        (Note the inequality can be written, for $w\neq 0$, as $\log(\|Tv\|/\|v\|)\geq \beta$.)
        
        \item The \textbf{singular values}\footnote{They should really be called \textit{logarithms of singular values}, in analogy with the Archimedean case.} of $T$ are the numbers $\sigma_1\geq\ldots\geq \sigma_r$, $r=\operatorname{rk}T$, where the filtration $\cF^-(T)$ (equivalently: $\cF^+(T)$) jumps, where the multiplicity of the jump $\sigma_i$ in the descending list is  $\dim_\mathbf k\cF^-(T)_{\leq\sigma_i}/\cF^-(T)_{<\sigma_i}$.
    \end{enumerate}
\end{definition}

Computationally, the filtrations $\cF^\pm(T)$ can be found by the algorithm that produces the Smith normal form, i.e.\ Gaussian elimination where the pivot is chosen to have maximal absolute value.

The filtrations $\cF^\pm(T)$ are functorial with respect to non-expansive maps, as made precise in the following lemma.

\begin{lemma}\label{lem:singfiltfunc}
Let 
\[
\begin{tikzcd}
    V \arrow[r,"f"]\arrow[d,swap,"T"] & V'\arrow[d,"T'"] \\
    W \arrow[r,swap,"g"] & W' 
\end{tikzcd}
\]
be a commutative diagram of normed vector spaces over $K$, where $f$ and $g$ are non-expansive. 
Then $\operatorname{gr}f$ and $\operatorname{gr}g$ are filtration preserving in the sense that
\[
(\operatorname{gr}f)(\cF^-(T)_{\leq\beta})\subseteq \cF^-(T')_{\leq\beta},\qquad (\operatorname{gr}g)(\cF^+(T)_{\geq\beta})\subseteq \cF^+(T')_{\geq\beta}.
\]
\end{lemma}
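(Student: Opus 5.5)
The plan is to verify both inclusions directly from the definitions of $\cF^\pm$. Before that, I would record that $\operatorname{gr}f$ is well defined on each graded piece. For $v\in V$ with $\|v\|\leq e^{\alpha}$, non-expansiveness gives $\|f(v)\|\leq e^\alpha$. If moreover $\|v\|<e^\alpha$, then $\|f(v)\|<e^\alpha$. Hence $f$ induces a $\mathbf k$-linear map $(\operatorname{gr}V)_\alpha\to(\operatorname{gr}V')_\alpha$, and the same holds for $g$. Since the filtrations are defined componentwise on each $(\operatorname{gr}V)_\alpha$, it suffices to fix $\alpha$ and $\beta$ and check the inclusions on the $\alpha$-components.

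\textbf{Inclusion for $\cF^-$.} Let $v\in(\cF^-(T)_{\leq\beta})_\alpha$, and pick a lift $\tilde v\in V$ with $\|\tilde v\|\leq e^\alpha$ and $\|T\tilde v\|\leq e^{\alpha+\beta}$. Then $f(\tilde v)$ has norm at most $e^\alpha$ and lifts $(\operatorname{gr}f)(v)$ by construction. Using the commutative square and non-expansiveness of $g$,
\[
\|T'f(\tilde v)\|=\|g(T\tilde v)\|\leq\|T\tilde v\|\leq e^{\alpha+\beta}.
\]
So $f(\tilde v)$ is a lift witnessing $(\operatorname{gr}f)(v)\in(\cF^-(T')_{\leq\beta})_\alpha$.

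\textbf{Inclusion for $\cF^+$.} Let $w\in(\cF^+(T)_{\geq\beta})_\alpha$, with witness $v\in V$ such that $Tv$ lifts $w$ (so $\|Tv\|\leq e^\alpha$) and $\|v\|\leq e^{\alpha-\beta}$. Take $f(v)\in V'$. Then $\|f(v)\|\leq e^{\alpha-\beta}$, and $T'f(v)=g(Tv)$. This element has norm at most $e^\alpha$ and lifts $(\operatorname{gr}g)(w)$, again by construction of $\operatorname{gr}g$. So $f(v)$ witnesses the required membership.

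The argument is essentially tautological, so I expect no real obstacle. The only point needing care is the bookkeeping of what ``lift'' means. The image of a representative of a class in $(\operatorname{gr}V)_\alpha$ must represent the image class, even when that image class is zero (for instance when $\|f(\tilde v)\|<e^\alpha$). This is exactly how $\operatorname{gr}f$ is defined, and zero is contained in every step of the filtration, so this degenerate case causes no problem. Diagonalizability of the norms plays no role in the argument.
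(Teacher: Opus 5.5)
Your proposal is correct and follows essentially the same argument as the paper: push the lift (resp.\ witness) forward along $f$, then use $T'f=gT$ together with non-expansiveness of $f$ and $g$. Your additional checks that $\operatorname{gr}f$ is well defined on each $\alpha$-component and that a lift may map to a lift of zero are harmless bookkeeping the paper leaves implicit.
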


\begin{proof}
Let $v\in (\cF^-(T)_{\leq\beta})_\alpha$.
By definition, there is a lift $\tilde{v}\in V$ of $v$ such that $\|T\tilde v\|\leq e^{\alpha+\beta}$.
Take $\tilde v'\coloneqq f(\tilde v)\in V'$, then this is a lift of $(\operatorname{gr}f)_\alpha(v)$ and
\[
\|T'\tilde v'\|=\|T'f(\tilde v)\|=\|g(T\tilde v)\|\leq \|T\tilde v\|\leq e^{\alpha+\beta},
\]
thus $(\operatorname{gr}f)_\alpha(v)\in (\cF^-(T')_{\leq\beta})_\alpha$.

Similarly, suppose $w\in (\cF^+(T)_{\geq\beta})_\alpha$.
By definition, there is a $v\in V$ so that $Tv$ lifts $w$ and $\|v\|\leq e^{\alpha-\beta}$.
Take $v'\coloneqq f(v)$, then $T'v'=T'f(v)=g(Tv)$ lifts $(\operatorname{gr}g)_\alpha(w)$ and
\[
\|v'\|=\|f(v)\|\leq\|v\|\leq e^{\alpha-\beta}
\]
thus $(\operatorname{gr}g)_\alpha(w)\in(\cF^+(T')_{\geq\beta})_\alpha$.
\end{proof}

The convexity of certain sums of $\sigma_i$'s will be used in the next subsection.

\begin{lemma}\label{lem:singvalueconvex}
Let $T\colon V\to W$ be a linear map between vector spaces over $K$, $r=\operatorname{rk}T$.
Then the partial sums of singular values
\[
s_1\coloneqq\sigma_1,\quad s_2\coloneqq\sigma_1+\sigma_2,\quad\ldots,\quad s_r\coloneqq\sigma_1+\ldots+\sigma_r
\]
define convex functions on $\cN_{\mathrm{diag}}(V)\times \cN_{\mathrm{diag}}(W)$.
\end{lemma}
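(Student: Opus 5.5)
Convexity here means convexity along geodesics of the CAT(0) space $\cN_{\mathrm{diag}}(V)\times\cN_{\mathrm{diag}}(W)$. Since any two points of an affine building lie in a common apartment, and geodesics inside an apartment are straight lines, I would reduce to showing that $s_k$ restricted to any product of apartments is convex in the affine coordinates. So fix bases $b_1,\dots,b_n$ of $V$ and $b'_1,\dots,b'_m$ of $W$, and consider the norms $\|\cdot\|_{b,c}$ and $\|\cdot\|_{b',c'}$ for $(c,c')\in\bR^n\times\bR^m$. It then suffices to show that $(c,c')\mapsto s_k(c,c')$ is a convex function on $\bR^{n+m}$.

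The key step is a variational formula for $s_k$ via exterior powers, modelled on the Archimedean identity $\sigma_1+\dots+\sigma_k=\log\|\Lambda^kT\|$. I would first show that, for diagonalizable norms, $\sigma_1(T)=\log\sup_{v\neq0}\|Tv\|/\|v\|$, the logarithm of the operator norm. This follows directly from the definition of $\cF^-(T)$: its largest jump is the smallest $\beta$ with $\log(\|T\tilde v\|/\|\tilde v\|)\leq\beta$ for all $\tilde v$. Next I would equip $\Lambda^kV$ with the induced norm, which is diagonal in the basis $b_I=b_{i_1}\wedge\dots\wedge b_{i_k}$ with weights $c_I=\sum_{i\in I}c_i$, and do the same for $W$. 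Using a non-Archimedean singular value decomposition $T=\sum_i t_i\langle e_i,\_\rangle f_i$ in orthogonal bases (Smith normal form, available for diagonalizable norms), I would check that $\sigma_1(\Lambda^kT)=\sigma_1(T)+\dots+\sigma_k(T)$. Hence $s_k=\log\|\Lambda^kT\|_{\mathrm{op}}$.

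On an apartment the operator norm is explicit:
\[
\log\|\Lambda^kT\|_{\mathrm{op}}=\max_{I,J}\bigl(\log|\det T_{J,I}|+c'_J-c_I\bigr),
\]
where the maximum runs over pairs $(I,J)$ with $\det T_{J,I}\neq0$ and $T_{J,I}$ is the $k\times k$ minor. This is a maximum of finitely many affine functions of $(c,c')$, and is therefore convex. Restricting to a geodesic in the product building, which lies in some product of apartments, gives convexity. Independence from the choice of apartment is automatic, since $s_k$ is defined intrinsically.

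I expect the main obstacle to be the exterior power step: proving that the singular values of $\Lambda^kT$ with respect to the induced norm are the $k$-fold sums of the $\sigma_i$. This requires showing that the SVD bases, which are orthogonal for the norms on $V$ and $W$, induce orthogonal bases of $\Lambda^k$. I would first prove the orthogonality statement for diagonal bases directly, then transport it via non-expansive isometries, using that $GL$-changes of basis preserving a diagonalizable norm act isometrically on $\Lambda^k$. A secondary point is the reduction to apartments when $K$ is not discretely valued; here I would invoke the building axiom, for $\Delta(V)$ as cited, lifted along the trivial $\bR$-bundle $\cN_{\mathrm{diag}}\to\Delta$.
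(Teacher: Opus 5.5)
Your proposal is correct and follows the paper's proof: first reduce to a single (product) apartment using the fact that geodesics lie in apartments, then write $s_k$ there as $\max_{I,J}\bigl(\log|\det T_{J,I}|+c'_J-c_I\bigr)$, a pointwise maximum of affine functions. The only difference is that the paper quotes this maximal-minor formula from Kedlaya's book, whereas you derive it via $s_k=\log\|\Lambda^kT\|_{\mathrm{op}}$ and a non-Archimedean singular value decomposition in orthogonal bases, which is a valid way to supply that step.
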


\begin{proof}
Fix some $i\in\{1,\ldots,r\}$.
Since any geodesic connecting two points in $\cN_{\mathrm{diag}}(V)\times \cN_{\mathrm{diag}}(W)$ is contained in an apartment, it suffices to prove that the restriction of $s_i$ to any apartment is convex.
Thus, we fix bases of $V$ and $W$, up to scalar factors depending on the coordinates in the apartment, and present $T$ as a matrix.
Then 
\[
s_i=\max\{\log|\det M|: M \text{ is an }i\times i\text{ minor of }T\},
\]
see for example~\cite[Chapter 3]{kedlaya_book}.
Hence, the restriction of $s_i$ to the apartment is the pointwise maximum of a collection of affine functions, thus convex.
\end{proof}

\subsubsection*{Spherical building at infinity}

Any apartment $\bR^n\hookrightarrow \cN_\mathrm{diag}(V)$ can be compactified by adding a sphere $S^{n-1}$ at infinity.
The points of this $S^{n-1}$ have the following modular interpretation:
Let $V=L_1\oplus\ldots\oplus L_n$ be the splitting of $V$ into lines corresponding to the given apartment.
Then to each $c\in\bR^n$ we can assign a complete $\bR$-filtration $\cF^c$ of $V$ given by
\[
\cF^c_\alpha\coloneqq \bigoplus_{i:c_i\leq\alpha}L_i.
\]
This induces a bijection between $S^{n-1}\cong (\bR^n\setminus\{0\})/\bR_{>0}$ and the set of complete $\bR$-filtrations $\cF$ of $V$ where each subspace $\cF_\alpha\subseteq V$ is a direct sum of $L_i$'s and which have a jump away from $0\in\bR$, up to rescaling.
The union of all these spheres is then the set of all complete $\bR$-filtrations $\cF$ of $V$ which have a jump away from $0\in\bR$, up to rescaling.

Similarly, each apartment $\bR^{n-1}\hookrightarrow \Delta(V)$ can be compactified by adding a sphere $S^{n-2}$ at infinity, whose points correspond to  complete $\bR$-filtrations $\cF$ of $V$ where each $\cF_\alpha$ is a direct sum of $L_i$'s and which have at least two jumps, up to action of $\mathrm{Aff}^+(\bR)$ on $\bR$ by scaling and translation.
The union of these spheres is thus the set of complete $\bR$-filtrations $\cF$ of $V$ which have at least two jumps, up to action of $\mathrm{Aff}^+(\bR)$.
This is the \textit{spherical building at infinity}, $\Delta(V)^\infty$, of $\Delta(V)$.
This is an example of the general construction of the \textit{boundary at infinity} of a CAT(0) space~\cite[Chapter II.8]{bridson_haefliger}.

\subsection{Harmonic norms}\label{subsect:non-arch-quiver}

Fix a non-Archimedean field $K$ with valuation ring $\mathcal O$ and residue field $\mathbf k$.
As before, $\Gamma\coloneqq \nu(K^\times)\subseteq\bR$ is the valuation group.
We assume that $K$ is spherically complete so that all norms on finite-dimensional vector spaces are diagonalizable (see Proposition~\ref{prop:normdiag}).

For the purposes of this subsection, a weaker notion of ramp function than the one defined in Section~\ref{subsec:rampfunctions} suffices:
\begin{definition}
A \textbf{weak ramp function} is a continuous function
$f\colon[-\infty,+\infty)\to\mathbb R$ such that:
\begin{enumerate}[(1)]
    \item $f|_{\mathbb R}$ is $C^1$,
    \item $f'(\lambda)>0$ for all $\lambda\in\mathbb R$,
    \item $f'$ is strictly increasing,
    \item $\lim_{\lambda\to-\infty} f'(\lambda)=0$ and
          $\lim_{\lambda\to+\infty} f'(\lambda)=+\infty$.
\end{enumerate}    
\end{definition}

Let $Q$ be a quiver and fix for each $i\in Q_0$ a stability parameter $\theta_i\in\mathbb R$ and for each arrow $a\in Q_1$ a weak ramp function $f_a\colon [-\infty,+\infty)\to \bR$ .
We begin by defining a suitable category $\cC(Q)$ of metrized representations, as well as a category $\cC_\star(Q)$ which receives the singular value data of such a representation.

\begin{definition}
Define $\cC(Q)$ to be the quasi-abelian category over $\mathcal O$ whose objects are representations $(E,\rho)$ of $Q$ over $K$ together with a norm on each $E_i$, and whose morphisms are intertwiners of representations which are furthermore required to be non-expansive. 
\end{definition}

\begin{definition}
Define $\cC_\star(Q)$ to be the quasi-abelian category over $\mathbf k$ whose objects are given by:
\begin{enumerate}[(1)]
    \item for each $i\in Q_0$ a finite-dimensional $\bR/\Gamma$-graded vector space $V_i$ over $\mathbf k$, 
    \item for each $a\in Q_1$ an increasing $\bR$-filtration $\cF^{a^-}$ on $V_{s(a)}$, compatible with the grading,
    \item for each $a\in Q_1$ a decreasing $\bR$-filtration $\cF^{a^+}$ on $V_{t(a)}$, compatible with the grading.
\end{enumerate}
Morphisms are collections of linear maps $V_i\to V_i'$ compatible with the gradings and filtrations.
\end{definition}

Define $\theta^\star\colon K_0(\cC_\star(Q))\to\bR$ by
\[
\theta^\star(V)\coloneqq \sum_{i\in Q_0}\theta_i\dim(V_i) + \sum_{\substack{a\in Q_1\\ \lambda\in\bR}} f_{a}'(\lambda)\left(\dim\left(\cF^{a^-}_{\leq\lambda}/\cF^{a^-}_{<\lambda}\right) - \dim\left(\cF^{a^+}_{\geq\lambda}/\cF^{a^+}_{>\lambda}\right)\right).
\]
There is a functor $\operatorname{Core}\colon \cC(Q)\to \cC_\star(Q)$ which sends a metrized representation $(E,\rho,\|\cdot\|)$ to the object $(V_i,\cF^{a^\pm})$ of $\cC_\star(Q)$ with $V_i=\operatorname{gr}(E_i,\|\cdot\|)$, $\cF^{a^\pm}=\cF^\pm(\rho(a))$.
A morphism $(f_i)$ in $\cC(Q)$ is sent to $(\operatorname{gr}(f_i))$ which preserves filtrations by Lemma~\ref{lem:singfiltfunc}.
There is also the obvious forgetful functor $\cC(Q)\to\mathbf{Rep}(Q,K)$.

\begin{definition}
A norm $\|\cdot\|$ on a representation $(E,\rho)$ of $Q$ over $K$, i.e.\ a lift to $\cC(Q)$, is \textbf{harmonic} if $\operatorname{Core}(E,\rho,\|\cdot\|)\in\cC_\star(Q)$ is $\theta^\star$-semistable.\footnote{We use semistability rather than polystability in the definition, since the limiting moment-map equation may degenerate to a semistability condition on the associated graded object.}
\end{definition}

To relate this to the quiver $Q^\star$ introduced in Section~\ref{subsec:nonarchlimit}, we note that the definition of $Q^\star$ generalizes to arbitrary non-Archimedean field $K$, with the only difference that all vector spaces over $\mathbf k$ are $\bR/\Gamma$-graded. Equivalently, one considers the quiver with a separate copy of the vertex $i$ (resp. $(a,k,\pm)$) for each homogeneous summand component of $E^\star_i$ (resp. $E^{\star,\pm}_{a,k}$).
Then $\operatorname{Core}(E,\rho,\|\cdot\|)$ is $\theta^\star$-(semi)stable iff $E^\star$ is $\theta^\star$-(semi)stable.

\begin{example}
For the simplest non-trivial example we take $Q=\bullet\to\bullet$ to be the $A_2$ quiver and the indecomposable representation $K\xrightarrow{1}K$.
This representation is $\theta$-stable for stability parameters $\theta_1,\theta_2$ attached to the first and second vertex, respectively, iff $\theta_1=-\theta_2<0$.
Assume this to be the case.
Norms on $K$ are classified by $\|1\|\in(0,+\infty)$, and up to overall rescaling we can assume that the norm on the first $K$ is the absolute value $|\_|$ and the norm on the second $K$ is $\|v\|=e^\lambda|v|$ for some $\lambda\in\bR$.
This norm is harmonic iff $\theta_2=f'(\lambda)$, where $f$ is the chosen weak ramp function for the single arrow. Existence and uniqueness of a solution $\lambda\in\bR$ to this equation are ensured by the conditions on $f$.
Note that in the limiting case where $\theta_1=\theta_2=0$ and the representation becomes strictly semistable, a harmonic norm does not exist.
\end{example}

\begin{example}
Let $Q$ be the $D_4$-type quiver
\[
\begin{tikzcd}
   \underset{-\frac{1}{2}}{\mathcircled{2}} \arrow[dr,"a_2"] & & \\
    & \underset{1}{\mathcircled{0}} & \underset{-1}{\mathcircled{1}} \arrow[l,"a_1"] \\
    \underset{-\frac{1}{2}}{\mathcircled{3}} \arrow[ur,swap,"a_3"] & & 
\end{tikzcd}
\]
where the numbers under the nodes indicate our choice of $\theta_i$'s.
Take a representation $(E,\rho)$ where $\dim E_0=2$, $\dim E_i=1$ for $i=1,2,3$, all $\rho(a_i)$ are inclusion maps, and the three lines $E_1,E_2,E_3\subset E_0$ are distinct. 
Under these conditions, $E$ is determined up to isomorphism and indecomposable.
Our choice of $\theta$ makes $E$ semistable, but not polystable, because of the $\theta$-stable subrepresentation given by $E_1$ (at both vertices 0 and 1).

Choose a norm $\|\cdot\|$ on $E$ and let $\sigma_i$ be the singular value of the rank 1 map $\rho(a_i)$.
The representation $E^\star$ of $Q^\star$ looks as follows:
\[
\begin{tikzcd}
  \underset{f'_2(\sigma_2)-\frac{1}{2}}{V_2} & & \underset{-f'_2(\sigma_2)}{V_2} \arrow[dr] & & & & \\
    & & & \underset{1}{V_0} & \underset{-f'_1(\sigma_1)}{V_1} \arrow[l] & & \underset{f'_1(\sigma_1)-1}{V_1} \\
  \underset{f'_3(\sigma_3)-\frac{1}{2}}{V_3}  & & \underset{-f'_3(\sigma_3)}{V_3} \arrow[ur] & & & &
\end{tikzcd}
\]
where $V_0\simeq\mathbf k^2$, possibly not $\bR/\Gamma$-homogeneous, $V_i\simeq \mathbf k$, $i=1,2,3$, maps assigned to arrows are injective, and 
where the numbers under the vector spaces indicate the value of $\theta^\star$ assigned to that vertex.
Thus, the metric is harmonic iff
\[
f'_1(\sigma_1)=1,\qquad f'_2(\sigma_2)=\frac{1}{2}=f'_3(\sigma_3)
\]
(which can be achieved by suitably rescaling the norms on the $E_i$, $i=1,2,3$) and the representation of the $D_4$-part of $Q^\star$ is $\theta^\star$-semistable.
This is true in two distinct cases:
\begin{enumerate}[(1)]
    \item $V_1,V_2,V_3\subset V_0$ are all distinct (identifying $V_i$ with its image in $V_0$). This essentially reproduces the situation we started with, just over $\mathbf k$ instead of $K$, so $E^\star$ is $\theta^\star$-semistable but not polystable.
    Note that this can only happen if $V_0$ is homogeneous with respect to the $\bR/\Gamma$-grading, i.e.\ the norm on $E_0$ is a lattice norm, up to rescaling.
    \item $V_1\neq V_2=V_3$, in which case $E^\star$ is $\theta^\star$-polystable.
\end{enumerate}
\end{example}
We conclude from the above example that the existence of a harmonic norm on a representation does not imply its $\theta$-polystability, unlike in the Archimedean case.
Additionally, $\theta$-semistability is not sufficient for existence of a harmonic norm as the $A_2$ quiver example shows.
Thus, both implications in the following theorem cannot be reversed, in general.

\begin{theorem}\label{thm:nonarch_harm}
Let $Q$ be a quiver, $\theta_i\in\bR$, $i\in Q_0$, $f_a$ weak ramp functions, $a\in Q_1$, and $E=((E_i)_i,\rho)$ a representation of $Q$ over $K$. 
Then there are implications:
\[
E\text{ polystable}\implies E\text{ admits a harmonic norm} \implies E\text{ semistable}.
\]
\end{theorem}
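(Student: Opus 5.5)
Both implications will be proved by working with the non-Archimedean Kähler potential on the space of norms. The model is the Archimedean functional \eqref{eq:quiverSh_gen} in its limit $S^{(0)}$ from Section~\ref{subsec:nonarchlimit}. On $\cN(E)\coloneqq\prod_{i\in Q_0}\cN^{\mathrm{diag}}(E_i)$ define
\[
S(\|\cdot\|)\coloneqq\sum_{a\in Q_1}\sum_{k=1}^{r_a} f_a\bigl(\sigma_k(\rho(a))\bigr)-\sum_{i\in Q_0}\theta_i\log\operatorname{vol}(\|\cdot\|_i),
\]
where $\sigma_1\geq\ldots\geq\sigma_{r_a}$ are the singular values of $\rho(a)$ and $\log\operatorname{vol}$ is the sum of the logarithmic jumps relative to a reference norm. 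The sign conventions are fixed so that the directional derivative of $S$ along a tangent filtration reproduces $-\langle\theta^\star,\cF_\bullet\rangle$.

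\textbf{Convexity and directional derivatives.} First I show that $S$ is convex along geodesics of the CAT(0) space $\cN(E)$. Geodesics lie in products of apartments, and on an apartment $\log\operatorname{vol}$ is affine. The term $\sum_k f_a(\sigma_k)$ is a Schur-convex, increasing function of the singular value vector. By Lemma~\ref{lem:singvalueconvex} the partial sums $s_k$ are convex, and since $f_a$ is convex and increasing, Karamata/majorization gives convexity of $\sum_k f_a(\sigma_k)$. Next I compute the one-sided derivative of $S$ at a norm in the direction of a tangent vector. By Proposition~\ref{prop:tangentconefilt}, such a vector is a complete $\bR$-filtration $\cF_\bullet$ of $\operatorname{gr}E_i$ for each $i$, and I regard it as a filtration of $\operatorname{Core}(E)$ in $\cC_\star(Q)$. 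Along the filtration, the singular values of $\rho(a)$ split according to the induced filtrations $\cF^{a^\pm}$: each block of $\cF^{a^-}$ at level $\lambda$ moves at rates given by the weights of the tangent filtration, minus the weights on the $\cF^{a^+}$ side. Since $f_a$ is $C^1$, the derivative should equal $-\langle\theta^\star,\cF_\bullet\rangle$ as in Lemma~\ref{lem:thetastablefilt}. I expect this first-variation formula to be the main obstacle. The difficulty is that $\rho(a)$ is not diagonal in a common apartment for source and target at once, and singular values with multiplicity can split to first order. I would handle this by choosing an apartment containing the geodesic germ, using the minors description $s_k=\max\log|\det M|$, and arguing that the first-order splitting of each multiple singular value is governed by the $\theta^\star$-weights of the filtration restricted to the corresponding graded pieces. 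Granting the formula, a norm is harmonic iff it is a minimum of $S$ along every geodesic germ. The first equivalence is by Lemma~\ref{lem:thetastablefilt} applied in $\cC_\star(Q)$, and convexity upgrades local minima to global minima.

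\textbf{Harmonic $\Rightarrow$ semistable.} Let $F\subseteq E$ be a subrepresentation. If it is not already zero, then $\theta(E)=0$ follows from the scaling direction, since $\theta^\star$ of the total object equals $\theta(E)$ by telescoping. Take the geodesic ray from a harmonic norm towards the point at infinity given by the two-step filtration $F\subset E$; this is the ray that shrinks $F$ relative to a complement. Along the ray, $S$ grows asymptotically with slope $-\theta(F)$ times a positive constant, because the $f_a$-terms either stay bounded or decrease: maps out of $F$ land in $F$, and for the rest the slope contributions are nonnegative because $f_a\geq f_a(-\infty)$ is bounded below on the directions where singular values go to $-\infty$. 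Since $S$ is convex and minimized at the starting point, this slope must be $\geq0$, which gives $\theta(F)\geq0$.

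\textbf{Polystable $\Rightarrow$ harmonic norm.} It suffices to treat $E$ stable, since orthogonal direct sums of harmonic norms are harmonic because Core is additive. For stable $E$ I show that $S$ is proper on $\cN(E)/\bR$ (modulo overall scaling). Suppose it is not. Then there are points escaping to infinity along which $S$ stays bounded, and CAT(0) compactness of the visual boundary produces a ray towards a point at infinity, that is, a filtration of $E$ by subspaces, along which $S$ has nonpositive asymptotic slope. From the growth $f_a'\to+\infty$, a ray along which some $\rho(a)$ fails to preserve the filtration makes $S$ grow superlinearly. So the filtration consists of subrepresentations, and the slope is $-\langle\theta,\cF\rangle>0$ by stability, a contradiction. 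Therefore $S$ attains a minimum; here I use completeness of $\cN^{\mathrm{diag}}$, which holds by spherical completeness, together with a bounded-sublevel argument on the Euclidean building. By the first step, the minimizer is a harmonic norm.
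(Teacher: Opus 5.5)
Your architecture is the paper's: convexity of $S$ via Lemma~\ref{lem:singvalueconvex} and Karamata, the first-variation formula $-\langle\theta^\star,\cdot\rangle$ (the paper's Lemma~\ref{lem:Smin}), reduction to stable $E$, and minimization on a Hadamard space. However, the existence step for stable $E$ fails as written.

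You argue that unbounded sublevel sets of $S$ on $\cN(E)/\bR$ would, by ``CAT(0) compactness of the visual boundary'', produce a geodesic ray along which $S$ stays bounded. Extracting a ray from segments $[p_0,x_n]$ with $d(p_0,x_n)\to\infty$ requires properness. But $\cN(E)/\bR$ is a product of Euclidean buildings whose spaces of directions at lattice norms are spherical buildings over the residue field $\mathbf k$. As soon as $\mathbf k$ is infinite (e.g.\ $K=\bC((t))$ or a Mal'cev--Neumann field, the cases motivating the theorem), the building is not locally compact, its boundary is not compact, and the segments need not subconverge.

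Your per-ray dichotomy is correct: filtrations by subrepresentations give slope $-\langle\theta,\cF\rangle>0$ by stability, and otherwise $f_a'\to+\infty$ forces superlinear growth. But ``$S\to+\infty$ on every ray'' does not imply bounded sublevel sets in a non-proper space. Take an $\bR$-tree with infinitely many half-line branches at $p_0$, and let the function be $t^2/k$ at distance $t$ along the $k$-th branch. It is convex and tends to $+\infty$ on every ray, yet equals $1$ at the points at distance $\sqrt{k}$ on the $k$-th branch, which escape to infinity.

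There are two ways to close the gap.
\begin{itemize}
\item The paper avoids compactness (Lemma~\ref{lem:Sinfinity}). It fixes a lattice norm $p_0$ and uses that every point lies on a ray from $p_0$. It normalizes each ray's filtration to have extreme jumps at $0$ and $1$. It then bounds $S$ from below along rays by constants independent of the ray; in the subrepresentation case this uses $\theta_{\min}/\dim E$, with $\theta_{\min}$ over the finitely many dimension vectors of subrepresentations. This gives bounded sublevel sets directly.
\item To keep your contradiction argument, replace compactness by finite dimensionality. By Caprace--Lytchak, a nested family of nonempty closed convex subsets of a complete CAT(0) space of finite telescopic dimension (finite-rank Euclidean buildings qualify) with empty intersection has a common point $\xi$ at infinity. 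Apply this to $\{S\le c\}$ with $c\downarrow\inf S$, assuming the infimum is not attained. The ray from $p_0$ to $\xi$ then stays in $\{S\le S(p_0)\}$, contradicting your dichotomy, so the minimum is attained.
\end{itemize}

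Separately, your route to harmonic $\Rightarrow$ semistable is valid once harmonic $\Rightarrow$ minimizer is known. With $\log\operatorname{vol}$ increasing in the norm, though, the slope along the ray shrinking $F$ is $+\theta(F)$, not $-\theta(F)$; only $+\theta(F)$ gives your conclusion $\theta(F)\ge 0$. The paper's argument here is more elementary and needs neither convexity nor the first-variation formula. A subrepresentation $A$ with $\theta(A)<0$ yields a subobject $\operatorname{Core}A\subset\operatorname{Core}E$ with $\theta^\star(\operatorname{Core}A)=\theta(A)<0$.
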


\begin{question}
Can the spherical completeness assumption on $K$ be omitted in Theorem~\ref{thm:nonarch_harm}?
\end{question}

The proof of Theorem~\ref{thm:nonarch_harm} will occupy the remainder of this subsection.
Fix $Q$, $\theta_i$, $f_a$, and a representation $E$ of $Q$ over $K$ as before.
Let
\[
\cN(E)\coloneqq \prod_{i\in Q_0}\cN(E_i)
\]
be the space of (diagonalizable, since $K$ is spherically complete) norms on $E$.
Motivated by the considerations in Section~\ref{subsec:nonarchlimit}, we introduce the potential function $S\colon \cN(E)\to\bR$ with
\[
S\coloneqq\sum_{a\in Q_1}\sum_{\lambda\in \bR}f_a(\lambda)\dim\left(\cF^-(\rho(a))_{\leq\lambda}/\cF^-(\rho(a))_{<\lambda}\right)-\sum_{i\in Q_0}\theta_i\log\operatorname{vol}(E_i)
\]
where we have fixed some norm on each $\operatorname{Det}(E_i)\cong K$ so that $\operatorname{vol}(E_i)$ is well-defined for a given choice of norm on $E_i$.

\begin{example}
Consider the ``rank 1'' case where $E_i=K$ for all $i\in Q_0$ and suppose, after possibly removing some arrows, that $\rho(a)\neq 0$ for all $a\in Q_1$.
Set $c_a\coloneqq \log|\rho(a)|$ and let $e^{x_i}|\cdot|$ be the norm on $E_i$, $i\in Q_0$, i.e.\ $x\in\bR^{Q_0}$ parametrizes the norms on the representation.
The potential is then 
\[
S(x)=\sum_{a\colon i\to j}f_a(x_j-x_i+c_a)-\sum_{i\in Q_0}\theta_ix_i
\]
and its critical points, the harmonic norms, given by solutions to the system of equations
\[
\theta_i=\sum_{a\colon j\to i}f_a'(x_i-x_j+c_a)-\sum_{a\colon i\to j}f_a'(x_j-x_i+c_a), \qquad i\in Q_0.
\]
In particular, there is no dependence on $K$, and in fact the equation is the same in both the Archimedean and the non-Archimedean case.
\end{example}

\begin{example}
Let $Q$ be the quiver with a single vertex and two arrows and the following simple representation:
\[
\begin{tikzcd}
    K^2 \arrow[loop left,"A"] \arrow[loop right,"B"]
\end{tikzcd},\qquad 
A=\begin{pmatrix} 0 & 1 \\ 0 & 0 \end{pmatrix},\qquad B=\begin{pmatrix} 0 & 0 \\ 1 & 0 \end{pmatrix}
\]
Assume, for concreteness, that $K=\bQ_2$ are the 2-adic numbers with the usual 2-adic norm such that $|2^n|=2^{-n}$ and choose $f_a(\lambda)=2^\lambda$.
Figure~\ref{fig:Sgraph} shows part of the graph of $S$.
\end{example}

\begin{figure}
    \centering
    \includegraphics[width=0.65\linewidth]{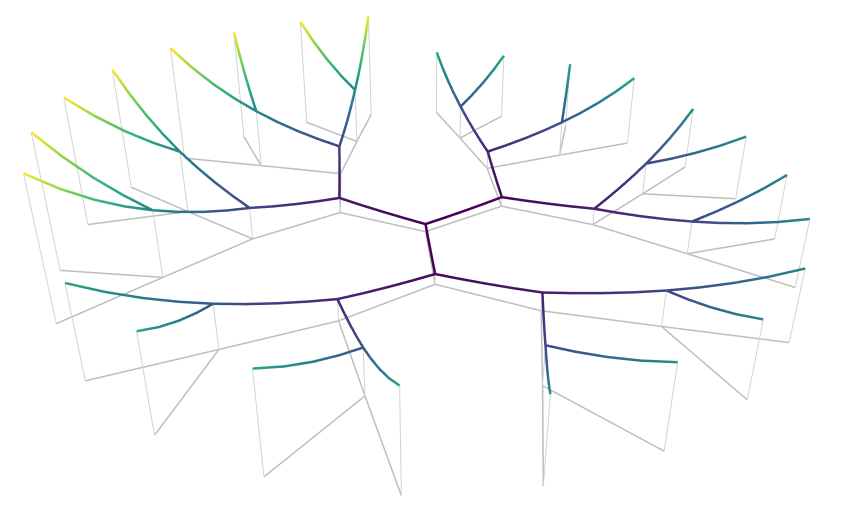}
    \caption{Graph of the function $S$ on the ball of radius 4 in the affine building $\cN(K^2)/_\sim$ with center the standard norm (the unique global minimum of $S$). $S$ increases slowest along the standard apartment $K^2=K\oplus K$.}
    \label{fig:Sgraph}
\end{figure}

\begin{lemma}\label{lem:Sconvex}
The function $S\colon \cN(E)\to\bR$ is convex.
\end{lemma}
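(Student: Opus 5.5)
We plan to reduce convexity of $S$ to convexity along geodesics in $\cN(E)=\prod_i\cN(E_i)$, and then to a statement on a single apartment. Any two points of $\cN(E)$ lie in a common product of apartments, and geodesics (for $d_2$) between them stay inside it; this uses spherical completeness, so that all norms are diagonalizable. Hence it suffices to prove that the restriction of $S$ to every product of apartments $\prod_i\bR^{n_i}$ is convex. We treat the two kinds of terms separately.

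The volume term is affine on each apartment. If the norm on $E_i$ is $\|\cdot\|_{b,c}$ for a fixed basis $b$, then $\log\operatorname{vol}(E_i)=\sum_k c_k+\mathrm{const}$. Indeed, the induced norm on $\operatorname{Det}(E_i)$ is $e^{\sum c_k}|\cdot|$ times a fixed constant. So $-\sum_i\theta_i\log\operatorname{vol}(E_i)$ is affine on the apartment, whatever the signs of the $\theta_i$.

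For each arrow $a$, write $\sigma_1\ge\cdots\ge\sigma_r$ for the singular values of $T=\rho(a)$. The corresponding term is then
\[
\sum_{k=1}^r f_a(\sigma_k)+(\dim E_{s(a)}-r)f_a(-\infty),
\]
and the second summand is constant. The jumps of $\cF^-(T)$ at finite $\lambda$ are exactly the $\sigma_k$, while the kernel contributes at $-\infty$. We would then invoke Lemma~\ref{lem:singvalueconvex}: on the apartment, the partial sums $s_k=\sigma_1+\cdots+\sigma_k$ are convex, being maxima of $\log|\det|$ of $k\times k$ minors, and these are affine in the apartment coordinates.

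The key step, and the main obstacle, is to combine this with convexity of $f_a$. We use the Hardy--Littlewood--P\'olya/Karamata majorization principle: a function $\Phi(\sigma)=\sum_k f(\sigma_k)$ with $f$ convex is Schur-convex, i.e.\ monotone with respect to the majorization order on decreasingly ordered vectors. Concretely, fix a segment $x_t=(1-t)x_0+tx_1$ in the apartment, and let $\sigma^{(t)}$ be the corresponding singular value vectors. Convexity of the $s_k$ gives
\[
s_k(\sigma^{(t)})\le(1-t)s_k(\sigma^{(0)})+t\,s_k(\sigma^{(1)})=s_k\bigl((1-t)\sigma^{(0)}+t\sigma^{(1)}\bigr),\qquad k\le r.
\]
The last equality holds because both vectors are sorted decreasingly. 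This is weak submajorization of $\sigma^{(t)}$ by $\tau\coloneqq(1-t)\sigma^{(0)}+t\sigma^{(1)}$. Since $f_a$ is convex and increasing ($f_a'>0$), Tomić--Weyl's inequality for weak majorization gives $\sum_k f_a(\sigma^{(t)}_k)\le\sum_k f_a(\tau_k)$. Convexity of $f_a$ applied coordinatewise then gives $\sum_k f_a(\tau_k)\le(1-t)\sum_k f_a(\sigma^{(0)}_k)+t\sum_k f_a(\sigma^{(1)}_k)$. Chaining these inequalities yields convexity of the arrow term along $x_t$.

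Two points need care. First, the rank $r$ is constant, since it is a property of $T$ and not of the norm; so the vectors $\sigma^{(t)}$ all have the same length and the weak majorization comparison is legitimate. Second, increasingness of $f_a$ is exactly what lets us pass from weak submajorization (inequalities only, no equality of total sums) to the desired inequality. Summing over arrows and adding the affine volume term shows that $S$ is convex on each apartment, hence on $\cN(E)$.
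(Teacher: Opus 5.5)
Your proposal is correct and follows essentially the same route as the paper's proof. Both reduce to a single (product of) apartment(s), observe that $-\sum_i\theta_i\log\operatorname{vol}(E_i)$ is affine there, and handle each arrow term by combining the convexity of the partial sums $s_k$ from Lemma~\ref{lem:singvalueconvex} with Karamata's inequality for non-decreasing convex $f_a$, followed by coordinatewise convexity of $f_a$. Your bookkeeping of the convex combination, pairing $x_t=(1-t)x_0+tx_1$ with $\tau=(1-t)\sigma^{(0)}+t\sigma^{(1)}$, is if anything more careful than the paper's, whose weights $t$ and $1-t$ are swapped relative to $m=c(tL)$.
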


\begin{proof}
By definition, convexity of $S$ means that for any geodesic $c\colon [0,L]\to\cN(E)$ the composition $S\circ c$ is convex.
Since any such geodesic is contained in an apartment, it suffices to show that the restriction of $S$ to any apartment is convex.
As the term
\[
-\sum_{i\in Q_0}\theta_i\log\operatorname{vol}(E_i)
\]
is an affine function when restricted to any apartment, the lemma is implied by the following claim:
If $T\colon V\to W$ is a linear map of rank $r$ between vector spaces $V,W$ over $K$ and $f$ a weak ramp function, then the function
\[
P\colon \cN(V)\times\cN(W)\to\bR,\qquad P\coloneqq \sum_{i=1}^rf(\sigma_i),
\]
where $\sigma_1\geq \sigma_2\geq\ldots\geq\sigma_r$ are the singular values of $T$, is convex.
In fact, we will only use that $f\colon \bR\to\bR$ is convex and increasing.

To prove this, let $c\colon [0,L]\to\cN(E)$ be a geodesic, $p\coloneqq c(0),q\coloneqq c(L)$, $t\in [0,1]$, $m\coloneqq c(tL)$,
\[
x_i\coloneqq t\sigma_i(p)+(1-t)\sigma_i(q),\qquad y_i\coloneqq \sigma_i(m),
\]
then by Lemma~\ref{lem:singvalueconvex} we have
\[
\sum_{i=1}^jx_i\geq \sum_{i=1}^jy_i, \qquad j=1,\ldots,r.
\]
By convexity of $f$ and Karamata's inequality (for non-decreasing convex functions $f$) we get 
\begin{align*}
tP(p)+(1-t)P(q)&=\sum_{i=1}^r\left(tf(\sigma_i(p))+(1-t)f(\sigma_i(q))\right) \\
&\geq \sum_{i=1}^rf(x_i)\geq \sum_{i=1}^rf(y_i)=P(m)
\end{align*}
as claimed.
\end{proof}

In the following, it will be convenient to introduce certain notations around $\bR$-filtrations.
First, if $\cF$ is an $\bR$-filtration and $\phi\colon \bR\to\bR$ an increasing homeomorphism, then $\phi(\cF)_\lambda=\cF_{\phi^{-1}(\lambda)}$.
Thus, if $\cF$ jumps at $\lambda_1<\ldots<\lambda_n$, then $\phi(\cF)$ jumps at $\phi(\lambda_1)<\ldots<\phi(\lambda_n)$ with the same subquotients.
Next, if $\cF$ is a finite $\bR$-filtration of some object in an abelian or exact category $\cA$ and $\theta\colon K_0(\cA)\to\bR$ is a linear functional, then as in Lemma~\ref{lem:thetastablefilt} we define
\[
\langle\theta,\cF\rangle\coloneqq \sum_{\lambda\in\bR}\lambda\, \theta(\cF_\lambda/\cF_{<\lambda}).
\]
Finally, suppose $\cF$ and $\cG$ are $\bR$-filtrations on a finite-dimensional vector space.
Then
\[
\langle\cF,\cG\rangle\coloneqq \sum_{\alpha,\beta\in\bR}\alpha\beta\dim\frac{\cF_\alpha\cap\cG_\beta}{\cF_\alpha\cap\cG_{<\beta}+\cF_{<\alpha}\cap\cG_\beta}.
\]

\begin{lemma}\label{lem:Smin}
A norm on $E$ is harmonic iff it minimizes $S$.
\end{lemma}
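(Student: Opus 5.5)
Since $S$ is convex on the CAT(0) space $\cN(E)$ by Lemma~\ref{lem:Sconvex}, a norm minimizes $S$ iff every directional derivative of $S$ at that norm is non-negative. I will therefore compute these derivatives and match them with the criterion of Lemma~\ref{lem:thetastablefilt}. By Proposition~\ref{prop:tangentconefilt}, the tangent cone to $\cN(E)$ at a norm $\|\cdot\|$ is the space of tuples $\cG=(\cG^i)_{i\in Q_0}$ of complete $\bR$-filtrations on the graded spaces $V_i=\operatorname{gr}(E_i,\|\cdot\|)$, compatible with the grading. Concretely, such a tuple is realized by a geodesic ray $t\mapsto\|\cdot\|_t$ in a common apartment adapted to lifts of the $\cG^i$: one scales the line $L$ by $e^{t\gamma}$ when it lifts a vector in the jump $\gamma$ of $\cG^i$. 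One can choose the splittings to be simultaneously adapted to $\cG^i$, to the grading, and to the singular value filtrations $\cF^\pm(\rho(a))$ at $t=0$. This is a Smith normal form argument over the spherically complete field $K$, using that both sides of each $\rho(a)$ can be adapted.

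The key computation is the one-sided derivative $\frac{d}{dt}\big|_{t=0^+}S(\|\cdot\|_t)=-\langle\theta^\star,\cG\rangle$. Here $\cG$ induces a filtration of $\operatorname{Core}(E,\rho,\|\cdot\|)$ in the quasi-abelian category $\cC_\star(Q)$, by taking $\cG^i$ on each $V_i$ and the induced filtrations on the $\cF^{a^\pm}$. The volume term contributes $-\sum_i\theta_i\sum_\gamma\gamma\dim\operatorname{gr}^{\cG^i}_\gamma$ directly. For the arrow terms, fix $a\colon i\to j$. For small $t$, the singular values of $\rho(a)$ in the jump $\lambda$ move to $\lambda+t(\gamma'-\gamma)$, with multiplicities given by $\dim$ of the bigraded pieces of $\cF^{a^-}$ against $\cG^i$ and of $\cF^{a^+}$ against $\cG^j$; this is where the pairings $\langle\cF,\cG\rangle$ enter. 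The expected statement is that, among singular values sharing the same $\lambda$, the perturbed values are obtained by a greedy (semicontinuous) matching. Differentiating $\sum f_a(\sigma_k)$ then gives $\sum_\lambda f_a'(\lambda)\big(\sum_\gamma\gamma\dim\operatorname{gr}^{\cG^j}_\gamma\operatorname{gr}^{\cF^{a^+}}_\lambda-\sum_\gamma\gamma\dim\operatorname{gr}^{\cG^i}_\gamma\operatorname{gr}^{\cF^{a^-}}_\lambda\big)$, up to sign. Combined with the volume term, this yields exactly $-\langle\theta^\star,\cG\rangle$ for the induced filtration on the core.

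The main obstacle is this derivative computation when $\cG$ is not in general position with the singular value filtrations. In that case the singular values only satisfy majorization-type inequalities (compare Lemma~\ref{lem:singvalueconvex}), and I must show that the one-sided derivative still equals the pairing computed with the strict induced filtrations. I would first treat rank-one blocks in an adapted apartment, where $S$ is piecewise of the form $f_a(x_j-x_i+c)$ as in the rank~1 example. I would then reduce to that case by choosing the apartment to contain both the ray and the Smith normal form basis of every $\rho(a)$ at $t=0$.

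Granting the formula, the conclusion follows. By convexity, minimality of $S$ is equivalent to $\langle\theta^\star,\cG\rangle\le0$ for all filtrations $\cG$ of the core arising this way. Every finite complete $\bR$-filtration of $\operatorname{Core}$ in $\cC_\star(Q)$ arises from tangent directions, since subobjects are graded subspaces with induced filtrations. Also $\theta^\star(\operatorname{Core})=0$, which follows from the derivative along the scaling direction and from additivity. Lemma~\ref{lem:thetastablefilt}, in its quasi-abelian version, then identifies minimality with $\theta^\star$-semistability of the core, i.e.\ harmonicity.
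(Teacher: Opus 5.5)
Your argument is the paper's: at a given norm, the one-sided derivative of $S$ along a tangent direction $\cG$ (a filtration of $\operatorname{gr}E$) is $-\langle\theta^\star,\cG\rangle$, and Lemma~\ref{lem:thetastablefilt} together with convexity of $S$ (Lemma~\ref{lem:Sconvex}) then identifies minimizers with norms whose core is $\theta^\star$-semistable; the paper simply asserts the derivative formula that you single out as the main obstacle. One caution on your sketched justification: an apartment adapted simultaneously to the ray and to the Smith normal forms of \emph{every} $\rho(a)$ need not exist (three distinct lines $E_1,E_2,E_3\subset E_0\cong K^2$, as in the $D_4$ example, cannot all be coordinate lines), but you do not need it, because $S$ is a sum of per-arrow terms and the first variation of each cluster sum of singular values can be read off from $\wedge^k\rho(a)$, whose top singular value is simple at a cluster end, with no general-position assumption on $\cG$.
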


\begin{proof}
Fix a norm on $E$, i.e.\ a point in $\cN(E)$.
By Proposition~\ref{prop:tangentconefilt}, the tangent cone to that norm is given by the space of complete $\bR$-filtrations on $\operatorname{gr}E$.
Let $\cG$ be such an $\bR$-filtration, then we want an expression for the first derivative of $S$ in the direction $\cG$, $(\delta S)(\cG)$.
We have
\begin{align*}
(\delta S)(\cG) &= \sum_{a\in Q_1}\left(\langle f_a'(\cF^+(\rho(a))),\cG\rangle-\langle f_a'(\cF^-(\rho(a))),\cG\rangle\right)-\langle\theta,\cG\rangle\\
&= \sum_{a\in Q_1}\sum_{\alpha,\beta\in\bR}f'_a(\alpha)\beta\Bigg(\dim\frac{\cF^+(\rho(a))_\alpha\cap\cG_\beta}{\cF^+(\rho(a))_\alpha\cap\cG_{<\beta}+\cF^+(\rho(a))_{<\alpha}\cap\cG_\beta}\\
&\qquad\qquad\qquad\qquad\quad\quad -\dim\frac{\cF^-(\rho(a))_\alpha\cap\cG_\beta}{\cF^-(\rho(a))_\alpha\cap\cG_{<\beta}+\cF^-(\rho(a))_{<\alpha}\cap\cG_\beta}\Bigg)\\
&\quad-\sum_{i\in Q_0}\sum_{\alpha\in\bR}\alpha\,\theta_i\dim((\cG_{\alpha}\cap E_i)/(\cG_{<\alpha}\cap E_i))\\
&= -\langle\theta^\star,\cG\rangle.
\end{align*}
By Lemma~\ref{lem:thetastablefilt}, the norm is harmonic iff $-\langle\theta^\star,\cG\rangle\geq 0$ for all $\cG$.
Equivalently, the norm locally minimizes $S$. 
Since $S$ is convex by Lemma~\ref{lem:Sconvex}, the claim follows.
\end{proof}

For the statement of the following lemma, note that if $\theta(E)=0$, then $S$ is invariant under rescaling the norm and thus descends to $\cN(E)/_\sim$. In particular, this is true if $E$ is stable.

\begin{lemma}\label{lem:Sinfinity}
If $E$ is stable, then the sublevel sets $\{x\mid S(x)\leq C\}$ of $S\colon \cN(E)/_\sim\to\bR$ are bounded.
\end{lemma}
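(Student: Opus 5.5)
We use that $\cN(E)/_\sim$ is a product of affine buildings (modulo the diagonal $\bR$), hence a complete CAT(0) space, and that $S$ is convex on it by Lemma~\ref{lem:Sconvex}. For a convex function on such a space, boundedness of all sublevel sets follows once we show that $S$ grows at least linearly along every geodesic ray from a fixed base point $x_0$. More precisely, we aim for a uniform bound
\[
S(c(t))\geq S(x_0)+\varepsilon t-C_0
\]
for unit-speed rays $c$ starting at $x_0$, with constants $\varepsilon>0$ and $C_0$ independent of the ray.

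\textbf{Step 1: asymptotic slope along a ray.} A geodesic ray from $x_0$ lies in an apartment. It therefore corresponds to a splitting of each $E_i$ into lines and to a direction $c\in\prod_i\bR^{\dim E_i}$, modulo the diagonal. In the language of the spherical building at infinity, this direction is a complete $\bR$-filtration $\cG$ of each $E_i$. We compute the asymptotic slope $\lim_{t\to\infty}S(c(t))/t$. The volume term contributes $-\sum_i\theta_i\sum_k c_{i,k}=-\langle\theta,\cG\rangle$. For an arrow $a\colon i\to j$, the singular values of $\rho(a)$ along the ray grow like the piecewise linear functions
\[
s_k(t)=\max\{\log|\det M|\},
\]
where $M$ ranges over $k\times k$ minors, as in the proof of Lemma~\ref{lem:singvalueconvex}. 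The growth rate of $\sigma_1$ is therefore $\max\{c_{j,q}-c_{i,p}\}$ over nonzero matrix entries, and similarly for the other singular values. Since $f_a'\to+\infty$ at $+\infty$ and $f_a\to f_a(-\infty)$, a bounded constant, at $-\infty$, the term $f_a(\sigma)$ grows superlinearly whenever some singular value tends to $+\infty$ linearly, and stays bounded otherwise.

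\textbf{Step 2: case analysis on the filtration.} Suppose first that some arrow has $\rho(a)$ failing to preserve $\cG$ in the sense that $\rho(a)(\cG_\lambda)\not\subseteq\cG_\lambda$ for some $\lambda$. Then some singular value grows linearly with positive rate, and $S$ grows superlinearly along the ray. Otherwise the $\cG_\lambda$ form a filtration of $E$ by subrepresentations, and all $f_a$-terms stay bounded. The slope is then
\[
-\langle\theta,\cG\rangle=\sum_k(\lambda_{k+1}-\lambda_k)\,\theta(\cG_{\lambda_k}),
\]
as in \eqref{eq:thetafilt_pf}. By stability and Lemma~\ref{lem:thetastablefilt}, this is strictly positive since $\cG$ has at least two jumps. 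Here we use that $\theta(E)=0$ and that we work modulo rescaling.

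\textbf{Step 3: uniformity (main obstacle).} We need the positive slope to be bounded below uniformly over all unit directions, and the additive constant to be uniform as well. The directions form the spherical building at infinity, which is not compact. To handle this, we exploit the following finiteness: the slope function $-\langle\theta,\cG\rangle$ restricted to filtrations by subrepresentations depends only on finitely many values $\theta(F)$ for $F$ a subrepresentation. By the support-type finiteness of dimension vectors, $\theta(F)\geq\delta>0$ for all proper nonzero subrepresentations $F$. This gives slope at least $\delta\cdot(\text{spread of }\cG)$, which is comparable to $t$.

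For non-invariant directions, we get a slope bound via the lower bound $f_a(\sigma)\geq A\sigma-B$ for any fixed $A$. We combine the two regimes by writing any direction as close to an invariant one up to the non-invariant part. Concretely, we would estimate $S(c(t))$ directly as
\[
\sum_a\Bigl(A\max\bigl(0,\,\text{growth of }\sigma_1(\rho(a))\bigr)-B\Bigr)-\langle\theta,c\rangle t.
\]
We would then show by a linear-programming compactness argument that this expression is at least $\varepsilon t$ for $A$ large. This is done over the compact sphere of directions within one apartment, together with the finitely many combinatorial types (valuations of matrix entries up to the $\Gamma$-action). Getting this uniformity across all apartments is the step we expect to require the most care. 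Convexity together with the linear lower bound on rays then yields bounded sublevel sets.
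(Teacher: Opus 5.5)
\paragraph{Where your proposal stands.} Your Steps~1 and~2 are correct and reproduce the paper's case split. If the limiting filtration $\cG$ of the ray is by subrepresentations, stability and finiteness of the set of dimension vectors give a slope of at least $\delta$ times the spread of $\cG$, uniformly. Otherwise some $\sigma_1(\rho(a))$ eventually grows linearly, and $f_a'\to+\infty$ makes that term superlinear.

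\paragraph{The gap.} These are statements ray by ray, and by convexity the lemma is \emph{equivalent} to the uniform estimate you write down. So all the content is in Step~3, which you only announce.

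Ray-by-ray growth really is insufficient here. When $\mathbf k$ is infinite, $\cN(E)/_\sim$ is not locally compact. On a tree with infinitely many branches $b_1,b_2,\ldots$ at $x_0$ (e.g.\ the building of $K^2$), define $g(x)=\max(0,d(x_0,x)-n)$ for $x\in b_n$. Then $g$ is convex and tends to $+\infty$ along every ray, yet $\{g\leq 0\}$ is unbounded.

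The finiteness you propose to exploit also does not exist. Take $\rho=\begin{pmatrix}0&t\\1&0\end{pmatrix}$ on $\bC((t))^2$, a stable representation of the loop quiver, and apply the $\cO$-basis change $\begin{pmatrix}1&0\\s&1\end{pmatrix}$. This produces the diagonal entry $ts$, whose valuation $1+\nu(s)$ is arbitrary and is unchanged by translations or permutations in the apartment. More importantly, the constants $\log|m_{qp}|$ in $\sigma_1(t)=\max_{q,p}\bigl(\log|m_{qp}|+t(c_{j,q}-c_{i,p})\bigr)$ are not bounded below over apartments through $x_0$. They decide when the superlinear regime begins, and compactness of the sphere of directions in one apartment cannot control them.

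\paragraph{Why uniformity fails, and what closes the gap.} Two kinds of directions break uniformity:
\begin{enumerate}
\item filtrations that are invariant only up to a small shift of weights, for which the rate $\alpha$ is tiny;
\item non-invariant $K$-filtrations that are $K$-adically close to a filtration by subrepresentations. Their rays coincide with the invariant ray for an arbitrarily long time, so the $f_a$-terms start growing arbitrarily late.
\end{enumerate}
The paper's proof uses the same two cases and simply asserts that growth starts ``after an a priori bounded amount of time''. That is precisely what needs an argument.

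Two further ingredients are needed.
\begin{itemize}
\item First, $-\sum_i\theta_i\log\operatorname{vol}(E_i)$ is affine along geodesics, with slope determined by the initial direction at $x_0$. Hence the late rays in (2) already have $\theta$-slope at least $\delta$ times their spread. Convexity of $S$ along rays (nondecreasing slope) then lets you pass between the two regimes.
\item Second, for directions with $\theta$-slope at most some $\epsilon_0>0$, you need a \emph{quantitative} form of stability. Concretely: a $\kappa>0$ such that every graded $K$-subspace $0\neq W\subsetneq E$ with $\theta(\dim W)\leq 0$ fails to be $\rho$-invariant by at least $\kappa$, measured in the base lattice norms. This must be combined with a perturbation argument for filtrations invariant only up to a small weight shift.
\end{itemize}
The second ingredient does not follow from Lemma~\ref{lem:thetastablefilt}, which only sees exact subrepresentations. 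For geometrically stable $E$ it follows from the Nullstellensatz, since the relevant quiver Grassmannians are empty over $\bar K$. If $E\otimes\bar K$ has $\theta=0$ subrepresentations not defined over $K$, you must also rule out $K$-subspaces approximating them arbitrarily well. This is where completeness of $K$ would have to enter.
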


\begin{proof}
Fix a lattice norm $\|\cdot\|$ on $E$ and let $p_0\in \cN(E)/_\sim$ be the corresponding point in the building.
The statement of the lemma is equivalent to the claim that for any $C>0$ there is an $R>0$ such that if $x$ lies along a geodesic ray starting at $p_0$ and has distance at least $R$ from $p_0$, then $S(x)\geq C$. Importantly, $R$ should not depend on the choice of ray.

By the discussion of the spherical building at infinity in the previous subsection, a ray starting at $p_0$ corresponds to a complete $\bR$-filtration $\cF$ of $E$ as a $Q_0$-graded vector space (not necessarily compatible with the maps $\rho(a)$, $a\in Q_1$) which has at least two steps, modulo the action of $\mathrm{Aff}^+(\bR)$. 
We fix the $\mathrm{Aff}^+(\bR)$-ambiguity by requiring the first step of $\cF$ to be at $0\in\bR$ and the last step of $\cF$ to be at $1\in\bR$.

There are two cases to consider:
\begin{enumerate}[(1)]
    \item If $\cF$ is a filtration by subrepresentations, i.e.\ preserved by all $\rho(a)$, $a\in Q_1$, then by Lemma~\ref{lem:thetastablefilt} we have $\langle -\theta,\cF\rangle>0$.
    More precisely, \eqref{eq:thetafilt_pf} shows that, thanks to our normalization of $\cF$, there is a positive lower bound
    \[
    \langle -\theta,\cF\rangle\geq \frac{\theta_{\mathrm{min}}}{\dim E}> 0
    \]
    where $\theta_{\mathrm{min}}$ is the minimal value of $\theta(A)$ as $A$ ranges over non-trivial subrepresentations of $E$ (the dimension vectors of which form a finite set).
    Thus, the second term in $S$ increases at a rate of at least $\frac{\theta_{\mathrm{min}}}{\dim E}$. 
    On the other hand, the first term is bounded below on all of $\cN(E)/_\sim$  by
    \[
    \sum_{a\in Q_1}f_a(-\infty)\operatorname{rk}\rho_a.
    \]
    \item If $\cF$ is \textit{not} a filtration by subrepresentations, then there is some $a\in Q_1$ such that $\rho(a)$ does not preserve $\cF$. 
    Looking at the second term of $S$, its slope $\langle -\theta,\cF\rangle$ along the ray can in this case be negative, but has a lower bound 
    \[ 
    -\dim E\cdot \max \{|\theta_i| \mid i\in Q_0\} 
    \]
    which depends on $E$ only. 
    This is compensated by the first term: The largest singular value, $\sigma_1$, of $\rho(a)$ eventually (after an a priori bounded amount of time) grows at a rate
    \[
    \alpha \coloneqq \min\{\beta\mid \rho(a)(\cF_\bullet)\subseteq \cF_{\bullet+\beta}\}>0
    \]
    as one travels along the geodesic ray.
    Since $f_a'(\sigma_1)\to +\infty$ as $\sigma_1\to +\infty$ (the fourth condition on a weak ramp function), the first term eventually dominates and we get a linear lower bound on $S$ along the ray.
\end{enumerate}
\end{proof}

\begin{proof}[Proof of Theorem~\ref{thm:nonarch_harm}]
Let $\Delta\coloneqq\cN(E)/_\sim$ be the space of norms on the representation $E$, up to rescaling.
This is a CAT(0) space and metrically complete by our assumption on $K$, thus a Hadamard space.
If $E$ is stable, then $S$ has at least one minimum by Lemma~\ref{lem:Sconvex}, Lemma~\ref{lem:Sinfinity}, and the standard fact about Hadamard spaces that lower semicontinuous convex functions with bounded sublevel sets have minima, see \cite[Lemma 2.2.19]{bacak_convex}.
Lemma~\ref{lem:Smin} then implies existence of a harmonic norm on $E$. 
The general polystable case follows by taking direct sums of stable representations with harmonic norms.

For the second implication in the statement of the theorem, suppose that $E$ is not semistable, so there is a subrepresentation $0\neq A\subset E$ with $\theta(A)<0$. Then, for any choice of metric on $E$, there is a corresponding subobject $\operatorname{Core}A\subset \operatorname{Core}E$ and $\theta^\star(\operatorname{Core}A)=\theta(A)<0$, so $E$ cannot admit a harmonic norm. 
\end{proof}

\section{Spectral networks in the plane}
\label{section:spectral}

The goal of this section is to formulate a conjectural correspondence between spectral networks in the plane and quiver representations over a non-Archimedean field equipped with a pair of ascending-phase filtrations with metrized subquotients.
This is, by itself, not a full-fledged example of a categorical K\"ahler structure, but represents the local picture of hypothetical examples of such structures coming from Fukaya categories of surfaces with coefficients, cf. Section~\ref{subsubsec:coeff}.

In the preliminary Section~\ref{subsec:nonexpansive} we discuss the relation between semistability of a quiver representation over a non-Archimedean field $K$ and its residue field $\mathbf k$.
Here, We restrict to those representations which admit a norm for which all maps $\rho(a)$ are non-expansive, and the whole discussion can be seen as a much simplified variant of the one in Section~\ref{sec:quivers_nonarch}.
In Section~\ref{subsec:harmpaths} we define our notion of spectral networks in the plane in terms of certain curves in the space of norms on a quiver representation. We then formulate the conjectural correspondence with pairs of ascending-phase filtrations with metrized subquotients.
Finally, in Section~\ref{subsec:honeycombs}, we discuss a special case of the general conjecture which is related to Horn's problem and \textit{honeycomb diagrams}.

\subsection{Non-expansive representations}\label{subsec:nonexpansive}

Fix a complete non-Archimedean field $K$ with absolute value $|\cdot|\colon K\to [0,\infty)$, valuation ring $\mathcal O\coloneqq \{a\in K: |a|\leq 1\}$, maximal ideal $\mathfrak m\coloneqq\{a\in K: |a|<1\}$, and residue field $\mathbf k\coloneqq\mathcal O/\mathfrak m$.
Also fix a quiver $Q$ and central charges $z_i\in\bR_{>0}e^{\sqrt{-1}(0,\pi]}$, $i\in Q_0$.

In the following, \textit{norm} will always mean \textit{diagonalizable norm} (see Section~\ref{subsec:prelimnorm}). 

\begin{definition}
A norm on a representation $(E,\rho)$ of $Q$ over $K$ is \textbf{non-expansive} if each $\rho(a)$, $a\in Q_1$, is a non-expansive map with respect to this norm.
If a non-expansive norm exists on $E$, we say that $(E,\rho)$ itself is non-expansive.
Denote by $\mathbf{Rep}^{\mathrm{ne}}(Q,K)\subseteq\mathbf{Rep}(Q,K)$ the full subcategory of non-expansive representations.
This subcategory is closed under subobjects, quotients, and extensions, in particular itself abelian.
\end{definition}

\begin{remark}
$\mathbf{Rep}^{\mathrm{ne}}(Q,K)=\mathbf{Rep}(Q,K)$ iff $Q$ is acyclic (or $K$ is trivially valued).
To see this, note that if $c=a_na_{n-1}\cdots a_1$ is an oriented cycle of $Q$ and $\rho(c)$ has an eigenvalue $\mu$ with $|\mu|>1$, then $(E,\rho)$ cannot be in $\mathbf{Rep}^{\mathrm{ne}}(Q,K)$.
\end{remark}

Given a representation $(E,\rho)$ over $K$ equipped with a non-expansive norm we obtain a representation $(\operatorname{gr}E,\operatorname{gr}\rho(a))$ of $Q$ over $\mathbf k$, where the functor $\operatorname{gr}$ was defined in Section~\ref{subsec:prelimnorm} and is just $\_\otimes_{\cO}\mathbf k$ in the case of lattice norms.

The parameters $z_i$, $i\in Q_0$, give us central charges 
\[ 
Z\colon K_0(\mathbf{Rep}^{\mathrm{ne}}(Q,K))\to\bC, \qquad Z\colon K_0(\mathbf{Rep}(Q,\mathbf k))\to\bC
\]
satisfying the HN property and the support property.

\begin{definition}
A non-expansive norm on a representation $(E,\rho)$ is \textbf{harmonic} if $(\operatorname{gr}E,\operatorname{gr}\rho)$ is $Z$-semistable in $\mathbf{Rep}(Q,\mathbf k)$.
\end{definition}

\begin{proposition}\label{prop:neharm}
A non-expansive representation $(E,\rho)$ of $Q$ over $K$ admits a harmonic norm iff it is $Z$-semistable in $\mathbf{Rep}^{\mathrm{ne}}(Q,K)$.
\end{proposition}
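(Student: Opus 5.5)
The easy direction is ``harmonic norm $\Rightarrow$ semistable'', proved by passing subobjects through the functor $\operatorname{gr}$. The converse I would prove by induction along a Jordan--Hölder filtration, and the key input is the following basic fact. Let $V$ carry a (diagonalizable) norm and let $U\subseteq V$ be a subspace with the restricted norm. Then $0\to\operatorname{gr}U\to\operatorname{gr}V\to\operatorname{gr}(V/U)\to0$ is exact, where $V/U$ carries the quotient norm. Consequently $\dim_{\mathbf k}\operatorname{gr}V=\dim_K V$, and the same holds vertexwise for representations, so $Z(\operatorname{gr}E)=Z(E)$.

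\emph{Harmonic $\Rightarrow$ semistable.} Suppose $E$ has a harmonic non-expansive norm and $0\neq A\subseteq E$ is a subrepresentation. Restrict the norm to $A$; then each $\rho(a)|_A$ is still non-expansive. Because $\operatorname{gr}$ is functorial for non-expansive maps and preserves injectivity of isometric inclusions, $\operatorname{gr}A$ is a subrepresentation of $\operatorname{gr}E$ with $Z(\operatorname{gr}A)=Z(A)$. Semistability of $\operatorname{gr}E$ then gives $\phi(A)\le\phi(E)$. (Subobjects in $\mathbf{Rep}^{\mathrm{ne}}(Q,K)$ are exactly subrepresentations, since this subcategory is closed under subobjects.)

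\emph{Semistable $\Rightarrow$ harmonic.} Take a Jordan--Hölder filtration $0=E_0\subset\cdots\subset E_n=E$ in $\mathbf{Rep}^{\mathrm{ne}}(Q,K)$ with stable factors of phase $\phi(E)$; this exists because the category is finite length. If I can produce, for each factor $S_k$, a non-expansive norm with $\operatorname{gr}S_k$ semistable of phase $\phi$, then I would like to glue these to a norm on $E$ whose $\operatorname{gr}$ is an iterated extension of the $\operatorname{gr}S_k$. An extension of semistable objects of phase $\phi$ is semistable of phase $\phi$, so this would finish the proof.

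Gluing is the main obstacle. Choosing a splitting $E=\bigoplus S_k$ as vector spaces and taking the max norm makes the off-diagonal (block lower-triangular) parts of $\rho$ possibly expansive. I would fix this by rescaling: replace the norm on the $k$-th block by $c^{k}\|\cdot\|$ with $|c|$ large, or small depending on the direction. This shrinks the off-diagonal entries mapping from $S_k$ into $E_{k-1}$ by powers of $|c|$, which makes $\rho$ non-expansive. With this choice, $\operatorname{gr}\rho$ becomes block diagonal, so $\operatorname{gr}E\cong\bigoplus\operatorname{gr}S_k$, which is semistable. The catch is that rescaling a norm with an element of $K^\times$ is harmless only if $K$ is nontrivially valued. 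If $K$ is trivially valued, then $\operatorname{gr}$ of a filtration norm is the associated graded, and I would instead use the $\bR$-filtration on $E$ given by the Jordan--Hölder filtration itself; non-expansiveness of $\rho$ holds because each $E_k$ is a subrepresentation.

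This reduces everything to a stable $E$. Start with any non-expansive norm. If $\operatorname{gr}E$ is not semistable, let $B\subset\operatorname{gr}E$ be its maximal destabilizing subobject and modify the norm along the corresponding lattice direction, rescaling the part lifting $B$ relative to a complement. This is the non-expansive analogue of the HN-descent in Section~\ref{subsect:non-arch-quiver}. Stability of $E$ ensures that the subobject $B$ does not lift to a $K$-subrepresentation, so this rescaling is forced to strictly decrease a discrete invariant, for instance $\sum$ of the HN masses via Proposition~\ref{prop:massdiscrete}, or the HN polygon of $\operatorname{gr}E$. Termination of this descent is where I expect to spend the most effort. If it proves delicate, I would fall back on the convex-potential argument of Theorem~\ref{thm:nonarch_harm} restricted to the (closed, convex) subset of non-expansive norms.
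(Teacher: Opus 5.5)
Your easy direction is correct and is the paper's. Your Jordan--H\"older reduction and gluing are also correct. You do not need the trivially-valued caveat, since a diagonalizable norm can be rescaled blockwise by arbitrary positive reals. But the reduction gains nothing. If $E$ is semistable, a lift of the maximal destabilizing subobject $A_0\subset\operatorname{gr}E$ can never be a subrepresentation, because it would have phase $\phi(A_0)>\phi(E)$. So the descent for stable $E$ is exactly the descent for semistable $E$. That descent is the whole content of the proposition, and your proposal does not supply it. Your claim that non-liftability of $A_0$ ``forces'' a strict decrease of a discrete invariant is unsupported.

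The paper's mechanism is as follows. Take an orthogonal splitting $E=E'\oplus E''$ with $\operatorname{gr}E'=A_0$, let $t=\max_a\|\rho(a)_{21}\|\in(0,1)$, and multiply the norm on $E''$ by $1/t$. Maximality of $t$ keeps every $\rho(a)$ non-expansive, while the $12$-blocks become strictly contracting. The new $\operatorname{gr}E$ is then a reversed extension $0\to B_0\to\operatorname{gr}(E,\|\cdot\|_{\mathrm{mod}})\to A_0\to 0$ with $B_0\cong\operatorname{gr}E/A_0$ and $\phi_+(B_0)<\phi(A_0)$. If this extension is non-split, Proposition~\ref{prop:mass}(3) gives $m(\operatorname{gr}(E,\|\cdot\|_{\mathrm{mod}}))<m(A_0)+m(B_0)=m(\operatorname{gr}E)$, the equality coming from part (2). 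Proposition~\ref{prop:massdiscrete} then forces termination. None of these steps appears in your sketch: the size of the rescaling, preservation of non-expansiveness, the reversed extension, the strict triangle inequality.

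The point you flag as delicate really is delicate: a lift that is not a subrepresentation need not give a non-split reversed extension. Take the Kronecker quiver with arrows $a,b$, $E_1=Ke$, $E_2=Kf_1\oplus Kf_2$, $a\colon e\mapsto f_1$, $b\colon e\mapsto f_1+cf_2$ with $0<|c|<1$, standard lattice norms, and $\phi(z_2)<\phi(z_1)$. Then $E$ is stable and $\operatorname{gr}E\cong M\oplus S_2$ with $A_0=M=(\mathbf k\rightrightarrows\mathbf k)$. For any $|d|<1$, setting $E'_1=Ke$, $E'_2=K(f_1+df_2)$, $E''=(0,Kf_2)$ gives an orthogonal lift. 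If $|c|<|d|$, then $t=|d|$, and after rescaling both arrows send $\bar e$ to the class of $f_1$. So $\operatorname{gr}(E,\|\cdot\|_{\mathrm{mod}})\cong M\oplus S_2$ again: the extension is split and the mass is unchanged. By contrast, $d=0$ produces a harmonic norm in one step.

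So you must either choose the complement so that the $21$-part is not a coboundary in $\operatorname{gr}$, or show that split steps can be repaired by re-choosing the lift and that this terminates. The paper's proof also passes over this point: it infers non-splitness directly from $\operatorname{gr}\rho(a_*)_{21}\neq0$.

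Your fallback does not close the gap either. The potential of Section~\ref{subsect:non-arch-quiver} is minimized by norms with $\theta^\star$-semistable $\operatorname{Core}$, which is a different condition from $Z$-semistability of $\operatorname{gr}$. Its existence argument needs spherical completeness, which this proposition does not assume. And the comparison table right after the proposition lists the analogue of that potential for this setting as ``?''.
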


\begin{proof}
One direction is immediate: If $(E,\rho)$ has a subrepresentation of strictly larger phase, then so does $(\operatorname{gr}E,\operatorname{gr}\rho)$ for any choice of non-expansive norm.

For the converse, let $E=(E,\rho)\in \mathbf{Rep}^{\mathrm{ne}}(Q,K)$. 
We must show that either there exists a harmonic norm on $E$, or that $E$ is not semistable.

Pick some non-expansive norm $\|\cdot\|$ on $E$.
If the norm is harmonic, we are done.
If not, choose $0\neq A_0\subset\operatorname{gr}E$ to be the maximal destabilizing subobject, i.e.\ the first HN factor of $\operatorname{gr}E$.
We can choose, by diagonalizability of $\|\cdot\|$, an orthogonal splitting $E=E'\oplus E''$ such that $\operatorname{gr}E'=A_0$.
There are two cases.
If $E'$ is a subrepresentation of $(E,\rho)$, i.e.\ preserved by all $\rho(a)$, $a\in Q_1$, then we have found a destabilizing subobject and are done.
Otherwise, there is an $a_*\in Q_1$ so that the block $\rho(a_*)_{21}\colon E'\to E''$ of $\rho(a_*)$ is non-zero. 
We can choose $a_*\in Q_1$ so that the operator norm $t\coloneqq \|\rho(a_*)_{21}\|\in (0,1)$ is maximal, and modify the norm on $E$ by multiplying its $E''$ component by $1/t>1$.
Denote this new norm by $\|\cdot\|_{mod}$.
The maximality of $t$ ensures that all 21-blocks remain non-expansive after this rescaling, while the 12-blocks become strictly contracting, hence $\rho$ is still non-expansive with respect to $\|\cdot\|_{mod}$.
With respect to the new norm, $\operatorname{gr}\rho(a_*)_{21}\neq 0$ and $\operatorname{gr}\rho(a)_{12}=0$ for all $a\in Q_1$.
This means there is a non-split short exact sequence in $\mathbf{Rep}(Q,\mathbf k)$ of the form
\begin{equation}\label{eq:reversedSES}
0\to B_0\to \operatorname{gr}(E,\|\cdot\|_{mod})\to A_0\to 0    
\end{equation}
i.e.\ with the roles of the subobject and quotient object reversed.
Applying Proposition~\ref{prop:mass} to~\eqref{eq:reversedSES} yields
\[
m(\operatorname{gr}(E,\|\cdot\|_{mod}))<m(A_0)+m(B_0)=m(\operatorname{gr}(E,\|\cdot\|)).
\] 
By discreteness of the set of masses (Proposition~\ref{prop:massdiscrete}), this process must terminate by either finding a harmonic norm or a destabilizing subobject of $E$.
\end{proof}

\begin{remark}
Inspection of the proof of Proposition~\ref{prop:neharm} shows that the same statement is true if we restrict everywhere to lattice norms, instead of diagonalizable norms.
\end{remark}

\begin{remark}
The notion of harmonicity considered here is reminiscent of the one considered in Section~\ref{subsect:non-arch-quiver}, and the statement of Proposition~\ref{prop:neharm} is somewhat similar to that of Theorem~\ref{thm:nonarch_harm}, although the proof is much simpler and does not require the spherical completeness assumption on $K$.

\begin{center}
\def\arraystretch{1.5}
\begin{tabular}{c|c}
    Proposition~\ref{prop:neharm} & Theorem~\ref{thm:nonarch_harm} \\
    $\mathbf{Rep}^{\mathrm{ne}}(Q,K)$ & $\mathbf{Rep}(Q,K)$ \\
    $\mathbf{Rep}(Q,\mathbf k)$ & $\cC_\star(Q)$ \\
    ? & $S\colon \cN_\mathrm{diag}(E)\to\bR$
\end{tabular}
\end{center}
\end{remark}

\begin{remark}
There is also an analogy between Proposition~\ref{prop:neharm} and Theorem~\ref{thm:comonads} below, and the idea of the proof of Proposition~\ref{prop:neharm} will be re-used in the proof of that theorem.
\end{remark}

\subsection{Harmonic paths}\label{subsec:harmpaths}

Fix a non-Archimedean field $K$ and a quiver $Q$ with central charge as before.
Additionally, we require that $\operatorname{Im}(z_i)>0$, which is generically satisfied.
We also fix a smooth area form $\omega=f(x,t)dx\wedge dt$ on $\mathbb R^2=\mathbb R^2_{xt}$ satisfying the following properties:
\begin{enumerate}[(1)]
    \item The area of the strip between any pair of parallel lines is finite.
    \item The area of any conical sector between intersecting lines is infinite.
\end{enumerate}
For instance, one can take $f(x,t)=1/(1+x^2+t^2)$.
This gives us another coordinate, $\lambda$, on $\mathbb R^2$ defined by
\[
\lambda(x,t)\coloneqq \int_0^{x}f(u,t)du.
\]

In the following we consider the closed subset $\cN_{\mathrm{ne}}(E)\subseteq \cN_\mathrm{diag}(E)$ of non-expansive norms on a representation $E=(E,\rho)$.
We have a notion of \textit{piecewise smooth curve} in $\cN_{\mathrm{ne}}(E)$: these are curves which, on each piece, are contained in a single apartment and are smooth there.

\begin{definition}
Suppose $c\colon \mathbb R\to \cN_{\mathrm{ne}}(E)$ is a continuous curve which is smooth away from a finite set $D\subset \mathbb R$ of the domain. 
Thus, for $t\notin D$, the velocity $\frac{dc}{dt}$ is a complete $\mathbb R$-filtration $\cF_\lambda^{t}$ of $\operatorname{gr}(E,c(t))$.
The \textbf{support} of $c$ is the closure, $\mathcal W(c)$, of the set of pairs $(x,t)\in\mathbb R^2$ such that $\operatorname{gr}_{\lambda(x,t)}\cF^t\neq 0$.
We restrict to curves $c$ where $\cW(c)$ is topologically a graph with finitely many vertices.
Each edge of the graph comes with a label, the object $\operatorname{gr}_{\lambda(x,t)}\cF^t\in\mathbf{Rep}(Q,\mathbf k)$.
The path is said to be \textbf{harmonic} if every edge of $\mathcal W(c)$ is labeled by a $Z$-semistable object in $\mathbf{Rep}(Q,\mathbf k)$, and the edge is a straight line with direction $\pm\overline{Z(A)}$. (See Figure~\ref{fig:specnetdeform}, right.)
\end{definition}

Note that, by definition, there can be no horizontal edges (constant $t$), which is why we impose the condition $\Imm(z_i)>0$ on the central charge.
The reason for using the complex conjugate $\overline{Z(A)}$ of the central charge, is that we want
\[
\text{total phase }=\text{ phase }(\text{=angle}/\pi)\text{ of edge } + \text{ phase of object on edge}
\]
to be constant across the graph, cf.~\cite[Section 4.2]{HKS} and~\eqref{eq:totphase}.

\begin{remark}
The above definition of spectral networks in the plane in terms of paths in the space of norms breaks the symmetry between $x$ and $t$.
An equivalent definition, more natural but more involved, can be made in terms of curved $A_\infty$-deformations and solutions to the $A_\infty$ Maurer--Cartan equation as in \cite{HKS}.
The supposed equivalence of the two definitions is established by a version of family Floer homology with respect to the family of horizontal lines in $\bR^2$.
This, minus the harmonicity condition, is similar in spirit to the equivalence between the augmentation category of a Legendrian link $\Lambda$ and the category of microlocal sheaves on $\bR^2$ with rank 1 singular support along the front of $\Lambda$~\cite{NRSSZ}.
\end{remark}

Suppose we are given a harmonic path $c\colon \bR\to\cN_{\mathrm{ne}}(E)$. We claim that we can assign to it a pair of ascending-phase filtrations on $E$ together with harmonic norms on the subquotients. These filtrations encode the asymptotic behavior of $c(t)$ as $t\to \pm\infty$.
First, suppose $t\gg 0$ so that vertices of $\mathcal W(c)$ are strictly below the horizontal line $\mathbb R\times \{t\}$.
Then the phases of the semistable objects $\operatorname{gr}_\lambda\cF^t$ increase with increasing $\lambda$ (going right, compare Figure~\ref{vertexdiagram}), so the filtration of $E$ by rate of growth of $c(t)$, as $t\to+\infty$, is an ascending-phase filtration $\cF_\phi^+$ of $E$.
Here, the area assumptions on $\omega$ ensure that the correspondence 
\[ 
\text{growth-rate of norm}\longleftrightarrow \text{phase }\phi
\]
is one-to-one.
The induced norms on the subquotients $\operatorname{gr}_\phi\cF^+$ are harmonic (because objects labeling edges are by assumption semistable), but not independent of $t$.
To fix this, we rescale these norms by $e^{-A(t)}$, where 
\[
A(t)\coloneqq\int_0^t\int_0^{\mu s}f(x,s)dxds, \qquad \mu\coloneqq-\cot\left(\pi\phi\right),
\]
and take the limit $t\to +\infty$.
Similarly, we get another ascending-phase filtration, $\cF_\phi^-$, from the limit $t\to-\infty$.
Indeed, suppose $t\ll 0$ so that vertices of $\mathcal W(c)$ are strictly above the horizontal line $\mathbb R\times \{t\}$.
Then the phases of the semistable objects $\operatorname{gr}_\lambda\cF^t$ increase with decreasing $\lambda$ (going left), so the filtration of $E$ by rate of growth of $c(t)$, as $t\to-\infty$, is an ascending-phase filtration $\cF_\phi^-$ of $E$.
The induced norms on the subquotients $\operatorname{gr}_\phi\cF^-$ are obtained by rescaling by $e^{-A(t)}$ and taking the limit $t\to -\infty$.

We are now ready to state our main conjecture in this section.

\begin{conjecture}\label{conj:bubbling}
Fix $K$, $Q$, $z_i$, $\omega$, and $E$ as before.
The map described above induces a one-to-one correspondence between:
\begin{enumerate}[(1)]
    \item harmonic paths in $\cN_{\mathrm{ne}}(E)$, and
    \item pairs of ascending-phase filtrations on $E$ together with harmonic norms on their subquotients.
\end{enumerate}
\end{conjecture}  

\begin{remark}
Following the proposal in Section~\ref{subsubsec:coeff}, we expect that the data of a quiver $Q$, or rather its category of representations, together with the $z_i$, can be replaced by a more general $\bC$-linear coefficient category $\cC$ with stability condition $\sigma_\cC$. In this case, $K=Nov$ is the Novikov field.
\end{remark}

\begin{example}
In the simplest case, $Q=\bullet$ is a single vertex with no arrows, so $E$ is just a vector space of some finite dimension $n=\dim E$ over $K$.
Let $z$ be the central charge assigned to the vertex and $\mu\coloneqq -\frac{\operatorname{Re}(z)}{\operatorname{Im}(z)}$.
Suppose first that $E=K$ and let $c(t)=e^{\alpha(t)}|\cdot|$ be a harmonic path.
The support $\cW(c)$ is given by the curve $x(t)$ with $\lambda(x(t),t)=\alpha'(t)$. 
By harmonicity, $x(t)=\mu t+x_0$ for some $x_0\in\bR$.
Hence,
\[
\alpha(t)=\int_0^t\lambda(\mu s+x_0,s)ds+C=\int_0^t\int_0^{\mu s+x_0}f(x,s)dxds+C
\]
for some constant $C\in\bR$.
The asymptotic norms on the subquotients of the (trivial) ascending-phase filtrations $\cF^\pm$ are $\|\cdot\|_\pm=e^{\lambda_\pm}|\cdot|$ where
\[
\lambda_\pm\coloneqq \int_{0}^{\pm\infty}\int_{\mu t}^{\mu t+x_0}f(x,t)dxdt+C
\]
where the integral is finite by the first condition on $f$. 
Moreover, given $\lambda_\pm$ we can find the corresponding $x_0$ and $C$ by the second condition on $f$, which ensures that
\[
\lim_{x_0\to\pm\infty}\int_{-\infty}^{+\infty}\int_{\mu t}^{\mu t+x_0}f(x,t)dxdt=\pm\infty.
\]

The case where $n=\dim E$ is arbitrary can be reduced to the $n=1$ case as follows. 
Suppose we are given a pair of diagonalizable norms $\|\cdot\|_\pm$ on $E$. There is a basis of $E$ which diagonalizes both of them.
Each basis vector yields a summand of the harmonic path, whose support is a union of $n$ parallel lines, counted with multiplicity, which is the $\mathbf k$-dimension of the vector space labeling them.
\end{example}

\begin{example}
\label{ex:harmpathA2}
Let $Q=\bullet\to\bullet$ be the $A_2$ quiver.
The three indecomposable representations, up to isomorphism, in  $\mathbf{Rep}(Q,K)$ are
\[
A\coloneqq(0\to K),\qquad B\coloneqq(K\xrightarrow{1} K),\qquad C\coloneqq(K\to 0)
\]
which fit into a short exact sequence $0\to A\to B\to C\to 0$.
Take the central charge with $Z(A)=e^{\pi \sqrt{-1}/6}$, $Z(C)=e^{5\pi \sqrt{-1}/6}$, $Z(B)=Z(A)+Z(C)=\sqrt{-1}$.
(The precise values are not so important in what follows, only the fact that $B$ is stable.)
Then any norm on $A$ or $C$ is harmonic, and a norm on $B$ is harmonic iff its restrictions to the two $K$'s coincide, i.e.\ $\rho(a)$ is norm-preserving.

Take $E=B^2=(K^2\xrightarrow{\mathrm{id}} K^2)$ and consider ascending-phase filtrations of the form $A\hookrightarrow E_1\hookrightarrow E_2\hookrightarrow E$, where $E_2/E_1\cong B$, $E/E_2\cong C$.
These are classified by a pair of lines in $K^2$: $\Imm (A\to E)$ and $\Ker(E\to E/E_2)$.
An example of a harmonic path with asymptotics of this type is shown in Figure~\ref{fig:hexagon}.
Other topologies are possible: The central polygon could have fewer vertices or collapse entirely, so that the graph is a tree.
The $\omega$-area of the central hexagon is essentially the valuation of the cross-ratio of the four lines in $K^2$, cf.\ \cite[Section 6.3]{HKS}.

\begin{figure}[ht]
    \centering
    \[
    \begin{tikzpicture}[xscale=0.33,yscale=0.191]
        \draw (0,0) -- (4,4) -- (4,9) -- (3,10) -- (-2,5) -- (-2,2) -- cycle;
        \node[below] at (2,2) {$\scriptstyle{C}$};
        \node[right] at (4,6.5) {$\scriptstyle{B}$};
        \node[above] at (3.7,9.3) {$\scriptstyle{A}$};
        \node[above] at (0.5,7.5) {$\scriptstyle{C}$};
        \node[left] at (-2,3.5) {$\scriptstyle{B}$};
        \node[below] at (-1.2,1.2) {$\scriptstyle{A}$};
        \draw (0,0) -- (0,-6);
        \node[right] at (0,-3) {$\scriptstyle{B}$};
        \draw (4,4) -- (12,-4);
        \node[below] at (8,0) {$\scriptstyle{A}$};
        \draw (4,9) -- (9,14);
        \node[above] at (6.5,11.5) {$\scriptstyle{C}$};
        \draw (3,10) -- (3,16);
        \node[left] at (3,13) {$\scriptstyle{B}$};
        \draw (-2,5) -- (-9,12);
        \node[above] at (-5.5,8.5) {$\scriptstyle{A}$};
        \draw (-2,2) -- (-8,-4);
        \node[below] at (-5,-1) {$\scriptstyle{C}$};        
    \end{tikzpicture}
    \]
    \caption{Support of a harmonic path with central hexagon. The labels $A,B,C$ refer to the three indecomposable representations of the $A_2$ quiver over $\mathbf{k}$.}
    \label{fig:hexagon}
\end{figure}
\end{example}

A possible approach to proving existence of harmonic curves is to use calculus of variations and formulate the harmonicity condition as a critical point condition for an action functional
\[
S(c|_{[a,b]})=\int_a^bL(t,c(t),\dot{c}(t))dt
\]
for a suitable Lagrangian $L$ which is convex in $\dot{c}$.
There is an a priori bound on the number of vertices of the harmonic path coming from the Legendrian dual of the graph. 
The dual is a tiled polygon, where the sides of all tiles are central charges of semistable objects and thus have a lower bound on their lengths by the support property.
Moreover, the perimeter of the polygon is fixed by the boundary condition coming from the two ascending-phase filtrations.


\subsubsection*{Modular lattice variant}

Suppose $K$ is trivially valued, then a non-expansive norm on $(E,\rho)$ is simply an $\bR$-filtration on $(E,\rho)$, i.e.\ a collection of $\bR$-filtrations on each $E_i$, $i\in Q_0$, compatible with the maps $\rho(a)$, $a\in Q_1$.
This notion depends only on the lattice of subobjects of $E\in\mathbf{Rep}(Q,K)$, which is a bounded modular lattice.
Piecewise smoothness for paths in the space of filtrations means the following: Away from a finite set, the underlying finite sequence of subobjects does not change, only the points in $\bR$ where the filtration jumps.

Given any bounded modular lattice $\Lambda$, define a \textit{central charge} on it to be a map $Z\colon \Lambda\to \mathbb C$ satisfying 
\begin{enumerate}[(1)]
    \item $Z(a\vee b)+Z(a\wedge b)=Z(a)+Z(b)$, $a,b\in\Lambda$,
    \item $\operatorname{Im}(Z(a))>0$ for $a\neq 0$,
    \item $Z(0)=0$.
\end{enumerate}
This is required to satisfy the analogue of the HN and support properties.
See also~\cite[Section 4.2]{HKKPitlogs1}, where this is called a \textit{polarization}.
In this setting one can define, in the same way as before, a notion of harmonic path and the analogue of  Conjecture~\ref{conj:bubbling}.

\subsection{Honeycomb diagrams}\label{subsec:honeycombs}

We discuss a special case which is related to \textit{honeycomb diagrams}.
Assume, for simplicity, that $\nu(K^\times)=\bR$, e.g.\ $K$ is a Novikov field with real exponents.
Take $Q$ to be the $A_2$ quiver, $Z$ as in Example~\ref{ex:harmpathA2}, and $E\coloneqq(K^n\xrightarrow{\mathrm{id}}K^n)$.
Suppose $\cF^-$ is the trivial (one-step) filtration and $\cF^+$ is the (unique) filtration with subquotients $A^n$ and $C^n$.
Choices of harmonic norms on the subquotients of the filtrations $\cF^{\pm}$ amount to a triple of diagonalizable norms, $\|\cdot\|_k$, $k=1,2,3$, on $K^n$.
Such a triple gives three sequences of real numbers, $\alpha_1\geq\ldots\geq\alpha_n$, $\beta_1\geq\ldots\geq\beta_n$, $\gamma_1\geq\ldots\geq\gamma_n$, which are the singular values of the identity maps $(K^n,\|\cdot\|_i)\to (K^n,\|\cdot\|_j)$ for $(i,j)=(1,2),(2,3),(1,3)$, respectively.
By multiplicativity of the determinant,
\begin{equation}\label{eq:ABCequality}
    \alpha_1+\ldots+\alpha_n+\beta_1+\ldots+\beta_n=\gamma_1+\ldots+\gamma_n.
\end{equation}
Which triples $(\alpha,\beta,\gamma)$ satisfying~\eqref{eq:ABCequality} arise from a triple of norms?
For $n=1$ there are no additional constraints.
For $n=2$ it is an easy exercise that the inequalities
\[
\max(\alpha_1+\beta_2,\alpha_2+\beta_1)\leq\gamma_1\leq \alpha_1+\beta_1
\]
are necessary and sufficient.
For general $n$, the list of inequalities, discovered by A.~Horn, is recursively defined, see for example~\cite[Section 4.5]{kedlaya_book} which cites~\cite{fulton_horn}.

For harmonic paths $c$ giving rise to such $\cF^\pm$, the  support $\cW(c)$ is a \textit{honeycomb diagram} in the sense of Knutson--Tao~\cite{knutson_tao99,knutson_tao_notices}.
Fixing $\alpha,\beta,\gamma$ we can consider three sets:
\begin{enumerate}[(1)]
    \item The set $\cP$ of pairs of ascending-phase filtrations, equivalently (in this setup) triples of non-Archimedean norms on $K^n$ with prescribed singular values.
    \item The set $\cS$ of harmonic paths with prescribed asymptotics as $t\to\pm\infty$.
    \item The set $\cH$ of honeycomb diagrams with prescribed spacings of the infinite edges. (These depend on the triple $\alpha,\beta,\gamma$ and the area form $\omega$.)
\end{enumerate}
We have the following diagram of maps between these sets:
\[
\begin{tikzcd}
    & \cS \arrow[dl]\arrow[dr,"\cW"] \\
    \cP & & \cH
\end{tikzcd}
\]
The Knutson--Tao honeycomb theorem tells us that $\cP$ is non-empty iff $\cH$ is non-empty, equivalently iff the Horn inequalities for $(\alpha,\beta,\gamma)$ hold.
Conjecture~\ref{conj:bubbling} says that the left arrow is a bijection. Assuming this, we get a map $\cP\to\cH$ from triples of norms to honeycombs (depending on $\omega$).
From the perspective of \cite{HKS}, the fibers of the map $\cS\to\cH$ are sets of solutions of certain $A_\infty$ Maurer--Cartan equations.

\section{(Co)monads and stability}
\label{section:comonads}

In the previous sections we discussed examples of categorical K\"ahler structures arising from fairly concrete data: quivers, stability parameters, ramp functions, and area forms on the plane.
In this section we switch gears and consider examples related to a more abstract categorical setup: certain (co)monadic adjunctions of stable $\infty$-categories. 
These unify both curved $A_\infty$-categories and formal neighborhoods of effective divisors.

This section is organized as follows.
In Section~\ref{subsec:monads} we study monadic adjunctions between stable $\infty$-categories, reformulating monadicity in terms of filtered objects.
Section~\ref{subsec:transport} contains the proof of our main theorem of this section on transporting stability conditions from a stable $\infty$-category with a monad to the quotient of the Eilenberg--Moore category by the Kleisli category, under suitable conditions.
Sections~\ref{subsec:nbhdiv} and \ref{subsec:defAinf} discuss examples arising from divisors in smooth varieties and formal deformations of $A_\infty$-categories, respectively.

\subsection{Monads on stable \texorpdfstring{$\infty$}{infinity}-categories}\label{subsec:monads}

In classical category theory, a monad is given by an endofunctor $T\colon\cC\to\cC$ of a category $\cC$, together with a pair of natural transformations $\mu\colon T^2\to T$, $\eta\colon 1_{\cC}\to T$ satisfying certain compatibility conditions which say that $(T,\mu,\eta)$ is a monoid object in the category $\mathrm{Fun}(\cC,\cC)$ of endofunctors.
Any adjunction $F:\cC  \leftrightarrows \cD:G$ gives rise to a monad with $T\coloneqq GF$ on $\cC$, and conversely from any monad $(T,\mu,\eta)$ one constructs an adjunction $\cC \leftrightarrows \cC^T$ where $\cC^T$ is the Eilenberg--Moore category of $T$-algebras.
However, there are typically many adjunctions which give rise to the same monad in this way. 
An adjunction is \textit{monadic} if it is, up to equivalence, of the form $\cC \leftrightarrows \cC^T$ for some monad $T$.
Monadic adjunctions are characterized by the Barr--Beck monadicity theorem.

An $\infty$-categorical generalization of the theory of monads was proposed and developed by Lurie~\cite[Section 4.7]{LurieHA} and Riehl--Verity~\cite{riehl_verity_monads}.
The main challenge, compared with the classical case, is dealing with a rather large amount of higher coherence data when defining monads and their $\infty$-categories of algebras.
Nevertheless, it turns out that the monadicity theorem has a satisfactory generalization to this setting in the following form.
(The definitions of \textit{simplicial object} and \textit{split simplicial objects} will be recalled below.)

\begin{theorem}[$\infty$-categorical monadicity theorem, {\cite[Theorem 4.7.3.5]{LurieHA}}]\label{thm:BBL}
An adjunction $F:\cC \leftrightarrows \cD:G$ between $\infty$-categories  is monadic if and only if the right adjoint $G$ satisfies the following conditions:
\begin{enumerate}[(1)]
    \item $G$ is conservative, i.e.\ $f$ is an isomorphism iff $G(f)$ is.
    \item If $X_\bullet$ is a simplicial object in $\cD$ such that $G(X_\bullet)$ is split, then the colimit of $X_\bullet$ exists in $\cD$ and is preserved by $G$.
\end{enumerate}
\end{theorem}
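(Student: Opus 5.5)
The statement is Lurie's $\infty$-categorical Barr--Beck theorem, quoted from \cite[Theorem 4.7.3.5]{LurieHA}. In the paper it is simply cited, but the proof would run as follows.

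Let $T\coloneqq GF$, viewed as a monad, i.e.\ an algebra object in $\mathrm{Fun}(\cC,\cC)$. Let $\cC^T\coloneqq\mathrm{LMod}_T(\cC)$ be its $\infty$-category of algebras. The first step is to construct the comparison functor $\Phi\colon\cD\to\cC^T$ over $\cC$. This uses the endomorphism-monad formalism: $\cD$ is left-tensored over $\mathrm{Fun}(\cC,\cC)$ via the adjunction, $G$ exhibits each $G(D)$ as a $T$-module through the counit, and so $G$ factors as $\cD\xrightarrow{\Phi}\cC^T\xrightarrow{U}\cC$ with $\Phi F\simeq F^T$. The task is then to show that $\Phi$ is an equivalence if and only if conditions (1) and (2) hold.

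For the easy direction, assume the adjunction is monadic. The forgetful functor $U$ is conservative, since an equivalence of modules is detected underlying. $U$ also creates colimits of $U$-split simplicial objects: a split augmentation is an absolute colimit, so it is preserved by $T$ and by $T^2$, and the module structure can be assembled levelwise.

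For the hard direction, I proceed in two steps.

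First, I show that $\Phi$ is fully faithful. For $D\in\cD$, take the bar construction $X_\bullet\coloneqq(FG)^{\bullet+1}D$, a simplicial object augmented to $D$. Its image $G(X_\bullet)$ carries the canonical extra degeneracy coming from the unit, so it is split. By (2), $|X_\bullet|$ exists and $G$ preserves it. Hence $G|X_\bullet|\simeq G(D)$, and conservativity (1) upgrades this to $|X_\bullet|\simeq D$. Full faithfulness then reduces to the case of free objects $F(C)$, where $\mathrm{Map}_{\cD}(FC,D)\simeq\mathrm{Map}_{\cC}(C,GD)\simeq\mathrm{Map}_{\cC^T}(TC,\Phi D)$ holds by adjunction. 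Since both sides of the comparison turn the relevant geometric realizations into totalizations of mapping spaces, the general case follows. For this last point, one must check that $\Phi$ preserves these particular realizations; this holds because $U$ preserves $U$-split colimits and is conservative.

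Second, I show that $\Phi$ is essentially surjective. Every $T$-algebra $M$ is the realization of its bar resolution $T^{\bullet+1}U M$, which is $U$-split. Lifting this through $\Phi$ gives $F(GF)^{\bullet}UM$ in $\cD$. Its image under $G$ is split, so by (2) its colimit exists in $\cD$, and $\Phi$ of that colimit recovers $M$.

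I expect the main obstacle to be the coherence bookkeeping rather than the combinatorics: making the comparison functor and the bar constructions functorial at the $\infty$-level. This is what Lurie's formalism of $\infty$-operads and left modules over endomorphism objects handles, and in practice I would invoke \cite[Section 4.7]{LurieHA} for it.
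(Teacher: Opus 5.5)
The paper gives no proof of this statement. It quotes \cite[Theorem 4.7.3.5]{LurieHA} and then explicitly adopts conditions (1)--(2) as its working definition of a monadic adjunction (recast for stable $\infty$-categories in Corollary~\ref{cor:monadic}), so none of its later arguments use the comparison functor.

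Measured against the standard proof, your outline is the classical Beck argument and is correct in structure, with the higher coherences rightly delegated to \cite{LurieHA}. It is organized differently from Lurie's proof. Lurie first shows that $\Phi$ admits a left adjoint $F'$: it is given on free algebras by $F$, and extended to arbitrary algebras through realizations of $U$-split free resolutions, which exist in $\cD$ by (2). He then checks that the unit is an equivalence, using that $G$ and $U$ preserve these realizations. Finally, he deduces that the counit is an equivalence from conservativity of $\Phi$ together with a triangle identity.

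Your order is different: full faithfulness from the canonical resolution $(FG)^{\bullet+1}D\to D$, then essential surjectivity. Its advantage is that lifting the bar resolution $F^TT^{\bullet}UM$ to $\cD$ costs nothing. Once $\Phi$ is fully faithful, it is an equivalence onto its essential image. That image contains every free algebra $F^TC\simeq\Phi FC$, so any diagram of free algebras lifts essentially uniquely. You should say this explicitly, because ``lifting this through $\Phi$'' is otherwise exactly the coherence problem you flag at the end. The left-adjoint route has to construct that simplicial object in $\cD$ directly, but in exchange it hands you the inverse functor.

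Two smaller corrections.
\begin{enumerate}
\item It is $\mathrm{Fun}(\cD,\cC)$, not $\cD$, that is left-tensored over $\mathrm{Fun}(\cC,\cC)$ (by postcomposition). $G$ is an object there with endomorphism monad $GF$, and its module structure over that monad is what yields the factorization $G\simeq U\circ\Phi$.
\item In the easy direction, the 1-categorical heuristic ``preserved by $T$ and $T^2$, assemble levelwise'' does not by itself produce a coherent $T$-module structure on the realization. The input you need is the $\infty$-categorical fact that a diagram in $\cC^T$ whose underlying diagram has a colimit in $\cC$ preserved by $T$ has a colimit in $\cC^T$, preserved by $U$. This applies because colimits of split simplicial objects are absolute.
\end{enumerate}
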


For our purposes, we will take the above pair of conditions as the definition of a monadic adjunction.
Furthermore, we focus on adjunctions of stable $\infty$-categories, for which one can reformulate the second condition via the $\infty$-categorical Dold--Kan correspondence.

\begin{remark}
The definitions of monads and their categories of algebras have not been completely translated to the language of dg/$A_\infty$ categories.
Anno--Arkhipov--Logvinenko~\cite{AAL_monoidal} have given a definition of ``$A_\infty$-monad on a dg category'' and its Eilenberg--Moore category. In their upcoming work, they intend to prove a Barr--Beck-type theorem in this setting.
\end{remark}

We recall the definition of \textit{split simplicial object}.
Let $\Delta_{-\infty}$ be the category with objects the totally ordered sets 
\[
[n]\coloneqq\{-\infty<0<\cdots<n\},\qquad n\geq -1, 
\]
and morphisms the non-decreasing maps $f\colon[m]\to[n]$ such that $f(-\infty)=-\infty$.
The \textit{augmented simplex category} is the subcategory $\Delta_+\subset \Delta_{-\infty}$ with the same objects and morphisms satisfying $f^{-1}(\{-\infty\})=\{-\infty\}$. Thus, in $\Delta_+$ we might as well exclude the element $-\infty$ from each $[n]$. 
More precisely, $\Delta_+$ is isomorphic to the category whose objects are the totally ordered sets $\{0<\cdots<n\}$, $n\geq -1$, where $[-1]=\emptyset$, and morphisms are non-decreasing maps.
Finally, the \textit{simplex category} is the full subcategory $\Delta\subset\Delta_+$ which excludes the object $[-1]=\{-\infty\}$.
A \textit{simplicial object} in an $\infty$-category $\cC$ is a functor $X_\bullet\colon N(\Delta^\mathrm{op})\to \cC$.
An \textit{augmentation} (resp. \textit{splitting}) of a simplicial object is an extension of $X_\bullet$ to $N(\Delta^\mathrm{op}_+)$ (resp. $N(\Delta^\mathrm{op}_{-\infty})$). A simplicial object or augmented simplicial object is \textit{split} if it admits a splitting.

From now on we assume that $\cC$ is stable. The $\infty$-categorical version of the Dold--Kan correspondence~\cite[Theorem 1.2.4.1]{LurieHA} is a canonical equivalence of functor $\infty$-categories
\begin{equation}\label{eq:LurieDK}
\mathrm{Fun}(N(\Delta^\mathrm{op}),\cC)\to\mathrm{Fun}(N(\bZ_{\geq 0}),\cC)
\end{equation}
which sends a simplicial object $X_{\bullet}$ to the filtered object
\[
Y_0\to Y_1\to Y_2\to\ldots,\qquad Y_n\coloneqq\operatorname{colim} \mathrm{sk}_nX_\bullet
\]
where $\mathrm{sk}_nX_\bullet\in\mathrm{Fun}(N(\Delta^{\mathrm{op}}),\cC)$ is the $n$-skeleton of $X_\bullet$.
We remark at this point that the data of a functor $Y_\bullet\colon N(\bZ_{\geq 0})\to\cC$ includes objects $Y_n$, $n\geq 0$ and morphisms $f_n\colon Y_{n-1}\to Y_{n}$, $n\geq 1$, but also morphisms $Y_m\to Y_n$, $m\leq n$, as well as coherence data (values of $Y$ on simplices of dimension $>0$). 
However, all this additional data is determined, up to a contractible space of choices, by the sequence of $f_n$'s, and we can thus suppress the distinction.

There is also an augmented variant of~\eqref{eq:LurieDK}, a canonical equivalence
\[
\mathrm{Fun}(N(\Delta_+^\mathrm{op}),\cC)\to\mathrm{Fun}(N(\bZ_{\geq -1}),\cC)
\]
which can be constructed from~\eqref{eq:LurieDK} as follows.
We have $\Delta_+\cong[-1]\star\Delta\leftarrow \Delta^1\times\Delta$ which yields an embedding
\[
\mathrm{Fun}(N(\Delta_+^\mathrm{op}),\cC)\longrightarrow\mathrm{Fun}(\Delta^1\times N(\Delta^\mathrm{op}),\cC).
\]
On the level of objects, this sends an augmented simplicial object $X_\bullet$ to the corresponding morphism of simplicial objects $X_{\geq 0}\to \underline{X_{-1}}$, where $X_{\geq 0}$ is the restriction of $X_\bullet$ along the inclusion $\Delta^\mathrm{op}\subset \Delta_+^\mathrm{op}$ and $\underline{X_{-1}}$ is the constant simplicial object with value $X_{-1}\in\cC$. 
By~\eqref{eq:LurieDK}, there is a canonical equivalence
\[
\mathrm{Fun}(\Delta^1\times N(\Delta^\mathrm{op}),\cC)\to\mathrm{Fun}(\Delta^1\times N(\bZ_{\geq 0}),\cC)
\]
which sends the subcategory of morphisms to a constant simplicial object to the subcategory of morphisms to a constant (up to equivalence) filtered object.
Finally, we send the morphism of filtered objects $Y_\bullet\to \underline{X_{-1}}$, where $\underline{X_{-1}}\colon N(\bZ_{\geq 0})\to\cC$ is now, by abuse of notation, the constant filtered object with value $X_{-1}$, to a functor $Z_\bullet\colon N(\bZ_{\geq -1})\to\cC$ with $Z_{-1}\coloneqq X_{-1}[-1]$ and $Z_n\coloneqq\mathrm{fib}(Y_n\to \underline{X_{-1}})$, $n\geq 0$.
Moreover, this construction can be made functorial and is an equivalence, as it is fiber rotation in the stable $\infty$-category $\mathrm{Fun}(N(\mathbb Z_{\geq 0}),\cC)$.

To set up some terminology, we refer to $\bZ_{\geq -1}$-filtered objects as \textbf{extended filtered objects} and $\bZ_{\geq 0}$-filtered objects together with a morphism to a constant filtered object as \textbf{augmented filtered objects}.
By the discussion above, the two notions are equivalent.

\begin{lemma}
    An augmented simplicial object $X_\bullet$ in a stable $\infty$-category $\cC$ is split iff the corresponding extended filtered object
    \[
    Y_{-1}\xrightarrow{f_0}Y_0\xrightarrow{f_1}Y_1\xrightarrow{f_2}Y_2\to\ldots
    \]
    satisfies $f_n\simeq 0$ for $n\geq 0$. We call such an extended filtered object \textbf{split}.
\end{lemma}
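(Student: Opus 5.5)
The plan is to reduce both directions to one standard fact. A splitting of an augmented simplicial object is the same as a splitting of its restriction to $\Delta^{\mathrm{op}}_{-\infty}$. Such split objects are exactly those in the essential image of left Kan extension along $N(\Delta_{+}^{\mathrm{op}})\to N(\Delta_{-\infty}^{\mathrm{op}})$ applied to ``free'' data, and the augmented object is then a colimit diagram with colimit $X_{-1}$ (\cite[Lemma 6.1.3.16]{LurieHTT}). So I would first record two things. If $X_\bullet$ is split, the augmentation $|X_{\geq 0}|\to X_{-1}$ is an isomorphism. In addition, each skeletal map $\operatorname{colim}\mathrm{sk}_{n-1}\to\operatorname{colim}\mathrm{sk}_n$ admits a retraction compatible with the extra degeneracies. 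In Dold--Kan terms, the extra degeneracies $s_{-1}\colon X_n\to X_{n+1}$ give a contracting homotopy of the associated ``normalized complex''.

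For the forward direction I would work through the extended filtered object $Y_\bullet$ with $Y_{-1}=X_{-1}[-1]$ and $Y_n=\mathrm{fib}(\operatorname{colim}\mathrm{sk}_nX\to X_{-1})$. The graded pieces $\mathrm{cofib}(f_n)$ are the shifted normalized terms $N_n(X)[n]$, with $N_{-1}=X_{-1}$. The connecting maps between consecutive graded pieces assemble into the $d_1$-differential of the associated spectral sequence. The contracting homotopy from the splitting should show that each $f_n$ factors through a map which the homotopy identifies with zero. Concretely, $f_n\colon Y_{n-1}\to Y_n$ is obtained by successive extension, and the extra degeneracy gives a null-homotopy of each composite $N_{k}[k]\to Y_{n-1}\to Y_n$ for $k<n$, compatibly, which yields $f_n\simeq 0$ by induction on $n$. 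The base case $n=0$ is $X_{-1}[-1]\to\mathrm{fib}(X_0\to X_{-1})$. This map is null because $X_0\to X_{-1}$ has the section $s_{-1}$.

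For the converse I would use the Dold--Kan equivalence as an equivalence of $\infty$-categories. Every extended filtered object with all $f_n\simeq 0$ is equivalent to one of the form $Y_n=\bigoplus$-assembled with zero transition maps, i.e. determined by the sequence of objects $(Y_n)_{n\geq -1}$ alone. I would then exhibit, for an arbitrary sequence of objects, a split augmented simplicial object realizing it: take the left Kan extension along $N(\Delta_{-\infty}^{\mathrm{op}})$ of suitable free data. One natural choice is the ``bar'' construction $X_n=\bigoplus_{S}$ of copies indexed by surjections, mirroring the classical Dold--Kan inverse with an added contraction. I would then check that its image under Dold--Kan has zero transition maps and the prescribed graded pieces. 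By essential uniqueness, $X_\bullet$ is equivalent to this split object, hence split.

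The main obstacle is making the identification of $f_n$ with something visibly killed by the extra degeneracies precise at the $\infty$-categorical level, since Lurie's Dold--Kan is not phrased via normalized complexes. I would try first to reduce to the homotopy category $\mathrm h\cC$ and to mapping spectra. The statement only concerns whether each $f_n$ is null, and splitness of a simplicial object can be detected after mapping into it, or out of it, from arbitrary objects. This reduces the claim, via the Yoneda embedding into spectrum-valued functors, to simplicial spectra, where the classical chain-level argument can be run levelwise.
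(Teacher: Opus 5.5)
Your proposal has a genuine gap. In both directions, the step that carries the content is only gestured at, and the devices you offer for it do not work.

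\textbf{Forward direction.} The fact you record about skeletal maps is false. Since the lemma holds, Lemma~\ref{lem:splitfilteredchar} shows that for split $X_\bullet$ the map $\operatorname{colim}\mathrm{sk}_{n-1}X\to\operatorname{colim}\mathrm{sk}_nX$ is $\mathrm{id}_{X_{-1}}\oplus 0\colon X_{-1}\oplus Y_{n-1}\to X_{-1}\oplus Y_n$. This has no retraction once $Y_{n-1}\neq0$. For instance, take the left Kan extension of $A$ from the object $[0]$ of $N(\Delta^{\mathrm{op}}_{-\infty})$, so $X_m=A^{\oplus\Delta([m],[1])}$; there $Y_0\simeq\mathrm{fib}(A\oplus A\xrightarrow{\mathrm{fold}}A)\simeq A$.

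Your induction also does not parse. The layers $N_k[k]=\mathrm{cofib}(f_k)$ receive maps from $Y_k$, but they do not come with maps to $Y_{n-1}$. So there are no composites $N_k[k]\to Y_{n-1}\to Y_n$ to null-homotope. Even nullity of $f_n$ layer by layer would not give $f_n\simeq 0$, because of the extension problem.

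The fallback does not rescue this. Applying $\mathrm{map}(c,-)$ with $c=Y_{n-1}$ does reduce the forward direction to spectra. But there the Dold--Kan correspondence is the same $\infty$-categorical statement, with no strict normalized complex for the extra degeneracies to contract. Passing to $\pi_*$, where a chain-level argument exists, does not detect null maps: $Sq^1\colon H\mathbb{F}_2\to\Sigma H\mathbb{F}_2$ is nonzero but zero on homotopy groups.

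\textbf{Converse.} Splittings of each $\mathrm{map}(c,X_\bullet)$ found separately need not be natural in $c$. The split object realizing a prescribed zero-map filtered object is never constructed. Free split objects such as the one above have several nonzero layers, so you would have to pass to retracts. That requires knowing that retracts of split objects are split, which is exactly the nontrivial input.

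\textbf{The missing idea} is the d\'ecalage, which is what the paper uses. The augmented simplicial object $\mathrm{Dec}_+X_\bullet$ (restriction along $[n]\mapsto[0]\star[n]$) is always split. By \cite[Corollary 4.7.2.9]{LurieHA}, $X_\bullet$ is split iff the canonical map $\mathrm{Dec}_+X_\bullet\to X_\bullet$ has a right homotopy inverse. Under Dold--Kan this map becomes $p\colon W_\bullet\to Y_\bullet$ with $W_n=\mathrm{fib}(Y_n\to Y_{n+1})$.

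The transition maps $w_n\colon W_{n-1}\to W_n$ are null for a formal reason. The map $w_{n+1}$ is the composite $\mathrm{fib}(f_{n+1})\to\mathrm{fib}(f_{n+2}f_{n+1})\to\mathrm{fib}(f_{n+2})$ of two consecutive maps in an octahedral fiber sequence.

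A section $s$ of $p$ then gives $f_n\simeq f_np_{n-1}s_{n-1}\simeq p_nw_ns_{n-1}\simeq 0$. Conversely, if all $f_n\simeq 0$, then $W_n\simeq Y_n\oplus Y_{n+1}[-1]$, and the inclusion of the first summand is the required section. This retract argument replaces both your contracting-homotopy induction and the unspecified bar-type realization.
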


\begin{proof}
By~\cite[Corollary 4.7.2.9]{LurieHA}, an augmented simplicial object $X_\bullet$ is split iff the canonical map $\mathrm{Dec}_+X_\bullet\to X_\bullet$ admits a right homotopy inverse. Here, $\mathrm{Dec}_+$, the \textit{augmented d\'ecalage}~\cite[\href{https://kerodon.net/tag/04SN}{Construction 04SN}]{kerodon}, is the endofunctor on the $\infty$-category of augmented simplicial objects in $\cC$ given by pullback along the functor $\Delta_+\to\Delta_+$, $[n]\mapsto [0]\star[n]\simeq[n+1]$.
Under the Dold--Kan correspondence, the augmented simplicial object $\mathrm{Dec}_+X_\bullet$ corresponds to an extended filtered object $W_n\coloneqq\fib(Y_n\to Y_{n+1})$, $n\geq -1$, which is split, and the morphism $\mathrm{Dec}_+X_\bullet\to X_\bullet$ corresponds to the canonical morphism of filtered objects $p\colon W_\bullet\to Y_\bullet$. 
If $p$ has a right homotopy inverse, then we conclude that $Y_\bullet$ is also split.
Conversely, if $Y_\bullet$ is split, then $W_n\simeq Y_n\oplus Y_{n+1}[-1]$, so the required right homotopy inverse comes from the inclusion of the first summand.
\end{proof}

\begin{corollary}\label{cor:monadic}
An adjunction $F:\cC \leftrightarrows \cD:G$ between stable $\infty$-categories is monadic if and only if the right adjoint $G$ satisfies the following conditions:
\begin{enumerate}[(1)]
    \item $G$ is conservative. (Equivalently: $G(A)\simeq 0\implies A\simeq 0$)
    \item If $X_\bullet\in\mathrm{Fun}(N(\mathbb Z_{\geq 0}),\cD)$ is a filtered object in $\cD$ and $G(X_\bullet)$ admits an augmentation so that the corresponding extended filtered object is split, then the colimit of $X_\bullet$ exists in $\cD$ and is preserved by $G$.
\end{enumerate}
\end{corollary}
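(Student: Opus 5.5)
The plan is to deduce Corollary~\ref{cor:monadic} directly from Theorem~\ref{thm:BBL}, translating each of its two conditions into the stable setting.

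For condition (1), conservativity is unchanged. The parenthetical equivalence follows from stability. Since $G$ is exact, $G(\mathrm{cofib}(f))\simeq\mathrm{cofib}(G(f))$. In a stable $\infty$-category a morphism is an isomorphism iff its cofiber vanishes. So if $G$ reflects zero objects, $G(f)$ iso $\Rightarrow$ $\mathrm{cofib}(G f)\simeq 0$ $\Rightarrow$ $\mathrm{cofib}(f)\simeq 0$ $\Rightarrow$ $f$ iso. The converse is immediate by applying conservativity to $0\to A$.

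For condition (2), I would transport along the Dold--Kan equivalence~\eqref{eq:LurieDK}.

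First, given a simplicial object $X_\bullet$ in $\cD$, let $Y_\bullet$ be the corresponding filtered object. The equivalence is natural in exact functors, since $Y_n=\operatorname{colim}\mathrm{sk}_nX_\bullet$ is built from finite colimits, which $G$ preserves. Hence $G(Y_\bullet)$ corresponds to $G(X_\bullet)$.

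Second, $\operatorname{colim}X_\bullet\simeq\operatorname{colim}_nY_n$, and one exists iff the other does. This holds because the skeleta exhaust $X_\bullet$ and colimits commute with colimits. It follows that ``colimit exists and is preserved by $G$'' means the same on both sides.

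Third, $G(X_\bullet)$ is split iff it admits an augmentation whose augmented simplicial object is split. By the preceding lemma, that happens iff some augmentation of the filtered object $G(Y_\bullet)$ gives a split extended filtered object. Here I use the equivalence between augmented simplicial objects and augmented/extended filtered objects constructed above, again compatible with $G$.

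Combining these steps, condition (2) of Theorem~\ref{thm:BBL} becomes exactly condition (2) of the corollary. The only point needing care is that a splitting of $G(X_\bullet)$ automatically includes an augmentation, namely the restriction to $\Delta_+^{\mathrm{op}}$. Conversely, a split augmented simplicial object is split in the underlying simplicial sense. Both directions are formal.

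I expect the main obstacle to be the naturality of the Dold--Kan equivalence, and of its augmented variant, with respect to exact functors, together with the identification of colimits. I would handle this by noting that the construction only uses skeleta (finite colimits) and fiber rotation, both of which are preserved by exact functors between stable $\infty$-categories.
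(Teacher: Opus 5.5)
Your proposal is correct and is essentially the paper's argument: the paper gives no separate proof, deriving the corollary from Theorem~\ref{thm:BBL} by transporting condition~(2) along the Dold--Kan equivalence~\eqref{eq:LurieDK} and its augmented variant, with the preceding lemma identifying split augmented simplicial objects with split extended filtered objects. You additionally spell out two points the paper leaves implicit: that these equivalences are compatible with the exact functor $G$, and that the colimit of a simplicial object agrees with the colimit of its associated filtered object.
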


The following lemma is useful for checking the second condition in the above corollary.

\begin{lemma}\label{lem:splitfilteredchar}
Let $\cC$ be a stable $\infty$-category and $X_\bullet\in\mathrm{Fun}(N(\mathbb Z_{\geq 0}),\cC)$ a filtered object in $\cC$.
Then the following are equivalent:
\begin{enumerate}[(1)]
    \item $X_\bullet$ admits an augmentation $X_\bullet\to\underline{X_{-1}}$ so that the corresponding extended filtered object $Y_\bullet$ (with $Y_{-1}=X_{-1}[-1]$, $Y_n=\mathrm{fib}(X_n\to X_{-1})$) is split.
    \item $X_\bullet$ is the direct sum of a constant filtered object $\underline{X_{-1}}$ and a filtered object $Y_0\xrightarrow{0}Y_1\xrightarrow{0}Y_2\xrightarrow{0}\ldots$ for which all maps are zero maps. We say in this case that $X_\bullet$ is \textbf{essentially constant}.
\end{enumerate}
\end{lemma}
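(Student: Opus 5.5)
The plan is to translate between the augmentation description and the direct-sum description by working throughout in the stable $\infty$-category $\mathrm{Fun}(N(\bZ_{\geq 0}),\cC)$. Fiber sequences there are computed levelwise. A morphism of filtered objects $\underline{X_{-1}}\to X_\bullet$ out of a constant filtered object is determined by its component at level $0$: the inclusion of $\{0\}$ is initial, so $\underline{(-)}$ is left adjoint to evaluation at $0$.

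\textbf{(2)$\Rightarrow$(1).} Suppose $X_\bullet\simeq \underline{X_{-1}}\oplus Y'_\bullet$ with all transition maps of $Y'_\bullet$ zero. Take as augmentation the projection $X_\bullet\to\underline{X_{-1}}$ onto the first summand. Its fiber is $Y'_\bullet$, so the extended filtered object is
\[
X_{-1}[-1]\to Y'_0\xrightarrow{0}Y'_1\xrightarrow{0}\cdots.
\]
The maps $f_n$ with $n\geq 1$ vanish by assumption. For $f_0$, note that the first map is the connecting map $X_{-1}[-1]\to Y'_0$ of the level-$0$ fiber sequence $Y'_0\to X_0\to X_{-1}$. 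That sequence is split, so the connecting map is zero, and hence $Y_\bullet$ is split.

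\textbf{(1)$\Rightarrow$(2).} Suppose we are given $p\colon X_\bullet\to\underline{X_{-1}}$ such that the extended filtered object $Y_\bullet$ is split.
\begin{itemize}
\item First, $f_0\simeq 0$ says that the connecting map $X_{-1}[-1]\to Y_0$ vanishes. Hence $p_0\colon X_0\to X_{-1}$ admits a section $s_0\colon X_{-1}\to X_0$ in $\cC$.
\item By the adjunction, $s_0$ extends to a map of filtered objects $s\colon\underline{X_{-1}}\to X_\bullet$. The composite $p\circ s$ is a map between constant filtered objects, which is determined by its level-$0$ component $p_0s_0\simeq\mathrm{id}$. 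So $s$ is a section of $p$ in $\mathrm{Fun}(N(\bZ_{\geq 0}),\cC)$.
\item Since that category is stable, a split epimorphism gives a direct sum decomposition $X_\bullet\simeq \underline{X_{-1}}\oplus \fib(p)$. Moreover $\fib(p)=(Y_n)_{n\geq 0}$, whose transition maps $f_n$, $n\geq1$, vanish by assumption.
\end{itemize}

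The main point requiring care is the claim that the splitting at level $0$ propagates to a splitting of filtered objects. This is exactly what the adjunction between constant objects and evaluation at $0$ provides, and I expect it to be the only nonformal step. A second, smaller point is identifying $f_0$ with the connecting map of $Y_0\to X_0\to X_{-1}$. This should follow from the fiber-rotation construction of extended filtered objects recalled before the preceding lemma.
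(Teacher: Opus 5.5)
Your proof is correct and follows the same idea as the paper. Use $f_0\simeq 0$ to split the level-$0$ triangle $Y_0\to X_0\to X_{-1}$, then propagate the section along the filtration. The paper does the propagation by hand in $\mathrm{h}\cC$, setting $s_n=g_ns_{n-1}$ and checking the block form of each $g_n$. Your adjunction between constant filtered objects and evaluation at $0$ produces exactly these $s_n$ as the components of a single map $\underline{X_{-1}}\to X_\bullet$. As a result, you get the direct-sum decomposition directly in the stable category $\mathrm{Fun}(N(\bZ_{\geq 0}),\cC)$, rather than inferring it from levelwise homotopy-category data.
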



\begin{proof}
The implication (2)$\implies$(1) is clear, since we can take as augmentation the projection to the direct summand $\underline{X_{-1}}$.
It remains to show (1)$\implies$(2).

So suppose $X_\bullet=X_0\xrightarrow{g_1}X_1\xrightarrow{g_2}X_2\to\cdots$ admits an augmentation $X_\bullet\to\underline{X_{-1}}$ so that the corresponding extended filtered object
\[
Y_{-1}\xrightarrow{f_0\simeq 0}Y_{0}\xrightarrow{f_1\simeq 0}Y_{1}\xrightarrow{f_2\simeq 0}Y_{2}\longrightarrow\cdots
\]
is split.
By definition, we have exact triangles
\[
Y_n\xrightarrow{\iota_n}X_n\xrightarrow{\rho_n}X_{-1}\xrightarrow{\delta_n}Y_n[1]
\]
in $\mathrm{h}\cC$, and $[f_n]=0$ implies $[\delta_n]= 0$, i.e.\ all these triangles are split.
We construct the splittings $s_n\colon X_{-1}\to X_n$ inductively so that $[s_{n}]=[g_n][s_{n-1}]$ in the homotopy category $\mathrm{h}\cC$ for $n\geq 1$ and $s_0$ is an arbitrary choice of splitting. 
The morphism between split exact triangles
\[
\begin{tikzcd}
Y_{n-1} \arrow[r,swap,"\iota_{n-1}"]\arrow[d,swap,"f_n\simeq 0"] & X_{n-1} \arrow[r,swap,"\rho_{n-1}"]\arrow[l,bend right,swap,dashed,"t_{n-1}"]\arrow[d,"g_n"] & X_{-1}\arrow[l,bend right,swap,dashed,"s_{n-1}"]\arrow[d,equal] \\
Y_n \arrow[r,"\iota_n"] & X_n \arrow[r,"\rho_n"]\arrow[l,bend left,dashed,"t_n"] & X_{-1}\arrow[l,bend left,dashed,"s_n"]
\end{tikzcd}
\]
then shows that $[g_n]$ is just projection to the summand $X_{-1}$, i.e.\ the only non-zero block of $[g_n]$ with respect to the decompositions $X_n=X_{-1}\oplus Y_n$ is the identity of $X_{-1}$. 
Indeed, identifying morphisms with their images in $\mathrm{h}\cC$ in the following calculation:
\[
\begin{pmatrix}
    \rho_ng_n s_{n-1} & \rho_ng_n\iota_{n-1} \\ t_ng_ns_{n-1} & t_ng_n\iota_{n-1}
\end{pmatrix}
=
\begin{pmatrix}
    \rho_{n-1}s_{n-1} & \rho_n\iota_nf_n \\ t_ns_n & t_n\iota_nf_n
\end{pmatrix}
=
\begin{pmatrix}
    1_{X_{-1}} & 0 \\ 0 & 0
\end{pmatrix}.
\]
Hence, $X_\bullet$ is a direct sum as a filtered object. 
\end{proof}

\begin{remark}
The second condition in Corollary~\ref{cor:monadic} almost makes sense if $\cC$ and $\cD$ are only triangulated, i.e.\ without assuming the existence of an enhancement. The problem is that homotopy colimits of filtered objects are, to our knowledge, not defined in general triangulated categories, but only in those with countable direct sums~\cite{bokstedt_neeman}.
The existence of countable direct sums is however incompatible with the axioms of a stability condition, so we will not impose it.
\end{remark}

\subsubsection*{Canonical resolutions}

For the rest of this subsection, fix a monadic adjunction between stable $\infty$-categories $F:\cC \leftrightarrows \cD:G$ with counit $\varepsilon\colon FG\to 1_{\cD}$ and unit $\eta\colon 1_{\cC}\to GF$.
Complete the counit to a cofiber sequence
\[
FG\xrightarrow{\varepsilon} 1_{\cD}\xrightarrow{\tau}S
\]
from which we build an extended filtered object $R_\bullet$ in $\mathrm{Fun}(\cD,\cD)$ given by
\begin{equation*}
\ldots\longrightarrow R_n\coloneqq S^{n+1}\circ[-1]\xrightarrow{\tau\,S^{n+1}[-1]}S^{n+2}\circ[-1]\eqqcolon R_{n+1}\longrightarrow\ldots,
\end{equation*}
thus beginning with $R_{-1}=[-1]\xrightarrow{\tau[-1]}R_0=S\circ [-1]\to\ldots$.

\begin{lemma}\label{lem:monadR}
The extended filtered object $GR_\bullet$ is split, i.e.\ the natural transformations $GR_n\to GR_{n+1}$ vanish, and moreover $\operatorname{colim}R_\bullet=0$.
\end{lemma}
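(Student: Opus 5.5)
We prove the two assertions separately: splitness of $GR_\bullet$ is a direct computation with the triangle identities, and vanishing of the colimit then follows from monadicity via Corollary~\ref{cor:monadic} and Lemma~\ref{lem:splitfilteredchar}.

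\textbf{Splitness.} Apply $G$ to the cofiber sequence $FG\xrightarrow{\varepsilon}1_{\cD}\xrightarrow{\tau}S$. This gives a cofiber sequence
\[
GFG\xrightarrow{G\varepsilon}G\xrightarrow{G\tau}GS.
\]
By the triangle identity $G\varepsilon\circ\eta G\simeq 1_G$, the map $G\varepsilon$ admits a section. The cofiber sequence therefore splits, and $G\tau\simeq 0$.

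The transition maps of $GR_\bullet$ are $G\tau S^{n+1}[-1]$. These are obtained from the natural transformation $G\tau\colon G\to GS$ by precomposition (whiskering on the right) with the functor $S^{n+1}[-1]$. Since $G\tau\simeq 0$ as a natural transformation, each transition map vanishes.

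One point needs care. The statement is about natural transformations in $\mathrm{Fun}(\cD,\cC)$, so we need $G\tau\simeq0$ there and not merely objectwise. This follows because the splitting $\eta G$ is itself a natural transformation, so the argument above takes place in the stable $\infty$-category $\mathrm{Fun}(\cD,\cC)$, and whiskering preserves nullhomotopies. Consequently $GR_\bullet$ is a split extended filtered object.

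\textbf{Vanishing colimit.} Fix $X\in\cD$ and consider the filtered object $R_{\geq0}(X)$. By the splitness just shown, $G(R_{\geq0}(X))$, with augmentation coming from $R_{-1}$, is a split extended filtered object in the sense of the augmented Dold--Kan discussion. Lemma~\ref{lem:splitfilteredchar} then says it is essentially constant, with constant part $GR_{-1}(X)[1]=G(X)$ up to the shift convention; here it is in fact zero, as shown below.

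Condition (2) of Corollary~\ref{cor:monadic} gives two facts: the colimit $\operatorname{colim}R_\bullet(X)$ exists in $\cD$, and $G$ preserves it. The colimit of a filtered object all of whose transition maps are zero is $0$, since the identity of the colimit factors through a zero map. Hence $G(\operatorname{colim}R_\bullet(X))\simeq\operatorname{colim}GR_\bullet(X)\simeq 0$. Conservativity of $G$ (condition (1)) then gives $\operatorname{colim}R_\bullet(X)=0$.

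\textbf{Where care is needed.} The main obstacle is bookkeeping the augmentation and shift conventions. One must check that an extended filtered object with $f_n\simeq0$ for all $n\ge0$, viewed as $Y_\bullet$ with $Y_{-1}=R_{-1}$, has vanishing colimit in the relevant sense. It is also cleaner to apply Corollary~\ref{cor:monadic}(2) directly to the $\bZ_{\geq0}$-part $R_{\geq0}(X)$. For that, we augment to $\underline{0}$, using $\operatorname{colim} GR_{\geq0}=0$, which gives the required split extended object. Pointwise colimits then assemble into a colimit in $\mathrm{Fun}(\cD,\cD)$.
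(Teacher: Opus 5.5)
Your proof is correct and follows the paper's argument. The triangle identity $G\varepsilon\circ\eta G\simeq 1_G$ forces $G\tau\simeq 0$, and then condition (2) of Corollary~\ref{cor:monadic}, applied with the zero augmentation, together with conservativity of $G$ gives $\operatorname{colim}R_\bullet=0$.

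The only blemish is your aside invoking Lemma~\ref{lem:splitfilteredchar} with the augmentation coming from $R_{-1}$. That augmentation corresponds to the filtered object $GJ_\bullet X$, which is essentially constant with constant part $GX$ (Proposition~\ref{prop:monadJ}); it does not correspond to $GR_{\geq0}(X)$. Drop it and use only the zero augmentation, as you and the paper do at the end.
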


\begin{proof}
By definition of $\tau$ there is a fiber sequence
\[
GFG\xrightarrow{G\varepsilon} G\xrightarrow{G\tau}GS,
\]
but $G\varepsilon$ has right homotopy inverse $\eta G$ by one of the triangle identities of the adjunction, so $G\tau= 0$ and the first claim follows. In particular, $\operatorname{colim} GR_\bullet=0$.
By the second condition in Corollary~\ref{cor:monadic} we get $G(\operatorname{colim} R_\bullet X)\simeq \operatorname{colim} GR_\bullet X=0$ for any $X\in \cD$. (View an extended filtered object as a filtered object by shifting the whole sequence to the right and take the augmentation by the zero object.)
Since $G$ is conservative by the first condition in Corollary~\ref{cor:monadic}, it follows that $\operatorname{colim} R_\bullet X=0$. Since $X$ was arbitrary, $\operatorname{colim}R_\bullet=0$.
\end{proof}

Let the augmented filtered object corresponding to $R_\bullet$ be $J_\bullet\to 1_{\cD}=R_{-1}[1]$. This is a coherent diagram
\[
\begin{tikzcd}
    FG=J_0\arrow[r]\arrow[d] & J_1 \arrow[r]\arrow[dl] & J_2 \arrow[r]\arrow[dll] & \ldots \\
    1_{\cD}
\end{tikzcd}
\]
where 
\[
J_n\coloneqq\mathrm{cofib}([-1]\to R_n)\simeq \mathrm{fib}(1_{\cD}\xrightarrow{\tau^{n+1}}S^{n+1}),\qquad \tau^n\coloneqq (\tau S^{n-1})\cdots(\tau S)\tau.
\]
By definition of $J_\bullet$ and the octahedral axiom, there are fiber sequences $J_n\to J_{n+1}\to FGS^{n+1}$, which shows by induction that the image of each $J_n$ is contained in the full triangulated subcategory of $\mathrm{h}\cD$ generated by the image of $F$. 
The corresponding full stable subcategory of $\cD$ is the \textit{stable Kleisli $\infty$-category} in the terminology of~\cite{christ_spherical}. 

\begin{proposition}\label{prop:monadJ}
The filtered object $GJ_\bullet$ is essentially constant and $\operatorname{colim}J_\bullet\simeq 1_{\cD}$.
\end{proposition}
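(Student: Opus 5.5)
The plan is to deduce both claims from Lemma~\ref{lem:monadR}, using the dictionary between extended filtered objects and augmented filtered objects established earlier. By construction, $J_\bullet\to 1_{\cD}$ is the augmented filtered object corresponding to the extended filtered object $R_\bullet$. Concretely, $J_n=\mathrm{cofib}(R_{-1}\to R_n)$, and the fiber of the augmentation $J_n\to 1_{\cD}$ is $R_n$, up to the shift conventions. Applying the exact functor $G$ commutes with all of these constructions, since $G$ preserves finite limits and colimits. Hence $GJ_\bullet\to G=\underline{G}$ is an augmentation whose associated extended filtered object is $GR_\bullet$.

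For the first claim, Lemma~\ref{lem:monadR} says that $GR_\bullet$ is split. Condition (1) of Lemma~\ref{lem:splitfilteredchar} therefore holds for the filtered object $GJ_\bullet$ in $\mathrm{Fun}(\cD,\cC)$, which is stable. Lemma~\ref{lem:splitfilteredchar} then shows that $GJ_\bullet$ is essentially constant, namely the direct sum of $\underline{G}$ and a filtered object with zero transition maps. One can also apply the lemma objectwise in $X\in\cD$, but working in the functor category avoids coherence issues.

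For the second claim, the natural approach uses the cofiber sequences
\[
R_{-1}\to R_n\to J_n,
\]
which are natural in $n$, with $R_{-1}=[-1]$ constant. Colimits of sequential diagrams commute with cofibers in a stable $\infty$-category whenever they exist. Here $\operatorname{colim}R_\bullet$ exists and vanishes by Lemma~\ref{lem:monadR}, and the constant diagram has colimit $[-1]$. So $\operatorname{colim}J_\bullet$ exists and equals $\mathrm{cofib}([-1]\to 0)\simeq 1_{\cD}$.

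One must still check that the induced map $\operatorname{colim}J_\bullet\to 1_{\cD}$ is the one coming from the augmentation. This follows from the compatible fiber sequences $R_n\to J_n\to 1_{\cD}$: passing to colimits gives $0\to\operatorname{colim}J_\bullet\to 1_{\cD}$, a fiber sequence, so the augmentation induces an equivalence. Alternatively, one can argue objectwise. By condition (2) of Corollary~\ref{cor:monadic} applied to $J_\bullet X$, whose image $GJ_\bullet X$ is essentially constant, the colimit exists and is preserved by $G$. Conservativity of $G$ then promotes the equivalence $G\operatorname{colim}J_\bullet X\simeq GX$ to $\operatorname{colim}J_\bullet X\simeq X$.

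The main subtlety I expect is existence of colimits in $\cD$ and their functoriality in $X$. If one insists on computing colimits in $\mathrm{Fun}(\cD,\cD)$, they are pointwise, and the objectwise existence from Lemma~\ref{lem:monadR} suffices. I would therefore phrase everything objectwise and assemble the result using that pointwise colimits compute colimits in functor categories.
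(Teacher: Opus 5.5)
Your proof is correct. The first claim is argued exactly as in the paper: Lemma~\ref{lem:monadR} makes $GR_\bullet$ split, and Lemma~\ref{lem:splitfilteredchar} then gives that $GJ_\bullet$ is essentially constant.

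For $\operatorname{colim}J_\bullet\simeq 1_{\cD}$, the paper's proof is the one you offer as an alternative. It applies condition (2) of Corollary~\ref{cor:monadic} directly to $J_\bullet$, which is possible because $GJ_\bullet$ is essentially constant. This gives $G(\operatorname{colim}J_\bullet)\simeq\operatorname{colim}GJ_\bullet\simeq G$, and conservativity finishes the argument.

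Your main route instead feeds the vanishing $\operatorname{colim}R_\bullet=0$ from Lemma~\ref{lem:monadR} into the fiber sequences $R_n\to J_n\to 1_{\cD}$ and commutes the sequential colimit with fibers. Since $\operatorname{colim}R_\bullet=0$ was itself proved by the same monadicity-plus-conservativity argument, applied to $R_\bullet$, the two routes use the same ingredients in a different order.

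Your arrangement has a modest advantage. It needs conservativity only in the form $GA\simeq 0\Rightarrow A\simeq 0$, and it shows directly that the equivalence $\operatorname{colim}J_\bullet\to 1_{\cD}$ is the map induced by the augmentation. The paper's one-line argument leaves that point implicit. To invoke conservativity there, one must note that $G$ of the augmentation-induced map is the colimit of the projections of $GJ_\bullet$ onto its constant summand $\underline{G}$. That holds because the splitting in Lemma~\ref{lem:splitfilteredchar} is built from the augmentation.
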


\begin{proof}
The first claim follows immediately from Lemma~\ref{lem:splitfilteredchar}.
As a consequence of this and monadicity of $G$, there are natural isomorphisms $G(\operatorname{colim}J_\bullet)\simeq \operatorname{colim}GJ_\bullet\simeq G$, and thus $\operatorname{colim}J_\bullet\simeq 1_{\cD}$.
\end{proof}

The following lemmas will be used in the proofs in the next subsection.
Since they require no additional assumptions on the monadic adjunction, we will discuss them here.

\begin{lemma}\label{lem:GS_G}
Let $T\coloneqq \mathrm{cofib}(1_{\cC}\xrightarrow{\eta}GF)\circ [1]$, then there is a natural isomorphism of functors $GS\simeq TG$. (This is true for any adjunction of stable $\infty$-categories.)
\end{lemma}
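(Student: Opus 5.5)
We prove the natural isomorphism $GS\simeq TG$ by applying the exact functor $G$ to the cofiber sequence defining $S$, and then identifying the resulting sequence with the one defining $T$, evaluated on $G$. Precomposing the cofiber sequence $FG\xrightarrow{\varepsilon}1_{\cD}\xrightarrow{\tau}S$ with $G$ on the left gives, by exactness of $G$, a cofiber sequence of functors $\cD\to\cC$:
\[
GFG\xrightarrow{G\varepsilon}G\xrightarrow{G\tau}GS.
\]
On the other hand, whiskering the cofiber sequence $1_{\cC}\xrightarrow{\eta}GF\to T[-1]$ (which is the definition of $T$) with $G$ on the right gives a cofiber sequence
\[
G\xrightarrow{\eta G}GFG\longrightarrow TG[-1].
\]

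The key input is the triangle identity $(G\varepsilon)\circ(\eta G)\simeq 1_G$. It shows that $\eta G$ is a split monomorphism with retraction $G\varepsilon$. In a stable $\infty$-category, a retract diagram $G\xrightarrow{\eta G}GFG\xrightarrow{G\varepsilon}G$ splits $GFG$ as $G\oplus K$, where $K$ can be described in two ways:
\[
K\simeq \mathrm{cofib}(\eta G)\simeq \mathrm{fib}(G\varepsilon).
\]
Concretely, the composite $\mathrm{fib}(G\varepsilon)\to GFG\to\mathrm{cofib}(\eta G)$ is an equivalence. One checks this in the homotopy category of the functor category, where the split triangles decompose as direct sums, and then lifts it, since equivalences are detected there.

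Combining the two descriptions of $K$:
\[
\mathrm{fib}(G\varepsilon)\simeq GS[-1], \qquad \mathrm{cofib}(\eta G)\simeq TG[-1].
\]
The first holds because $GS[-1]\to GFG\to G$ is a fiber sequence; the second holds by definition of $T$. Together with the equivalence above this gives $GS[-1]\simeq TG[-1]$, and shifting by $[1]$ yields $GS\simeq TG$. Naturality is automatic because every construction takes place in the stable $\infty$-category $\mathrm{Fun}(\cD,\cC)$.

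The only delicate step is making the retract-splitting argument coherent, rather than merely valid objectwise in $\mathrm{h}\cC$. I would handle it by noting that $\mathrm{Fun}(\cD,\cC)$ is itself stable, so the splitting lemma applies there directly, with $K$ defined as a fiber in that category. No monadicity hypothesis is used, consistent with the parenthetical remark in the statement.
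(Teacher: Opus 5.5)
Your argument is correct and is essentially the paper's proof. The paper applies the octahedral axiom to the factorization $1_G\simeq(G\varepsilon)\circ(\eta G)$, and since $\mathrm{cofib}(1_G)\simeq 0$ the resulting triangle $TG[-1]\to 0\to GS\to TG$ yields the isomorphism; this is the same comparison map (up to shift) as your composite $\mathrm{fib}(G\varepsilon)\to GFG\to\mathrm{cofib}(\eta G)$, so your retract-splitting step is that octahedron in different packaging, carried out, as you propose, in the stable $\infty$-category $\mathrm{Fun}(\cD,\cC)$.
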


\begin{proof}
This follows from the octahedral axiom applied to the triangle identity $1_G\simeq(G\varepsilon)\circ(\eta G)$:
\[
\begin{tikzcd}
 & 0 \arrow[dotted,ddl]\arrow[dr] & \\
TG[-1] \arrow[dotted,d]\arrow[ur] & & GS \arrow[dotted,ll,"\simeq"]\arrow[dotted,ddl] \\
G \arrow[dr,swap,"\eta G"]\arrow[rr,"1_G"] & & G \arrow[uul]\arrow[u,swap]\\
 & GFG \arrow[ur,swap,"G\varepsilon"]\arrow[uul] & 
\end{tikzcd}
\]
where the dotted arrows denote the degree-one connecting morphisms in the corresponding exact triangles.
\end{proof}

\begin{lemma}\label{lem:homvanish} 
Suppose $X,Y\in\cD$ are such that $
\operatorname{Map}_{\cC}(T^nGX,GY)$ is contractible for all $n\geq 0$, where $T$ is as in the previous lemma.
Then $\operatorname{Map}_{\cD}(X,Y)$ is contractible. 
In particular, $\operatorname{Hom}_{\mathrm h\cD}(X,Y)=0$. 
\end{lemma}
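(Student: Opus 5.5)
Using Proposition~\ref{prop:monadJ}, write $X\simeq\operatorname{colim}_n J_nX$, so that
\[
\operatorname{Map}_{\cD}(X,Y)\simeq\lim_n\operatorname{Map}_{\cD}(J_nX,Y).
\]
It therefore suffices to show that each $\operatorname{Map}_{\cD}(J_nX,Y)$ is contractible; a limit of contractible spaces is contractible.

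I would prove this by induction on $n$, using the fiber sequences $J_{n-1}X\to J_nX\to FGS^{n}X$ obtained from the octahedral axiom above (with $J_{-1}=0$ and $J_0=FG$). Applying $\operatorname{Map}_{\cD}(-,Y)$ gives a fiber sequence of mapping spectra. The inductive step then reduces to showing that $\operatorname{Map}_{\cD}(FGS^nX,Y)$ is contractible.

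By the adjunction, $\operatorname{Map}_{\cD}(FGS^nX,Y)\simeq\operatorname{Map}_{\cC}(GS^nX,GY)$. Iterating Lemma~\ref{lem:GS_G} gives $GS^n\simeq T^nG$, so this space is $\operatorname{Map}_{\cC}(T^nGX,GY)$, which is contractible by hypothesis for all $n\ge0$.

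The main point needing care is that "contractible" should be understood for mapping spectra, or equivalently for all shifts, so that fiber sequences give long exact sequences in which all terms vanish. For mapping spaces in a stable $\infty$-category, contractibility of $\operatorname{Map}(A,B)$ alone gives only $\pi_0=0$ of the spectrum in nonnegative degrees. The induction needs $\operatorname{Map}(A[1],B)$ as well, and the fiber sequence does not obviously supply it.

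I would handle this by applying the hypothesis with the triangle shifted. The condition for all $n$, combined with $T^nG(X[k])=T^nGX[k]$, only covers $k\ge0$ in spirit, and the shift by $-1$ arises in the long exact sequence. Concretely, to show $\operatorname{Map}(J_nX,Y)\simeq*$, I need $\pi_0$ and $\pi_{i>0}$. The $\pi_i$ come from $\operatorname{Hom}(J_nX[i],Y)$, and the triangle involves $\operatorname{Hom}(FGS^nX[i],Y)$ and $\operatorname{Hom}(J_{n-1}X[i],Y)$. The latter vanish for all $i\ge0$ by the contractibility hypothesis, since $\pi_i\operatorname{Map}(A,B)=\operatorname{Hom}(A[i],B)$. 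So at the level of $\pi_i$ with $i\ge0$, the exact sequence
\[
\operatorname{Hom}(FGS^nX[i],Y)\to\operatorname{Hom}(J_nX[i],Y)\to\operatorname{Hom}(J_{n-1}X[i],Y)
\]
suffices. No shift in the wrong direction is needed, because the middle term is sandwiched between two vanishing terms.

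Finally, the passage to the colimit: $\pi_i$ of a sequential limit involves a $\lim^1$ term built from $\pi_{i+1}$. Since every term vanishes in all degrees $\ge0$, both $\lim$ and $\lim^1$ vanish, and the limit is contractible. This last step is the one I would double-check, but it is a standard Milnor sequence argument.
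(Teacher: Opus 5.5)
Your proposal is correct and follows the paper's proof: write $X\simeq\operatorname{colim}_nJ_nX$, show by induction that each $\operatorname{Map}_{\cD}(J_nX,Y)$ is contractible using the cofiber sequences $J_{n-1}X\to J_nX\to FGS^nX$ together with the adjunction and $GS^n\simeq T^nG$, then pass to the limit. Your worry about mapping spectra is unnecessary: $\operatorname{Map}_{\cD}(-,Y)$ turns each cofiber sequence into a fiber sequence of spaces whose fiber and base are contractible, which is exactly how the paper argues, and your degree-wise check that $\operatorname{Hom}(J_nX[i],Y)=0$ for $i\ge 0$ is just an equivalent reformulation of that step.
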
 

\begin{proof} 
Recall from Proposition~\ref{prop:monadJ} that  $X\simeq\operatorname{colim}_{n\geq0}J_nX$, and that there are fiber sequences 
$J_nX\rightarrow J_{n+1}X\rightarrow FGS^{n+1}X$. 
We first show by induction that 
\[ 
\operatorname{Map}_{\cD}(J_nX,Y)\simeq * 
\] 
for every $n\geq 0$. 

For $n=0$, we have $J_0X\simeq FGX$, and hence, by adjunction, 
\[ 
\operatorname{Map}_{\cD}(J_0X,Y) \simeq \operatorname{Map}_{\cD}(FGX,Y) \simeq \operatorname{Map}_{\cC}(GX,GY) \simeq *. 
\] 
Now suppose that $\operatorname{Map}_{\cD}(J_nX,Y)\simeq *$. 
Applying $\operatorname{Map}_{\cD}(-,Y)$ to the fiber sequence 
$J_nX\rightarrow J_{n+1}X\rightarrow FGS^{n+1}X$ 
gives a fiber sequence of mapping spaces 
\[ 
\operatorname{Map}_{\cD}(FGS^{n+1}X,Y) \longrightarrow \operatorname{Map}_{\cD}(J_{n+1}X,Y) \longrightarrow \operatorname{Map}_{\cD}(J_nX,Y). 
\] 
By adjunction and Lemma~\ref{lem:GS_G}, 
\[ 
\begin{aligned} \operatorname{Map}_{\cD}(FGS^{n+1}X,Y) &\simeq \operatorname{Map}_{\cC}(GS^{n+1}X,GY)\\ &\simeq \operatorname{Map}_{\cC}(T^{n+1}GX,GY) \simeq *. 
\end{aligned} 
\] 
Thus the other two terms in the fiber sequence are contractible, and hence $\operatorname{Map}_{\cD}(J_{n+1}X,Y)$ is as well. 
This completes the induction. 

Finally, since mapping out of a colimit is a limit, 
\[ 
\operatorname{Map}_{\cD}(X,Y) \simeq \operatorname{Map}_{\cD} \left(\operatorname{colim}_{n\geq0}J_nX,Y\right) \simeq \lim_{n\geq0}\operatorname{Map}_{\cD}(J_nX,Y) \simeq *. 
\] 
\end{proof}

\begin{lemma}\label{lem:morlift}
Let $X\in\cD$, $\overline{Y}\in\cC$, and $\bar{f}\colon GX\to \overline{Y}$ be a morphism.
Suppose that there are:
\begin{enumerate}[(1)]
    \item objects $Y_n\in\cD$ and morphisms $f_n\colon GY_n\to\overline{Y}$, $n\geq 0$,
    \item bicartesian squares
        \[
        \begin{tikzcd}
            FGY_n\arrow[r,"F(f_n)"]\arrow[d,swap,"\varepsilon_{Y_n}"] & F\overline{Y} \arrow[d] \\
            Y_n\arrow[r] & Y_{n+1}
        \end{tikzcd}
        \]
        for $n\geq -1$ (set $Y_{-1}\coloneqq X$, $f_{-1}\coloneqq \bar{f}$),
\end{enumerate}
such that, for $n\geq 0$, $f_n$ is a left homotopy inverse to the morphism $\overline{Y}\xrightarrow{\eta_{\overline{Y}}}GF\overline Y\to GY_n$.
Then $\bar{f}$ lifts along $G$ in the sense that there is a $Y\in \cD$, $f\colon X\to Y$, and an isomorphism $GY\simeq \overline{Y}$ under which $G(f)$ corresponds to a morphism homotopic to $\bar{f}$.
\end{lemma}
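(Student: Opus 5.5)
The plan is to take $Y\coloneqq\operatorname{colim}_{n\geq -1}Y_n$, the colimit of the sequence $X=Y_{-1}\to Y_0\to Y_1\to\cdots$ formed by the bottom arrows of the bicartesian squares. The morphism $f\colon X\to Y$ is the canonical map from the first term. To identify $GY$ with $\overline Y$, we use the second monadicity condition of Corollary~\ref{cor:monadic}. That requires showing that the filtered object $GY_\bullet$ (indexed from $n\geq 0$) is essentially constant with constant part $\overline Y$, in the sense of Lemma~\ref{lem:splitfilteredchar}.

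First apply $G$ to the bicartesian squares. Since $G$ is exact, this gives pushout squares in $\cC$:
\[
GY_{n+1}\simeq\operatorname{cofib}\bigl(GFGY_n\to GY_n\oplus GF\overline Y\bigr).
\]
Equivalently, there are fiber sequences $GY_n\to GY_{n+1}\to G\operatorname{cofib}(\varepsilon_{Y_n})\simeq GS Y_n$. By Lemma~\ref{lem:monadR} the map $G\tau$ is null, so $GY_n\to GY_{n+1}$ factors through $GF\overline Y\to GY_{n+1}$ up to homotopy.

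Precisely, let $\iota_n\colon GF\overline Y\to GY_{n+1}$ be the right vertical map. The pushout square gives the relation
\[
(GY_n\to GY_{n+1})\circ G\varepsilon_{Y_n}\simeq \iota_n\circ GF(f_n).
\]
Precomposing with $\eta_{GY_n}$ and using the triangle identity $G\varepsilon\circ\eta G\simeq 1$, we get
\[
(GY_n\to GY_{n+1})\simeq \iota_n\circ GF(f_n)\circ\eta_{GY_n}\simeq \iota_n\circ\eta_{\overline Y}\circ f_n
\]
by naturality of $\eta$. So each transition map factors through $\overline Y$ as $GY_n\xrightarrow{f_n}\overline Y\xrightarrow{u_{n+1}}GY_{n+1}$, where $u_{n+1}\coloneqq\iota_n\eta_{\overline Y}$. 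The hypothesis says $f_{n+1}u_{n+1}\simeq 1_{\overline Y}$ for $n+1\geq 0$. Thus $u_{n}$ exhibits $\overline Y$ as a retract of $GY_n$, with complement $W_n\coloneqq\fib(f_n)$.

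In block form with respect to $GY_n\simeq \overline Y\oplus W_n$, the transition map has only the identity block $\overline Y\to\overline Y$, since $f_{n+1}u_{n+1}f_n u_n=1$. The component $W_n\to GY_{n+1}$ vanishes because it is $u_{n+1}f_n|_{W_n}=0$. This is exactly the block computation of Lemma~\ref{lem:splitfilteredchar}. So $GY_\bullet$ is essentially constant with value $\overline Y$, and it admits the augmentation $(f_n)_n$, which is compatible since $f_{n+1}\circ u_{n+1}f_n=f_n$.

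By Corollary~\ref{cor:monadic}, the colimit exists and $GY\simeq\operatorname{colim}GY_n\simeq\overline Y$, the isomorphism being induced by the $f_n$. Under this identification $G(f)$ is the composite $GX\to GY_0\to GY\simeq\overline Y$. For the step from $n=-1$, the same pushout computation gives $GX\to GY_0\simeq u_0\circ\bar f$, and then $f_0u_0\simeq 1$ yields $G(f)\simeq\bar f$.

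The main obstacle is coherence. The homotopies $f_{n+1}u_{n+1}f_n\simeq f_n$ are only known in $\mathrm h\cC$, whereas the essential-constancy hypothesis concerns a functor $N(\bZ_{\geq0})\to\cC$ together with an augmentation. To handle this, we avoid building the augmentation by hand. Instead we note that Lemma~\ref{lem:splitfilteredchar} and Corollary~\ref{cor:monadic} only need the transition maps to be null on complements, which is a homotopy-category statement. A filtered object whose transition maps have this block form is equivalent to $\underline{\overline Y}\oplus(W_\bullet,0)$, because a $\bZ_{\geq0}$-diagram is determined up to contractible choice by its consecutive maps. The identification of the colimit with $\overline Y$ via the $f_n$ then follows.
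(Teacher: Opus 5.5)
Your proposal is correct and follows the paper's proof: you factor $GY_n\to GY_{n+1}$ as $\iota_n\eta_{\overline Y}f_n$ using the triangle identity and naturality of $\eta$, conclude that $GY_\bullet$ is essentially constant with value $\overline Y$, and invoke monadicity to get $Y=\operatorname{colim}Y_\bullet$ with $GY\simeq\overline Y$; your check $G(f)\simeq f_0u_0\bar f\simeq\bar f$ spells out a step the paper leaves implicit. One small slip, harmless because your ``precisely'' computation does not rely on it: the cofiber of $Y_n\to Y_{n+1}$ is $\operatorname{cofib}(F(f_n))$, not $SY_n$, so the correct fiber sequence is $GF\overline Y\to GY_{n+1}\to GSY_n$, and it is the composite $GY_n\to GY_{n+1}\to GSY_n$ that equals $G\tau_{Y_n}\simeq 0$.
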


\begin{proof}
The lower arrows in the given bicartesian squares together give a filtered object $Y_\bullet$.
We claim that $GY_\bullet$ is essentially constant.
To see this, consider the coherent diagram
\[
\begin{tikzcd}
    GY_n \arrow[r,"\eta_{GY_n}"]\arrow[d,swap,"f_n"] & GFGY_n\arrow[r,"G\varepsilon_{Y_n}"]\arrow[d,"GF(f_n)"] & GY_n\arrow[d] \\
    \overline{Y}\arrow[r,"\eta_{\overline{Y}}"] & GF\overline{Y}\arrow[r] &  GY_{n+1}
\end{tikzcd}
\]
which shows that the map $GY_n\to GY_{n+1}$ factors through the common summand $\overline{Y}$.
Here, the left square comes from naturality of $\eta$ while the right square is the image under $G$ of one of the given bicartesian squares.
By monadicity, the colimit $Y\coloneqq \operatorname{colim} Y_\bullet$ exists, and $GY\simeq \overline{Y}$.
By functoriality of the colimit, the morphisms $\underline{X}=\underline{Y_{-1}}\to Y_\bullet$ from the constant filtered object induce a morphism $f\colon X\to Y$ which is the required lift of $\bar{f}=f_{-1}$.
\end{proof}

\begin{remark}
One can show that for fixed $X\in\cD$ the following two types of data are equivalent:
\begin{enumerate}[(1)]
    \item A morphism $f\colon X\to Y$ in $\cD$.
    \item An object $\overline{Y}\in\cC$ and a sequence of objects $Y_0,Y_1,Y_2,\ldots\in\cD$ together with:
    \begin{itemize}
        \item morphisms $f_n\colon GY_n\to\overline{Y}$, where $n\geq -1$ and $Y_{-1}=X$, and
        \item bicartesian squares
        \[
        \begin{tikzcd}
            FGY_n\arrow[r,"F(f_n)"]\arrow[d,swap,"\varepsilon_{Y_n}"] & F\overline{Y} \arrow[d] \\
            Y_n\arrow[r] & Y_{n+1}
        \end{tikzcd}
        \]
        for $n\geq -1$, or equivalently, isomorphisms 
        \[ 
        g_n\colon Y_{n+1} \xrightarrow{\simeq} \mathrm{cofib}\left(\mathrm{fib}(F(f_n))\to Y_n\right),
        \]
    \end{itemize}
    so that, for $n\geq 0$, $f_n$ is a left homotopy inverse to the morphism $\overline{Y}\xrightarrow{\eta_{\overline{Y}}}GF\overline Y\to GY_n$.
\end{enumerate}
The previous lemma essentially recovers $f$ from the other data.
To go from (1) to (2) take $\overline{Y}\coloneqq GY$, $Z\coloneqq\mathrm{cofib}(f)$, $Y_n\coloneqq\mathrm{fib}\left(J_nZ\to Z\to X[1]\right)$, $f_{-1}\coloneqq G(f)$, $f_{n}\coloneqq G(Y_n\to Y)$.
Since we will only need Lemma~\ref{lem:morlift} in what follows, we omit the details.
\end{remark}

\subsection{Transport of stability conditions}\label{subsec:transport}

In this subsection we complete the proof of the following theorem using the results of the previous subsection and those of Section~\ref{subsec:mass}.

\begin{theorem}\label{thm:monadtransport}
Let $F:\cC\leftrightarrows\cD:G$ be a monadic adjunction of stable $\infty$-categories such that $T\coloneqq \mathrm{cofib}(1_{\cC}\xrightarrow{\eta}GF)\circ [1]$ is an autoequivalence of $\cC$, and let $\sigma\in\mathrm{Stab}(\cC,\Gamma,\mathrm{cl})$ be a stability condition preserved by $T$ and such that $\mathrm{cl}\circ K_0(T)=\mathrm{cl}$.
If we set $\cE\coloneqq \cD/F(\cC)$, then:
\begin{enumerate}[(1)]
    \item For any $X\in\cD$, the class $\mathrm{cl}([GX])\in \Gamma$ depends only on the isomorphism class of the image of $X$ in $\cE$. Moreover, this induces a well-defined map $\mathrm{cl}_{\cE}\colon K_0(\cE)\to \Gamma$.
    \item There is a stability condition $\sigma'$ on $\cE$ with respect to the map $\mathrm{cl}_{\cE}$ characterized by the following: The central charge is the same as for $\sigma$ and an isomorphism class of objects in $\cE$ is $\sigma'$-semistable of phase $\phi$ if and only if it has a representative $X\in\cD$ such that $GX$ is $\sigma$-semistable of phase $\phi$.
\end{enumerate}
\end{theorem}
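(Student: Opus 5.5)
We begin with part (1), which is formal. A morphism in $\cD$ whose cone lies in the stable Kleisli subcategory $F(\cC)$ (more precisely, the thick subcategory generated by $F(\cC)$) becomes an isomorphism in $\cE$. Using $GF\simeq \mathrm{cofib}$-extension of $1_\cC$ by $T[-1]$, i.e.\ the fiber sequence $1_\cC\xrightarrow{\eta}GF\to T[-1]$, we get $\mathrm{cl}([GFY])=\mathrm{cl}([Y])-\mathrm{cl}([TY])=0$ by the hypothesis $\mathrm{cl}\circ K_0(T)=\mathrm{cl}$. Hence $\mathrm{cl}\circ K_0(G)$ kills $K_0$ of the thick closure of $F(\cC)$. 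By the exact sequence $K_0(F(\cC)^{\mathrm{thick}})\to K_0(\cD)\to K_0(\cE)\to 0$ for Verdier quotients, $\mathrm{cl}_\cE$ is well defined, and isomorphic objects in $\cE$ have equal class.

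For part (2) we would use the heart/slicing characterization of stability and build the slicing directly. For each $\phi$, let $\cE_\phi$ be the set of images of those $X\in\cD$ with $GX\in\cC_\phi$. We then verify the axioms of Definition~\ref{def:stability} in the following order.

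\emph{Shift and central charge.} Shift compatibility is immediate since $G$ commutes with $[1]$. The value $Z(X)=Z(\mathrm{cl}(GX))$ lies on the correct ray. The support property is inherited from $\sigma$, because classes of $\sigma'$-semistables are classes of $\sigma$-semistables.

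\emph{Hom-vanishing.} Let $X,Y\in\cD$ with $GX\in\cC_{\phi_2}$, $GY\in\cC_{\phi_1}$ and $\phi_1<\phi_2$. Since $T$ preserves $\sigma$ and $T$ is an autoequivalence preserving classes, $T$ fixes each slice (its action on phases is a shift by an integer that must be $0$ given $Z\circ T=Z$), so $T^nGX\in\cC_{\phi_2}$. Then Lemma~\ref{lem:homvanish} gives $\mathrm{Map}_\cD(X,Y)\simeq *$. To pass to morphisms in $\cE$, we would use that a morphism in the Verdier quotient is a roof $X\leftarrow X'\to Y$ with cone of $X'\to X$ in the Kleisli part. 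Alternatively, we can show $\Hom_\cE(X,Y)=\operatorname{colim}$ over such replacements; since $X'$ can again be chosen with $GX'$ controlled, each $\mathrm{Map}_\cD(X',Y)$ vanishes. The cleanest way to arrange this is to prove that objects of $F(\cC)$ are left-orthogonal to nothing relevant; concretely, we would first establish that every object of $\cE$ has a representative unique up to the $J_\bullet$-resolution, using Proposition~\ref{prop:monadJ}.

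\emph{HN filtrations, the main obstacle.} Given $X\in\cD$, we argue by induction on the mass $m(GX)$, which is legitimate by Proposition~\ref{prop:massdiscrete}, mimicking the proof of Proposition~\ref{prop:neharm}. If $GX$ is semistable we are done. Otherwise, let $\overline{Y}$ be the last HN factor of $GX$ and $\bar f\colon GX\to\overline{Y}$ the projection. We try to lift $\bar f$ via Lemma~\ref{lem:morlift}, building the squares inductively. The required left-inverse condition on $f_n$ amounts to vanishing of certain maps from $T$-twists of the first HN pieces into $\overline Y$, which follow from $T$-invariance and Hom-vanishing.

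If at some stage the composite $\overline Y\to GF\overline Y\to GY_n$ fails to split, we instead obtain a non-split extension with the roles of sub and quotient reversed, now with a nonzero connecting morphism. In that case Proposition~\ref{prop:mass}(3) strictly lowers the mass of a new representative $X'$ of the same object of $\cE$, namely $X'=\mathrm{cofib}(F\text{-piece}\to X)$, and we restart. When the lift succeeds, we get $X\to Y$ with $GY\simeq\overline Y$ semistable of minimal phase, and its fiber has strictly smaller mass, so induction produces the HN filtration in $\cE$.

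We expect this step to be the hardest: correctly identifying the obstruction to the left-inverse condition as a nonzero $\Ext^1$ class between HN pieces, so that Proposition~\ref{prop:mass}(3) applies, and checking that modifying $X$ by Kleisli objects leaves its image in $\cE$ unchanged. Finally, the characterization of $\sigma'$-semistables in the statement holds by construction, and uniqueness follows since a slicing with given central charge is determined by its semistable objects.
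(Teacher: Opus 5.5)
The overall architecture of your proposal matches the paper: part (1), vanishing in $\cD$ via Lemma~\ref{lem:homvanish}, a lift-or-lower-the-mass dichotomy built on Lemma~\ref{lem:morlift} and Proposition~\ref{prop:mass}(3), and induction on the discrete set of masses. The gap is that the Hom-vanishing in $\cE$ is never established.

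Left roofs $X\leftarrow X'\to Y$ do not work. The cone $N$ of $X'\to X$ is only known to lie in the thick closure of $F(\cC)$. So $GX'$ is an extension of $GX$ by $GN[-1]$, where e.g.\ $GN=GFZ$ is built from $Z$ and $TZ[-1]$ with $Z\in\cC$ arbitrary. Nothing in your argument yields a cofinal family of such $X'$ with $GX'$ semistable of phase $\phi(GX)$. That would need something like $S^{-1}$, and $S=\mathrm{cofib}(\varepsilon)$ is not assumed invertible.

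The missing idea is to modify the \emph{target} instead:
\begin{itemize}
\item $\cE$ is the localization of $\cD$ along $\tau\colon 1_\cD\to S$, because $\mathrm{fib}(\tau_X)\simeq FGX$ while $FZ$ is a retract of $FGFZ\simeq\mathrm{fib}(\tau_{FZ})$.
\item By naturality of $\tau$, cospans $X\xrightarrow{g}S^nY\xleftarrow{\tau^n_Y}Y$ are closed under composition, so every morphism $LX\to LY$ has this form (Lemma~\ref{lem:loc_t}). This is where your $J_\bullet$ actually enters, since $J_{n-1}Y\simeq\mathrm{fib}(\tau^n_Y)$.
\item $GS\simeq TG$, by the octahedral axiom applied to $G\varepsilon\circ\eta G\simeq 1_G$ (Lemma~\ref{lem:GS_G}). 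Since $T$ preserves $\sigma$, $GS^nY\simeq T^nGY$ is again semistable of phase $\phi(GY)$.
\item Your $\cD$-level vanishing applied to the pair $(X,S^nY)$ then gives $g=0$.
\end{itemize}

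In the HN step, your sentence claiming that the left-inverse condition of Lemma~\ref{lem:morlift} follows from $T$-invariance and Hom-vanishing is false. Set $\overline W=\mathrm{fib}(\bar f)$ and $Y_{n+1}=\mathrm{cofib}(FT^{n+1}\overline W\to Y_n)$. The octahedral axiom gives a triangle $T^{n+2}\overline W[-1]\xrightarrow{\delta_n}\overline Y\to GY_{n+1}\to T^{n+2}\overline W$, and $\delta_n$ is the obstruction to choosing $f_{n+1}$. But $\delta_n$ is a map $T^{n+2}\overline W\to\overline Y[1]$ into an object of phase $\phi(\overline Y)+1$, so Hom-vanishing says nothing about it. The case $\delta_n\neq0$ is the generic one and is exactly what lowers the mass. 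So drop that sentence and keep the dichotomy of your next paragraph, with $X'=Y_{n+1}$ an iterated cofiber rather than a single one.

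There is also a bookkeeping point. When you concatenate an inductively obtained HN filtration of $LW$ with the semistable quotient of phase $\phi$, you need the phases of $LW$ to exceed $\phi$. You can either:
\begin{itemize}
\item carry in the induction that each modified representative $X'$ satisfies $GX'\in\cC_{[\phi_-(GX),\phi_+(GX)]}$, which holds because $GY_{n+1}$ is an extension of $T^{n+2}\overline W$ by $\overline Y$ and $T$ preserves phases; or
\item do as the paper does: fix a representative $X$ of the given object of $\cE$ with $m(GX)$ minimal. Then the mass-reducing alternative of Lemma~\ref{lem:HNlift} is excluded, and the HN filtration of $GX$ itself lifts.
\end{itemize}
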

In the above theorem, $\cD/F(\cC)$ is shorthand for the cofiber (in the $\infty$-category of small stable $\infty$-categories and exact functors) of the inclusion functor of the full stable subcategory of $\cD$ generated by the image of $F$. On the level of homotopy categories, this coincides with the Verdier quotient. In fact, the enhancement of $\mathrm{h}\cE$ will play no role in our arguments.

To say that $T$ preserves a stability condition $\sigma$, one really needs to specify a compatible action of $T$ on $\Gamma$ which preserves $Z$.
Since we already require $\mathrm{cl}\circ K_0(T)=\mathrm{cl}$, the additional condition is just that $T(\cC_\phi)=\cC_\phi$, i.e.\ $T$ preserves the property of being semistable of phase $\phi$ for all $\phi\in\bR$.

\begin{remark}
Suppose that $T$ fixes $\sigma$, but does not necessarily act trivially on $\Gamma$.
Then we can replace $\Gamma$ by $\Gamma'\coloneqq \Gamma/\operatorname{Ker}(Z)$.
To show the support property for $\Gamma'$, choose a norm on $\Gamma'\otimes_{\bZ}\bR$ so that the quotient map becomes non-expansive.
\end{remark}

\begin{remark}
If $T$ moreover induces the identity on $K_0(\cC)$, then there is in fact a well-defined map $K_0(\cE)\to K_0(\cC)$, as follows from inspection of the proof below.
\end{remark}

For the rest of this subsection we fix an adjunction  $F:\cC\leftrightarrows\cD:G$ and a stability condition $\sigma$ on $\cC$ as in the above theorem.
Denote by $L\colon\cD\to\cE$ the localization functor.
As in the previous subsection, let $S\coloneqq\mathrm{cofib}(FG\xrightarrow{\varepsilon} 1_{\cD})$ and $\tau\colon 1_{\cD}\to S$ the natural transformation.

\begin{remark}
We do not require that $S$ is an autoequivalence, i.e.\ that the adjunction is \textit{spherical} in the sense of~\cite{AL_spherical,DKSS}, but this will hold in all examples we consider. 
\end{remark}

\begin{lemma}\label{lem:loc_t}
The quotient category $\cE=\cD/F(\cC)$ coincides with the localization along the components of the natural transformation $\tau\colon 1_{\cD}\to S$. Furthermore, any morphism $f\in\Hom_{\mathrm{h}\cE}(X,Y)$ can be represented by a cospan in $\mathrm h\cD$ of the form
\[
X \xrightarrow{g} S^nY\xleftarrow{\tau^n_Y}Y, \qquad \tau^n_Y\coloneqq  \tau_{S^{n-1}Y}\cdots\tau_{SY}\tau_Y
\]
for some $n\geq 0$, in the sense that $f=(\tau_Y^n)^{-1}g$ in $\mathrm{h}\cE$.
\end{lemma}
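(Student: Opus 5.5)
We prove the two assertions of Lemma~\ref{lem:loc_t} separately: first that inverting the components of $\tau$ is the same as killing $F(\cC)$, then the cospan description via a calculus of fractions.

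\textbf{Same localization.} Let $\cK\subseteq\cD$ be the stable Kleisli subcategory, i.e.\ the thick stable subcategory generated by the image of $F$. Let $W$ be the class of morphisms whose cofiber lies in $\cK$. By the standard description of Verdier quotients, $\mathrm h\cE$ is the localization of $\mathrm h\cD$ at $W$. Since $\mathrm{cofib}(\tau_X)\simeq FGX[1]\in\cK$, every $\tau_X$ lies in $W$.

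Conversely, suppose a functor inverts all components of $\tau$. Then it inverts every $\tau_X$, so it kills $FGX[1]$, and hence every $FGX$. By naturality of $\tau$ and the triangle identities, $F\overline{X}$ is a retract of $FGF\overline{X}$: indeed $\varepsilon_{F\overline X}\circ F\eta_{\overline X}=1$. So the functor also kills $F\overline{X}$ for all $\overline{X}\in\cC$. It therefore kills the thick closure $\cK$, and so inverts $W$.

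Thus localizing at $\{\tau_X\}$ and localizing at $W$ have the same universal property, and both give $\mathrm h\cE$. In fact both localizations agree already at the level of stable $\infty$-categories.

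\textbf{Cospan representation.} We show the system $\{\tau^n_Y\}$ is cofinal among morphisms $Y\to Y'$ in $W$ with source $Y$. Since $\mathrm h\cE$ is computed by right fractions $X\to Y'\xleftarrow{w}Y$ with $w\in W$, it then suffices to prove: for every $w\colon Y\to Y'$ with $C=\mathrm{cofib}(w)\in\cK$, there exist $n$ and $h\colon Y'\to S^nY$ with $h\circ w=\tau^n_Y$ in $\mathrm h\cD$.

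By the octahedral axiom and the filtration $J_\bullet$ of Section~\ref{subsec:monads}, this reduces to the composite $Y\to S^nY$ factoring through $w$. That happens precisely when the connecting map $C[-1]\to Y\to S^nY$ vanishes.

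\textbf{Key claim.} For every $C\in\cK$ and every $Z$, every morphism $C\to Z$ becomes zero after composing with $\tau^n_Z$, for $n$ bounded by the number of cones needed to build $C$ from objects $F\overline{X}$.

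\emph{Base case.} For $C=F\overline{X}$, a map $F\overline{X}\to Z$ is adjoint to $\overline X\to GZ$, so it factors as $F\overline X\to FGZ\xrightarrow{\varepsilon_Z}Z$. Since $\tau_Z\circ\varepsilon_Z=0$, composing with $\tau_Z$ kills it.

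\emph{Induction.} Suppose $C$ sits in an extension $C'\to C\to C''$, with the claim known for $C'$ and $C''$ using $n'$ and $n''$ respectively. Given $u\colon C\to Z$, first $\tau^{n'}_Z\circ u$ vanishes on $C'$. Hence it factors through $C''$, and then applying $\tau^{n''}$ kills it. This uses the naturality identity $\tau^{n''}_{S^{n'}Z}\tau^{n'}_Z=\tau^{n'+n''}_Z$, which holds by definition of $\tau^n$. Retracts are handled trivially, since a retract of $C$ inherits the bound for $C$.

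\textbf{Conclusion.} Given $f\in\Hom_{\mathrm h\cE}(X,Y)$, write it as $w^{-1}g'$ with $w\colon Y\to Y'$ in $W$. Choose $h$ as above and set $g=h\circ g'$. Then $f=(\tau^n_Y)^{-1}g$.

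The main obstacle is the key claim: it requires care with the naturality of $\tau$ along the iterates $S^k$ and with why the composite truly factors through $C''$. This is a bookkeeping issue in $\mathrm h\cD$ rather than a conceptual one.
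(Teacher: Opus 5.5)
Your proof is correct. For the first claim it coincides with the paper's: the cofiber of $\tau_X$ is $FGX[1]$, and conversely an exact functor inverting the $\tau_X$ kills $FGX$, hence also its retract $F\overline X$.

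For the cospan description you take a different route.
\begin{itemize}
\item \emph{The paper's route.} Morphisms of $\mathrm h\cD$ and inverses $\tau_Y^{-1}$ are already cospans of the required form. It then shows that such cospans are closed under composition, using only the naturality square $S^m(g)\circ\tau^m_Y=\tau^m_{S^nZ}\circ g$ and the identity $\tau^m_{S^nZ}\tau^n_Z=\tau^{m+n}_Z$.
\item \emph{Your route.} You start from the right calculus of fractions for the Verdier quotient. You then prove that the $\tau^n_Y$ are cofinal among morphisms $w\colon Y\to Y'$ with $\mathrm{cofib}(w)\in\cK$. The key input is a nilpotence statement: any map out of an object built from $k$ cones of objects $F\overline X$ is annihilated by $\tau^k$.
\end{itemize}

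The paper's argument is shorter. However, it tacitly uses that every morphism of $\mathrm h\cE$ is a zigzag of morphisms of $\mathrm h\cD$ and inverted components of $\tau$. In other words, it assumes $\mathrm h\cE$ is the plain localization of $\mathrm h\cD$ at the $\tau_X$, whereas the first claim was only checked against exact functors.

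Your factorization $h\circ w=\tau^n_Y$ fills exactly this gap. By the octahedral axiom, $h$ also has cone in $\cK$, so cofinality gives $h'$ with $h'\circ h=\tau^m_{Y'}$. Hence for any functor $\Phi$ inverting all $\tau_X$, $\Phi(h)$ has both a right and a left inverse. Therefore $\Phi(h)$ is invertible, and so is $\Phi(w)=\Phi(h)^{-1}\Phi(\tau^n_Y)$. Your argument also gives an explicit bound on $n$.

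A minor remark: the appeal to the octahedral axiom and to $J_\bullet$ in your reduction step is unnecessary. That $\tau^n_Y$ factors through $w$ if and only if $C[-1]\to Y\to S^nY$ vanishes is just exactness of $\Hom_{\mathrm h\cD}(-,S^nY)$ on the triangle $C[-1]\to Y\xrightarrow{w}Y'\to C$.
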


\begin{proof}
Each component $\tau_X\colon X\to SX$ maps to an isomorphism under $L$ by definition of $\tau$.
Conversely, if $X\in\cC$, then $FX$ is a summand of $FGFX$ by the triangle identity of the adjunction, and $FGFX\simeq\mathrm{fib}(\tau_{FX})$, so $LFGFX=0$ and thus $LFX=0$. 
This shows the first claim.

For the second claim it suffices to show that morphisms which can be represented in this way are closed under composition.
This can be seen from the following diagram:
\[
\begin{tikzcd}
    X \arrow[dr,swap,"f"] & & Y \arrow[dl,swap,"\tau^m_Y"]\arrow[dr,"g"] & & Z \arrow[dl,"\tau^n_Z"] \\
    & S^mY \arrow[dr,swap,"S^m(g)"] & & S^nZ \arrow[dl,"\tau^m_{S^nZ}"] & \\
    & & S^{m+n}Z & & 
\end{tikzcd}
\]
The square commutes by naturality: $\tau_Y^m$ and $\tau_{S^nZ}^m$ are the components of the natural transformation 
\[
\tau^m\coloneqq (\tau S^{m-1})\circ(\tau S^{m-2})\circ \cdots\circ(\tau S)\circ \tau\colon 1_{\cD}\to S^m.
\]
Finally, $\tau^m_{S^nZ}\tau^n_Z=\tau^{m+n}_Z$.
\end{proof}

\begin{lemma}\label{lem:decphase}
Let $X,Y\in\cD$ such that $GX,GY\in\cC$ are non-zero $\sigma$-semistable with phases $\phi(GX)>\phi(GY)$.
Then $\Hom_{\mathrm h\cD}(X,Y)=0$ and $\Hom_{\mathrm h\cE}(LX,LY)=0$.
\end{lemma}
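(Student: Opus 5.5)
The first claim follows from Lemma~\ref{lem:homvanish}, which reduces vanishing of $\operatorname{Map}_{\cD}(X,Y)$ to vanishing of $\operatorname{Map}_{\cC}(T^nGX,GY)$ for all $n\geq 0$. Since $T$ preserves $\sigma$, i.e.\ $T(\cC_\phi)=\cC_\phi$, each $T^nGX$ is $\sigma$-semistable of phase $\phi(GX)$. Likewise, for every $k\geq 0$ the object $T^nGX[k]$ is semistable of phase $\phi(GX)+k>\phi(GY)$.

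By axiom (2) of Definition~\ref{def:stability}, this gives
\[
\pi_k\operatorname{Map}_{\cC}(T^nGX,GY)\cong\Hom_{\mathrm h\cC}(T^nGX[k],GY)=0,\qquad k\geq 0,
\]
at every basepoint. Since $\cC$ is stable, the mapping space is an infinite loop space, so it is contractible. Lemma~\ref{lem:homvanish} then gives $\operatorname{Map}_{\cD}(X,Y)\simeq *$, and in particular $\Hom_{\mathrm h\cD}(X,Y)=0$.

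For the second claim, we use Lemma~\ref{lem:loc_t}: every morphism $LX\to LY$ in $\mathrm h\cE$ is represented by a cospan $X\xrightarrow{g}S^nY\xleftarrow{\tau^n_Y}Y$. It therefore suffices to show $\Hom_{\mathrm h\cD}(X,S^nY)=0$ for all $n\geq 0$. By Lemma~\ref{lem:GS_G}, $GS^nY\simeq T^nGY$, which is again $\sigma$-semistable of phase $\phi(GY)$, because $T$ preserves $\cC_{\phi}$. Applying the first part to the pair $(X,S^nY)$ gives $\Hom_{\mathrm h\cD}(X,S^nY)=0$, so $g=0$ and hence every morphism $LX\to LY$ vanishes.

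The only delicate point is the passage from vanishing of $\Hom$ in the homotopy category to contractibility of the mapping space. It needs vanishing for all nonnegative shifts $[k]$, not just $k=0$. This is exactly where the strict inequality of phases is used: every shift $[k]$ with $k\geq 0$ only increases the phase of the source.
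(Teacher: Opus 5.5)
Your proposal is correct and follows the paper's proof step for step. You use $T$-invariance of $\sigma$ and the phase inequality for all shifts $[k]$, $k\geq 0$, to feed Lemma~\ref{lem:homvanish}, then apply Lemma~\ref{lem:loc_t} and Lemma~\ref{lem:GS_G} to reduce the $\cE$-statement to the pair $(X,S^nY)$, with only the infinite-loop-space remark added as a justification of contractibility.
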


\begin{proof}
Since $T$ preserves $\sigma$, $T^nGX$ is semistable of phase $\phi(GX)$ for every $n\geq0$. 
Hence, for every $k\geq0$, 
\[ 
\operatorname{Hom}_{\mathrm h\cC}(T^nGX[k],GY)=0, 
\] 
because $\phi(T^nGX[k])=\phi(GX)+k>\phi(GY)$. 
In other words, $\operatorname{Map}_{\cC}(T^nGX,GY)$ is contractible for every $n\geq 0$. 
The vanishing of $\Hom_{\mathrm h\cD}(X,Y)$ then follows from Lemma~\ref{lem:homvanish}.

By Lemma~\ref{lem:loc_t} we can represent a morphism $LX\to LY$ by a cospan of the form $X \xrightarrow{f} S^nY\xleftarrow{\tau^n_Y}Y$ where
$f$ is a morphism in $\cD$ and $L(\tau^n_Y)$ is an isomorphism. 
By Lemma~\ref{lem:GS_G} we have $GS^nY\simeq T^nGY$, which, thanks to $T(\sigma)=\sigma$, is semistable of phase $\phi(GY)$. 
Hence the vanishing of $f$ and thus the corresponding morphism in $\cE$ follow from the first part applied to the pair $X,S^nY$.
\end{proof}

\begin{lemma}\label{lem:HNlift}
Let $X\in \cD$. Then either the HN filtration of $GX$ lifts to a filtration of $X$, or there is a morphism $\beta\colon X\to X'$ such that $L(\beta)$ is an isomorphism, $m(GX')<m(GX)$, and $\mathrm{cl}([GX'])=\mathrm{cl}([GX])$ in $\Gamma$.
\end{lemma}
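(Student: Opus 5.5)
We argue by induction on the length of the HN filtration of $GX$, and at each stage try to lift the first HN factor. Let $\bar A\to GX\to \bar C$ be the first step, with $\bar A\in\cC_{\phi_1}$ the maximal-phase factor and $\bar C=$ the remaining part. The plan is to use Lemma~\ref{lem:morlift} to lift the projection $\bar f\colon GX\to \bar C$ to a morphism $f\colon X\to Y$ in $\cD$ with $GY\simeq\bar C$. If such a lift exists, then $\mathrm{fib}(f)$ has $G(\mathrm{fib} f)\simeq\bar A$, and we can apply the inductive hypothesis to $Y$ (whose image under $G$ has shorter HN filtration). Two things can then happen. Either the HN filtration of $GY$ lifts, and concatenating with $\mathrm{fib}(f)\to X$ gives a lift of the HN filtration of $GX$. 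Or $Y$ admits $\beta_Y\colon Y\to Y'$ reducing mass. In the latter case we form $X'$ as the pullback or extension of $\mathrm{fib}(f)$ by $Y'$. The triangle inequality (Proposition~\ref{prop:mass}(1)) then gives $m(GX')\le m(\bar A)+m(GY')<m(\bar A)+m(GY)=m(GX)$, the last equality by Proposition~\ref{prop:mass}(2). The class in $\Gamma$ is preserved since $\mathrm{cl}$ is additive.

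The key step is running the iterative construction of Lemma~\ref{lem:morlift}. Set $Y_{-1}=X$, $f_{-1}=\bar f$, and $Y_{n+1}=\mathrm{cofib}(\mathrm{fib}(F f_n)\to Y_n)$. At each step we need a morphism $f_n\colon GY_n\to\bar C$ left inverse to $\bar C\to GF\bar C\to GY_n$. We have $GY_{n+1}$ sitting in a triangle with $GF\bar C$ and $GY_n$, and by Lemma~\ref{lem:GS_G} the ``error term'' is of the form $T^{k}G(\cdot)$ applied to $\bar A$, possibly shifted. Concretely, the obstruction to extending $f_n$ is the composite $\delta_n\colon T\bar A'\to\bar C[1]$-type morphism. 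Here $T\bar A$ is semistable of phase $\phi_1$ (as $T$ preserves $\sigma$), and $\bar C$ has all phases $<\phi_1$.

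If all these obstruction classes vanish, we obtain the required $f_n$ at every step and Lemma~\ref{lem:morlift} produces the lift. Otherwise we take the first $n$ where the obstruction $\delta\in\Ext^1(\bar C', \bar A'')$ is nonzero, with $\bar A''$ of phase $\phi_1>\phi_+(\bar C)$. Then we replace $X$ by $X'\coloneqq Y_{n}$ (or the corresponding modified object), with $\beta\colon X\to X'$ the composite of the maps $Y_k\to Y_{k+1}$. These maps are isomorphisms in $\cE$ because their cones lie in $F(\cC)$. Moreover, $GX'$ is an extension whose connecting morphism is the nonzero $\delta$ going from the lower-phase part to the higher-phase part. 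This is precisely the situation of Proposition~\ref{prop:mass}(3), which yields $m(GX')<m(GX)$. Finally, $\mathrm{cl}$ is unchanged because the extra terms contribute classes $\mathrm{cl}(GF\bar C)-\mathrm{cl}(\bar C)=\mathrm{cl}(T\bar C)$-type differences, which vanish since $\mathrm{cl}\circ K_0(T)=\mathrm{cl}$.

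The main obstacle is the bookkeeping that identifies $GY_{n+1}$ and shows the successive obstructions are maps from a phase-$\phi_1$ semistable piece (a $T$-twist of $\bar A$) into $\bar C[1]$, so that nonvanishing triggers Proposition~\ref{prop:mass}(3) in the reversed order. I would first do this carefully for $n=-1$, where $GY_0$ sits in $GX\to \bar C\oplus T\bar C'$, using the fiber sequence $GFG\to G\to GS$ and $GS\simeq TG$. I would then argue that the same octahedral diagram repeats at each stage with $X$ replaced by $Y_n$.
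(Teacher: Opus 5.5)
Your overall architecture matches the paper's. You lift a two-step piece of the HN filtration of $GX$ via the iterative construction feeding Lemma~\ref{lem:morlift}, let a nonzero obstruction $\delta_n$ trigger Proposition~\ref{prop:mass}(3), and recurse otherwise. The obstruction step also works in your orientation: the octahedron gives $\bar C\to GY_{n+1}\to T^{n+2}\bar A\xrightarrow{\delta_n}\bar C[1]$ with $\phi_+(\bar C)<\phi_1=\phi_-(T^{n+2}\bar A)$, and $[GY_{n+1}]=[\bar C]+[T^{n+2}\bar A]$ has the right class.

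\textbf{The gap is in the recursive step.} After lifting, you have $W\to X\xrightarrow{f}Y\xrightarrow{\partial}W[1]$ with $GW\simeq\bar A$, and the inductive hypothesis may return $\beta_Y\colon Y\to Y'$. You then need an object $X'$ with $W\to X'\to Y'$, receiving a map from $X$ compatible with $\mathrm{id}_W$ and $\beta_Y$. This is what both your estimate $m(GX')\le m(\bar A)+m(GY')$ and the invertibility of $L(\beta)$ require. Such an $X'$ exists only if $\partial$ factors through $\beta_Y$, i.e.\ only if $\partial$ vanishes on $K\coloneqq\mathrm{fib}(\beta_Y)$. A pullback is not available, since $\beta_Y$ points away from $Y$.

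This obstruction is nonzero in general. Suppose $\beta_Y$ comes directly from the iterative construction on $Y$, and let $\bar A_2$ be the second HN factor of $GX$. Then the first map $h_{-1}\colon F\bar A_2\to Y$ factors through $K$. Moreover, $\partial\circ h_{-1}$ is adjoint to $\bar A_2\to\bar C\to\bar A[1]$, the extension class of the first two HN factors of $GX$. That class is nonzero whenever those two factors form a non-split extension. No phase argument helps, since $\Hom(T^j\bar A_2,\bar A[1])$ goes from phase $\phi_2$ up to phase $\phi_1+1$.

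\textbf{The repair is to swap the roles of sub and quotient, as the paper does.} Take $\bar f\colon GX\to\overline Y$ to be the projection onto the semistable factor of \emph{minimal} phase, with $\overline W\coloneqq\mathrm{fib}(\bar f)$ the rest. The obstruction analysis is identical, now with $A=\overline Y$ semistable and $C=T^{n+2}\overline W$ satisfying $\phi_-(C)>\phi(\overline Y)$. If all $\delta_n$ vanish, Lemma~\ref{lem:morlift} gives $f\colon X\to Y$, and you recurse on the \emph{fiber} $W$, by induction on $m(GX)$ or on HN length.

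A lifted HN filtration of $W$ simply concatenates with $W\to X$. A mass-reducing $\beta_W\colon W\to W'$ transports to $X$ by the pushout $X'\coloneqq\mathrm{cofib}\bigl(\mathrm{fib}(\beta_W)\to W\to X\bigr)$. This always exists, because it only uses a composite map. Then $W'\to X'\to Y$ is exact, $[GX']-[GX]=[GW']-[GW]$, and by Proposition~\ref{prop:mass}(1),(2) we get $m(GX')\le m(GW')+m(\overline Y)<m(GW)+m(\overline Y)=m(GX)$.
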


\begin{proof}
If $X=0$, there is nothing to prove, so we assume $X\neq 0$ and thus also have $GX\neq 0$.
Let $\bar{f}\colon GX\to\overline{Y}$ be the map to the semistable factor $\overline{Y}$ of $GX$ of minimal phase $\phi\coloneqq\phi_{-}(GX)$, i.e.\ $\overline{W}\coloneqq\mathrm{fib}(\bar{f})$ is an extension of semistable objects of phase $>\phi$.
Set $Y_{-1}\coloneqq X$, $f_{-1}\coloneqq \bar{f}$ and assume by induction that we have a fiber sequence
\[
T^{n+1}\overline{W}\longrightarrow GY_{n}\xrightarrow{f_n}\overline{Y}.
\]
By adjunction we get a morphism $h_n\colon FT^{n+1}\overline{W}\to Y_n$ and we define $Y_{n+1}\coloneqq\mathrm{cofib}(h_n)$.
Consider the resulting octahedral diagram:
\[
\begin{tikzcd}
 & \overline{Y} \arrow[dotted,ddl]\arrow[dr] & \\
T^{n+2}\overline{W}[-1] \arrow[d,dotted]\arrow[ur,"\delta_n"] & & GY_{n+1} \arrow[dotted,ll]\arrow[dotted,ddl] \\
T^{n+1}\overline{W} \arrow[dr,swap,"\eta_{T^{n+1}\overline{W}}"]\arrow[rr] & & GY_n \arrow[uul,"f_n"]\arrow[u]\\
 & GFT^{n+1}\overline{W} \arrow[ur,swap,"G(h_n)"]\arrow[uul] & 
\end{tikzcd}
\]
There are two possibilities:
\begin{enumerate}[(1)]
    \item $\delta_n\neq 0$: Since $\phi=\phi(\overline Y)=\phi_+(\overline Y)<\phi_-(\overline{W})=\phi_-(T^{n+2}\overline{W})$, where we used $T(\sigma)=\sigma$, Proposition~\ref{prop:mass} gives 
    \[
    m(GY_{n+1})<m(\overline{Y})+m(\overline{W})=m(GX).
    \]
    Thus, we can set $X'\coloneqq Y_{n+1}$ and $\beta$ to the composition
    \[
    X=Y_{-1}\to Y_0 \to\ldots\to Y_n\to Y_{n+1}=X'
    \]
    where each $L(Y_i\to Y_{i+1})$ is an isomorphism by construction and 
    \[
    \mathrm{cl}([GY_{n+1}])=\mathrm{cl}([\overline Y])+\underbrace{\mathrm{cl}([T^{n+2}\overline W])}_{\mathrm{cl}([\overline W])}=\mathrm{cl}([GX])
    \]
    where we have used $\mathrm{cl}\circ K_0(T)=\mathrm{cl}$.
    This shows the claim and the induction ends.
    \item $\delta_n=0$: In this case we can choose $f_{n+1}\colon GY_{n+1}\to \overline{Y}$ which splits the top triangle in the octahedron.
    This yields a fiber sequence
    \[
    T^{n+2}\overline{W}\longrightarrow GY_{n+1}\xrightarrow{f_{n+1}}\overline{Y}
    \]
    and we continue the induction.
\end{enumerate}
If the induction does not terminate at any step (all $\delta_n=0$), then Lemma~\ref{lem:morlift} gives a morphism $f\colon X\to Y$ in $\cD$ which lifts $\bar{f}$.
Let $W\coloneqq\mathrm{fib}(f)$, so $GW\simeq\overline{W}$. By induction on $m(GX)$ (which ranges over a discrete subset of $\mathbb R_{\geq 0}$ by Proposition~\ref{prop:massdiscrete}) we may assume that the statement of the lemma is true for $W$. 

In the first case, the HN filtration of $GW$ lifts, so we combine it with the morphism $W\to X$ to obtain a lift of the HN filtration of $GX$.

In the second case, there is a $\beta_W\colon W\to W'$ with $L(\beta_W)$ an isomorphism, $m(GW')<m(GW)$, and $\mathrm{cl}([GW'])=\mathrm{cl}([GW])$.
Let $\beta_X\colon X\to X'$ be the pushout of $W\to W'$ along $W\to X$, i.e.\ $X'=\mathrm{cofib}(\mathrm{fib}(W\to W')\to X)$.
Then $L(\beta_X)$ is an isomorphism, $[GX']-[GX]=[GW']-[GW]$, and
\[
m(GX)=m(GY)+m(GW)>m(GY)+m(GW')\geq m(GX')
\]
using Proposition~\ref{prop:mass}.
This completes the proof of the lemma.
\end{proof}

\begin{proof}[Proof of Theorem~\ref{thm:monadtransport}]
\textit{Slicing}: Define $\cE_\phi$ to be the full additive subcategory of $\cE$ of those objects which have a lift $X\in\cD$ with $GX\in \cC_\phi$. 
To show existence of a HN filtration for $X\in\cE$ we choose a lift $Y\in\cD$, $LY\simeq X$, with $m(GY)$ minimal among all lifts. This exists by discreteness of the set of masses, cf.\ Proposition~\ref{prop:massdiscrete}.
Lemma~\ref{lem:HNlift} implies that the HN filtration of $GY$ lifts to a filtration of $Y$. The image of this filtration in $\cE$ is a HN filtration of $X$. 
Lemma~\ref{lem:decphase} tells us that there are no non-zero morphisms in strictly decreasing phase.

\textit{Map $\mathrm{cl}_{\cE}\colon K_0(\cE)\to \Gamma$}: 
Since $\cE=\cD/F(\cC)$, the sequence 
\[ 
K_0(\cC)\xrightarrow{K_0(F)} K_0(\cD)\xrightarrow{K_0(L)} K_0(\cE)\longrightarrow 0 
\] 
is exact.
It follows from the assumption $\mathrm{cl}\circ K_0(T)=\mathrm{cl}$, that $\mathrm{cl}\circ K_0(G)\circ K_0(F)=0$.
Thus, $\mathrm{cl}\circ K_0(G)$ lifts uniquely to a homomorphism 
$\mathrm{cl}_{\cE}\colon K_0(\cE)\longrightarrow\Gamma$.

\textit{Stability condition}: Compatibility of the slicing and the central charge on $\cE$ holds by definition. The support property is also clear, since the set of $\gamma\in\Gamma$ supporting a $\sigma'$-semistable object is contained in the set of classes supporting a $\sigma$-semistable object. 
\end{proof}

We state for convenience the dual version of our main result, which follows by passing to opposite categories and conjugate stability conditions (Proposition/Definition~\ref{propdef:conjstab}).

\begin{theorem}\label{thm:comonads}
Let $F:\cC\leftrightarrows\cD:G$ be a comonadic adjunction of stable $\infty$-categories such that $T\coloneqq\mathrm{fib}(FG\xrightarrow{\varepsilon}1_{\cD})\circ [-1]$ is an autoequivalence of $\cD$, and let $\sigma\in\mathrm{Stab}(\cD,\Gamma,\mathrm{cl})$ be a stability condition preserved by $T$ and such that $\mathrm{cl}\circ K_0(T)=\mathrm{cl}$.
If we set $\cE\coloneqq \cC/G(\cD)$, then:
\begin{enumerate}[(1)]
    \item For any $X\in\cC$, the class $\mathrm{cl}([FX])\in \Gamma$ depends only on the isomorphism class of the image of $X$ in $\cE$. Moreover, this induces a well-defined map $\mathrm{cl}_{\cE}\colon K_0(\cE)\to \Gamma$.
    \item There is a stability condition $\sigma'$ on $\cE$ with respect to the map $\mathrm{cl}_{\cE}$ characterized by the following: The central charge is the same as for $\sigma$ and an isomorphism class of objects in $\cE$ is $\sigma'$-semistable of phase $\phi$ if and only if it admits a representative $X\in\cC$ such that $FX$ is $\sigma$-semistable of phase $\phi$.
\end{enumerate}
\end{theorem}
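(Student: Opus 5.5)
We plan to deduce Theorem~\ref{thm:comonads} from Theorem~\ref{thm:monadtransport} by passing to opposite categories, as announced just before the statement. Opposites of stable $\infty$-categories are stable, with shift $[-1]^{\mathrm{op}}$, and an adjunction $F\dashv G$ between $\cC$ and $\cD$ becomes an adjunction $G^{\mathrm{op}}\dashv F^{\mathrm{op}}$ from $\cD^{\mathrm{op}}$ to $\cC^{\mathrm{op}}$. The counit $\varepsilon\colon FG\to 1_{\cD}$ becomes the unit $1_{\cD^{\mathrm{op}}}\to F^{\mathrm{op}}G^{\mathrm{op}}$. Comonadicity of the original adjunction means precisely that this opposite adjunction is monadic, with right adjoint $F^{\mathrm{op}}$. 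In Theorem~\ref{thm:monadtransport} the roles are therefore relabeled: its $\cC$ is $\cD^{\mathrm{op}}$, its $\cD$ is $\cC^{\mathrm{op}}$, its left adjoint is $G^{\mathrm{op}}$ and its right adjoint is $F^{\mathrm{op}}$.

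The first step is to identify the functor $T$. The monadic version uses $\mathrm{cofib}(1\to F^{\mathrm{op}}G^{\mathrm{op}})\circ[1]$ computed in $\cD^{\mathrm{op}}$. A cofiber in $\cD^{\mathrm{op}}$ is a fiber in $\cD$, and the shift $[1]$ of $\cD^{\mathrm{op}}$ is $[-1]$ of $\cD$. Hence this functor is the opposite of $\mathrm{fib}(FG\xrightarrow{\varepsilon}1_{\cD})\circ[-1]$, which is exactly the $T$ of the statement. Its being an autoequivalence is unaffected by passing to opposites.

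The second step transfers the stability condition. By Proposition/Definition~\ref{propdef:conjstab}, $\sigma$ gives the conjugate stability condition $\overline{\sigma}$ on $\cD^{\mathrm{op}}$, with $K_0(\cD^{\mathrm{op}})\cong K_0(\cD)$ and the same $\mathrm{cl}$. The conjugate $\overline{\sigma}$ is preserved by $T^{\mathrm{op}}$, since $T$ preserves each $\cD_\phi$ and $\overline{\sigma}$ has slices $\cD_{-\phi}$. The condition $\mathrm{cl}\circ K_0(T)=\mathrm{cl}$ is unchanged. Applying Theorem~\ref{thm:monadtransport} gives a stability condition on $\cC^{\mathrm{op}}/G^{\mathrm{op}}(\cD^{\mathrm{op}})\simeq \cE^{\mathrm{op}}$. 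The Verdier quotient commutes with taking opposites because the thick subcategory generated by the image of $G$ is self-dual. This stability condition has central charge $\overline{Z}$ and class map induced by $\mathrm{cl}([F^{\mathrm{op}}X])$. Its semistable objects of phase $\psi$ are exactly the objects having a lift $X$ with $FX\in\cD_{-\psi}$.

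The last step conjugates back with Proposition/Definition~\ref{propdef:conjstab}. This gives on $(\cE^{\mathrm{op}})^{\mathrm{op}}=\cE$ a stability condition with central charge $Z$, whose phase-$\phi$ semistables are the classes with a lift $X\in\cC$ such that $FX\in\cD_\phi$. Part (1) likewise follows from part (1) of the monadic theorem. The only real obstacle is bookkeeping: checking that the sign of the shift, the direction of fiber versus cofiber, and the phase reversal all match the stated $T$ and the stated description of $\sigma'$. Everything else is formal.
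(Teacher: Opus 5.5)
Your proposal is correct and is the paper's argument: the paper deduces this theorem from Theorem~\ref{thm:monadtransport} by passing to opposite categories and to conjugate stability conditions via Proposition/Definition~\ref{propdef:conjstab}. Your bookkeeping supplies the details the paper leaves implicit: identifying the monadic $T$ with $T^{\mathrm{op}}$, the identification $\cC^{\mathrm{op}}/G^{\mathrm{op}}(\cD^{\mathrm{op}})\simeq\cE^{\mathrm{op}}$, and the double phase reversal.
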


\subsubsection*{Relation to general framework}

Let us explain how comonads with stability conditions as in Theorem~\ref{thm:comonads} provide an example of our general framework as described in Section~\ref{section:axiomatic}. (There is an almost identical discussion in the dual monadic case.)
We think of the adjunction and the stability condition $\sigma$ on $\cD$ as providing a geometric structure for the enhanced triangulated category $\cE$ with its stability condition $\sigma'$.

\textit{Metrized objects.} The category of metrized objects and non-expansive maps is $\cC$ with its localization functor  $L\colon\cC\to\cE$. Thus, a metric on an object $X\in\cE$ is an object $\widetilde{X}\in\cC$ together with an isomorphism $f\colon L\widetilde X\to X$.
The functor $S\coloneqq\mathrm{fib}(1_{\cC}\xrightarrow{\eta} GF)$ together with the natural transformation $\tau\colon S\to 1_{\cC}$ induce a $\bZ_{\geq 0}$-action on the set $\mathrm{Met}(X)$ of metrics on $X$ by sending $(\widetilde X,f)$ to $(S\widetilde{X},f\circ L(\tau_{\widetilde{X}}))$.
We can think of this action as rescaling the metric down (up in the monadic case).

\textit{Mass measures.} The mass measure $\mu_{\widetilde{X}}$ is determined by the HN filtration of $F\widetilde X$ as follows: If $A_1,\ldots,A_n\in \cD$ are the semistable factors of $F\widetilde X$, then
\[
\mu_{\widetilde{X}}\coloneqq\sum_{k=1}^n|Z(A_k)|\delta_{\phi(A_k)}
\]
where $\delta_{\phi}$ is the Dirac measure concentrated at $\phi\in\bR$.
In particular, $\widetilde{X}$ is harmonic iff $F\widetilde{X}$ is $\sigma$-semistable.

\textit{Flow.}
There is a kind of discretized minimizing flow implicit in the construction in the proof of Lemma~\ref{lem:HNlift}.
For a given object $X\in \cE$ with initial metric $\widetilde{X}_0\in\mathrm{Met}(X)$ it yields a sequence 
\[
\widetilde{X}_0\leftarrow \widetilde{X}_1\leftarrow \widetilde{X}_2\leftarrow \cdots\leftarrow \widetilde{X}_n=\widetilde{X}_\infty\leftarrow \widetilde{X}_\infty\leftarrow \widetilde{X}_\infty \leftarrow \cdots
\]
in $\cC$ and $\mathrm{Met}(X)$ so that $m(\widetilde{X}_k)$ decreases strictly until the sequence stabilizes at a metric $\widetilde{X}_\infty\coloneqq \widetilde{X}_n$, $n\gg 0$, so that the HN filtration of $F\widetilde{X}_\infty$ lifts to a HN filtration of $X$. In particular, $\widetilde{X}_\infty$ is harmonic iff $X$ is $\sigma'$-semistable.
See also Example~\ref{ex:discreteshortening} in Section~\ref{subsec:defAinf} below.

\subsection{Comonads from limits}\label{subsec:comonadslimits}

The following proposition will be useful in establishing comonadicity in examples.

\begin{proposition}\label{prop:comonadtower}
Let $F:\cC\leftrightarrows\cD:G$ be an adjunction of stable $\infty$-categories with unit $\eta$ completed to a cofiber sequence $S\xrightarrow{\tau}1_{\cC}\xrightarrow{\eta}GF$ and let $\tau^n\coloneqq \tau\circ(\tau S)\circ\cdots \circ  (\tau S^{n-1})$ as before (but dual).
Suppose there is a coherent diagram of stable $\infty$-categories and functors
\[
\begin{tikzcd}
\cC \arrow[d,swap,"F"]\arrow[dr,swap]\arrow[drr,swap]\arrow[drrr]\arrow[drrrrr,"P_n"] \\
\cD & \cC_1 \arrow[l,"Q_0"] & \cC_2 \arrow[l,"Q_1"] & \cC_3 \arrow[l,"Q_2"] & \ldots\arrow[l] & \arrow[l] \cC_n & \ldots\arrow[l]
\end{tikzcd}
\]
which is a limit cone with apex $\cC$, i.e.\ $\cC\simeq\lim \cC_n$.
Suppose further that $P_n(\tau^{n+1})=0$ and all $\cC_n$ are idempotent complete.
Then $F$ is comonadic.
\end{proposition}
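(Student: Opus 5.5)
We verify the two conditions of the dual of Corollary~\ref{cor:monadic} for the left adjoint $F$: conservativity, and preservation of limits of cofiltered (tower) objects $X_\bullet$ in $\cC$ for which $F(X_\bullet)$ is essentially constant in the dual sense of Lemma~\ref{lem:splitfilteredchar}. Both will be deduced from the limit presentation $\cC\simeq\lim\cC_n$. Objects of $\cC$ are compatible families $(P_nX)_n$, and mapping spaces are limits of mapping spaces in the $\cC_n$. The key input is $P_n(\tau^{n+1})=0$: after applying $P_n$, the composite $S^{n+1}\to 1_{\cC}$ vanishes. Dually to the construction of $J_\bullet$, the cofiber sequences $S\to 1\to GF$ give the cotower $K_n\coloneqq\mathrm{cofib}(\tau^{n+1})$, which lies in the thick subcategory generated by the image of $G$ and comes with fiber sequences $GFS^{n}\to K_n\to K_{n-1}$. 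The plan is to show $1_{\cC}\simeq\lim_n K_n$, the dual of Proposition~\ref{prop:monadJ}. Since $P_n(\tau^{n+1})=0$, the map $P_nX\to P_nK_nX$ is split injective, with complement $P_n S^{n+1}X[1]$. We then need to show that these splittings are compatible enough for the limit over $n$ to recover $X$. For this we use that $P_n(\tau^{m+1})=0$ for every $m\ge n$: since $P_n$ factors through $P_m$ via the functors $Q$, the pro-system $(P_nS^{m+1}X)_m$ has zero transition maps into $P_nX$, so it is pro-zero. This is a Mittag-Leffler type argument.

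Conservativity then follows. If $FX\simeq 0$, every $GFS^kX$-type term vanishes; more precisely, $FS^k X\simeq T^k F X$-like identifications (Lemma~\ref{lem:GS_G}, dualised) show that each $K_nX$ vanishes. Hence $X\simeq\lim K_nX\simeq 0$. A more direct route: $FX=0$ implies $\eta_X=0$, so $\tau_X$ is an isomorphism, hence so is $\tau^{n+1}_X$. But $P_n(\tau^{n+1}_X)=0$, so $P_nX=0$ for all $n$, and therefore $X=0$ because $\cC\simeq\lim\cC_n$. I would use this simpler argument.

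For the limit condition, let $X_\bullet$ be a tower in $\cC$ with $FX_\bullet\simeq\underline{A}\oplus(\text{zero-map tower})$. Applying the same vanishing trick levelwise, each $P_nX_\bullet$ should become essentially constant in $\cC_n$, i.e.\ a constant tower plus a tower with null transition maps (up to some reindexing). Idempotent completeness of $\cC_n$ lets us split off the constant summand as an honest object $Y_n\in\cC_n$ realising $\lim P_nX_\bullet$, since the limit of an essentially constant tower is the image of an idempotent on a term. Compatibility under the $Q_n$ assembles the $Y_n$ into $Y\in\lim\cC_n\simeq\cC$ with $Y\simeq\lim X_\bullet$. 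Since $F$ factors through $P_0$ (as $F=Q_0P_1$-type composite), $F$ preserves this limit.

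The main obstacle is the middle step: proving that $P_nX_\bullet$ is essentially constant. The data only constrain $F=Q_0\circ P_1$, not $P_n$ directly. I would induct on $n$, using $P_n(\tau^{n+1})=0$ together with the filtration of $X$ by $S^kX$, whose graded pieces $GFS^{k}X$ are controlled by $F$. This reduces the claim to a finite extension of essentially constant towers. A key point is that essential constancy is closed under extensions once transition maps are composed sufficiently many times, which I expect to be the delicate part.
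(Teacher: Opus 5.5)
Your proposal is correct and is essentially the paper's argument: the cotower $K_n=\mathrm{cofib}(\tau^{n+1})$ (the paper's $J_n$) with graded pieces $GFS^{k}$, the splitting $P_nK_n\simeq P_n\oplus P_nS^{n+1}[1]$ forced by $P_n(\tau^{n+1})=0$, idempotent completeness to pass to the retract $P_nX_\bullet$, and levelwise assembly in $\lim\cC_n$. Your conservativity shortcut is valid ($FX=0$ makes every $\tau_{S^kX}$ invertible, so $P_n$ kills the isomorphism $\tau^{n+1}_X\colon S^{n+1}X\to X$), the detour through $1_{\cC}\simeq\lim_n K_n$ is unnecessary, and the step you flag as delicate is sidestepped in the paper: it never proves that $P_nX_\bullet$ is essentially constant, but only that $\lim_j P_nK_iX_j$ exists and is preserved by $Q_0\cdots Q_{n-1}$, inductively along $P_nGFS^{i+1}X_\bullet\to P_nK_{i+1}X_\bullet\to P_nK_iX_\bullet$ via Lemma~\ref{lem:cofilt_fibseq} and then through the retract via Lemma~\ref{lem:retract_lim}.
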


\begin{remark}
One can weaken the assumption that the $\cC_n$ are idempotent complete to the hypothesis that those idempotents whose image in $\cD$ is split are also split in $\cC_n$.
\end{remark}

The proof of the proposition will require two basic lemmas on limits in $\infty$-categories.

\begin{lemma}\label{lem:cofilt_fibseq}
Let $X_\bullet\to Y_\bullet\to Z_\bullet$ be a fiber sequence in the $\infty$-category $\mathrm{Fun}(N(\mathbb Z_{\leq 0}),\cC)$ of cofiltered objects in a stable $\infty$-category $\cC$.
If $\lim Y_\bullet$ and $\lim Z_\bullet$ exist in $\cC$, then so does $\lim X_\bullet\simeq \operatorname{fib}(\lim Y_\bullet\to\lim Z_\bullet)$.
\end{lemma}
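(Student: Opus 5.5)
The plan is to deduce the statement from the general fact that, in a stable $\infty$-category, limits commute with finite limits and fibers are computed pointwise. I would first recall that the $\infty$-category $\mathrm{Fun}(N(\mathbb Z_{\leq 0}),\cC)$ is stable and that its fiber sequences are detected levelwise. So the hypothesis gives, for each $n\leq 0$, a fiber sequence $X_n\to Y_n\to Z_n$ in $\cC$, compatible with the transition maps. Let $L\coloneqq\operatorname{fib}(\lim Y_\bullet\to\lim Z_\bullet)$, where the map is the one induced on limits by $Y_\bullet\to Z_\bullet$. The projections $\lim Y_\bullet\to Y_n$ and $\lim Z_\bullet\to Z_n$ then induce maps $L\to\operatorname{fib}(Y_n\to Z_n)\simeq X_n$, and these are compatible in $n$. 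This compatibility is most cleanly obtained by writing $L$ as the fiber of a morphism between constant diagrams and using that $\operatorname{fib}$ is a functor on $\mathrm{Fun}(\Delta^1,\cC)$. The result is a cone $\underline{L}\to X_\bullet$.

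The key step is to show that this cone is a limit cone, i.e.\ that for every $W\in\cC$ the map
\[
\operatorname{Map}_{\cC}(W,L)\longrightarrow\lim_n\operatorname{Map}_{\cC}(W,X_n)
\]
is an equivalence. For this I would apply $\operatorname{Map}_{\cC}(W,-)$, which preserves fiber sequences of spectra or spaces. On the left this gives the fiber of $\operatorname{Map}(W,\lim Y_\bullet)\to\operatorname{Map}(W,\lim Z_\bullet)$. By the universal property, these terms are $\lim_n\operatorname{Map}(W,Y_n)$ and $\lim_n\operatorname{Map}(W,Z_n)$. On the right, $\operatorname{Map}(W,X_n)\simeq\operatorname{fib}(\operatorname{Map}(W,Y_n)\to\operatorname{Map}(W,Z_n))$ levelwise. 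Since limits commute with fibers in $\mathcal S$ (both are limits), the two sides agree.

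To avoid basepoint issues with fibers of spaces, I would work with mapping spectra, using the canonical enrichment of stable $\infty$-categories in spectra. Alternatively, I would observe that the fiber is taken over the zero map, so the basepoint is canonical.

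The main obstacle is purely bookkeeping: checking that the equivalence produced abstractly is the map induced by the constructed cone, rather than merely an abstract equivalence. I would handle this by constructing everything inside $\mathrm{Fun}(\Delta^1,\mathrm{Fun}(N(\mathbb Z_{\leq 0}),\cC))$ and applying the functor $\lim$, which exists on the full subcategory where the limits of $Y$ and $Z$ exist, together with the right-adjoint properties of $\lim$ against the constant-diagram functor.
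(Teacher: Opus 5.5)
Your proof is correct. It rests on the same principle as the paper's, that limits commute with limits, but you carry it out by hand where the paper cites a theorem.

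The paper treats the data as a single diagram $N(\mathbb Z_{\leq 0})\times\Lambda^2_2\to\cC$, sending the third vertex of $\Lambda^2_2$ to $0$, and applies the $\infty$-categorical Fubini theorem for limits in both orders:
\begin{itemize}
\item The limits over $N(\mathbb Z_{\leq 0})$ exist at each vertex ($\lim Y_\bullet$, $\lim Z_\bullet$, $0$), and the resulting pullback exists by stability. So the total limit exists and equals $\operatorname{fib}(\lim Y_\bullet\to\lim Z_\bullet)$.
\item The pointwise limits over $\Lambda^2_2$ are the $X_n$. So the same total limit is also $\lim X_\bullet$.
\end{itemize}

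You instead build the comparison cone $\underline{L}\to X_\bullet$ explicitly and check its universal property through $\operatorname{Map}_{\cC}(W,-)$. This reduces the question to interchanging limits in spaces or spectra, where all limits exist. In effect you reprove this instance of Fubini via Yoneda.

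The paper's route absorbs into the cited theorem exactly the bookkeeping you flag as the main obstacle: producing the cone coherently and checking that the equivalence is the one it induces. Your route is self-contained and elementary.

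Your key step also needs slightly less than ``limits commute with fibers in spaces''. The cone induces a map of fiber sequences of mapping spaces, from $\operatorname{Map}(W,L)\to\operatorname{Map}(W,\lim Y_\bullet)\to\operatorname{Map}(W,\lim Z_\bullet)$ to $\operatorname{Map}(\underline{W},X_\bullet)\to\operatorname{Map}(\underline{W},Y_\bullet)\to\operatorname{Map}(\underline{W},Z_\bullet)$. The latter are mapping spaces in $\mathrm{Fun}(N(\mathbb Z_{\leq 0}),\cC)$, and the second sequence is a fiber sequence because corepresentables preserve limits. This map is an equivalence on the last two terms by the universal properties of $\lim Y_\bullet$ and $\lim Z_\bullet$, hence on the first. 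That is precisely the statement that $\underline{L}\to X_\bullet$ is a limit cone.
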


\begin{proof}
This is an instance of Fubini's theorem for limits in an $\infty$-category~\cite[\href{https://kerodon.net/tag/06A2}{Tag 06A2}]{kerodon} applied to the pair of simplicial sets $N(\mathbb Z_{\leq 0})$ and $N(\bullet\to\bullet\leftarrow \bullet)\cong\Lambda^2_2$.
\end{proof}

\begin{lemma}\label{lem:retract_lim}
Let $D$ be a simplicial set, $\cC$ an idempotent complete $\infty$-category, $X,Y\in\operatorname{Fun}(D,\cC)$ so that $Y$ is a retract of $X$.
If $\lim X\in \cC$ exists, then $\lim Y\in\cC$ exists and is a retract of $\lim X$.
\end{lemma}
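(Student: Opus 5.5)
The plan is to characterize $\lim Y$ via an idempotent on the diagram $X$. Fix the retraction data: maps $i\colon Y\to X$ and $r\colon X\to Y$ in $\operatorname{Fun}(D,\cC)$ together with a homotopy $r\circ i\simeq 1_Y$. Then $e\coloneqq i\circ r$ is an idempotent on $X$ in the homotopy category $\mathrm h\operatorname{Fun}(D,\cC)$. More precisely, the retract diagram $\mathrm{Ret}\to\operatorname{Fun}(D,\cC)$, in the sense of Lurie's treatment of idempotents (HTT~4.4.5), gives a coherent idempotent $\mathrm{Idem}\to\operatorname{Fun}(D,\cC)$ whose splitting is $Y$.

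The first step is to apply $\lim\colon\operatorname{Fun}(D,\cC)\to\cC$. This functor is only defined on the full subcategory of diagrams admitting limits. So instead I will use functoriality of mapping spaces: for any $C\in\cC$ we have
\[
\operatorname{Map}(C,\lim Y)\simeq\operatorname{Map}_{\operatorname{Fun}(D,\cC)}(\underline{C},Y),
\]
and the latter is a retract of $\operatorname{Map}(\underline{C},X)\simeq\operatorname{Map}(C,\lim X)$, naturally in $C$. Hence the presheaf $C\mapsto\operatorname{Map}(\underline{C},Y)$ on $\cC$ is a retract of the representable presheaf $h_{\lim X}$ in $\operatorname{Fun}(\cC^{\mathrm{op}},\mathcal S)$. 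The retraction is induced by postcomposition with $i$ and $r$, and is coherent because the Yoneda-type construction $\operatorname{Map}(\underline{-},-)$ is a functor of $\infty$-categories applied to the retract diagram.

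The second step is to observe that a retract of a representable presheaf is representable when $\cC$ is idempotent complete. The idempotent $h_{\lim X}\to h_{\lim X}$ comes, by Yoneda, from a coherent idempotent on $\lim X$ in $\cC$. By idempotent completeness it splits as some $L\in\cC$, and $h_L$ is then the splitting of the presheaf idempotent. Splittings are unique because retracts are limits (equivalently colimits) of $\mathrm{Idem}$-diagrams, and the Yoneda embedding preserves limits. Therefore $h_L\simeq\operatorname{Map}(\underline{-},Y)$, so $L$ is $\lim Y$ and is a retract of $\lim X$. The retraction maps are the ones induced from $i$ and $r$, which also gives compatibility with the cone maps.

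I expect the main obstacle to be the coherence of the idempotent. A homotopy-level idempotent need not lift to a coherent one in general, but here a retract diagram always extends to a coherent $\mathrm{Idem}$-diagram (HTT~4.4.5.7). I will cite this rather than build the homotopies by hand. Once that is in place, the remaining steps are formal applications of Yoneda.
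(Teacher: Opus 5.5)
Your argument is correct, but it takes a different route from the paper.

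\textbf{The paper's route.} The paper extends the retraction to a diagram $X'\colon N(\mathrm{Idem})\times D\to\cC$ and applies Fubini's theorem for limits. Taking the limit over $D$ first produces an idempotent $A'$ on $\lim X$, which splits by idempotent completeness. Taking the limit over $N(\mathrm{Idem})$ first recovers $Y$ pointwise. Hence $\lim Y\simeq\lim X'\simeq\lim A'$.

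\textbf{Your route.} You avoid Fubini and work with representing presheaves:
\begin{enumerate}
\item Existence of $\lim Y$ is equivalent to representability of $C\mapsto\operatorname{Map}_{\operatorname{Fun}(D,\cC)}(\underline{C},Y)$, which is the presheaf classified by the right fibration $\cC_{/Y}\to\cC$.
\item This presheaf is a retract of $h_{\lim X}$.
\item In an idempotent complete $\infty$-category, representables are closed under retracts. This holds because splittings of idempotents are absolute limits and the Yoneda embedding is fully faithful.
\end{enumerate}

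\textbf{Comparison.} Both proofs identify $\lim Y$ as the splitting of the idempotent induced on $\lim X$. The paper's version is shorter once Fubini is available, and it parallels the preceding lemma on fiber sequences of cofiltered objects. Yours isolates the single input ``retracts of representables are representable.'' It therefore dualizes verbatim to colimits, and it applies to any object characterized by a representability condition.

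One expository point: your first display writes $\operatorname{Map}(C,\lim Y)$ before $\lim Y$ is known to exist. Phrase it instead as the representability statement you are proving.
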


\begin{proof}
Let $\mathrm{Idem}$ be the category with a single object $\bullet$ and a single non-identity morphism which is an idempotent.
Since retracts give idempotents~\cite[\href{https://kerodon.net/tag/03Z7}{Tag 03Z7}]{kerodon}, we get a functor 
$ X'\colon N(\mathrm{Idem})\times D\to\cC $
extending $X$.
By assumption, the functor $X=X'(\bullet,\_)\colon D\to\cC$ has a limit $A\coloneqq \lim X'(\bullet,\_)\in\cC$, thus, again using Fubini's theorem for limits, the functor $X'$ has a limit $A'\coloneqq \lim_iX'(\_,i)\in\mathrm{Fun}(N(\mathrm{Idem}),\cC)$.
By idempotent completeness of $\cC$, $B\coloneqq \lim A'\simeq\lim X'\simeq\lim Y$ exists and is the required retract of $A$.
\end{proof}

\begin{proof}[Proof of Proposition~\ref{prop:comonadtower}]
Consider, as in Section~\ref{subsec:monads}, the endofunctors $$J_n\coloneqq \mathrm{cofib}(S^{n+1}\xrightarrow{\tau^{n+1}}1_{\cC})$$ which can equivalently be defined inductively by
\begin{equation}\label{eq:comonadJinductive}
J_0=GF,\qquad GFS^{n+1}\to J_{n+1}\to J_n\qquad \text{is a fiber sequence}.
\end{equation}
Furthermore, the assumption $P_n(\tau^{n+1})=0$ implies 
\begin{equation}\label{eq:Jsplitting}
P_nJ_n\simeq P_n\oplus (P_nS^{n+1}[1]).    
\end{equation}

Suppose $X\in\cC$ with $FX=0$. 
From \eqref{eq:comonadJinductive}, together with $FS^{n+1}\simeq T^{n+1}F$ (see Lemma~\ref{lem:GS_G}), it follows that $J_nX=0$ for all $n\geq 0$. But then~\eqref{eq:Jsplitting} implies that $P_nX=0$ for all $n$, thus $X=0$, since $\cC\simeq\lim \cC_n$. This shows that $F$ is conservative.

Let $X_\bullet$ be a cofiltered object in $\cC$ such that $FX_\bullet$ is essentially constant.
Fix $n\geq 1$. 
We will show by induction on $i\geq 0$: $\lim_jP_nJ_iX_j$ exists in $\cC_n$ and is preserved by the functor $F_n\coloneqq Q_0Q_1\cdots Q_{n-1}\colon \cC_n\to\cD$.
For the base case $i=0$ we have that $P_nJ_0X_\bullet=P_nGFX_\bullet$ is essentially constant by assumption, so the limit exists and is preserved by any additive functor.
Suppose the statement is true for $i$.
We have a fiber sequence of cofiltered objects in $\cC_n$:
\[
P_nGFS^{i+1}X_\bullet\to P_nJ_{i+1}X_\bullet\to P_nJ_{i}X_\bullet
\]
and again using $FS^{i+1}\simeq T^{i+1}F$ we see that the first term is essentially constant.
By the induction hypothesis and Lemma~\ref{lem:cofilt_fibseq}, it follows that $\lim_jP_nJ_{i+1}X_j$ exists in $\cC_n$ and is preserved by $F_n$.
In particular, if we set $i=n$ we deduce from~\eqref{eq:Jsplitting} and Lemma~\ref{lem:retract_lim} that $X^{(n)}\coloneqq \lim_jP_nX_j$ exists and is preserved by $F_n$.
From the universal property we get isomorphisms $Q_nX^{(n+1)}\xrightarrow{\simeq} X^{(n)}$ which determine an object $X\in\cC$ which is a limit of $X_\bullet$ and 
\begin{align*}
FX &\simeq F_nX^{(n)} && \text{from }F\simeq F_nP_n\text{ and }P_nX\simeq X^{(n)},\\
&\simeq F_n \lim_jP_nX_j && \text{by definition of }X^{(n)},\\
&\simeq \lim_j F_n P_nX_j && \text{limit preserved by }F_n, \\
&\simeq \lim_j  FX_j && \text{using }F\simeq F_nP_n.
\end{align*}
This verifies the second condition of (the dual of) Corollary~\ref{cor:monadic} for $F$, so $F$ is comonadic.
\end{proof}

\begin{remark}
The proof of Proposition~\ref{prop:comonadtower} shows that the functors $F_n\colon \cC_n\to\cD$ are ``almost comonadic'' in the sense that they satisfy the two conditions of (the dual of) Corollary~\ref{cor:monadic}, but may not have a right adjoint, as is typically the case in the examples we consider. 
\end{remark}

\subsection{Formal neighborhoods of divisors}\label{subsec:nbhdiv}

Our goal in this subsection is to give examples from algebraic geometry where Theorem~\ref{thm:comonads} on transport of stability conditions along comonadic adjunctions can be applied.
As Proposition~\ref{prop:comonadtower} from the previous subsection already hints at, such examples can be found in the setting of formal schemes.
For simplicity, we restrict here to those formal schemes which arise as formal completions. 

Before continuing with examples, we briefly address the relation between the language of dg categories, which are common in derived algebraic geometry, and the language of $\infty$-categories (weak Kan complexes), which we have been using so far in this section.
The dg-nerve construction~\cite[Section 1.3.1]{LurieHA} produces an $\infty$-category $N^{\mathrm{dg}}(\cC)$ from a small dg category $\cC$, and $\cC$ and $N^{\mathrm{dg}}(\cC)$ have canonically isomorphic homotopy categories. 
If $\cC$ is moreover pretriangulated, then $N^{\mathrm{dg}}(\cC)$ is stable~\cite{cohn_dg,faonte}.
This correspondence is compatible with the formation of (homotopy) limits and colimits.
To obtain adjunctions between $\infty$-categories in the sense of~\cite{LurieHTT} we appeal to~\cite[Proposition 5.2.2.8]{LurieHTT} which constructs such an adjunction from the data of a unit or counit alone.
In what follows, we will assume without further mention that $N^{\mathrm{dg}}$ has been applied when appealing to results stated earlier in this section in the language of stable $\infty$-categories.

Let $X$ be a Noetherian separated scheme over a fixed ground field $\mathbf k$, and let $j\colon Z\hookrightarrow X$ be a closed subscheme cut out by the  corresponding ideal sheaf $\cI_Z\subseteq \cO_X$.
The \textit{$n$-th infinitesimal neighborhood}, $j_n\colon Z_n\hookrightarrow X$ of $Z$ has ideal sheaf $\cI_Z^{n+1}$.
If $j_{m,n}\colon Z_m\to Z_n$, $m<n$, are the inclusions, then the functors $j_{m,n}^*\colon \mathrm{Perf}(Z_n)\to\mathrm{Perf}(Z_m)$ form a projective system of dg categories.
Its (homotopy) limit $\lim_n \mathrm{Perf}(Z_n)\eqqcolon \mathrm{Perf}(\widehat{X}_Z)$ is, by definition, following~\cite[Definition 2.1]{efimov17}, the category of perfect complexes on the formal completion $\widehat{X}_Z$ of $X$ along $Z$.
By~\cite[Theorem 2.3]{efimov17}, $\mathrm{Perf}(\widehat{X}_Z)$ is quasi-equivalent to a certain full subcategory of the derived category of quasi-coherent sheaves on $X$, $D(\mathrm{QCoh}(X))$.

\begin{proposition}
Let $X$ be a Noetherian separated scheme over $\mathbf k$ and $j\colon Z\hookrightarrow X$ a closed subscheme with ideal sheaf $\cI_Z\subseteq \cO_X$.
Suppose that $X$ is regular and $\cI_Z$ is invertible, thus $Z$ an effective divisor.
Then there is a comonadic adjunction
\[
\begin{tikzcd}
     \mathrm{Perf}(\widehat{X}_Z)\arrow[r,shift left,"j^*"] & D^b\mathrm{Coh}(Z) \arrow[l,shift left,"j_*"]
\end{tikzcd}
\]
and the unit and counit fit into exact triangles
\begin{gather*}
\cI_Z\otimes\_\longrightarrow 1_{\mathrm{Perf}(\widehat{X}_Z)}\xrightarrow{\eta} j_*j^*\xrightarrow{+1} \\
j^*\cI_Z[1]\otimes\_\longrightarrow j^*j_*\xrightarrow{\varepsilon} 1_{D^b\mathrm{Coh}(Z)}\xrightarrow{+1}
\end{gather*}
where $j^*\cI_Z=\cN_{Z/X}^\vee$ is the conormal sheaf of $Z\subseteq X$.
\end{proposition}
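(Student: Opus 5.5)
We apply Proposition~\ref{prop:comonadtower} with $\cC\coloneqq\mathrm{Perf}(\widehat{X}_Z)=\lim_n\mathrm{Perf}(Z_n)$, $\cD\coloneqq D^b\mathrm{Coh}(Z)$, $\cC_n\coloneqq\mathrm{Perf}(Z_n)$ for $n\geq 1$, and $Q_n\coloneqq j_{n,n+1}^*$, $P_n$ the limit projections. The plan has four steps: construct the adjunction, identify the unit and counit cones, check the vanishing hypothesis $P_n(\tau^{n+1})=0$, and check idempotent completeness.

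\textbf{Step 1: the adjunction.} Regularity of $X$ gives $D^b\mathrm{Coh}(Z)=\mathrm{Perf}(Z_0)$ only when $Z$ is regular, which we do not assume. So $F\coloneqq j^*$ must be understood as the functor $\mathrm{Perf}(\widehat X_Z)\to\mathrm{Perf}(Z)\subseteq D^b\mathrm{Coh}(Z)$, the $0$-th projection followed by the inclusion.

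For the right adjoint $j_*$ we use that $Z\hookrightarrow X$ is a regular embedding of codimension one and $X$ is regular. Hence a coherent sheaf on $Z$, pushed forward to $X$, is perfect on $X$ and set-theoretically supported on $Z$. Its restrictions to the $Z_n$ are perfect and compatible, so it defines an object of $\lim_n\mathrm{Perf}(Z_n)$. Concretely, $j_*M$ is the system $(j_{0,n*}M)_n$, each term perfect on $Z_n$ because $Z_0\subset Z_n$ is cut out by a nonzerodivisor on a locally complete intersection.

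Adjunction is checked levelwise: $\Hom(j_n^*E,j_{0,n*}M)\simeq\Hom(j^*E,M)$, and these identifications are compatible in $n$. Alternatively, one can invoke Efimov's description of $\mathrm{Perf}(\widehat X_Z)$ inside $D(\mathrm{QCoh}(X))$ together with the usual $j^*\dashv j_*$.

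\textbf{Step 2: the unit and counit cones.} The unit triangle comes from $0\to\cI_Z\to\cO_X\to j_*\cO_Z\to0$ tensored with $E$ (projection formula), giving $\cI_Z\otimes E\to E\to j_*j^*E$. The counit triangle comes from the standard computation $j^*j_*M\simeq M\oplus M\otimes\cN^\vee_{Z/X}[1]$ locally. Globally it is an extension, computed by the Koszul resolution $[\cI_Z\to\cO_X]$ of $\cO_Z$, so its cofiber is $j^*\cI_Z[1]\otimes M$.

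Consequently $S=\cI_Z\otimes\_$, which is an autoequivalence since $\cI_Z$ is invertible, and $\tau^{n+1}$ is multiplication by the section $\cI_Z^{n+1}\to\cO$. Since the ideal of $Z_n$ is $\cI_Z^{n+1}$, this map restricts to zero on $Z_n$, so $P_n(\tau^{n+1})=0$ as a map $\cI_Z^{n+1}|_{Z_n}\to\cO_{Z_n}$ tensored with $E$.

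\textbf{Step 3: idempotent completeness.} $\mathrm{Perf}(Z_n)$ is idempotent complete by Thomason--Trobaugh.

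\textbf{Step 4: conclusion and main obstacle.} Proposition~\ref{prop:comonadtower} now gives comonadicity. The main obstacle is Step 1 together with the compatibility check in Step 2. We must confirm that the triangle in Step 2, computed in $D(\mathrm{QCoh}(X))$, agrees with the cone of the unit of the adjunction from Step 1. We must also confirm that the essential image of $F$ and the domain of $G$ match $D^b\mathrm{Coh}(Z)$ and not just $\mathrm{Perf}(Z)$; this is exactly the point where the regularity of $X$ enters.
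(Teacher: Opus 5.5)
Your skeleton matches the paper's: Proposition~\ref{prop:comonadtower} applied to the tower $\mathrm{Perf}(Z_n)$, the unit triangle from $0\to\cI_Z\to\cO_X\to j_*\cO_Z\to 0$, the counit triangle from the Koszul/local computation, and $P_n(\tau^{n+1})=0$ because $\cI_Z^{n+1}|_{Z_n}\to\cO_{Z_n}$ vanishes. Your appeal to Thomason--Trobaugh for idempotent completeness supplies a hypothesis the paper leaves implicit.

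The genuine error is the ``concrete'' model of $j_*$ in Step~1. A local equation $f$ of $Z$ is a nonzerodivisor on $X$ but a zero divisor on $Z_n$ for $n\geq 1$, since $f\cdot f^n=0$. So $Z\subset Z_n$ is \emph{not} a regular embedding. Over $A/f^{n+1}$ the module $A/f$ has the infinite minimal resolution
\[
\cdots\xrightarrow{f^{n}}A/f^{n+1}\xrightarrow{f}A/f^{n+1}\xrightarrow{f^{n}}A/f^{n+1}\xrightarrow{f}A/f^{n+1}\to A/f\to 0.
\]
Hence $j_{0,n*}M$ is not perfect on $Z_n$, already for $M=\cO_Z$. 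Similarly $Lj_{n,n+1}^*(j_{0,n+1*}M)$ is unbounded, so $(j_{0,n*}M)_n$ is not even an object of $\lim_n\mathrm{Perf}(Z_n)$.

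The object you first describe in words is the right one: $(Lj_n^*j_{0*}M)_n$, with $j_{0*}M\in D^b\mathrm{Coh}(X)\simeq\mathrm{Perf}(X)$. For a coherent sheaf $M$, however, this is a non-split extension
\[
j_{0,n*}\bigl(M\otimes\cI_Z^{n+1}|_Z\bigr)[1]\longrightarrow Lj_n^*j_{0*}M\longrightarrow j_{0,n*}M.
\]
For example, for $X=\mathbb A^1$, $Z=\{x=0\}$, $M=\mathbf k$, it is the perfect complex $\mathbf k[x]/x^{n+1}\xrightarrow{x}\mathbf k[x]/x^{n+1}$.

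Consequently your levelwise check $\Hom(j_n^*E,j_{0,n*}M)\simeq\Hom(j^*E,M)$ does not compute $\Hom(E,j_*M)$. At each finite level, $\Hom_{Z_n}(E_n,Lj_n^*j_{0*}M)$ carries an extra term; for $E=(\cO_{Z_n})_n$ it is an extra summand $R\Gamma(Z,M\otimes\cI_Z^{n+1}|_Z)[1]$. The adjunction isomorphism only appears after passing to $\lim_n$. This works because the transition maps on the extra terms are induced by $\cI_Z^{n+2}\hookrightarrow\cI_Z^{n+1}$, whose restriction to $Z$ vanishes, so they form a pro-zero tower. You can either supply that limit argument, or do what the paper does (and what you list only as an alternative): realize $\mathrm{Perf}(\widehat X_Z)$ as a full subcategory of $D(\mathrm{QCoh}(X))$ via Efimov and deduce $j^*\dashv j_*$ from the ordinary adjunction there.

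A minor point: in Step~2, $\cN^\vee_{Z/X}[1]\otimes M$ is the fiber of $\varepsilon_M$, not its cofiber.
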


\begin{proof}
The right adjoint $j_*$ is the composition
\[
D^b\mathrm{Coh}(Z)\longrightarrow D^b\mathrm{Coh}(X)\simeq \mathrm{Perf}(X)\longrightarrow \mathrm{Perf}(\widehat{X}_Z)
\]
where the first functor is induced by the (exact) direct image functor $\mathrm{Coh}(Z)\to\mathrm{Coh}(X)$, the quasi-equivalence is a consequence of regularity of $X$~\cite[\href{https://stacks.math.columbia.edu/tag/08E0}{Tag 08E0}]{stacks-project}, and the third functor arises from the universal property of the limit applied to the functors $j_n^*\colon \mathrm{Perf}(X)\to\mathrm{Perf}(Z_n)$.
The claimed adjunction follows from the usual adjunction between direct and inverse image, together with the realization of $\mathrm{Perf}(\widehat{X}_Z)$ as a full subcategory of $D(\mathrm{QCoh}(X))$.

The exact triangle for the unit comes from the short exact sequence of sheaves $0\to\cI_Z\to \cO_X\to j_*\cO_Z\to 0$.
The exact triangle for the counit is also standard and follows by a local calculation, see for example~\cite[Section 11.1]{huybrechts_fm}.
Comonadicity follows from Proposition~\ref{prop:comonadtower} applied to the sequence of categories $\mathrm{Perf}(Z_n)$.
The required vanishing of the morphism $j^*_n\cI_Z^{n+1}\to j^*_n\cO_X\cong \cO_{Z_n}$ follows by definition.
\end{proof}

Suppose we are in the setting of the above proposition.
The autoequivalence $T$ of $D^b\mathrm{Coh}(Z)$ as in Theorem~\ref{thm:comonads} is thus $\cN_{Z/X}^\vee\otimes\_$.
By that theorem, we can transport any stability condition preserved by $T$ (and with a suitable choice of lattice $\Gamma$) from $D^b\mathrm{Coh}(Z)$ to 
\[
\cE=\mathrm{Perf}(\widehat{X}_Z)/j_*D^b\mathrm{Coh}(Z)\eqqcolon \mathrm{Perf}(\widehat{X}_Z\setminus Z).
\] 
This can be thought of as the category of perfect complexes on the punctured formal neighborhood ``$\widehat{X}_Z\setminus Z$'' of $Z$.

\begin{example}
Assume that $\mathbf k$ is algebraically closed and $\operatorname{char}\mathbf k=0$.
Suppose $X$ is smooth over $\mathrm{Spec}({\mathbf k[[t]])}$ and $Z$ is the fiber over $0$.
Then $\mathrm{Perf}(\widehat{X}_Z\setminus Z)\cong D^b\mathrm{Coh}(X_{\mathbf k((t))})$ by~\cite{morimura}.
\end{example}

\subsection{Deformations of \texorpdfstring{$A_\infty$}{A-infinity}-categories}\label{subsec:defAinf}

Formal 1-parameter deformations of $A_\infty$-categories are described by curved $A_\infty$-categories over the formal power series ring $\mathbf k[[t]]$.
In this subsection we review this notion and show that it gives rise to a comonad of stable $\infty$-categories where $T=\mathrm{fib}(FG\xrightarrow{\varepsilon}1_{\cD})\circ [-1]$ is the identity functor.
For a more in-depth discussion of curved $A_\infty$-algebras and categories we refer the reader to~\cite{fukaya_def,positselski_book,dedeken_lowen}.

Fix a coefficient field $\mathbf k$.
A $\mathbf k[[t]]$-module $M$ is \textit{topologically free} if it is isomorphic to the completion (with respect to the $t$-adic filtration) of $\overline{M}\otimes_{\mathbf k}\mathbf k[[t]]$, where $\overline{M}\coloneqq M\otimes_{\mathbf k[[t]]}\mathbf k$.
In particular, if $\overline{M}$ is a finite-dimensional vector space over $\mathbf k$, then $\overline{M}\otimes_{\mathbf{k}}\mathbf k[[t]]$ is topologically free.

\begin{definition}
A \textbf{curved $A_\infty$-category} over $\mathbf k[[t]]$, $\cA$, is given by 
\begin{enumerate}[(1)]
    \item a collection of objects $\mathrm{Ob}(\cA)$,
    \item for each pair $X,Y\in \mathrm{Ob}(\cA)$ a $\bZ$-graded $\mathbf k[[t]]$-module $\Hom(X,Y)$ such that each $\Hom^i(X,Y)$ is topologically free,
    \item for each $n\geq 0$ and $X_0,\ldots,X_n\in \mathrm{Ob}(\cA)$ a map of $\mathbf k[[t]]$-modules
    \[
    \mk m_n\colon \Hom(X_{n-1},X_n)\otimes\cdots\otimes \Hom(X_0,X_1)\longrightarrow\Hom(X_0,X_n)
    \]
    of degree $2-n$ such that the \textit{curvature} $\mk m_0\in\Hom^2(X,X)$ has vanishing constant term and the (curved) $A_\infty$ equations
    \[
    \sum_{i+j+k=n}(-1)^{\|a_1\|+\ldots+\|a_i\|}\mk m_{i+1+j}(a_n,\ldots,\mk m_j(a_{i+j},\ldots,a_{i+1}),\ldots,a_1)=0
    \]
    hold, where $\|a\|\coloneqq |a|-1$.
    Moreover, we require the existence of strict units $1_X\in\Hom^0(X,X)$, $X\in\mathrm{Ob}(\cA)$.
\end{enumerate}
\end{definition}

For the definitions of $A_\infty$-functors and natural transformations in this setting we refer to~\cite[Section 2.1]{H3dCY}. 
These resemble their counterparts for the more standard (uncurved) $A_\infty$-categories over $\mathbf k$, except for additional terms coming from $\mk m_0$.

A curved $A_\infty$-category $\cA$ has a kind of closure under finite direct sums, shifts, extensions, and infinitesimal formal deformations of objects, the (uncurved) $A_\infty$-category of \textbf{twisted complexes}, $\mathrm{Tw}(\cA)$, whose objects are pairs consisting of:
\begin{enumerate}[(1)]
    \item A formal expression $X=\bigoplus_{i=1}^nV_i\otimes X_i$, where $V_i$ are finite-dimensional $\bZ$-graded vector spaces over $\mathbf k$ and $X_i\in \mathrm{Ob}(\cA)$.
    \item A \textit{twisting cochain} $\delta\in\Hom^1(X,X)$ whose constant term is strictly upper triangular. Thus, $\delta$ is given by matrix coefficients $\delta_{ij}\in\Hom^1(V_j\otimes X_j,V_i\otimes X_i)$ which vanish $\mod t$ for $i\geq j$. These need to satisfy the \textit{$A_\infty$ Maurer--Cartan equation}
    \[
    \sum_{n=0}^\infty\mk m_n(\delta,\ldots,\delta)=0
    \]
    where the series converges $t$-adically by the assumptions on $\delta$.
\end{enumerate}
In the case where $\cA$ is an (uncurved) $A_\infty$-category over $\mathbf k$, $\delta$ is required to be strictly upper triangular, and such twisted complexes are sometimes referred to as \textit{one-sided}.
See~\cite[Section 2.2]{H3dCY} for the definitions of the morphism spaces and structure maps of $\mathrm{Tw}(\cA)$.

If $\cA$ is a curved $A_\infty$-category over $\mathbf k[[t]]$, then the category $\cA_0\coloneqq\cA\otimes_{\mathbf k[[t]]}\mathbf k$ with the same objects and morphism spaces reduced modulo $t$ is an (uncurved) $A_\infty$-category over $\mathbf k$.
There is a canonical functor $F\colon \cA\to \cA_0$ which induces a functor $F\colon \mathrm{Tw}(\cA)\to \mathrm{Tw}(\cA_0)$ denoted by the same letter.

In a similar vein to the discussion in the previous subsection about the relation between $\infty$-categories and dg categories, we need to address the relation between (stable) $\infty$-categories and (pre-triangulated) $A_\infty$-categories, so that we can apply the main results of this section on comonads to the present setting.\footnote{Note that we will neither attempt nor need to relate \textit{curved} $A_\infty$-categories with $\infty$-categories.}
Starting from an $A_\infty$-category $\cA$, one constructs an $\infty$-category $N^{A_\infty}(\cA)$ via the $A_\infty$-nerve construction~\cite{faonte}, which generalizes the dg-nerve construction.
Alternatively, one can argue that any $A_\infty$-category is quasi-equivalent to a dg category, in fact canonically via the Yoneda embedding, and then apply the dg-nerve.
This construction extends to $A_\infty$-functors.
Further, if $\cA$ is pre-triangulated, in the sense that the embedding $\cA\to\mathrm{Tw}(\cA)$ is a quasi-equivalence, then $N^{A_\infty}(\cA)$ is stable. 

\begin{remark}
Additional difficulties arise if the base ring is not a field or if strict unitality is relaxed. We refer the reader to~\cite{COS24,tanaka24}.
\end{remark}

Our main goal in this subsection is to prove the following:

\begin{proposition}\label{prop:defadjunction}
Let $\cA$ be a curved $A_\infty$-category over $\mathbf k[[t]]$, then $F\colon \mathrm{Tw}(\cA)\to \mathrm{Tw}(\cA_0)$ yields, via the $A_\infty$-nerve construction, a comonadic functor between stable $\infty$-categories.
Furthermore, $T=\mathrm{fib}(FG\xrightarrow{\varepsilon}1_{\cD})\circ [-1]$, where $G$ denotes the right adjoint constructed below, is equivalent to the identity functor, and the base change functor induces an equivalence $\mathrm{Tw}(\cA)/G(\mathrm{Tw}(\cA_0))\to \mathrm{Tw}(\cA)\otimes_{\mathbf k[[t]]}\mathbf k((t))$.
\end{proposition}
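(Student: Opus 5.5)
The plan is to construct the right adjoint $G\colon \mathrm{Tw}(\cA_0)\to\mathrm{Tw}(\cA)$ explicitly, identify the counit cofiber, and then deduce comonadicity from Proposition~\ref{prop:comonadtower}, with $\cC_n\coloneqq\mathrm{Tw}(\cA\otimes_{\mathbf k[[t]]}\mathbf k[t]/t^{n+1})$. These are curved $A_\infty$-categories over $\mathbf k[t]/t^{n+1}$, but their twisted complexes form uncurved pretriangulated categories, since the curvature is nilpotent and is absorbed into the Maurer--Cartan equation.

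For $G$, take $G(Y)\coloneqq \mathrm{cone}(t\colon \widetilde Y\to\widetilde Y)$. Here $\widetilde Y\in\mathrm{Tw}(\cA)$ is a lift of the twisted complex $Y$: we lift its objects and take its twisting cochain as a Maurer--Cartan element. Such a lift need not exist, so instead I would define $G$ on all of $\mathrm{Tw}(\cA_0)$ by the Yoneda-type formula $\Hom_{\mathrm{Tw}(\cA)}(X,G Y)\simeq \Hom_{\mathrm{Tw}(\cA_0)}(FX,Y)$. Then I would check representability by observing that $\Hom_{\mathrm{Tw}(\cA)}(X,Z)\otimes^L_{\mathbf k[[t]]}\mathbf k\simeq\Hom_{\mathrm{Tw}(\cA_0)}(FX,FZ)$, which uses topological freeness. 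Representability then reduces to the case of generators $Y=A\in\cA_0$, where $G(A)=\mathrm{cone}(t\cdot 1_A)$ is a twisted complex over $\cA$. Its Maurer--Cartan equation reads $t\mk m_0\cdot$stuff; there is a subtlety that $\mk m_0\neq0$, so the twisting cochain is $\begin{pmatrix}0&t\\ \mk m_0/t&0\end{pmatrix}$, which exists because $\mk m_0$ has vanishing constant term but need not be divisible by $t$ beyond that. Since $\mk m_0\equiv 0\bmod t$ and the Hom spaces are topologically free, $\mk m_0/t$ is defined, so this works: it is a matrix factorization of $\mk m_0$ by $t$. Then $G$ extends to $\mathrm{Tw}(\cA_0)$ because it is exact and $\mathrm{Tw}(\cA_0)$ is generated by $\cA_0$ under cones.

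Next I would compute the triangles. The unit is $X\to GFX\simeq \mathrm{cone}(t\colon X\to X)$, so $S=\mathrm{fib}(\eta)\simeq X$ with $\tau=t$. For the counit, $FG(A)=F(\mathrm{cone}(t))$ reduces mod $t$ to $A\oplus A[1]$, where the cross term $\mk m_0/t \bmod t$ is a closed degree-2 endomorphism. I expect it to be homotopic to zero on the level of the functor, or at least to give a split triangle; this needs care. Thus $\mathrm{fib}(\varepsilon)\simeq [1]$, and $T=\mathrm{fib}(\varepsilon)\circ[-1]\simeq 1$. Naturality of these identifications is the main obstacle. I would handle it by writing $FG\simeq 1\otimes_{\mathbf k}\mathrm{cone}(t\colon\mathbf k[[t]]\to\mathbf k[[t]])\otimes\mathbf k$, i.e. tensoring with $\mathbf k\otimes^L_{\mathbf k[[t]]}\mathbf k\simeq\mathbf k\oplus\mathbf k[1]$ as a $\mathrm{Tw}(\cA_0)$-bimodule. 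The extension class of this bimodule is governed by $\mk m_0/t$, the first-order deformation class (a Hochschild 2-cocycle). So it splits after $[-1]$ only up to possibly nontrivial extension, but $T$ is only the fiber, which is $1_\cD[1]$ regardless, so $T\simeq 1$ still holds.

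Comonadicity then follows from Proposition~\ref{prop:comonadtower}. We have $\mathrm{Tw}(\cA)\simeq\lim_n\cC_n$ by $t$-adic completeness of the Hom spaces and convergence of Maurer--Cartan elements. This limit statement is the second delicate point; I would prove full faithfulness via Milnor $\lim^1$-vanishing (the inverse system of Homs is Mittag-Leffler) and essential surjectivity by lifting twisting cochains order by order. Further, $P_n(\tau^{n+1})=t^{n+1}=0$ in $\cC_n$. Idempotent completeness of $\cC_n$ I would either assume, or pass to idempotent completions, using the weakened hypothesis in the remark. Finally, $\mathrm{Tw}(\cA)/G(\mathrm{Tw}(\cA_0))$ is, by Lemma~\ref{lem:loc_t} (dually), the localization inverting $\tau=t$. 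Its Hom spaces are $\operatorname{colim}(\Hom\xrightarrow{t}\Hom\to\cdots)=\Hom\otimes\mathbf k((t))$, which identifies it with the base change.
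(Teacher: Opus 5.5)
Your proposal follows the paper's proof. On generators, $G$ is the matrix factorization $\bigl(A\oplus A[1],\left(\begin{smallmatrix}0&t\\ \mathfrak m_0/t&0\end{smallmatrix}\right)\bigr)$, and one reads off $S\simeq 1$ with $\tau=t\cdot 1$ and $\mathrm{fib}(\varepsilon)\simeq[1]$. The counit triangle need not split, since its connecting map is the first-order class $\mathfrak m_0/t \bmod t$, but as you note only the fiber matters. Proposition~\ref{prop:comonadtower} applied to $\mathrm{Tw}(\cA\otimes_{\mathbf k[[t]]}\mathbf k[t]/t^{n+1})$, together with the dual of Lemma~\ref{lem:loc_t}, then finishes the argument.

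The paper simply cites~\cite{H3dCY} for the adjunction and the counit triangle, which you obtain via representability. Because the generator $A$ is curved, that check is a $t$-adic filtration argument on $\Hom(X,G(A))$, not the formula $\Hom(X,Z)\otimes^L\mathbf k\simeq\Hom(FX,FZ)$. The paper also passes over the limit identification and idempotent completeness, which you rightly flag. For idempotent completeness, use the weakened hypothesis of the remark rather than idempotent completions, since passing to completions would change the limit.
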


Combining this with Theorem~\ref{thm:comonads}, we obtain, after fixing some $\mathrm{cl}\colon K_0(\mathrm{Tw}(\cA_0))\to\Gamma$, a canonical map
\[
\mathrm{Stab}(\mathrm{Tw}(\cA_0))\longrightarrow\mathrm{Stab}(\mathrm{Tw}(\cA)\otimes_{\mathbf k[[t]]}\mathbf k((t)))
\]
over $\mathrm{Hom}(\Gamma,\bC)$, which is thus a local homeomorphism, although in general neither injective nor surjective.

\begin{proof}
A right adjoint $G$ was already constructed in~\cite[Section 2.3]{H3dCY}. We recall its definition.
First, choose vector space splittings 
\[
\Hom_{\cA}(X,Y)=\overline{\Hom}_{\cA}(X,Y)\oplus \left(\Hom_{\cA}(X,Y)t\right)
\]
and write elements as $a=\bar{a}+\hat{a}t$ with $\bar{a}$ and $\hat{a}t$ in the first and second summand above, respectively.
We initially define $G$ on $\cA_0$ and then extend it to $\mathrm{Tw}(\cA_0)$ in the natural way. 
For $X\in\mathrm{Ob}(\cA_0)$ let
\begin{equation*}
G(X)\coloneqq\left(X\oplus X[1],\begin{pmatrix} 0 & t \\ \hat{\mk m}_0 & 0\end{pmatrix}\right)
\end{equation*}
and for a morphism $a$ in $\cA_0$: 
\begin{equation*}
G_1(a)\coloneqq\begin{pmatrix} a & 0 \\ (-1)^{\|a\|}\hat{\mk m}_1(a) & a \end{pmatrix}.
\end{equation*}
The higher order terms of $G$ are given by
\begin{equation*}
G_n(a_n,\ldots,a_1)\coloneqq\begin{pmatrix} 0 & 0 \\ (-1)^{\|a_1\|+\ldots+\|a_n\|}\hat{\mk m}_n(a_n,\ldots,a_1) & 0 \end{pmatrix}
\end{equation*}
for $n\geq 2$. It is shown in~\cite[Proposition 2.12]{H3dCY} that $G$ is an $A_\infty$-functor $\cA_0\to \mathrm{Tw}(\cA)$. We denote its extension to $\mathrm{Tw}(\cA_0)$ by the same letter.

The composite $FG$ is given on objects of $\cA_0$ by
\[
FGX=\left(X\oplus X[1],\begin{pmatrix} 0 & 0 \\ \bar{\hat{\mk m}}_0 & 0\end{pmatrix}\right)
\]
and there is a natural transformation $\varepsilon\colon FG\to 1_{\cA_0}$ whose components are just projection to the first factor. 
This is a counit in the sense that it induces quasi-isomorphisms
\begin{equation}\label{eq:catdefadjunction}
    \Hom_{\mathrm{Tw}(\cA)}(X,GY)\xrightarrow{\simeq}\Hom_{\mathrm{Tw}(\cA_0)}(FX,Y)
\end{equation}
as shown in~\cite[Proposition 2.13]{H3dCY}. 
The same proposition also tells us that $\mathrm{fib}(\varepsilon)\simeq [1]$, i.e. $T$ is the identity functor.

The composite $GF$ is given on objects of $\mathrm{Tw}(\cA)$ (which have vanishing curvature $\mathfrak m_0$, by definition) by
\[
GFX=\left(X\oplus X[1],\begin{pmatrix} 0 & t \\ 0 & 0\end{pmatrix}\right)\simeq\mathrm{cofib}(t\cdot1_X\colon X\to X).
\]
Furthermore, $S\coloneqq \mathrm{fib}(\eta)$ is also the identity functor and the natural transformation $\tau\colon S\to 1_{\mathrm{Tw}(\cA)}$ as in Proposition~\ref{prop:comonadtower} has components $\tau_X=t\cdot 1_X$.
In particular, the localization along $\tau$ is $\mathrm{Tw}(\cA)\otimes_{\mathbf k[[t]]}\mathbf k((t))$ and, together with the dual of Lemma~\ref{lem:loc_t}, this gives the claim about $\mathrm{Tw}(\cA)/G(\mathrm{Tw}(\cA_0))$.

Consider the sequence of $A_\infty$-categories
\[
\cA_n\coloneqq\mathrm{Tw}\left(\cA\otimes_{\mathbf k[[t]]}\mathbf k[t]/t^{n+1}\right)
\]
where we take twisted complexes which are two-sided, i.e.\ not necessarily strictly upper-triangular, and whose reduction mod $t$ is one-sided, just as for curved $A_\infty$-categories over $\mathbf k[[t]]$.
By construction, $\mathrm{Tw}(\cA)=\lim_n \cA_n$ and the image of $\tau^{n+1}$ in $\cA_n$ vanishes.
It follows from Proposition~\ref{prop:comonadtower} that $F$ is comonadic.
This completes the proof of the proposition.
An alternative proof that $F$ is conservative is found in~\cite[Proposition 2.1]{Hskein}.
\end{proof}

\begin{figure}[ht]
    \centering
\begin{tikzpicture}[scale=1.3]   
 \foreach \x in {0,1,2,3,4} {
  \foreach \y in {0,1,2,3,4} {
   \fill (\x,\y) circle (1pt);
  }
 }
 \draw (-0.5,-0.5) rectangle (4.5,4.5);
 \draw (2,-0.5) -- (2,0) -- (3,1) -- (1,2) -- (1,3) -- (2,4) -- (2,4.5);
 \draw[thick,red] (3,1) -- (1,2);
 \draw[dashed] (2,0) -- (2,1) -- (1,3);
 \draw[blue] plot [smooth] coordinates {(1.5,-0.5)  (1.5,0) (3.5,1) (0.5,2) (0.5,3) (2.5,4) (1.5,4.5)};
\end{tikzpicture}
    \caption{Discrete curve shortening on the torus $T^2$ punctured along the lattice of points $P$: Start with the object in $\mathcal F(T^2\setminus P)$ represented by the blue curve. Its Harder--Narasimhan filtration is represented by the solid piecewise-linear curve. The red segment is the HN component of maximal phase. A single step of the mass (length) minimizing procedure replaces the object by one whose HN filtration includes the dashed segments instead of the red segment and the two adjacent ones. Intuitively, we are allowing the curve to pass through the two endpoints of the red segment and pull it tight.}
    \label{fig:discreteshortening}
\end{figure}

\begin{example}\label{ex:discreteshortening}
Take $T^2=\bR^2/\bZ^2$ to be the flat torus, punctured along the $N^2$ points on the lattice $P\coloneqq \frac{1}{N}\bZ^2/\bZ^2\subset T^2$.
According to~\cite{hkk}, there is a corresponding stability condition on the wrapped Fukaya category $\cD\coloneqq \mathcal F(T^2\setminus P)$ (over some arbitrary fixed base field $\mathbf k$).
The category itself, up to equivalence, does not depend on the locations of the punctures, only their number, and is mirror dual to $D^b\mathrm{Coh}(Y)$, where $Y$ is a necklace of $N^2$ projective lines.
The stability condition does depend on the location of the punctures and its stable objects correspond to straight lines on $T^2$ which do not pass through any puncture and are either closed loops or are arcs whose endpoints lie in $P$.
The wrapped Fukaya category is the central fiber of a \textit{relative Fukaya category} $\cC\coloneqq \mathcal F(T^2,P)$ over $\mathbf k[[t]]$ where immersed disks in $T^2$ are counted with weight $t^n$, $n$ being the number of punctures inside the disk, cf.\ \cite{HKS}.
The general discrete minimizing procedure discussed at the end of Section~\ref{subsec:transport} is, in this example, a kind of curve-shortening procedure on piecewise straight paths on $T^2$ with vertices on $P$, see Figure~\ref{fig:discreteshortening}.
Of course, more general choices of surfaces and locations of punctures are possible.
\end{example}

\bibliographystyle{alpha}
\bibliography{catkah} 

\Addresses

\end{document}